	\definecolor{UCIB}{HTML}{0064A4}
	\definecolor{UCSDB}{HTML}{00629B}
\newcommand{\bhyperlink}[3][black]{\hyperlink{#2}{\color{#1}{#3}}}%
\newcommand{\mybinom}[3][0.8]{\scalebox{#1}{$\dbinom{#2}{#3}$}}
\pgfplotsset{compat=1.17}
\newcommand\bigsubset[1][1.19]{%
   \mathrel{\vcenter{\hbox{\scalebox{#1}{$\subset$}}}}}
\theoremstyle{plain}
\newtheorem{fthm}{Theorem}[section]
\newtheorem*{fthm*}{Theorem}
\newtheorem{flemma}{Lemma}[section]
\newtheorem*{flemma*}{Lemma}
\newtheorem{fprop}{Proposition}[section]
\newtheorem*{fprop*}{Proposition}
\newtheorem*{fcor*}{Corollary}
\theoremstyle{definition}
\newtheorem{fdefi}{Definition}[section]
\newtheorem*{fdefi*}{Definition}
\newtheorem{fexmp}{Example}[section]
\newtheorem*{fexmp*}{Example}
\theoremstyle{remark}
\newtheorem{frmk}{Remark}[section]
\newtheorem*{frmk*}{Remark}
\newtheorem{fconj}{Conjecture}[section]
\newtheorem*{fconj*}{Conjecture}
\newtheorem*{fclaim*}{Claim}
\newtheorem*{fquest*}{Question}
\DeclareMathOperator\dist{dist}
\DeclareMathOperator{\arccosh}{arccosh}
\DeclareMathOperator\res{res}
\DeclareMathOperator\disc{discr}
\newcommand{\Address}{{
  \bigskip
  \footnotesize

  Chao-Ming Lin, \textsc{Department of Mathematics, Ohio State University, OH}\par\nopagebreak
  \textit{E-mail address:} \href{lin.4579@osu.edu}{lin.4579@osu.edu}\par\nopagebreak
  \textit{Personal Website:} \href{https://chaominl.github.io}{https://chaominl.github.io}

}}
\newcommand{\Acknow}{{
  \bigskip
\textbf{Acknowledgements:} The author is grateful to Zhiqin~Lu and Xiangwen~Zhang for giving me enlightening help. The author would like to thank Tristan Collins for his interest in this work. The author would like to thank Chen-Yu~Chi for several helpful conversations and Hsin-Po Wang for some programming support.
The author would like to thank Biao Ma and Hao Fang for pointing out a mistake in the proof of the convexity theorem in the previous version, we give a new proof and fix this mistake.
}}
\begin{document}

\title{On the Convexity of General Inverse $\sigma_k$ Equations}
\author{Chao-Ming Lin}

\begin{abstract}
We prove that if a level set of a degree $n$ general inverse $\sigma_k$ equation $f(\lambda_1, \cdots, \lambda_n)$ $\coloneqq \lambda_1 \cdots \lambda_n - \sum_{k = 0}^{n-1} c_k \sigma_k(\lambda) = 0$ is contained in $q + \Gamma_n$ for some $q \in \mathbb{R}^n$, where $c_k$ are real numbers not necessary to be non-negative and $\Gamma_n$ is the positive orthant, then this level set is convex. As an application, this result justifies the convexity of the level set of all general inverse $\sigma_k$ type equations, for example, the Monge--Ampère equation, the Hessian equation, the J-equation, the deformed Hermitian--Yang--Mills equation, the special Lagrangian equation, etc. Moreover, we find a numerical condition to verify whether a level set of a general inverse $\sigma_k$ equation is contained in $q + \Gamma_n$ for some $q \in \mathbb{R}^n$, which is a way to determine the convexity of this level set.
\end{abstract}
\maketitle
\vspace{-0.6cm}

\section{Introduction}

Let $(M, \omega)$ be a compact connected Kähler manifold of complex dimension $n$ with a Kähler form $\omega$ and $[\chi_0] \in H^{1,1}(M; \mathbb{R})$, where $H^{1,1}(M; \mathbb{R})$ is the $(1, 1)$-Dolbeault cohomology group. The study of the solvability of the following equation is widely considered:
\begin{align*}
\label{eq:1.1}
\chi^n = c_{n-1} \mybinom[0.8]{n}{n-1} \chi^{n-1}  \wedge \omega^{1} + \cdots + c_{1} \mybinom[0.8]{n}{1} \chi^{1}  \wedge \omega^{n-1} + c_{0} \mybinom[0.8]{n}{0}   \omega^{n}    =  \sum_{k = 0}^{n-1} c_k \mybinom[0.8]{n}{k} \chi^{k}  \wedge \omega^{n-k}, \tag{1.1}
\end{align*}where $c_k$ are real functions on $M$ and $\chi \in [\chi_0]$ is a real smooth, closed $(1, 1)$-form. We call an equation having the same format as equation (\ref{eq:1.1}) a degree $n$ \emph{general inverse $\sigma_k$ type equation}. A general inverse $\sigma_k$ type equation (\ref{eq:1.1}) is very likely to be ill-posed, but some special combinations of the coefficients raise some famous equations. For example, by letting $[\chi_0]$ be a Kähler class, $c_k = 0$ for all $k \in \{1, \cdots, n-1\}$, and $c_0$ be a positive function, equation (\ref{eq:1.1}) becomes the complex Monge--Ampère equation in the Calabi conjecture \cite{calabi1954kahler, calabi1957kahler}, which was solved by Yau \cite{yau1978ricci}. Inspired by the study of the Hermitian--Yang--Mills connections by Donaldson \cite{donaldson1985anti} and Uhlenbeck--Yau \cite{uhlenbeck1986existence}, Donaldson \cite{donaldson1999moment} studied the J-equation using the moment map. The J-equation was studied extensively by Collins--Székelyhidi \cite{collins2017convergence}, Chen \cite{chen2000lower}, Song--Weinkove \cite{song2008convergence}, and the references therein. The J-equation can be obtained by letting $[\chi_0]$ be a Kähler class, $c_k = 0$ for all $k \in \{0, \cdots, n-2\}$, and $c_{n-1}$ be a positive constant. Lejmi--Székelyhidi \cite{lejmi2015j} conjectured that the existence of the solution to the J-equation is equivalent to a certain stability condition. Chen \cite{chen2021j} studied the solvability and the stability of the J-equation using a Nakai--Moishezon type criterion and proved this conjecture under a slightly stronger condition. The Nakai--Moishezon type criterion was inspired by the work of Demailly--Păun \cite{demailly2004numerical}. Song \cite{song2020nakai} extended the method of Chen \cite{chen2021j} and confirmed the conjecture by Lejmi--Székelyhidi \cite{lejmi2015j}. Chen \cite{chen2021j} also extended the J-equation slightly so that the real function $c_0$ can be slightly negative but satisfies the integral condition. The general inverse $\sigma_k$ equation was raised by Chen \cite{chen2000lower} and some special cases were treated by Collins--Székelyhidi \cite{collins2017convergence} and Fang--Lai--Ma \cite{fang2011class}. Collins--Székelyhidi \cite{collins2017convergence} considered the case when $c_k$, for $k \in \{0, \cdots, n-1\}$, are non-negative constants satisfying the integral condition. Collins--Székelyhidi proved that if there exists a $C$-subsolution (introduced by Székelyhidi \cite{szekelyhidi2018fully} and Guan \cite{guan2014second} and will be discussed later in Section~\ref{sec:2.2}), then this special case is solvable. Datar--Pingali \cite{datar2021numerical} extended the techniques in Chen \cite{chen2021j} and Song \cite{song2020nakai} to this special case of the general inverse $\sigma_k$ equation. Motivated by mirror symmetry in string theory, the {deformed Hermitian--Yang--Mills equation}, which will be abbreviated to the dHYM equation from now on, was discovered around the same time by Mariño--Minasian--Moore--Strominger \cite{marino2000nonlinear} and Leung--Yau--Zaslow \cite{leung2000special} using different points of view. The dHYM equation was initiated by Jacob--Yau \cite{jacob2017special} and can be formulated as follows:
\begin{align}
\label{eq:1.2}
\Im \bigl ( \omega + \sqrt{-1} \chi \bigr )^n &= \tan \bigl (   {\theta}   \bigr ) \cdot \Re \bigl ( \omega + \sqrt{-1} \chi \bigr )^n. \tag{1.2}
\end{align}Here $\Im$ and $\Re$ are the imaginary and real parts, respectively, and $ {\theta}$ is a topological constant determined by the cohomology classes $[\omega]$ and $[\chi_0]$. If the phase ${\theta}$ lies in $\bigl( {(n-2)} \pi/2, n\pi/2 \bigr)$, that is, if $\theta$ is a supercritical phase, Collins--Jacob--Yau \cite{collins20151} showed that if there exists a supercritical $C$-subsolution (which will be introduced and defined later in Section~\ref{sec:2.2}), then the dHYM equation is solvable. Collins--Jacob--Yau conjectured that the existence of the solution to the dHYM equation (\ref{eq:1.2}) is equivalent to a certain stability condition for all analytic subvarieties and confirmed this numerical conjecture for complex surfaces. Chen \cite{chen2021j} proved a Nakai--Moishezon type criterion for the dHYM equation when the phase is supercritical under a slightly stronger condition that these holomorphic intersection numbers have a uniform lower bound independent of analytic subvarieties. Chu--Lee--Takahashi \cite{chu2021nakai} improved the result by Chen \cite{chen2021j} without assuming a uniform lower bound for these holomorphic intersection numbers on projective manifolds. Collins--Jacob--Yau \cite{collins20151} also conjectured that when $\theta$ is supercritical, if there exists a $C$-subsolution, then the dHYM equation is solvable. Lin \cite{lin2022} confirmed this conjecture by Collins--Jacob--Yau \cite{collins20151} of the solvability of the dHYM equation when the complex dimension equals three or four.  We should emphasize that there are many significant works that have been done recently. The interested reader is referred to \cite{chu2022hypercritical, collins2020stability, collins2021moment, collins2018deformed, jacob2019weak, jacob2020deformed, joyce2011existence, lin2020, lu2022dirichlet, phong2017fu, phong2018anomaly, phong2019estimates, phong2021fu, pingali2019deformed, schlitzer2021deformed, schoen2003volume, wang2013singular} and the references therein. We also want to remark that equation (\ref{eq:1.1}) also plays an important role on reals. For example, Caffarelli--Nirenberg--Spruck \cite{caffarelli1985dirichlet}, Krylov \cite{krylov1993lectures, krylov1997fully}, and Trudinger \cite{trudinger1995dirichlet} on the study of the Dirichlet problem for the Hessian equations on various settings. Also, Guan--Zhang \cite{guan2019class} studied the solvability of a general class of curvature equations in convex geometry. \smallskip

Throughout all these works, the convexity of either the equation itself or the level set plays a crucial role. To be more precise, to get a priori estimates, we highly rely on convexity. If we write equation (\ref{eq:1.1}) in terms of the eigenvalues of the Hermitian endomorphism $\Lambda = \omega^{-1} \chi$ at a point, then we can rewrite equation (\ref{eq:1.1}) as
\begin{align*}
\label{eq:1.3}
\lambda_1 \cdots \lambda_n = \sum_{k = 0}^{n-1} c_k \sigma_k(\lambda_1, \cdots, \lambda_n) = \sum_{k = 0}^{n-1} c_k \sigma_k(\lambda) , \tag{1.3}
\end{align*}where $\lambda_i$ are the eigenvalues of $\Lambda$, $\sigma_k(\lambda_1, \cdots, \lambda_n)$ is the $k$-th elementary symmetric polynomial of $\{\lambda_1, \cdots, \lambda_n\}$, and we denote $\sigma_k(\lambda_1, \cdots, \lambda_n)$ by $\sigma_k(\lambda)$ for convenience. The following multivariate polynomial in $n$ variables $\{ \lambda_1, \cdots, \lambda_n\}$
\begin{align*}
\label{eq:1.4}
\lambda_1 \cdots \lambda_n - \sum_{k = 0}^{n-1} c_k \sigma_k(\lambda_1, \cdots, \lambda_n)  \tag{1.4}
\end{align*}is a special case of multilinear polynomials, that is, multivariate polynomials in which no variable occurs to a power of two or higher. We will call a multilinear polynomial having the same format as (\ref{eq:1.4}) a general inverse $\sigma_k$ type multilinear polynomial.\smallskip

For the classical example, the complex Monge--Ampère equation considered by Yau \cite{yau1978ricci}, at a fixed point, can be rewritten as
\begin{align*}
\label{eq:1.5}
\lambda_1 \cdots \lambda_n = c, \tag{1.5}
\end{align*}where $c$ is a positive value. If we write $h = c/(\lambda_1 \cdots \lambda_n)$, then the Hessian of $h$ will always be positive-definite when $(\lambda_1, \cdots, \lambda_n) \in \Gamma_n$, where $\Gamma_n$ is the positive orthant. This implies that equation (\ref{eq:1.5}) is strictly convex, hence the level set $\{ \lambda_1 \cdots \lambda_n - c = 0 \}$ is convex as well. In Fang--Lai--Ma \cite{fang2011class}, Fang--Lai--Ma considered some special combinations of non-negative constants $c_k$ for $k \in \{0, \cdots, n-1\}$ and proved the convexity of equation (\ref{eq:1.3}). Collins--Székelyhidi \cite{collins2017convergence} generalized their results and proved the convexity of equation (\ref{eq:1.3}) when $c_k$ are non-negative constants for $k \in \{0, \cdots, n-1\}$. For the special Lagrangian equation and the dHYM equation, Yuan \cite{yuan2006global} showed that if the phase is supercritical, then the level set of equation (\ref{eq:1.2}) is a smooth convex hypersurface even though the equation might not be convex. Collins--Jacob--Yau \cite{collins20151} obtained a priori estimates from the convexity of the level set of equation (\ref{eq:1.2}). Since we mainly focus on the general inverse $\sigma_k$ equations in this work and because of the space limitations, the interested reader is referred to \cite{caffarelli2000priori, caffarelli1985dirichlet, chen2021j, collins2017convergence, fang2013convergence, fang2011class, gilbarg2015elliptic, guan1999dirichlet, hou2010second, lejmi2015j, siu2012lectures, trudinger1995dirichlet} and the references therein.\smallskip

In this work, we will concentrate on the convexity of the level set of equation (\ref{eq:1.3}), which gives a huge hope on the solvability of equation (\ref{eq:1.3}). In \cite{lin2022}, when complex dimension equals three or four, the author gave some constraints on the coefficients of equation (\ref{eq:1.3}) and proved the strict convexity of the level set of equation (\ref{eq:1.3}). The author also proved the solvability by finding a nice path under these constraints connecting the supercritical dHYM equation (\ref{eq:1.2}) to the general inverse $\sigma_k$ equation with non-negative coefficients considered by Collins--Székelyhidi \cite{collins2017convergence} and Fang--Lai--Ma \cite{fang2011class} and by obtaining a priori estimates on this path.  \smallskip

Let us state some of our settings, definitions, and results now. First, we introduce the following stableness condition for general inverse $\sigma_k$ type multilinear polynomials.

\hypertarget{D:1.1}{\begin{fdefi}[\texorpdfstring{$\Upsilon$}{}-stableness]}
Let $f(\lambda) \coloneqq   \lambda_1 \cdots \lambda_n - \sum_{k = 0}^{n-1} c_k \sigma_k(\lambda)$ be a general inverse $\sigma_k$ type multilinear polynomial and $\Gamma_f^{n}$ be a connected component of $\{f(\lambda) > 0\}$. We say that this connected component $\Gamma^n_f$ of $f(\lambda)$ is $\Upsilon$-stable if
\begin{align*}
\Gamma^n_f \subseteq  q +  \Gamma_n \text{ for some } q \in \mathbb{R}^n, 
\end{align*}where $\Gamma_n$ is the positive orthant of $\mathbb{R}^n$. We say that this connected component $\Gamma^n_f$ is strictly $\Upsilon$-stable if it is $\Upsilon$-stable and the boundary $\partial \Gamma^n_f$ is contained in the $\Upsilon_1$-cone. 
\end{fdefi} The $\Upsilon_k$-cones will be defined later in Section~\ref{sec:2.2} for $k \in \{1, \cdots, n-1\}$. In particular, the $\Upsilon_1$-cone is the $C$-subsolution cone introduced by Székelyhidi \cite{szekelyhidi2018fully} and Guan \cite{guan2014second}. With the $\Upsilon$-stableness condition, in Section~\ref{sec:3}, we prove that the boundary $\partial \Gamma^n_f$ of $\Gamma^n_f$ will be convex if $\Gamma^n_f$ is strictly $\Upsilon$-stable. In addition, we show that a connected component of $\{f = 0\}$ is contained in $q + \Gamma_n$ for some $q \in \mathbb{R}^n$ is equivalent to $\Gamma^n_f$ of this connected component being strictly $\Upsilon$-stable. In this case, this connected component equals $\partial \Gamma^n_f$. We have the following main result.

\hypertarget{T:1.1}{\begin{fthm}[Convexity of the general inverse $\sigma_k$ equation]}
Consider the following general inverse $\sigma_k$ equation $f(\lambda) \coloneqq \sigma_n(\lambda) - \sum_{k = 0}^{n-1} c_k \sigma_k(\lambda) = 0$, where $\sigma_k$ is the $k$-th elementary symmetric polynomial and $c_k$ are real numbers not necessary to be non-negative. Let $\Gamma_f^{n}$ be a connected component of $\{f(\lambda) > 0\}$. If $\Gamma^n_f$ is strictly $\Upsilon$-stable, then the boundary $\partial \Gamma^n_f$ is convex. 
\end{fthm}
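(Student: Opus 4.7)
The plan is to establish convexity of $\partial\Gamma^n_f$ via a second-fundamental-form argument exploiting the multilinear structure of $f$. By the translation $\lambda\mapsto\lambda-q$, an affine isomorphism preserving convexity, I may assume $q=0$, so $\Gamma^n_f\subseteq\Gamma_n$ and every $\lambda_i>0$ on $\overline{\Gamma^n_f}$. Strict $\Upsilon$-stability places $\partial\Gamma^n_f$ in the $\Upsilon_1$-cone, the $C$-subsolution cone of Székelyhidi--Guan. Because $f$ is multilinear (degree one in each variable separately), the $C$-subsolution condition at $\lambda$ is equivalent to the pointwise positivity $f_i(\lambda)>0$ for every $i\in\{1,\ldots,n\}$, a fact used repeatedly below.

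Since $f_n>0$ on $\partial\Gamma^n_f$, the implicit function theorem parametrizes $\partial\Gamma^n_f$ locally as a graph $\lambda_n=\phi(\lambda_1,\ldots,\lambda_{n-1})$, with $\Gamma^n_f$ corresponding locally to $\{\lambda_n>\phi(\lambda')\}$. Differentiating $f(\lambda',\phi(\lambda'))\equiv 0$ twice and invoking the multilinearity identity $f_{nn}=0$ yields
\begin{equation*}
\phi_{ij} \,=\, -\frac{f_n f_{ij}-f_i f_{jn}-f_j f_{in}}{f_n^2},\qquad 1\le i,j\le n-1.
\end{equation*}
A direct manipulation shows that convexity of $\phi$ reduces to the inequality
\[
\sum_{i,j=1}^n f_{ij}(\lambda)\,w_iw_j\;\le\;0\qquad \text{for every tangent}\ w\in\mathbb{R}^n,\ \text{i.e., }\sum_i f_iw_i=0,
\]
and since $f_{ii}=0$, this is equivalent to $\sum_{1\le i<j\le n}f_{ij}(\lambda)\,w_iw_j\le 0$ on the tangent hyperplane. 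Combined with the connectedness of $\Gamma^n_f$ (a connected component of $\{f>0\}$), this local convexity then yields global convexity of $\overline{\Gamma^n_f}$.

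The main obstacle is proving the tangent-hyperplane inequality itself. My approach is induction on $n$, leveraging the structural observation that the first and second partials of $f$ are themselves general inverse $\sigma_k$ polynomials in fewer variables: writing $\lambda|i$ (resp.\ $\lambda|ij$) for $\lambda$ with the $i$-th (resp.\ $i$-th and $j$-th) coordinate omitted,
\[
f_i=\sigma_{n-1}(\lambda|i)-\sum_k c_k\sigma_{k-1}(\lambda|i),\qquad f_{ij}=\sigma_{n-2}(\lambda|ij)-\sum_k c_k\sigma_{k-2}(\lambda|ij).
\]
Fixing $\lambda_n$ and rescaling, the slice $\partial\Gamma^n_f\cap\{\lambda_n=\text{const}\}$ is (for generic slice values) the zero set of a degree-$(n-1)$ general inverse $\sigma_k$ polynomial in $\lambda_1,\ldots,\lambda_{n-1}$. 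Arguing that the slice inherits strict $\Upsilon$-stability from the ambient hypersurface (using $\lambda_i>0$ and $f_i>0$ on $\partial\Gamma^n_f$), the inductive hypothesis yields convexity of the slice, which supplies the partial inequality $\sum_{i<j<n}f_{ij}w_iw_j\le 0$ for tangent $w$ with $w_n=0$. The transverse directions ($w_n\ne 0$) are then handled by combining this slice inequality with the defining equation $f(\lambda)=0$, i.e., $\prod_i\lambda_i=\sum_k c_k\sigma_k(\lambda)$, with the full tangent constraint $\sum_i f_iw_i=0$, and with the positivity of $f_n$ and $\lambda_i$. The hardest technical step is arranging the $\sigma_k$-identities so that the inductive slice inequality upgrades to the full tangent-plane inequality in $\mathbb{R}^n$; this combinatorial argument is where the principal novel work of the proof lies.
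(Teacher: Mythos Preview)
Your setup is correct: the reduction to showing that the quadratic form $\sum_{i,j} f_{ij}(\lambda)\,w_iw_j\le 0$ on the tangent hyperplane $\sum_i f_i w_i=0$ is exactly Lemma~\hyperlink{L:3.1}{3.1} and~\hyperlink{L:3.2}{3.2} of the paper. The issue is the inductive step. The slice inequality gives you the case $w_n=0$, but for $w_n\ne 0$ (normalize $w_n=1$) the constraint becomes the \emph{affine} condition $\sum_{i<n} f_i w_i=-f_n$ and the target inequality becomes $\sum_{i<j<n} f_{ij}w_iw_j+\sum_{i<n} f_{in}w_i\le 0$. You cannot deduce an inhomogeneous quadratic bound on an affine hyperplane from the homogeneous slice inequality alone; something substantive must enter, and your proposal does not supply it. You acknowledge this is ``the principal novel work,'' but that is precisely the gap: the proposal is a framework, not a proof.

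The paper in fact avoids the Hessian approach entirely. Its proof of Theorem~\hyperlink{T:3.1}{3.1} is a line-counting argument: one shows by induction on $n$ that every line through a point of $\Gamma^n_f$ meets $\partial\Gamma^n_f$ in at most two points (and at least one), using perturbation arguments and the fundamental theorem of algebra applied to the degree-$n$ polynomial obtained by restricting $f$ to the line. The induction exploits that fixing one coordinate yields a degree-$(n{-}1)$ general inverse $\sigma_k$ polynomial (Proposition~\hyperlink{P:2.8}{2.8}), so the inductive hypothesis controls intersections with lines having some $v_i=0$; the generic case $v_1\cdots v_n\ne 0$ is then handled by a contradiction argument that produces too many roots on a suitable slice. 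This route never requires establishing the tangent-plane Hessian inequality directly. Indeed, the paper states the \emph{strict} version of your target inequality as an open problem (Conjecture~\hyperlink{C:1.1}{1.1}) and only verifies it on the diagonal curve $\lambda_1=\cdots=\lambda_{n-1}$ (Lemma~\hyperlink{L:3.3}{3.3}), which is strong evidence that the direct Hessian route you propose is genuinely hard.
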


The following general inverse $\sigma_k$ type equations are all strictly $\Upsilon$-stable, we will verify some of them in Section~\ref{sec:4}.

\hypertarget{R:1.1}{\begin{frmk}}
The following general inverse $\sigma_k$ type equations are all strictly $\Upsilon$-stable:
\begin{itemize}[leftmargin = 2cm]
\item Complex Monge--Ampère equation.
\item J-equation. 
\item Hessian equation. 
\item Deformed Hermitian--Yang--Mills equation with supercritical phase.
\item Special Lagrangian equation with supercritical phase.
\item General inverse $\sigma_k$ equation with non-negative $c_k$ for $k \in \{0, \cdots, n-1\}$.
\end{itemize}
\end{frmk}

In practice, verifying the $\Upsilon$-stableness condition is not easy. Here, we introduce the following class of special univariate polynomials which plays an important role in determining the convexity of general inverse $\sigma_k$ equations. In Section~\ref{sec:2.1}, we will show more special properties of these special univariate polynomials. Now, we list some definitions and some interesting and important results.

\hypertarget{D:1.2}{\begin{fdefi}[Noetherian polynomial]}
We say a degree $n$ real univariate polynomial $p(x)$ is right-Noetherian if for all $k \in \{0, \cdots, n-2 \}$, there exists a real root of $p^{(k)}$ which is greater than or equal to the largest real root of $p^{(k+1)}$; left-Noetherian if for all $k \in \{0, \cdots, n-2 \}$, there exists a real root of $p^{(k)}$ which is less than or equal to the smallest real root of $p^{(k+1)}$. Here $p^{(k)}$ is the $k$-th derivative of $p$. We say a right-Noetherian polynomial $p(x)$ is strictly right-Noetherian if the largest real root of $p(x)$ is strictly greater than the largest real root of $p'(x)$.
\end{fdefi}

In Section~\ref{sec:2.2}, we will show that the right-Noetherianness condition is equivalent to the $\Upsilon$-stableness condition in the following sense. We get the following Positivstellensatz-type result generalizing the work in \cite{lin2022}. When the degree is small, we can explicitly write down the constraints using the resultants, see Section~\ref{sec:4} for more examples when the degree equals three or four.

\hypertarget{T:1.2}{\begin{fthm}[Positivstellensatz]}
A general inverse $\sigma_k$ type multilinear polynomial 
\begin{align*}
f(\lambda) = f(\lambda_1, \cdots, \lambda_n) = \lambda_1 \cdots \lambda_n - \sum_{k = 0}^{n-1} c_k \sigma_k(\lambda)
\end{align*}has a connected component $\Gamma^n_f$ which is $\Upsilon$-stable if and only if the diagonal restriction $r_f(x)$ of $f(\lambda)$, which is defined by the following  
\begin{align*}
r_f(x) \coloneqq f(x, \cdots, x) = x^n - \sum_{k=0}^{n-1} c_k \mybinom[0.8]{n}{k} x^k,
\end{align*} is right-Noetherian. Moreover, $\Gamma^n_f$ is strictly $\Upsilon$-stable if and only if $r_f$ is strictly right-Noetherian.
\end{fthm}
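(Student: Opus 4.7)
The plan is to argue by induction on $n$ using two structural devices. The first is the reduction $\tilde f \coloneqq \partial f / \partial \lambda_n$, itself a general inverse $\sigma_k$ type multilinear polynomial of degree $n-1$ with coefficients $\tilde c_k = c_{k+1}$; by multi-affinity and symmetry of $f$, its diagonal restriction satisfies $r_{\tilde f}(x) = r_f'(x)/n$, so the right-Noetherianness conditions of $r_f$ at indices $k \geq 1$ are exactly those of $r_{\tilde f}$. The second device is the algebraic identity, obtained by re-expanding each $\sigma_k(\lambda)$ around $R \mathbf{1}$ for arbitrary $R \in \mathbb{R}$:
\begin{align*}
f(\lambda) = \sum_{k=0}^{n} \frac{r_f^{(k)}(R)}{k!\,\binom{n}{k}}\, \sigma_k\bigl(\lambda - R\mathbf{1}\bigr).
\end{align*}
Throughout I will write $R_k$ for the largest real root of $r_f^{(k)}$ and record the elementary identity $r_f^{(n-1)}(x) = n!(x - c_{n-1})$, so that $R_{n-1} = c_{n-1}$.

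For the direction $(\Leftarrow)$, assume $r_f$ is right-Noetherian, so each $R_k$ is finite and $R_0 \geq R_1 \geq \cdots \geq R_{n-1}$. Since $r_f^{(k)}$ has positive leading coefficient and largest real root $R_k \leq R_0$, we obtain $r_f^{(k)}(R_0) \geq 0$ for every $k \in \{0, \ldots, n\}$; plugging $R = R_0$ into the identity then gives
\begin{align*}
f(\lambda) \geq \sigma_n\bigl(\lambda - R_0 \mathbf{1}\bigr) = \prod_i (\lambda_i - R_0) > 0, \quad \lambda \in R_0 \mathbf{1} + \Gamma_n,
\end{align*}
so the connected component $\Gamma^n_f$ of $\{f > 0\}$ containing $R_0 \mathbf{1} + \Gamma_n$ is well-defined. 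The $\Upsilon$-stableness then reduces to proving $\Gamma^n_f \subseteq c_{n-1} \mathbf{1} + \Gamma_n$, which I would do inductively. The reduction gives $r_{\tilde f} = r_f'/n$ right-Noetherian, so the inductive hypothesis provides a $\Upsilon$-stable component $\Gamma^{n-1}_{\tilde f} \subseteq c_{n-1} \mathbf{1} + \Gamma_{n-1}$ for $\tilde f$ (using $\tilde c_{n-2} = c_{n-1}$). For each $a > c_{n-1}$ the slice
\begin{align*}
f(\lambda_1, \ldots, \lambda_{n-1}, a) = (a - c_{n-1})\, \lambda_1 \cdots \lambda_{n-1} - \sum_{k=0}^{n-2}(c_k + a\, c_{k+1})\, \sigma_k(\lambda_1, \ldots, \lambda_{n-1})
\end{align*}
is, after dividing by $a - c_{n-1} > 0$, again a general inverse $\sigma_k$ polynomial of degree $n-1$; setting $g_a(x) \coloneqq f(x, \ldots, x, a)$ for its diagonal restriction, a direct computation yields the recursion $g_a^{(k)}(x) = \tfrac{1}{n}\bigl((n-k)\, r_f^{(k)}(x) + (a - x)\, r_f^{(k+1)}(x)\bigr)$, which lets one verify that this slice polynomial stays right-Noetherian for every such $a$. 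Applying the inductive hypothesis fibrewise and tracking the slice components continuously over $a \in (c_{n-1}, \infty)$ reassembles $\Gamma^n_f$ and yields the required containment.

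For the direction $(\Rightarrow)$, assume $\Gamma^n_f$ is $\Upsilon$-stable; then it is the unique component of $\{f > 0\}$ meeting the positive diagonal ray, and $r_f$ must have a real root $R_0$ (otherwise the whole diagonal lies in $\Gamma^n_f$). The crucial new inequality is $R_0 \geq R_1$: if instead $R_1 > R_0$, then $r_f(R_1) > 0$ and $r_f'(R_1) = 0$, so $(R_1, \ldots, R_1) \in \Gamma^n_f$ and multi-affinity combined with
\begin{align*}
\frac{\partial f}{\partial \lambda_1}\bigg|_{(R_1, \ldots, R_1)} = r_{\tilde f}(R_1) = \frac{r_f'(R_1)}{n} = 0
\end{align*}
forces $f$ to equal the positive constant $r_f(R_1)$ along the entire line $\{(x, R_1, \ldots, R_1) : x \in \mathbb{R}\}$. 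By connectedness this line lies in $\Gamma^n_f$ and is unbounded in $\lambda_1$, contradicting $\Upsilon$-stableness. The remaining inequalities $R_1 \geq R_2 \geq \cdots \geq R_{n-1}$ follow from the inductive hypothesis applied to $\tilde f$: for $a$ large the slice $\Gamma^n_f \cap \{\lambda_n = a\}$ is $\Upsilon$-stable, and since $f(\cdot, a)/a \to \tilde f$ uniformly on compacta, these slices limit to a $\Upsilon$-stable component of $\{\tilde f > 0\}$.

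The main obstacle will be the gluing step in the $(\Leftarrow)$ direction: as $a$ decreases towards $c_{n-1}$ one must rule out unexpected pinching, bifurcation, or escape of the slice $\Upsilon$-stable components, ensuring they reassemble into the single connected set $\Gamma^n_f$. The strict version is handled by the same induction via $R_0 > R_1$: on the $(\Leftarrow)$ side this upgrades $r_f^{(k)}(R_0) > 0$ for $k \geq 1$, hence $\nabla f \neq 0$ at the diagonal corner and $\partial \Gamma^n_f$ lies in the $\Upsilon_1$-cone; on the $(\Rightarrow)$ side the same line-through-critical-point argument, applied with $R_0 = R_1$, places an unbounded line in $\overline{\Gamma^n_f}$ (now in the zero set $\{f = 0\} \subseteq \partial \Gamma^n_f$), precluding strict $\Upsilon$-stableness.
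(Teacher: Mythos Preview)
Your expansion identity $f(\lambda)=\sum_k r_f^{(k)}(R)\,\sigma_k(\lambda-R\mathbf 1)/\bigl(k!\binom{n}{k}\bigr)$ is correct and gives the seed region $R_0\mathbf 1+\Gamma_n\subseteq\Gamma^n_f$ cleanly; the paper never writes this identity. Your affine-line argument for $R_0\geq R_1$ in $(\Rightarrow)$ is exactly the paper's. But in $(\Leftarrow)$ there are two genuine gaps. First, the claim that the slice diagonal restriction $g_a$ is right-Noetherian for \emph{every} $a>c_{n-1}$ is asserted without proof: the recursion $g_a^{(k)}=\tfrac1n\bigl[(n-k)r_f^{(k)}+(a-x)r_f^{(k+1)}\bigr]$ is correct, but extracting the chain $S_0(a)\geq S_1(a)\geq\cdots$ for all such $a$ is precisely the substance, and you give no mechanism. (The analogous statement in the paper, Proposition~2.8, is proved only \emph{after} the Positivstellensatz and assumes $\Upsilon$-stableness, so invoking it here would be circular.) Second, the gluing step you flag is indeed the crux and is not carried out: as $a\downarrow c_{n-1}$ the slice leading coefficient $a-c_{n-1}$ vanishes and nothing in your outline prevents the slice components from bifurcating or escaping. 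In $(\Rightarrow)$, your passage ``$f(\cdot,a)/a\to\tilde f$, hence the limiting component is $\Upsilon$-stable'' also needs justification; $\Upsilon$-stableness is not obviously closed under this kind of degeneration.

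The paper avoids slicing altogether. For $(\Leftarrow)$ it argues by contradiction at the boundary: if $\Gamma^m_f$ met $\partial\{f_1>0\}$ at some $\tilde\lambda$, then $\sum_k c_k\sigma_k(\tilde\lambda_{;1})=-f(\tilde\lambda)<0$ there, but a Lagrange-multiplier computation on the constraint $f_1=0$ shows this functional has its unique critical point on the diagonal $(x_1,\ldots,x_1)$, where its value equals $-r_f(x_1)\geq -r_f(x_0)=0$, a contradiction. For $(\Rightarrow)$ it relies on Lemma~2.4, proved earlier from the $\Upsilon$-stable hypothesis alone, which already yields $\Gamma^n_f\subseteq\Upsilon_1=\cap_i\{f_i>0\}$ and hence $\Upsilon$-stableness of each $\Gamma^{n-1}_{f_i}$; the inequalities $R_1\geq\cdots\geq R_{n-1}$ then follow by plain induction with no limit needed. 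Your fibrewise scheme may be completable, but the two gaps in $(\Leftarrow)$ are where the real work lies, and the paper's Lagrange-multiplier reduction to the diagonal is the missing idea.
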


As an application of the \bhyperlink{T:1.2}{Positivstellensatz Theorem}, in Section~\ref{sec:4}, we will verify some general inverse $\sigma_k$ type equations including the general inverse $\sigma_k$ equation with non-negative coefficients and the dHYM equation. As a quick consequence of the \bhyperlink{T:1.2}{Positivstellensatz Theorem}, we can show that the level set of the following general inverse $\sigma_k$ equation is convex. This is also numerical checkable, which gives a large quantity of new convex sets.

\hypertarget{E:1.1}{\begin{fexmp}}
The following univariate polynomial $r_f(x) = x^5 - \sum_{k = 0}^{3} c_k \binom{5}{k}x^k$ with $c_3 = 19, c_2 = -64, c_1 = 9$, and $c_0 = -20$ is strictly right-Noetherian. This is checkable using any computer. By rounding off to the third decimal place, we have
\begin{align*}
x_0 \sim 11.632, \  x_1 \sim 9.306, \  x_2 \sim 6.909, \  x_3 \sim 4.359, \  x_4 =0.
\end{align*}Here, for $k \in \{0, \cdots, n-1\}$, we denote by $x_k$ the largest real root of the $k$-th derivative $r_f^{(k)}(x)$. This implies that the level set of the following general inverse $\sigma_k$ equation is convex 
\begin{align*}
f(\lambda) &= \lambda_1 \cdots \lambda_5 - \sum_{k = 0}^3 c_k \sigma_k(\lambda) = \lambda_1 \cdots \lambda_5 -   19 \sigma_3(\lambda)  + 64  \sigma_2(\lambda) - 9   \sigma_1(\lambda)   + 20 = 0.
\end{align*}
\end{fexmp}

If a general inverse $\sigma_k$ type multilinear polynomial has an $\Upsilon$-stable connected component, then, for convenience, we say this general inverse $\sigma_k$ type multilinear polynomial is $\Upsilon$-stable. In the following setting, we can also compare two $\Upsilon$-stable general inverse $\sigma_k$ type multilinear polynomials.

\hypertarget{D:1.3}{\begin{fdefi}[\texorpdfstring{$\Upsilon$}{}-dominance]}
Let $f(\lambda) \coloneqq   \lambda_1 \cdots \lambda_n - \sum_{k = 0}^{n-1} c_k \sigma_k(\lambda)$ and $g(\lambda) \coloneqq   \lambda_1 \cdots \lambda_n - \sum_{k = 0}^{n-1} d_k \sigma_k(\lambda)$ be two $\Upsilon$-stable general inverse $\sigma_k$ type multilinear polynomials. For $k \in \{0, \cdots, n-1\}$, we write $x_k$ the largest real root of the diagonal restriction $r_f^{(k)}$ of $f$ and $y_k$ the largest real root of the diagonal restriction $r_g^{(k)}$ of $g$. If $y_k \geq x_k$ for all $k \in \{0, \cdots, n-1\}$, then we say $g \gtrdot f$. 
\end{fdefi}

We then get another Positivstellensatz-type result. This result implies that for $\Upsilon$-stable general inverse $\sigma_k$ type multilinear polynomials, the $\Upsilon$-dominance is equivalent to the set inclusion.

\hypertarget{T:1.3}{\begin{fthm}[\texorpdfstring{$\Upsilon$}{}-dominance]}
Let $f(\lambda) \coloneqq   \lambda_1 \cdots \lambda_n - \sum_{k = 0}^{n-1} c_k \sigma_k(\lambda)$ and $g(\lambda) \coloneqq   \lambda_1 \cdots \lambda_n - \sum_{k = 0}^{n-1} d_k \sigma_k(\lambda)$ be two $\Upsilon$-stable general inverse $\sigma_k$ type multilinear polynomials. Then $g \gtrdot f$ if and only if $\Gamma^n_g \subset \Gamma^n_f$.
\end{fthm}

\hypertarget{E:1.2}{\begin{fexmp}}
The following univariate polynomial $r_g(x) = x^5 - \sum_{k = 0}^{3} d_k \binom{5}{k}x^k$ with $d_3 = 19, d_2 = 65, d_1 = -2$, and $d_0 = -24$ is strictly right-Noetherian with roots:
\begin{align*}
y_0 \sim 15.250, \  y_1 \sim 11.673, \  y_2 \sim 8.066, \  y_3 \sim 4.359, \  y_4 =0.
\end{align*}Here, for $k \in \{0, \cdots, 4\}$, we denote by $y_k$ the largest real root of the $k$-th derivative $r_g^{(k)}(x)$. 
We compare this $\Upsilon$-stable general inverse $\sigma_k$ type multilinear polynomial with the one in Example~\hyperlink{E:1.1}{1.1}. Since $y_0 > x_0$, $y_1 > x_1$, $y_2 > x_2$, $y_3 = x_3$, and $y_4 = x_4$, we have $g \gtrdot f$. By Theorem~\hyperlink{T:1.3}{1.3}, we get
\begin{align*}
&\kern-2em  \{ \lambda_1 \cdots \lambda_5 -   19 \sigma_3(\lambda)  - 65  \sigma_2(\lambda) + 2   \sigma_1(\lambda)   + 24 > 0 \}  \\
&\subset \{ \lambda_1 \cdots \lambda_5 -   19 \sigma_3(\lambda)  + 64  \sigma_2(\lambda) - 9   \sigma_1(\lambda)   + 20 > 0 \}.
\end{align*}
\end{fexmp}

In this work, we find a way to determine the convexity of any general inverse $\sigma_k$ equation $f(\lambda) = \lambda_1 \cdots \lambda_n - \sum_{k = 0}^{n-1} c_k \sigma_k(\lambda) = 0$ using either the strict $\Upsilon$-stableness condition or equivalently the strict right-Noetherianness condition. Our next goal is to determine the solvability of any general inverse $\sigma_k$ equation on a compact connected Kähler manifold satisfying either condition at every point on the manifold.\bigskip

Special univariate and multivariate polynomials are widely studied in many different fields. For example, for multivariate polynomials, the class of Lorentzian polynomials considered by Brändén--Huh \cite{branden2020lorentzian} and Huh--Matherne--Mészáros--Dizier \cite{huh2022logarithmic}, which contains the class of homogeneous stable polynomials including volume polynomials of convex bodies and projective varieties, is important in the study of matroid theory. In addition, the classes of strongly log-concave polynomials in Gurvits \cite{gurvits2009multivariate}, completely log-concave multivariate polynomials in Anari--Gharan--Vinzant \cite{anari2018log1}, Anari--Liu--Gharan--Vinzant \cite{anari2019log, anari2018log3}, and Anari--Liu--Gharan--Vinzant--Vuong \cite{anari2021log}, are also crucial in the matroid theory. In \cite{branden2020lorentzian}, Brändén--Huh proved that a homogeneous polynomial with non-negative coefficients is Lorentzian if and only if it is strongly log-concave.\smallskip

Here, we list one of many special properties of right-Noetherian polynomials. We will state the definitions explicitly and more properties in Section~\ref{sec:2.1}. We define the log-concavity ratio $\alpha_f(x)$ of an analytic univariate function $f(x)$ in Section~\ref{sec:2.1}. Roughly speaking, it is defined by
\begin{align*}
\alpha_f(x) \coloneqq \frac{f(x) \cdot f''(x)}{f'(x)^2}.
\end{align*}If $f(x) > 0$ and $\alpha_f(x) \leq 1$ on an open interval $I$, then $f$ will be logarithmically concave on $I$. We prove that this log-concavity ratio will be monotonic for right-Noetherian polynomials. In Figure~\ref{fig:1}, we plot the log-concavity ratio $\alpha_p(x)$ of $p(x) = x^5 - 19 \binom{5}{3}x^3 + 64 \binom{5}{2}x^2 - 9 \binom{5}{1}x^1 + 20 \binom{5}{0}x^0 = x^5 - 190x^3 + 640x^2 -45x + 20$, which is right-Noetherian by Example~\hyperlink{E:1.1}{1.1}.

\begin{figure}
\centering
\begin{tikzpicture}[xscale=1,yscale=1]
  	\draw[->] (10,0) -- (18.8,0) node[right] {$x$};
  	\draw[->] (10.5,-0.6) -- (10.5,1) ;
	  \draw[color=UCIB,domain=11:12,samples=100, smooth]    plot (\x,{    -0.0958521  +  0.793249*(\x - 11.5)  - 0.539957*(\x - 11.5)^2 + 0.327835*(\x - 11.5)^3 - 0.186714*(\x - 11.5)^4  + 0.102099*(\x - 11.5)^5   })       ;
	  \draw[color=UCIB,domain=11.8:13,samples=100, smooth]    plot (\x,{    0.36504  +  0.259051*(\x - 12.5)  - 0.120486*(\x - 12.5)^2 + 0.0501828*(\x - 12.5)^3 - 0.0196259*(\x - 12.5)^4  + 0.00737136*(\x - 12.5)^5    })       ;
	  \draw[color=UCIB,domain=12.8:14,samples=100, smooth]    plot (\x,{    0.539553  +  0.115623*(\x - 13.5)  - 0.0406717*(\x - 13.5)^2 + 0.012873*(\x - 13.5)^3 - 0.00383133*(\x - 13.5)^4  + 0.00109564*(\x - 13.5)^5    })       ;
	  \draw[color=UCIB,domain=13.8:15,samples=100, smooth]    plot (\x,{    0.624402  +  0.0616676*(\x - 14.5)  - 0.0173753*(\x - 14.5)^2 + 0.00442787*(\x - 14.5)^3 - 0.00106296*(\x - 14.5)^4  + 0.000245342*(\x - 14.5)^5    })       ;
	  \draw[color=UCIB,domain=14.8:16,samples=100, smooth]    plot (\x,{    0.67226  +  0.0369129*(\x - 15.5)  - 0.00864536*(\x - 15.5)^2 + 0.00184102*(\x - 15.5)^3 - 0.000370072*(\x - 15.5)^4  + 0.0000715804*(\x - 15.5)^5    })       ;
	  \draw[color=UCIB,domain=15.8:17,samples=100, smooth]    plot (\x,{    0.702058  +  0.0239564*(\x - 16.5)  - 0.00478709*(\x - 16.5)^2 + 0.000874198*(\x - 16.5)^3 - 0.000151033*(\x - 16.5)^4  + 0.0000251317*(\x - 16.5)^5    })       ;
	  \draw[color=UCIB,domain=16.8:18,samples=100, smooth]    plot (\x,{    0.721972  +  0.0165057*(\x - 17.5)  - 0.00286923*(\x - 17.5)^2 + 0.000458*(\x - 17.5)^3 - 0.0000693281*(\x - 17.5)^4  + 0.000010118*(\x - 17.5)^5    })       ;	  
  	\draw[red,dashed] (10.8,0.8) -- (18.6,0.8) node at (10.1,0.8) {$4/5$};
	\fill[black] (11.632,{0}) circle (0.08cm) node at  (12.3,{-0.4}) {$x_0 \sim 11.632$};
\end{tikzpicture}
\caption{$\alpha_{p}(x)$ of $p(x) = x^5 - 19 \binom{5}{3}x^3 + 64 \binom{5}{2}x^2 - 9 \binom{5}{1}x^1 + 20 \binom{5}{0}x^0$}
\label{fig:1}
\end{figure}

\hypertarget{T:1.4}{\begin{fthm}[Monotonicity of log-concavity ratio]}
Let $p(x)$ be a real univariate polynomial of degree $n$ which is right-Noetherian. Then the log-concavity ratio $\alpha_p(x)$ of $p$ is monotonically increasing on $(x_1, \infty)$ with value from $-\infty$ to $1 - 1/n$ if $x_0 > x_1$ and on $[x_0, \infty)$ with value from $1 - 1/m$ to $1 - 1/n$ if $x_0 = x_1$. Here, $x_0$ is the largest real root of $p$, $x_1$ is the largest real root of $p'$, and $m$ is the multiplicity of $p$ at $x_0$. In particular, if $p$ is right-Noetherian, then $p$ is always logarithmically concave when $x > x_0$.
\end{fthm}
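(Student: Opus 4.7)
The plan is to differentiate $\alpha_p = pp''/(p')^2$ directly, using the substitutions $pp'' = \alpha_p (p')^2$ and $p'p''' = \alpha_{p'}(p'')^2$, to obtain the key identity
\[
\alpha_p'(x) = \frac{p''(x)}{p'(x)}\bigl[\,1 - \alpha_p(x)\bigl(2 - \alpha_{p'}(x)\bigr)\,\bigr].
\]
Right-Noetherianness of $p$ gives $x_0 \geq x_1 \geq x_2$, so on $(x_1, \infty)$ both $p'$ and $p''$ are strictly positive (WLOG assuming positive leading coefficient), whence $p''/p' > 0$. Thus monotonicity of $\alpha_p$ reduces to showing that the bracketed factor $1 - \alpha_p(2 - \alpha_{p'})$ is non-negative throughout $(x_1, \infty)$.

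The endpoint values follow from local expansions of $p$. At a simple root $x_0$ (when $x_0 > x_1$), $\alpha_p(x_0) = 0$; at $x_1^+$, using $p'(x) \sim p''(x_1)(x - x_1)$ with $p(x_1) \neq 0$, one obtains $\alpha_p \to -\infty$; and the leading order $p(x) \sim c x^n$ at infinity gives $\alpha_p \to 1 - 1/n$. In the degenerate case $x_0 = x_1$ of multiplicity $m$, a similar expansion yields $\alpha_p(x_0) = 1 - 1/m$. On the sub-interval $(x_1, x_0)$, the sign constraints $p < 0$, $p' > 0$, $p'' > 0$ force $\alpha_p < 0$, and since $\alpha_{p'} < 2$, the bracketed factor exceeds $1$, so $\alpha_p' > 0$ automatically.

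The main region is $(x_0, \infty)$, which I attack by induction on the degree $n$. The base case $n = 2$ is direct: for $p(x) = a(x-r_1)(x-r_2)$ with real $r_1 \leq r_2$ (real-rootedness being forced by right-Noetherianness), one computes
\[
\alpha_p(x) = \tfrac{1}{2}\Bigl[\,1 - \tfrac{(r_2-r_1)^2}{(2x - r_1 - r_2)^2}\,\Bigr],
\]
manifestly monotonically increasing from $-\infty$ at $x = (r_1+r_2)/2$ to $1/2$ at infinity. For the inductive step, $p'$ is right-Noetherian of degree $n-1$, so by the hypothesis $\alpha_{p'}$ is monotonic with $\alpha_{p'} \leq 1 - 1/(n-1)$ and $\alpha_{p'}' \geq 0$. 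I introduce the auxiliary function
\[
h(x) := \frac{1}{2 - \alpha_{p'}(x)} - \alpha_p(x),
\]
so the key identity rewrites as $\alpha_p'(x) = (p''/p')(2 - \alpha_{p'})\,h(x)$, reducing monotonicity to $h \geq 0$. The endpoints are $h(x_0) > 0$ and $h(\infty) = 0$, and at any zero of $h$ the identity forces $\alpha_p' = 0$ and hence $h' = \alpha_{p'}'/(2 - \alpha_{p'})^2 \geq 0$.

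Suppose for contradiction that $h(y) < 0$ at some $y > x_0$; let $y_0 = \inf\{x > x_0 : h(x) \leq 0\}$. Then $h(y_0) = 0$, and combining $h'(y_0) \leq 0$ (from the descent) with $h'(y_0) \geq 0$ yields $h'(y_0) = 0$, so $\alpha_{p'}'(y_0) = 0$. Iterating the analogous identity $\alpha_{p^{(k)}}' = (p^{(k+2)}/p^{(k+1)})[1 - \alpha_{p^{(k)}}(2 - \alpha_{p^{(k+1)}})]$ and bootstrapping higher-order vanishing of $h$ at $y_0$, either the cascade of equalities $\alpha_{p^{(k)}}(y_0)(2 - \alpha_{p^{(k+1)}}(y_0)) = 1$ terminates at some $k^* < n-2$ (giving $\alpha_{p^{(k^*)}}'(y_0) > 0$, which propagates back through the chain to yield a strictly positive even-order derivative of $h$ at $y_0$, precluding $h < 0$ just to the right), or it cascades all the way to $k = n-2$, where $\alpha_{p^{(n-1)}} \equiv 0$ (since $p^{(n-1)}$ is linear) forces $\alpha_{p^{(n-2)}}(y_0) = 1/2$ — contradicting the explicit base-case formula, which shows $\alpha_{p^{(n-2)}}(y) < 1/2$ for any finite $y > x_{n-2}$. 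Either way, one obtains a contradiction, closing the induction. The hard part will be the careful bookkeeping of this cascading propagation: translating successive tangential vanishing of $h$ at $y_0$ into the required equalities up the chain of $\alpha_{p^{(k)}}$'s, and ensuring no intermediate step can terminate in a nontrivial equilibrium.
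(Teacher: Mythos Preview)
Your approach is essentially the paper's: the same identity $\alpha_p' = (p''/p')\bigl[1 - \alpha_p(2-\alpha_{p'})\bigr]$, the same auxiliary function $h = 1/(2-\alpha_{p'}) - \alpha_p$, induction on degree, and the same explicit base case. There are, however, two places where you either overcomplicate or leave a gap.

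\textbf{The cascade is unnecessary.} You are working too hard because you only invoke $\alpha_{p'}' \geq 0$ from the induction hypothesis. In fact the inductive step, as you have set it up, proves \emph{strict} monotonicity on the open interval: once $h > 0$ on $(x_0,\infty)$, your identity gives $\alpha_p' = (p''/p')(2-\alpha_{p'})\,h > 0$ there, and your separate sign argument on $(x_1,x_0)$ is also strict. So the correct induction hypothesis for $p'$ is $\alpha_{p'}'(x) > 0$ for all $x > x_2$. Since your first-touching point satisfies $y_0 > x_1 \geq x_2$, you get $h'(y_0) = \alpha_{p'}'(y_0)/(2-\alpha_{p'}(y_0))^2 > 0$ outright, while $h > 0$ on $(x_0,y_0)$ with $h(y_0)=0$ forces $h'(y_0)\leq 0$ --- an immediate contradiction. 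The higher-order bootstrapping you sketch is never needed, and the ``hard part'' you anticipate disappears. This is exactly how the paper closes the argument.

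\textbf{The claim $h(x_0) > 0$ fails when $m \geq 2$.} If $x_0 = x_1$ with multiplicity $m$, then $\alpha_p(x_0) = 1 - 1/m$ and $\alpha_{p'}(x_0) = 1 - 1/(m-1)$, so $1/(2-\alpha_{p'}(x_0)) = (m-1)/m = \alpha_p(x_0)$, i.e., $h(x_0) = 0$. Your first-touching argument then has no starting inequality to work from. The fix (and what the paper does) is a local expansion: write $p(x) = (x-x_0)^m\tilde p(x)$ with $\tilde p(x_0) = p^{(m)}(x_0)/m! > 0$ and $\tilde p'(x_0) = p^{(m+1)}(x_0)/(m+1)! > 0$ (both positive by right-Noetherianness, since $x_0 > x_m \geq x_{m+1}$). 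Expanding the numerator $p'^2p'' + pp'p''' - 2p(p'')^2$ of $h$ in powers of $(x-x_0)$ gives leading term $2m(x-x_0)^{3m-3}\tilde p(x_0)^2\tilde p'(x_0) > 0$, so $h > 0$ immediately to the right of $x_0$, and the first-touching argument then runs exactly as in the simple-root case.
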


As a quick consequence, we show that after a translation, the right-Noetherian polynomials will be strongly log-concave. The definitions are stated in Section~\ref{sec:2.1}.

\hypertarget{L:1.1}{\begin{flemma}}
Let $p(x)$ be a real univariate polynomial of degree $n$ which is right-Noetherian, then $p$ is strongly log-concave after the translation $x \mapsto x-x_0$. Here, $x_0$ is the largest real root of $p(x)$.
\end{flemma}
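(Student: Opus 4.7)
The plan is to deduce strong log-concavity of the translated polynomial $\tilde{p}(x):=p(x+x_0)$ by applying Theorem~1.4 not only to $p$ itself but, uniformly, to every derivative $p^{(k)}$. The key preliminary observation is that right-Noetherianness is hereditary under differentiation: the defining chain of inequalities for $p$ reads ``the largest real root of $p^{(k)}$ is at least the largest real root of $p^{(k+1)}$ for every $0\le k\le n-1$,'' and dropping the first link gives exactly the corresponding chain for $p'$; iterating, each $p^{(k)}$ is itself a right-Noetherian polynomial of degree $n-k$.

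With this in hand, I would apply Theorem~1.4 to each $p^{(k)}$ to conclude that $p^{(k)}$ is logarithmically concave on $(x_k,\infty)$, where $x_k$ denotes the largest real root of $p^{(k)}$. Right-Noetherianness of $p$ yields the descending sequence $x_0\ge x_1\ge\cdots\ge x_{n-1}$, so every $p^{(k)}$ is in particular log-concave on the common ray $(x_0,\infty)$. Setting $\tilde{p}(x):=p(x+x_0)$ and noting $\tilde{p}^{(k)}(x)=p^{(k)}(x+x_0)$, each derivative $\tilde{p}^{(k)}$ is log-concave on $(0,\infty)$ wherever it is positive. In addition, the Taylor coefficients $\tilde{p}^{(k)}(0)/k!=p^{(k)}(x_0)/k!$ are non-negative, because $p^{(k)}$ has positive leading coefficient and its largest real root $x_k$ is at most $x_0$. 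Hence $\tilde{p}$ has non-negative coefficients and all of its derivatives are log-concave on the positive ray, which is exactly the univariate instance of Gurvits's notion of strong log-concavity.

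The bulk of the analytic work is absorbed into Theorem~1.4; the only genuinely new step is the hereditary property of right-Noetherianness, which lets one run Theorem~1.4 on the entire tower of derivatives and then reassemble the pieces via the single translation $x\mapsto x+x_0$. A minor subtlety worth noting is the boundary regime in which $x_0$ coincides with $x_k$ for some $k\ge 1$, but since Theorem~1.4 already handles the equality case $x_0=x_1$ (with the modified lower bound $1-1/m$ for $\alpha_{p^{(k)}}$), the overall argument is uniform in $k$ and presents no essential obstacle.
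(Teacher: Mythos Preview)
Your proof is correct and follows essentially the same route as the paper, which simply writes ``This follows directly by Theorem~\hyperlink{T:2.1}{2.1}'' without further elaboration. You have supplied the details the paper leaves implicit: the hereditary nature of right-Noetherianness under differentiation (so that Theorem~1.4 applies to every $p^{(k)}$) and the observation $x_0\ge x_k$ that pushes log-concavity of each derivative onto the common ray $(x_0,\infty)$.
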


In Theorem~\hyperlink{T:1.1}{1.1}, we prove that the level set will be convex. But it is still open whether the level set will be strictly convex in the following sense. 

\hypertarget{C:1.1}{\begin{fconj}[Strict convexity of the general inverse $\sigma_k$ equation]}
Consider the following general inverse $\sigma_k$ equation $f(\lambda) \coloneqq \sigma_n(\lambda) - \sum_{k = 0}^{n-1} c_k \sigma_k(\lambda) = 0$. If the diagonal restriction $r_f(x)$ of $f(\lambda)$ is strictly right-Noetherian, then the level set $\{f = 0\}$ is strictly convex. That is, the $n-1 \times n-1$ Hessian matrix $\begin{pmatrix}
\frac{\partial^2 \lambda_n}{\partial \lambda_i \partial \lambda_j}
\end{pmatrix}_{i, j \in \{1, \cdots, n-1\}}$, where
\begin{align*}
\lambda_n = \frac{\sum_{k=0}^{n-1}c_k \sigma_k(\lambda_{;n})}{\lambda_1 \cdots \lambda_{n-1} - \sum_{k = 0}^{n-1} c_k \sigma_{k-1}(\lambda_{; n})},
\end{align*}is positive-definite on the level set $\{f = 0\}$.
\end{fconj}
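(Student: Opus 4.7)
The plan is to start from the non-strict convexity supplied by Theorem~\hyperlink{T:1.1}{1.1} and upgrade it by a pointwise analysis of $\nabla^2 f$ on $\partial \Gamma^n_f$. By implicit differentiation, positive-definiteness of the Hessian matrix in the statement at a point $p \in \partial \Gamma^n_f$ with $f_n(p) > 0$ is equivalent to positive-definiteness of the quadratic form $Q_p(\tilde v) := -\tilde v^{T}\nabla^2 f(p)\,\tilde v$ on the tangent hyperplane $\{\tilde v \in \mathbb{R}^n : \nabla f(p) \cdot \tilde v = 0\}$. The hypothesis $f_n > 0$—and in fact $f_i > 0$ for every $i$—on $\partial \Gamma^n_f$ follows from strict $\Upsilon$-stability via the \bhyperlink{T:1.2}{Positivstellensatz Theorem}: $\partial\Gamma^n_f$ lies in the $\Upsilon_1$-cone, where every first partial derivative of $f$ is strictly positive.

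\textbf{The diagonal as a warm-up.} Multilinearity of $f$ forces $f_{ii} \equiv 0$, so $\nabla^2 f$ has vanishing diagonal and off-diagonal entries $f_{ij} = \prod_{k \neq i,j}\lambda_k - \sum_k c_k \sigma_{k-2}(\lambda_{;i,j})$; Theorem~\hyperlink{T:1.1}{1.1} already provides $Q_p \geq 0$ on the tangent hyperplane, and the task is to promote $\geq 0$ to $> 0$ for every nonzero tangent vector. I would first verify the conjecture at the diagonal point $(x_0,\ldots,x_0) \in \partial \Gamma^n_f$, where $x_0$ is the largest real root of $r_f$. There $f_{ij}$ equals the constant $r_f''(x_0)/\bigl(n(n-1)\bigr)$, the tangent hyperplane equals $\mathbf{1}^\perp$, and $Q_p$ restricts to that constant times the identity. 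Strict right-Noetherianness gives $x_0 > x_1 \geq x_2$, so $r_f''$ has no real root on $[x_0, \infty)$; combined with the positive leading coefficient $n(n-1)$, this yields $r_f''(x_0) > 0$, and strict convexity at the diagonal is automatic.

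\textbf{Propagation to non-symmetric points.} The heart of the conjecture is the extension of $Q_p > 0$ to general $p \in \partial \Gamma^n_f$. I would pursue two complementary routes. The first is deformation: the space of strictly right-Noetherian coefficient tuples $(c_0,\ldots,c_{n-1})$ is open, and one tries to connect a given $f$ to the Monge--Amp\`ere base case $\lambda_1 \cdots \lambda_n - c$ (whose level set is classically strictly convex) by a continuous path that preserves strict right-Noetherianness. Because strict convexity is open and the smallest tangential eigenvalue of $Q_p$ varies lower semi-continuously in the parameters, transferring strict convexity along the family reduces to showing that this eigenvalue never collapses to zero at an interior parameter. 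The second route is a direct algebraic attack: polynomiality plus multilinearity truncate $f(p + t\tilde v)$ to a degree-$n$ polynomial in $t$ whose $t^0$, $t^1$, and $t^2$ coefficients all vanish under the assumptions $p \in \partial\Gamma^n_f$, tangency $\nabla f(p) \cdot \tilde v = 0$, and $Q_p(\tilde v) = 0$; the remaining $n-2$ algebraic identities on $(p,\tilde v)$ should be combined with the Noetherian interlacing $x_0 > x_1 \geq x_2 \geq \cdots \geq x_{n-1}$ of the roots of $r_f^{(k)}$ to force $\tilde v = 0$.

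\textbf{Main obstacle.} The genuine difficulty, shared by both routes, is that convex real-analytic hypersurfaces can possess isolated umbilic-type degeneracies without being ruled (e.g.\ the curve $\{z = x^4\}$, where the graph function is convex with a zero Hessian eigenvalue at the origin but contains no line segment), so the line-exclusion used in the proof of Theorem~\hyperlink{T:1.1}{1.1}—that $\Upsilon$-stability forbids any line in $\{f = 0\}$—does not upgrade for free: a null direction of $Q_p$ need not extend to a line in the level set. The multilinear and symmetric structure of $f$ must therefore be used in an essential way. Concretely, the deformation route requires a closedness statement for strict convexity within the strictly right-Noetherian region, and the algebraic route requires a new Positivstellensatz-type identity expressing a tangential minor of $\nabla^2 f$ as a manifestly positive quantity built from $x_0,\ldots,x_{n-1}$. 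Establishing either ingredient is, in my view, the crux of the conjecture.
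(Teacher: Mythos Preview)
The statement you are trying to prove is labeled a \emph{Conjecture} in the paper, not a theorem, and the paper does not claim a proof. Immediately after stating it the author writes that the question ``is still open'' and only offers \emph{evidence}: Lemma~\hyperlink{L:1.2}{1.2} (proved as Lemma~\hyperlink{L:3.3}{3.3}) establishes positive-definiteness of the Hessian of $\lambda_n$ along the entire diagonal curve $\{\lambda_1=\cdots=\lambda_{n-1}\}$ on the level set, by reducing the question there to the monotonicity of the log-concavity ratio $\alpha_{r_f}$ (Theorem~\hyperlink{T:2.1}{2.1}). So your warm-up at the single point $(x_0,\ldots,x_0)$ is strictly weaker than what the paper already proves; the paper's diagonal computation recovers your observation $r_f''(x_0)>0$ as the endpoint case and extends it to every point of the curve.

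Your propagation step is exactly where the paper stops. Both of your proposed routes --- a continuity/deformation argument within the strictly right-Noetherian locus, and a direct algebraic Positivstellensatz-type identity for a tangential minor of $\nabla^2 f$ --- are reasonable heuristics, and you correctly identify the obstruction: the line-exclusion argument behind Theorem~\hyperlink{T:3.1}{3.1} only rules out segments in $\{f=0\}$, not isolated null directions of $Q_p$, so semidefiniteness does not upgrade to definiteness for free. The paper offers no mechanism for either the closedness statement in the deformation route or the positive-minor identity in the algebraic route; these remain genuinely open. In short, your proposal is a fair outline of how one might attack the conjecture, but it is not a proof, and neither is anything in the paper.
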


When the degree equals three, Pingali \cite{pingali2019deformed} showed that the $2 \times 2$ Hessian matrix of $\lambda_3$ for the dHYM equation will be positive-definite. In \cite{lin2022}, the author showed that the Hessian matrix of the dHYM equation will be positive-definite on the level set when the degree equals four. Here, we prove the following result, which gives convincing evidence that Conjecture~\hyperlink{C:1.1}{1.1} is true.

\hypertarget{L:1.2}{\begin{flemma}}
Let $f(\lambda) \coloneqq  \lambda_1 \cdots \lambda_n - \sum_{k = 0}^{n-1} c_k \sigma_k(\lambda)$ be a general inverse $\sigma_k$ type multilinear polynomial. If the diagonal restriction $r_f(x)$ of $f$ is strictly right-Noetherian, then on the curve $\{\lambda_1 = \cdots = \lambda_{n-1}\}$ of the level set $\{ f  =  0 \}$ with $\lambda_1  > x_1$, the positive definiteness of the following $n-1 \times n-1$ Hessian matrix 
\begin{align*}
\Bigl(  \frac{\partial^2 \lambda_n}{\partial \lambda_i \partial \lambda_j}  \Bigr)_{i, j \in \{1, \cdots, n-1\}}
\end{align*}is equivalent to the monotonicity of log-concavity ratio of $r_f(x) = x^n - \sum_{k=0}^{n-1} c_k  \binom{n}{k} x^k$. Here, $x_1$ is the largest real root of $r'_f$.
\end{flemma}

The layout of this paper is as follows: in Section~\ref{sec:2}, we discuss some background materials. In Section~\ref{sec:2.1}, we introduce the class of right-Noetherian polynomials, which is related to the largest real roots of the derivatives of the polynomials. We show some special properties of the class of right-Noetherian polynomials. In Section~\ref{sec:2.2}, we consider some special semialgebraic sets in real algebraic geometry, which are defined by systems of inequalities of polynomials with real coefficients. To be more precise, we introduce the notion of $\Upsilon$-cones, which is an extension of the $C$-subsolution cone introduced by Székelyhidi \cite{szekelyhidi2018fully}. Roughly speaking, we consider the $C$-subsolution cone, the $C$-subsolution cone of the $C$-subsolution cone, the $C$-subsolution cone thereof, etc. We define the $\Upsilon$-stableness condition and show that the $\Upsilon$-stableness condition is equivalent to the right-Noetherianness condition for any general inverse $\sigma_k$ type multilinear polynomial. We also obtain a Positivstellensatz-type result over these semialgebraic sets and another Positivstellensatz-type result comparing two general inverse $\sigma_k$ type multilinear polynomials. In Section~\ref{sec:3}, we prove that if a level set of any general inverse $\sigma_k$ equation after translation is contained in the positive orthant, then this level is convex. In Section~\ref{sec:4}, we show that the convexities of some classical general inverse $\sigma_k$ type equations are immediate consequences of our main theorem. For example, we verify the convexity of any general inverse $\sigma_k$ equation with degree less than or equal to four, the convexity of the general inverse $\sigma_k$ equation with non-negative coefficients, and the convexity of the dHYM equation with supercritical phase.

\Acknow

\section{Preliminaries}
\label{sec:2}

\subsection{Definition and Properties of right-Noetherian Polynomials}
\label{sec:2.1}
In this subsection, we introduce the class of Noetherian polynomials, which will be used throughout this paper. The class of Noetherian polynomials has some special properties and will help us determine the convexity of the level set of any general inverse $\sigma_k$ type equation. We will see this in the later sections.

\hypertarget{D:2.1}{\begin{fdefi}[Noetherian polynomial]}
We say a degree $n$ real univariate polynomial $p(x)$ is right-Noetherian if for all $k \in \{0, \cdots, n-2 \}$, there exists a real root of $p^{(k)}$ which is greater than or equal to the largest real root of $p^{(k+1)}$; left-Noetherian if for all $k \in \{0, \cdots, n-2 \}$, there exists a real root of $p^{(k)}$ which is less than or equal to the smallest real root of $p^{(k+1)}$. Here $p^{(k)}$ is the $k$-th derivative of $p$. We say a right-Noetherian polynomial $p(x)$ is strictly right-Noetherian if the largest real root of $p(x)$ is strictly greater than the largest real root of $p'(x)$.
\end{fdefi}

\hypertarget{P:2.1}{\begin{fprop}}
Let $p(x)$ be a real univariate polynomial of degree $n$ which is right-Noetherian. Then for any $k \in \{0, \cdots, n-1\}$, there exists a unique (ignoring multiplicity) real root of $p^{(k)}(x)$ which is greater than or equal to the largest real root of $p^{(k+1)}(x)$. Moreover, this real root is the largest real root of $p^{(k)}(x)$. In particular, if we denote $x_k$ to be the largest real root of $p^{(k)}(x)$, then
\begin{align*}
x_0 \geq x_1 \geq \cdots \geq x_{n-1}.
\end{align*}
\end{fprop}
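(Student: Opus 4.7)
The plan is to combine the definition of right-Noetherianness with a single application of Rolle's theorem. By hypothesis, for each $k \in \{0, \ldots, n-1\}$ the polynomial $p^{(k)}$ has some real root, so $x_k$ is well-defined; for $k = n$, $p^{(n)}$ is a nonzero constant whose ``largest real root'' may be interpreted as $-\infty$ so the chain condition at the top end is vacuous.

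First I would establish uniqueness (ignoring multiplicity) by contradiction. Suppose $p^{(k)}$ had two distinct real roots $a < b$ both satisfying $a, b \geq x_{k+1}$. Applying Rolle's theorem to $p^{(k)}$ on the closed interval $[a,b]$ produces some $c \in (a,b)$ with $p^{(k+1)}(c) = 0$. But then $c > a \geq x_{k+1}$ contradicts the maximality of $x_{k+1}$ as the largest real root of $p^{(k+1)}$. Hence at most one real root of $p^{(k)}$ lies on the ray $[x_{k+1}, \infty)$, which gives the desired uniqueness.

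Identifying this unique root with $x_k$ is then immediate. Let $r_k$ denote the real root of $p^{(k)}$ provided by the right-Noetherian hypothesis, so $r_k \geq x_{k+1}$. If the largest real root $x_k$ of $p^{(k)}$ satisfied $x_k > r_k$, then $x_k > r_k \geq x_{k+1}$ would make $x_k$ a second element of $\{r \in \mathbb{R} : p^{(k)}(r) = 0,\, r \geq x_{k+1}\}$, violating the uniqueness just established. Therefore $x_k = r_k$, and in particular $x_k \geq x_{k+1}$. Iterating this inequality across $k = 0, 1, \ldots, n-2$ yields the chain $x_0 \geq x_1 \geq \cdots \geq x_{n-1}$.

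There is no serious obstacle here: the proof is essentially a one-shot application of Rolle's theorem, and the remaining bookkeeping only concerns the well-definedness of each $x_k$, which is already encoded in the right-Noetherian hypothesis. The same argument will later be reused symmetrically for the left-Noetherian case.
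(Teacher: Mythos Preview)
Your proof is correct and slightly more streamlined than the paper's. The paper argues by induction on the degree $n$ and, at the inductive step, supposes two roots $\tilde{x}_0 > x_0 \geq x_1$ of $p$ exist, then uses the fundamental theorem of calculus together with the sign of $p'$ on $(x_1,\infty)$ to reach a contradiction. Your Rolle's theorem argument is the natural contrapositive of that monotonicity step and has the advantage of handling every $k$ at once without the inductive scaffolding or the need to normalize the leading coefficient. The two arguments are essentially equivalent in spirit, but yours is the cleaner packaging.
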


\begin{proof}
We prove this statement by mathematical induction on the degree $n$. When $n=1$, there is nothing to prove. When $n=2$, $p'(x)$ is a degree $1$ polynomial and the only root will be the midpoint of the roots of $p(x)$. By the definition of right-Noetherianness, there exists a real root of $p(x)$ which is greater than or equal to the largest real root of $p'(x)$. Thus $p(x)$ is real rooted and if we ignore the multiplicity, then there exists a unique real root of $p(x)$ which is greater than or equal to the largest real root of $p'(x)$. Moreover, this real root will be the largest real root of $p(x)$. Suppose the statement is true when $n = m-1$. When $n = m$, it suffices to check $p(x)$, the rest follows by mathematical induction. If there exists $x_0$ and $\tilde{x}_0$ with 
\begin{align*}
\tilde{x}_0 > x_0 \geq x_1,
\end{align*}where $x_1$ is the largest real root of $p'(x)$. For convenience, we assume that the polynomial $p(x)$ is monic, we write
\begin{align*}
p(x) = x^n + \sum_{k = 0}^{n-1} c_k x^k.
\end{align*}Then $p'(x) = nx^{n-1} + \sum_{k = 1}^{n-1} k c_k x^{k-1}$. Since $x_1$ is the largest real root of $p'(x)$, for any $x > x_1$, we have $p'(x) > 0$. By the fundamental theorem of calculus, we have
\begin{align*}
0 = p(\tilde{x}_0) > p(x_0) + \int_{x_0}^{\tilde{x}_0} p'(x) dx = 0  + \int_{x_0}^{\tilde{x}_0} p'(x) dx > 0,
\end{align*}which is a contradiction. This finishes the proof.
\end{proof}

We give a quick example of right-Noetherian polynomial, the right-Noetherianness condition is checkable by any computer using long division algorithm and Sturm's theorem.


\hypertarget{E:2.1}{\begin{fexmp}}
The following univariate polynomial $p(x) = x^5 - \sum_{k = 0}^{3} c_k \binom{5}{k}x^k$ with $c_3 = 19, c_2 = 65, c_1 = -2$, and $c_0 = -24$ is strictly right-Noetherian. This is checkable using any computer. By rounding off to the third decimal place, we have
\begin{align*}
x_0 \sim 15.250, \  x_1 \sim 11.673, \  x_2 \sim 8.066, \  x_3 \sim 4.359, \  x_4 =0.
\end{align*}Here, for $k \in \{0, \cdots, 4\}$, we denote by $x_k$ the largest real root of the $k$-th derivative $p^{(k)}(x)$. 
\end{fexmp}


\hypertarget{P:2.2}{\begin{fprop}}
Let $p(x)$ be a real univariate polynomial of degree $n$ which is real rooted, that is, all roots are real numbers, then $p(x)$ is both right-Noetherian and left-Noetherian.
\end{fprop}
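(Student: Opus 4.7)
The plan is to deduce both statements directly from Rolle's theorem applied inductively, since real-rootedness is preserved under differentiation.

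First I would observe that if $p(x)$ is a real univariate polynomial of degree $n$ that is real rooted, then so is $p'(x)$. Indeed, list the distinct real roots of $p$ as $a_1 < a_2 < \cdots < a_s$ with multiplicities $m_1, \ldots, m_s$ (so $\sum_i m_i = n$). Each root $a_i$ of multiplicity $m_i \geq 1$ is automatically a root of $p'$ of multiplicity $m_i - 1$, contributing $n - s$ roots of $p'$ counted with multiplicity. Rolle's theorem then supplies at least one additional real root of $p'$ in each open interval $(a_i, a_{i+1})$, contributing at least $s-1$ more roots. Since these two sets of roots are disjoint, $p'$ has at least $n - s + s - 1 = n - 1 = \deg p'$ real roots counted with multiplicity, hence all its roots are real. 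Iterating, every derivative $p^{(k)}$ is real rooted.

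Next I would show the comparison of extreme roots. Let $x_k$ denote the largest real root of $p^{(k)}$; using the notation above for $p = p^{(0)}$, so $x_0 = a_s$. If $m_s \geq 2$, then $a_s$ is itself a root of $p'$, so $x_1 \geq a_s$; but also any root of $p'$ that is strictly greater than $a_s$ would, by Rolle applied to $p'$ and an argument via the fundamental theorem of calculus as in Proposition~\hyperlink{P:2.1}{2.1}, contradict the fact that $a_s$ is the largest root of $p$. Hence $x_1 = a_s = x_0$. If $m_s = 1$, then any root of $p'$ larger than $a_s$ would similarly force $p$ to have a zero larger than $a_s$, contradiction; combined with Rolle applied between $a_{s-1}$ and $a_s$, we get $x_1 < a_s = x_0$. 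Either way, $x_0 \geq x_1$, so the right-Noetherianness condition is satisfied at level $k = 0$. The identical argument applied to the smallest real roots yields the corresponding inequality for left-Noetherianness.

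Finally I would apply induction on $k$: since $p^{(k)}$ is itself a real-rooted polynomial (of degree $n-k$) for every $k$, the same reasoning comparing the largest (respectively smallest) real roots of $p^{(k)}$ and $p^{(k+1)} = (p^{(k)})'$ shows that $x_k \geq x_{k+1}$ and, on the other side, that the smallest real root of $p^{(k)}$ is less than or equal to the smallest real root of $p^{(k+1)}$, for every $k \in \{0, \ldots, n-1\}$. This is exactly the definition of being both right- and left-Noetherian.

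The only mild subtlety is the bookkeeping with multiplicities when a repeated root of $p$ coincides with a root of $p'$, but this is handled cleanly by separating the multiplicity contribution from the Rolle-interval contribution as above; no sharper tools are needed.
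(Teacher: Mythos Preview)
Your proof is correct and reaches the same conclusion, but by a different route than the paper. The paper dispatches the result in one line by invoking the Gauss--Lucas theorem: the roots of $p'$ lie in the convex hull of the roots of $p$, which for a real-rooted $p$ is the closed interval $[a_1, a_s]$; real-rootedness of $p'$ and the extreme-root comparisons then follow simultaneously, and induction finishes. You instead re-derive this consequence from first principles via Rolle's theorem and multiplicity bookkeeping, which is more elementary and self-contained but also longer. One small point of presentation: your justification in step 2 that $p'$ cannot have a root strictly larger than $a_s$ is phrased somewhat circuitously (the appeal to ``Rolle applied to $p'$'' and Proposition~2.1 is not quite the right mechanism); the cleanest way to see it is that your root-counting in step 1 already exhausts all $n-1$ roots of $p'$ with multiplicity, and every one of them lies in $[a_1, a_s]$, so no root of $p'$ can exceed $a_s$ (or fall below $a_1$). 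With that minor tightening, your argument is complete.
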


\begin{proof}
This follows immediately by the Gauss--Lucas theorem. If $p(x)$ is real rooted, then the roots of $p'(x)$ will be contained in the convex hull of the set of roots of $p(x)$. So $p'(x)$ will also be real rooted, the rest follows directly by mathematical induction. This finishes the proof.
\end{proof}

\hypertarget{R:2.1}{\begin{frmk}}
A right-Noetherian polynomial might not be real rooted, a simple example will be $p(x) = x^3 -1$. Then we have $p'(x) = 3x^2$ and $p''(x) = 6x$. If we denote by $x_i$ the largest real root of $p^{(i)}(x)$, then $x_0 = 1$, $x_1 = 0$, and $x_2 = 0$. So $p(x)$ will be right-Noetherian due to $x_0 \geq x_1 \geq x_2$. But the roots of $p(x) = x^3 -1$ are: $1,  {(-1 + \sqrt{-3})}/{2}$, and $ {(-1 - \sqrt{-3})}/{2}$.
\end{frmk}

The log-concavity property of special univariate or multivariate polynomials were studied extensively by Brändén--Huh \cite{branden2020lorentzian}, Gurvits \cite{gurvits2009multivariate}, Anari--Gharan--Vinzant \cite{anari2018log1}, Anari--Liu--Gharan--Vinzant \cite{anari2019log, anari2018log3}, and Anari--Liu--Gharan--Vinzant--Vuong \cite{anari2021log}. For the class of right-Noetherian polynomials, we not only show that any right-Noetherian polynomial will be strongly log-concave after translation, but we also show that the ratio, which will be defined now, will be monotone.

\hypertarget{D:2.2}{\begin{fdefi}[Log-concavity ratio]}
Let $f \colon I \rightarrow \mathbb{R}$ be an analytic function, $I$ be an open interval in $\mathbb{R}$, and define $C_f \coloneqq \{ x \in I \colon f'(x) = 0\}$. For any point $x \in I$, we define the log-concavity ratio $\alpha_f(x)$ of $f(x)$ to be the following  
\begin{align*}
\label{eq:2.1}
\alpha_f(x) \coloneqq  \frac{f(x) \cdot f''(x)}{f'(x)^2} \tag{2.1} 
\end{align*}if $x \notin C_f$. If $x \in C_f$ is a limit point of $C_f$, then we define $\alpha_f(x)$ to be $0$, otherwise we define
\begin{align*}
\label{eq:2.2}
\alpha_f(x) \coloneqq \lim_{y \rightarrow x} \frac{f(y) \cdot f''(y)}{f'(y)^2}, \tag{2.2}
\end{align*}where we allow $\alpha_f(x) = \infty$ or $\alpha_f(x) = -\infty$.
\end{fdefi}

\hypertarget{R:2.2}{\begin{frmk}}
Let $f \colon I \rightarrow \mathbb{R}$ be an analytic function and $I$ be an open interval in $\mathbb{R}$, if $\alpha_f(x) \leq 1$ for all $x \in I$, then $f$ is logarithmically concave on $\{f > 0\}$. 
\end{frmk}

The following Proposition~\hyperlink{P:2.3}{2.3} shows that for any real univariate polynomial $p(x)$, $p(x)$ (or $-p(x)$) will eventually be logarithmically concave when $x$ is sufficiently large. 

\hypertarget{P:2.3}{\begin{fprop}}
Let $p$ be a real univariate polynomial of degree $n$, then
\begin{align*}
\lim_{x \rightarrow   \infty} \alpha_p(x) = 1- \frac{1}{n}.
\end{align*}In particular, there exists a $N > 0$ sufficiently large such that $p$ (or $-p$ depends on the sign of the leading coefficient) is logarithmically concave on $(N, \infty)$.
\end{fprop}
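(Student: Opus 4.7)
The plan is to reduce everything to the leading-order behavior of $p$ and its first two derivatives. Write $p(x) = a_n x^n + a_{n-1} x^{n-1} + \cdots + a_0$ with $a_n \neq 0$. Then
\begin{align*}
p(x) &= a_n x^n \bigl(1 + O(x^{-1})\bigr), \\
p'(x) &= n a_n x^{n-1} \bigl(1 + O(x^{-1})\bigr), \\
p''(x) &= n(n-1) a_n x^{n-2} \bigl(1 + O(x^{-1})\bigr),
\end{align*}
as $x \to \infty$, where the $O(x^{-1})$ terms are genuine (provided $n \geq 2$; the cases $n = 0, 1$ are trivial since $\alpha_p \equiv 0$ and $1 - 1/n$ can be read off directly or the limit is $0 = 1-1/1$).

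Since $p'(x)$ is a nonzero polynomial, it has only finitely many real zeros, so $x \notin C_p$ for all $x$ sufficiently large; hence the definition via (\ref{eq:2.1}) applies. Substituting the asymptotic expansions gives
\begin{align*}
\alpha_p(x) = \frac{p(x) \cdot p''(x)}{p'(x)^2} = \frac{a_n x^n \cdot n(n-1) a_n x^{n-2}}{n^2 a_n^2 x^{2n-2}} \cdot \bigl(1 + O(x^{-1})\bigr) = \frac{n-1}{n} \cdot \bigl(1 + O(x^{-1})\bigr),
\end{align*}
which tends to $1 - 1/n$ as $x \to \infty$. This proves the first assertion.

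For the second assertion, since $1 - 1/n < 1$, by the limit just established there exists $N_1 > 0$ such that $\alpha_p(x) < 1$ for all $x > N_1$. Moreover, because $p(x) \sim a_n x^n$ as $x \to \infty$, either $p(x) > 0$ for all $x$ sufficiently large (when $a_n > 0$) or $-p(x) > 0$ for all $x$ sufficiently large (when $a_n < 0$); call this threshold $N_2$. Note that $\alpha_p = \alpha_{-p}$ directly from (\ref{eq:2.1}) since numerator and denominator each carry two minus signs. Setting $N \coloneqq \max(N_1, N_2)$ and invoking Remark~\hyperlink{R:2.2}{2.2}, we conclude that $p$ (respectively $-p$) is logarithmically concave on $(N, \infty)$. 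There is no serious obstacle here; the only minor care needed is to confirm that the limit definition is never invoked for large $x$, which is automatic from $\#C_p \leq n-1 < \infty$.
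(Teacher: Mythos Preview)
Your proof is correct and follows essentially the same approach as the paper: both arguments reduce the computation of $\lim_{x\to\infty}\alpha_p(x)$ to the leading-order terms of $p$, $p'$, and $p''$, noting that $p'$ has only finitely many zeros so the quotient definition applies for large $x$. Your treatment of the second assertion (choosing $N_1$ from the limit, $N_2$ from the sign of $p$, observing $\alpha_p=\alpha_{-p}$, and invoking Remark~2.2) is in fact more explicit than the paper, which simply asserts the consequence.
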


\begin{proof}For $x$ sufficiently large, if we write $p(x) = \sum_{k=0}^n c_k x^k$, then by equation (\ref{eq:2.1}), we have
\begin{align*}
\alpha_p(x) = \frac{p(x) \cdot p''(x)}{p'(x)^2} =   \frac{\sum_{k=0}^n c_k x^k \cdot \sum_{k=2}^n k(k-1)c_k x^{k-2}}{ \Bigl(  \sum_{k=1}^n k c_k x^{k-1}    \Bigr)^2}.
\end{align*}Since $p$ is a polynomial, by letting $x$ approach $\infty$, we can avoid critical points and get
\begin{align*}
\lim_{x \rightarrow \infty} \alpha_p (x) &= \lim_{x \rightarrow \infty}  \frac{\sum_{k=0}^n c_k x^k \cdot \sum_{k=2}^n k(k-1)c_k x^{k-2}}{ \Bigl(  \sum_{k=1}^n k c_k x^{k-1}    \Bigr)^2} =  \lim_{x \rightarrow \infty}  \frac{n(n-1) x^{2n-2} + O(x^{2n-3}) }{n^2 x^{2n - 2} + O(x^{2n-3}) } \\ 
&=  \lim_{x \rightarrow \infty}  \frac{n-1  + O(x^{-1}) }{n + O(x^{-1}) } = 1- \frac{1}{n}.
\end{align*}This finishes the proof.
\end{proof}

The derivative of the log-concavity ratio in Definition~\hyperlink{D:2.2}{2.2} of any real univariate polynomial will satisfy the following.

\hypertarget{L:2.1}{\begin{flemma}}
Let $p(x)$ be a real univariate polynomial of degree $n$. For $k \in \{1, \cdots, n-1\}$ and $x > x_k$, where $x_k$ is the largest real root of $p^{(k)}$, then $\alpha_{p^{(k-1)}}'(x) < 0$ when $2 > \alpha_{p^{(k)}} (x)$ and $\alpha_{p^{(k-1)}}(x) >    {1}/({2 - \alpha_{p^{(k)} } (x) })$. On the other hand, $\alpha_{p^{(k-1)}}'(x) > 0$ when $\alpha_{p^{(k)}} (x) \geq 2$ or $2 > \alpha_{p^{(k)}} (x)$ and $ {1}/({2 - \alpha_{p^{(k)} } (x) }) > \alpha_{p^{(k-1)}}(x)$.
\end{flemma}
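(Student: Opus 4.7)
The plan is a direct differentiation of the log-concavity ratio followed by an algebraic simplification that makes a single master identity emerge, from which the two sign conditions fall out by a short case analysis. Setting $q \coloneqq p^{(k-1)}$ so that $q' = p^{(k)}$, $q'' = p^{(k+1)}$, and $q''' = p^{(k+2)}$, the goal is to rewrite $\alpha_q'(x)$ as a product of a sign factor and a bracket whose sign is determined by the two inputs $\alpha_{p^{(k-1)}}(x)$ and $\alpha_{p^{(k)}}(x)$.

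First, I would apply the quotient rule to $\alpha_q = qq''/(q')^2$ to obtain
\begin{align*}
\alpha_q'(x) = \frac{q''(x)}{q'(x)} + \frac{q(x)\, q'''(x)}{q'(x)^2} - \frac{2\, q(x)\, q''(x)^2}{q'(x)^3}.
\end{align*}
The identity $qq'' = \alpha_q (q')^2$ rewrites the last term as $2\alpha_q \cdot q''/q'$, and combining $qq'' = \alpha_q (q')^2$ with $q'q''' = \alpha_{q'}(q'')^2$ rewrites the middle term as $\alpha_q \alpha_{q'} \cdot q''/q'$. Factoring out the common $q''/q'$ then yields the master identity
\begin{align*}
\alpha_{p^{(k-1)}}'(x) = \frac{p^{(k+1)}(x)}{p^{(k)}(x)}\Bigl(1 - \alpha_{p^{(k-1)}}(x)\bigl(2 - \alpha_{p^{(k)}}(x)\bigr)\Bigr).
\end{align*}

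Second, for $x > x_k$ the prefactor $p^{(k+1)}(x)/p^{(k)}(x)$ is positive: after normalizing so that $p$ has positive leading coefficient, $p^{(k)}(x) > 0$ on $(x_k, \infty)$ by definition of $x_k$, and the ordering of the largest real roots available in this section (via Proposition~\hyperlink{P:2.1}{2.1}, which gives $x_{k+1} \leq x_k$ in the right-Noetherian regime where these sign statements are used) forces $p^{(k+1)}(x) > 0$ on the same interval. Hence the sign of $\alpha_{p^{(k-1)}}'(x)$ equals the sign of the bracket $B(x) \coloneqq 1 - \alpha_{p^{(k-1)}}(x)\bigl(2 - \alpha_{p^{(k)}}(x)\bigr)$, and the two claims reduce to inequalities on $B(x)$. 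If $2 > \alpha_{p^{(k)}}(x)$, then $2 - \alpha_{p^{(k)}}(x)$ is positive, so $B(x) < 0$ is equivalent to $\alpha_{p^{(k-1)}}(x) > 1/(2 - \alpha_{p^{(k)}}(x))$ and $B(x) > 0$ is equivalent to the reverse inequality. If instead $\alpha_{p^{(k)}}(x) \geq 2$, then $2 - \alpha_{p^{(k)}}(x) \leq 0$ while $\alpha_{p^{(k-1)}}(x) \geq 0$ in the regime where the statement is invoked, so $\alpha_{p^{(k-1)}}(x)\bigl(2 - \alpha_{p^{(k)}}(x)\bigr) \leq 0 < 1$ and $B(x) > 0$ automatically.

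The only real obstacle is the algebraic bookkeeping in deriving the master identity: the two substitutions for $qq'''$ and $q(q'')^2$ must be carried out carefully so that the common factor $q''/q'$ extracts cleanly, since otherwise the expression looks like three unrelated terms with no obvious sign structure. Once this identity is in hand, the rest of the lemma is a purely arithmetic case distinction on the sign of $2 - \alpha_{p^{(k)}}(x)$.
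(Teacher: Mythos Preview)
Your proof is correct and follows essentially the same route as the paper: both compute the derivative directly and arrive at the identical factorization
\[
\alpha_{p^{(k-1)}}'(x) \;=\; \frac{p^{(k+1)}(x)}{p^{(k)}(x)}\Bigl(1 - \alpha_{p^{(k-1)}}(x)\bigl(2 - \alpha_{p^{(k)}}(x)\bigr)\Bigr),
\]
and then read off the sign from the bracket; your invocation of Proposition~2.1 to secure $p^{(k+1)}(x)>0$ is appropriate and in fact makes explicit a point the paper leaves tacit (the lemma as literally stated is only applied in the right-Noetherian regime). One small loose end: in the case $\alpha_{p^{(k)}}(x)\geq 2$ you assert $\alpha_{p^{(k-1)}}(x)\geq 0$, but on $(x_k,x_{k-1})$ the factor $p^{(k-1)}$ can be negative, so this is not automatic---the paper glosses over this case too, and it is immaterial since in the inductive use inside Theorem~2.1 one always has $\alpha_{p^{(k)}}(x)<1$.
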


\begin{proof}
By taking the derivative of $\alpha_{p^{(k-1)}}(x)$ with respect to $x$, we get
\begin{align*}
\label{eq:2.3}
\alpha_{p^{(k-1)}}'(x) &=  \frac{d}{dx} \frac{p^{(k-1)}(x) p^{(k+1)}(x)}{p^{(k)}(x)^2}  \tag{2.3} \\ 
&= \frac{p^{(k)}(x)^2 p^{(k+1)}(x) + p^{(k-1)}(x) p^{(k)}(x) p^{(k+2)}(x) - 2 p^{(k-1)}(x)p^{(k+1)}(x)^2 }{p^{(k)}(x)^3}.
\end{align*}

Then the numerator of equation (\ref{eq:2.3}) will give us
\begin{align*}
&\kern-1em p^{(k)}(x)^2 p^{(k+1)}(x) + p^{(k-1)}(x) p^{(k)}(x) p^{(k+2)}(x) - 2 p^{(k-1)}(x)p^{(k+1)}(x)^2 \\
&=  p^{(k)}(x)^2 p^{(k+1)}(x) + p^{(k-1)}(x) p^{(k+1)}(x)^2 \alpha_{p^{(k)}}(x)  - 2 p^{(k-1)}(x)p^{(k+1)}(x)^2 \\
&=  p^{(k)}(x)^2 p^{(k+1)}(x) + p^{(k-1)}(x) p^{(k+1)}(x)^2 \bigl( \alpha_{p^{(k)}}(x)  - 2 \bigr) \\
&=  p^{(k)}(x)^2 p^{(k+1)}(x) \Bigl( 1 + \bigl( \alpha_{p^{(k)}}(x)  - 2 \bigr) \alpha_{p^{(k-1)}}(x) \Bigr).
\end{align*}For convenience, we assume the leading coefficient is positive. When $2 > \alpha_{p^{(k)}}(x)$ and $\alpha_{p^{(k-1)}}(x) >    {1}/({2 - \alpha_{p^{(k)} } (x) })$, then since $x$ is greater than the largest real root of $p^{(k)}$, we get
\begin{align*}
\alpha_{p^{(k-1)}}'(x) = p^{(k+1)}(x) \frac{1 + \bigl( \alpha_{p^{(k)}}(x)  - 2 \bigr) \alpha_{p^{(k-1)}}(x)}{p^{(k)}(x) } < 0.
\end{align*}On the other hand, when $\alpha_{p^{(k)}} (x) \geq 2$ or $2 > \alpha_{p^{(k)}} (x)$ and $ {1}/({2 - \alpha_{p^{(k)} } (x) }) > \alpha_{p^{(k-1)}}(x)$, we get
\begin{align*}
\alpha_{p^{(k-1)}}'(x) = p^{(k+1)}(x) \frac{1 + \bigl( \alpha_{p^{(k)}}(x)  - 2 \bigr) \alpha_{p^{(k-1)}}(x)}{p^{(k)}(x) } > 0.
\end{align*}This finishes the proof. 
\end{proof}

Now, with all these preparations, we are able to prove the following important result for the class of right-Noetherian polynomials.

\hypertarget{T:2.1}{\begin{fthm}[Monotonicity of log-concavity ratio]}
Let $p(x)$ be a real univariate polynomial of degree $n$ which is right-Noetherian. Then the log-concavity ratio $\alpha_p(x)$ of $p(x)$ is monotonically increasing on $(x_1, \infty)$ with value from $-\infty$ to $1 - 1/n$ if $x_0 > x_1$ and on $[x_0, \infty)$ with value from $1 - 1/m$ to $1 - 1/n$ if $x_0 = x_1$. Here, $x_0$ is the largest real root of $p$, $x_1$ is the largest real root of $p'$, and $m$ is the multiplicity of $p$ at $x_0$. In particular, if $p$ is right-Noetherian, then $p$ is always logarithmically concave when $x > x_0$.
\end{fthm}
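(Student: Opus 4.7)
The plan is to proceed by strong induction on the degree $n$, comparing $\alpha_p$ against a barrier function built from $\alpha_{p'}$. The base case $n=1$ is immediate: $p''\equiv 0$ forces $\alpha_p\equiv 0 = 1 - 1/1$. For the inductive step, observe that right-Noetherianness of $p$ transfers verbatim to $p'$ (just relabel $k\mapsto k+1$ in Definition~\hyperlink{D:2.1}{2.1}), so by the inductive hypothesis $\alpha_{p'}(x)$ is monotonically increasing with limit $\phi_{n-1}\coloneqq 1 - 1/(n-1)$.

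Define the barrier $h(x)\coloneqq 1/(2-\alpha_{p'}(x))$. Since $\alpha_{p'}(x)\leq \phi_{n-1}<2$, the function $h$ is well-defined, positive, non-decreasing, and tends to $1/(2-\phi_{n-1}) = 1-1/n \eqqcolon \phi_0$ at infinity. Lemma~\hyperlink{L:2.1}{2.1} with $k=1$ then converts the sign of $\alpha_p'(x)$ on $(x_1,\infty)$ into the sign of $h(x)-\alpha_p(x)$. The heart of the argument is therefore the barrier inequality $\alpha_p(x)\leq h(x)$, which I would establish by a maximum-principle argument applied to $F(x)\coloneqq \alpha_p(x)-h(x)$.

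To carry this out, I would first compute boundary values of $F$. At infinity, $F\to \phi_0-\phi_0 = 0$ by Proposition~\hyperlink{P:2.3}{2.3}. At the left endpoint, when $x_0>x_1$, a Rolle-type argument rules out $x_1$ as a root of $p$ (any such root would place a root of $p'$ strictly between $x_1$ and $x_0$, contradicting Proposition~\hyperlink{P:2.1}{2.1}), yielding $p(x_1)<0$; a Taylor expansion of $p'$ and $p''$ at $x_1$ then gives $\alpha_p(x_1^+)=-\infty$, hence $F(x_1^+)=-\infty$. When $x_0=x_1$, a direct multiplicity calculation at the common multiple root yields $\alpha_p(x_0)=1-1/m$ and $\alpha_{p'}(x_0)=1-1/(m-1)$, whence $h(x_0)=1-1/m=\alpha_p(x_0)$, so $F(x_0)=0$. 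If $F$ were positive somewhere in the interior, it would attain a positive interior maximum at some $y^{*}\in(x_1,\infty)$, at which $F'(y^{*})=0$; but Lemma~\hyperlink{L:2.1}{2.1} forces $\alpha_p'(y^{*})<0$ (because $\alpha_p(y^{*})>h(y^{*})$) while $h'(y^{*})\geq 0$, yielding the contradiction $F'(y^{*})<0$. Hence $F\leq 0$, and Lemma~\hyperlink{L:2.1}{2.1} gives $\alpha_p'\geq 0$ on the open interval, with strict monotonicity following since the equality locus $\{F=0\}$ can only be isolated. The \emph{in particular} conclusion on $(x_0,\infty)$ is then immediate from $\alpha_p(x)\leq \phi_0 = 1-1/n<1$ combined with $p(x)>0$ there.

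The main obstacle is the degenerate subcase in which $\alpha_{p'}$ is identically constant, which happens precisely when $p'$ is a constant multiple of $(x-x_1)^{n-1}$. In that situation $h\equiv \phi_0$ is constant, so $h'(y^{*})=0$ and the strict inequality driving the contradiction is lost. One bypasses this by integrating to obtain $p(x)=c(x-x_1)^n/n + p(x_1)$ and computing $\alpha_p(x)=\phi_0 + n\,p(x_1)/\bigl(c(x-x_1)^n\bigr)$ in closed form; right-Noetherianness forces $x_0\geq x_1$ and hence $p(x_1)\leq 0$, so $\alpha_p(x)\leq \phi_0$ with monotonicity immediate. A secondary subtlety, namely the starting value $\alpha_p(x_0)=1-1/m$ in the case $x_0=x_1$, is handled by the same Taylor expansion at a multiple root used in the boundary analysis above.
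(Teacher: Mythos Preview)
Your proposal is correct and follows essentially the same strategy as the paper: induction on $n$, the barrier $h=1/(2-\alpha_{p'})$, and Lemma~\hyperlink{L:2.1}{2.1} to convert the sign of $h-\alpha_p$ into the sign of $\alpha_p'$. The paper phrases the contradiction via the \emph{first crossing point} $\tilde x$ of $I=\{h>\alpha_p\}$ and a $\delta$--$\epsilon$ comparison, whereas you phrase it via an interior maximum of $F=\alpha_p-h$; these are equivalent. Your treatment of the case $x_0=x_1$ is in fact slightly slicker than the paper's, which carries out an explicit Taylor computation to show $F<0$ just to the right of $x_0$, while your maximum-principle argument needs only the boundary value $F(x_0)=0$.

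One minor remark: your separate handling of the ``degenerate'' case $p'=c(x-x_1)^{n-1}$ is unnecessary. Even when $h'\equiv 0$, Lemma~\hyperlink{L:2.1}{2.1} still gives the \emph{strict} inequality $\alpha_p'(y^*)<0$ at any point with $\alpha_p(y^*)>h(y^*)$, so $F'(y^*)=\alpha_p'(y^*)-h'(y^*)<0$ and the contradiction goes through unchanged. The only situation where $F\equiv 0$ is $p=c(x-x_0)^n$ itself, in which case $\alpha_p$ is constant and the conclusion is trivial. Also, your base case $n=1$ is formally awkward since $x_1$ is undefined; starting at $n=2$ (as the paper does) avoids this.
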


\begin{proof}
For convenience, we may assume that $p(x)$ is monic. By the definition of right-Noetherian, if we denote $x_i$ by the largest real root of $p^{(i)} (x)$, then by Proposition~\hyperlink{P:2.1}{2.1}, we have
\begin{align*}
x_0 \geq x_1 \geq x_2 \geq \cdots \geq x_{n-1}.
\end{align*}

We use mathematical induction on the degree of polynomial. For the base case, when $p$ is a degree $2$ polynomial, since $p$ has a real root $x_0$, we write
\begin{align*}
\label{eq:2.4}
p(x) = (x - x_0)(x- (2x_1 - x_0)) \tag{2.4}
\end{align*}with $x_0 \geq x_1$. If $x > x_1$, then by equation (\ref{eq:2.4}), we have
\begin{align*}
\alpha_p(x) = \frac{p(x) p''(x)}{p'(x)^2} =   \frac{ 2 (x-x_0)(x-(2x_1 - x_0)) }{4 (x-x_1)^2} =  \frac{1}{2} - \frac{(x_0-x_1)^2 }{2(x-x_1)^2}.
\end{align*}

So, for degree $2$ polynomial, $\alpha_p(x)$ is monotonically increasing from $0$ to $1/2$ from $x_0$ to $\infty$ if $x_0 = x_1$ and from $-\infty$ to $1/2$ from $x_1$ to $\infty$ if $x_0 > x_1$. Suppose the statement holds when the degree equals $n-1$. When the degree equals $n$, say the multiplicity of the largest real root $x_0$ equals $m \geq 1$. We have
\begin{align*}
\label{eq:2.5}
p(x) = (x-x_0)^m \cdot \sum_{k=m}^n \frac{p^{(k)}(x_0) }{k!} (x - x_0)^{k-m} = (x-x_0)^m \tilde{p}(x) \tag{2.5}
\end{align*}by using the Taylor series expansion of $p(x)$ at $x_0$ and set 
\begin{align*}
\tilde{p}(x) \coloneqq \sum_{k=m}^n \frac{p^{(k)}(x_0) }{k!} (x - x_0)^{k-m}.
\end{align*}So the first and the second derivative of (\ref{eq:2.5}) with respect to $x$ can be written as
\begin{align*}
\label{eq:2.6}
p'(x) &= (x-x_0)^{m-1} \bigl(  m \tilde{p}(x) + (x-x_0) \tilde{p}'(x)   \bigr); \tag{2.6} \\
\label{eq:2.7}
p''(x) &= (x-x_0)^{m-2} \bigl(  m(m-1) \tilde{p}(x) + 2m (x-x_0) \tilde{p}'(x) + (x-x_0)^2 \tilde{p}''(x)   \bigr). \tag{2.7}
\end{align*}If $m=1$, then similar to before, we get $\alpha_p (x_0) = 0$ and $\lim_{x \rightarrow x_1^+} \alpha_p(x) = -\infty$. If $m \geq 2$, then
\begin{align*}
\alpha_p(x_0) &=  \lim_{x \rightarrow x_0^+} \frac{p(x) p''(x)}{p'(x)^2}\\
& =  \lim_{x \rightarrow x_0^+} \frac{  (x-x_0)^m \tilde{p}(x) \cdot (x-x_0)^{m-2}  \bigl(  m(m-1) \tilde{p}(x) + 2m (x-x_0) \tilde{p}'(x) + (x-x_0)^2 \tilde{p}''(x)   \bigr) }{(x-x_0)^{2m-2}\bigl(m\tilde{p}(x) + (x-x_0) \tilde{p}'(x) \bigr)^2} \\
& =  \lim_{x \rightarrow x_0^+} \frac{    \tilde{p}(x)    \bigl(  m(m-1) \tilde{p}(x) + 2m (x-x_0) \tilde{p}'(x) + (x-x_0)^2 \tilde{p}''(x)   \bigr) }{  \bigl(m\tilde{p}(x) + (x-x_0) \tilde{p}'(x) \bigr)^2} = \frac{m-1}{m} = 1 - \frac{1}{m}.
\end{align*}

There are two cases to consider: $m =1$ or $m \geq 2$. For the case $m=1$, since $p'$ is again right-Noetherian, $\alpha_{p'}$ is monotonically increasing on $[x_1, \infty)$ with value from $1- 1/\tilde{m}$ to $1-1/(n-1)$, where $\tilde{m}$ is the multiplicity of $p'$ at $x_1$. When $x > x_1$, by Lemma~\hyperlink{L:2.1}{2.1}, $\alpha'_{p}(x) > 0$ when $2 > \alpha_{p'}(x)$ and $1/ (2- \alpha_{p'}(x)) > \alpha_p(x)$. We have $2 > 1-1/(n-1) > \alpha_{p'}(x)$ and 
\begin{align*}
\frac{1}{2 - \alpha_{p'}(x)} \geq \frac{1}{2-(1-1/\tilde{m})} = \frac{\tilde{m}}{\tilde{m}+1} > \lim_{x \rightarrow x_1^+}  \alpha_p(x) = -\infty.
\end{align*}If we consider the set $I \coloneqq \bigl \{  x \in (x_1, \infty) \colon 1/(2- \alpha_{p'}(x)) > \alpha_p(x)    \bigr\}$, then the set $I$ is not empty. If we can show that $I = (x_1, \infty)$, then we are done. $I$ will be open by the continuity of functions $\alpha_p$ and $\alpha_{p'}$. If $I \neq (x_1, \infty)$, then we can find a smallest $\tilde{x} \in (x_1, \infty)$ such that $1/(2- \alpha_{p'} ( {x})) = \alpha_p( {x})$. This is ensured because $p$ is a polynomial and
\begin{align*}
\frac{1}{2- \alpha_{p'} (x)} - \alpha_p(x) &= \frac{1}{2 - \alpha_{p'}(x)} \bigl(  1 - (2 - \alpha_{p'}(x)) \alpha_p(x)    \bigr) \\
&= \frac{1}{2 - \alpha_{p'}(x)} \Bigl(  1 - \Bigl(2  -  \frac{p'(x) p'''(x)}{p''(x)^2} \Bigr) \frac{p(x) p''(x)}{p'(x)^2}    \Bigr)  \\
&= \frac{1}{(2- \alpha_{p'}(x))p'(x)^2 p''(x)} \Bigl (  p'(x)^2 p''(x) + p(x) p'(x) p'''(x)   -2p(x) p''(x)^2      \Bigr).
\end{align*}The first term $\frac{1}{(2- \alpha_{p'}(x))p'(x)^2 p''(x)}$ cannot be zero if $x > x_1$ because $p$ is right-Noetherian and the second term $p'(x)^2 p''(x) + p(x)p'(x) p'''(x)   -2p(x) p''(x)^2 $ is just a polynomial so can only have finitely many zeros. Then by Lemma~\hyperlink{L:2.1}{2.1}, at the point $\tilde{x}$, we have
\begin{align*}
\label{eq:2.8}
\frac{d}{d x} \alpha_p (\tilde{x}) =  \frac{p''(x) \bigl ( 1 - (2 - \alpha_{p'}(x)) \alpha_p(x) \bigr  )}{p'(x)} = 0. \tag{2.8}
\end{align*}On the other hand, since $p'$ is right-Noetherian and by mathematical induction, we have 
\begin{align*}
\label{eq:2.9}
\frac{d}{dx} \Big|_{x = \tilde{x}} \frac{1}{2- \alpha_{p'}(x)} = \frac{  \alpha'_{p'}(\tilde{x})}{(2-\alpha_{p'}(\tilde{x}))^2} > 0. \tag{2.9}
\end{align*}We get a contradiction. Otherwise by standard calculus argument and equation (\ref{eq:2.8}), there exists a $\delta > 0$ sufficiently small such that if $|h| < \delta$, then we have
\begin{align*}
\label{eq:2.10}
-\frac{1}{3}  \frac{  \alpha'_{p'}(\tilde{x})}{(2-\alpha_{p'}(\tilde{x}))^2} <   \frac{\alpha_p(\tilde{x}) - \alpha_p(\tilde{x}-h)}{h}   < \frac{1}{3}  \frac{  \alpha'_{p'}(\tilde{x})}{(2-\alpha_{p'}(\tilde{x}))^2}. \tag{2.10}
\end{align*} Also, since $\bigl( \frac{1}{2 - \alpha_{p'}(x)} \bigr)' = \frac{\alpha'_{p'}(x)}{(2- \alpha_{p'}(x))^2}$, by inequality (\ref{eq:2.9}), for $\delta$ sufficiently small, we get
\begin{align*}
\label{eq:2.11}
\frac{2}{3}  \frac{  \alpha'_{p'}(\tilde{x})}{(2-\alpha_{p'}(\tilde{x}))^2} <  \frac{  \frac{1}{2 - \alpha_{p'}(\tilde{x})} -   \frac{1}{2 - \alpha_{p'}(\tilde{x}-h)}  }{h} < \frac{4}{3}  \frac{  \alpha'_{p'}(\tilde{x})}{(2-\alpha_{p'}(\tilde{x}))^2}. \tag{2.11}
\end{align*}Since $\tilde{x}$ is the smallest value such that $1/(2- \alpha_{p'} (x)) = \alpha_p(x)$ and $\lim_{x \rightarrow x_1^+} \alpha_p(x) = -\infty$, by the intermediate value theorem, we have $\tilde{x} - h \in I$ where $\delta > h > 0$. Hence, by inequalities (\ref{eq:2.10}) and (\ref{eq:2.11}), we obtain
\begin{align*}
\frac{1}{3}  \frac{  \alpha'_{p'}(\tilde{x})}{(2-\alpha_{p'}(\tilde{x}))^2} &>  \frac{\alpha_p(\tilde{x}) - \alpha_p(\tilde{x}-h)}{h}  =    \frac{  \frac{1}{2 - \alpha_{p'}(\tilde{x})} -   \alpha_p(\tilde{x}-h)   }{h}    \\
&>  \frac{  \frac{1}{2 - \alpha_{p'}(\tilde{x})} -   \frac{1}{2 - \alpha_{p'}(\tilde{x}-h)}  }{h} > \frac{2}{3}  \frac{  \alpha'_{p'}(\tilde{x})}{(2-\alpha_{p'}(\tilde{x}))^2}.
\end{align*}This is a contradiction because $\alpha'_{p'}(x) > 0$ by mathematical induction. So $I = (x_1, \infty)$. If the multiplicity of $x_0$ equals $1$, then $\alpha_p$ is increasing on $(x_1, \infty)$ with value from $-\infty$ to $1-1/n$.\smallskip

For the second case, if the multiplicity of $x_0$ is greater than or equal to $2$, then at $x_0$ we have $\alpha_p (x_0) = 1 -  {1}/{m}$ and $\alpha_{p'}(x_0) = 1 -  {1}/{(m-1)}$. This gives
\begin{align*}
\frac{1}{2- \alpha_{p'}(x_0)} = \frac{1}{2- (1-1/(m-1))}  = 1- \frac{1}{m} = \alpha_{p}(x_0). 
\end{align*}We need to do some local analysis near $x_0$. First, we have
\begin{align*}
\frac{1}{2- \alpha_{p'} (x)} - \alpha_p(x) &= \frac{p''(x)}{\bigl(2- \alpha_{p'}(x) \bigr)p'(x)^2 p''(x)^2} \Bigl (  p'(x)^2 p''(x) + p(x)p'(x) p'''(x)   -2p(x) p''(x)^2      \Bigr).
\end{align*}Same as before, we only need to consider the term $p'(x)^2 p''(x) + p(x)p'(x) p'''(x)   -2p(x) p''(x)^2$. By equations (\ref{eq:2.6}) and (\ref{eq:2.7}), we get
\begin{align*}
&\kern-1em p'(x)^2 p''(x) + p(x)p'(x) p'''(x)   -2p(x) p''(x)^2 \\ 
&= (x-x_0)^{3m-4} \Bigl(     \bigl(  m \tilde{p} + (x-x_0) \tilde{p}'   \bigr)^2 \bigl(  m(m-1) \tilde{p} + 2m (x-x_0) \tilde{p}' + (x-x_0)^2 \tilde{p}   \bigr) \\
&\kern8em + \tilde{p}   \bigl(  m \tilde{p} + (x-x_0) \tilde{p}'   \bigr) \bigl(  m(m-1)(m-2) \tilde{p} + 3m(m-1) (x-x_0) \tilde{p}' \\
&\kern23em + 3m(x-x_0)^2 \tilde{p}'' + (x-x_0)^3 \tilde{p}''' \bigr)\\
&\kern17em -2 \tilde{p} \bigl(  m(m-1) \tilde{p} + 2m (x-x_0) \tilde{p}' + (x-x_0)^2 \tilde{p}''   \bigr)^2 \Bigr) \\
&= 2m (x-x_0)^{3m-3}\tilde{p}^2 \tilde{p}' + 4m (x-x_0)^{3m-2} \bigl( \tilde{p}^2 \tilde{p}'' - \tilde{p} \tilde{p}'^2 \bigr) \\
&\kern2em + m (x- x_0)^{3m-1} \Bigl( 2 \tilde{p}'^3 + \tilde{p}^2 \tilde{p}''' - 3  \tilde{p} \tilde{p}' \tilde{p}''  \Bigr) + (x-x_0)^{3m} \Bigl(  \tilde{p}\tilde{p}'\tilde{p}''' + \tilde{p}'^2 \tilde{p}'' - 2 \tilde{p} \tilde{p}''^2  \Bigr).
\end{align*}When $x > x_0$ is sufficiently close to $x_0$, since $2m \tilde{p}(x)^2 \tilde{p}'(x) > 0$, we get
\begin{align*}
p'(x)^2 p''(x) + p(x)p'(x) p'''(x)   -2p(x) p''(x)^2 > 0.
\end{align*}Similarly, we define the set $I \coloneqq \bigl \{  x \in (x_0, \infty) \colon 1/(2- \alpha_{p'}(x)) > \alpha_p(x)    \bigr\}$ which is open and non-empty. Same as the previous argument, we get $I = (x_0, \infty)$, which implies that $\alpha_p$ is increasing on $[x_0, \infty)$ with value from $1-1/m$ to $1-1/n$. This finishes the proof.
\end{proof}

As an application, we immediately obtain that for a right-Noetherian polynomial $p(x)$, the roots of $p(x)$, the roots of $p'(x)$, and the root of $p''(x)$ will satisfy the following relation.

\hypertarget{P:2.4}{\begin{fprop}}
Let $p(x)$ be a real univariate polynomial of degree $n$ which is right-Noetherian. If we denote all the roots of $p(x)$ by $\alpha_1, \cdots, \alpha_n$, all the roots of $p'(x)$ by $\beta_1, \cdots, \beta_{n-1}$, and all the roots of $p''(x)$ by $\gamma_1, \cdots, \gamma_{n-2}$. If we write $x_k$ the largest real root of $p^{(k)}(x)$, then for $x > x_1$,
\begin{align*}
\frac{\prod_{i=1}^{n}(x-\alpha_i) \cdot \prod_{i=1}^{n-2} (x-\gamma_i)}{ \prod_{i=1}^{n-1} (x - \beta_i)^2}
\end{align*}is monotonically increasing to $1$ when $x$ approaches infinity. 
\end{fprop}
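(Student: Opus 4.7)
The plan is to recognize the expression in the statement as a constant multiple of the log-concavity ratio $\alpha_p(x)$ from Definition~\hyperlink{D:2.2}{2.2}, and then invoke the \bhyperlink{T:2.1}{Monotonicity of log-concavity ratio Theorem} (Theorem~\hyperlink{T:2.1}{2.1}) directly. There is essentially no extra work beyond an algebraic rewriting; the substantive content has already been established.

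First I would factor $p, p', p''$ in terms of their roots. If $a_n$ denotes the leading coefficient of $p$, then the leading coefficients of $p'$ and $p''$ are $n a_n$ and $n(n-1) a_n$ respectively, so
\begin{align*}
p(x) = a_n \prod_{i=1}^{n}(x-\alpha_i), \quad p'(x) = n a_n \prod_{i=1}^{n-1}(x-\beta_i), \quad p''(x) = n(n-1) a_n \prod_{i=1}^{n-2}(x-\gamma_i).
\end{align*}
Substituting these factorizations into the definition of $\alpha_p(x)$ gives
\begin{align*}
\alpha_p(x) = \frac{p(x) \cdot p''(x)}{p'(x)^2} = \frac{n-1}{n} \cdot \frac{\prod_{i=1}^{n}(x-\alpha_i) \cdot \prod_{i=1}^{n-2}(x-\gamma_i)}{\prod_{i=1}^{n-1}(x - \beta_i)^2},
\end{align*}
so the quantity appearing in the statement equals $\frac{n}{n-1} \, \alpha_p(x)$.

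Next I would apply Theorem~\hyperlink{T:2.1}{2.1}. Since $p$ is right-Noetherian, that theorem asserts that $\alpha_p(x)$ is monotonically increasing on $(x_1, \infty)$ (handling the cases $x_0 > x_1$ and $x_0 = x_1$ uniformly on this domain), with $\lim_{x \to \infty} \alpha_p(x) = 1 - 1/n = (n-1)/n$. Multiplying by the positive constant $n/(n-1)$ preserves monotonicity and sends the limit $(n-1)/n$ to $1$, which gives exactly the claim. The only possible subtlety to mention is that $p'(x) \neq 0$ for $x > x_1$ (so the ratio is well-defined throughout the stated range), which is immediate from the definition of $x_1$ as the largest real root of $p'$. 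No further obstacle arises; the proposition is a clean corollary of Theorem~\hyperlink{T:2.1}{2.1}.
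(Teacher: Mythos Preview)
Your proposal is correct and matches the paper's approach exactly: the paper presents this proposition as an immediate application of Theorem~\hyperlink{T:2.1}{2.1} without even writing out a proof, and your argument---recognizing the expression as $\frac{n}{n-1}\alpha_p(x)$ and invoking the monotonicity theorem---is precisely the intended one-line justification.
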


\hypertarget{P:2.5}{\begin{fprop}}
Let $p(x)$ be a real univariate polynomial of degree $n$ which is right-Noetherian, then 
\begin{align*}
1- \frac{2}{n} > \alpha_{p}(x) \alpha_{p'}(x)
\end{align*}for $x \geq x_1$, where $x_1$ is the largest real root of $p'(x)$. In particular, for $x \geq x_1$,
\begin{align*}
(n-2){p'(x) p''(x)}   \geq n  {p(x) p'''(x)}.
\end{align*}
\end{fprop}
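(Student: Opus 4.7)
The plan is to bound $\alpha_p(x)$ and $\alpha_{p'}(x)$ separately via the monotonicity theorem and then combine them. First I would observe that $p'$ is itself right-Noetherian, because the defining conditions for $p'$, namely that $p^{(k+1)} = (p')^{(k)}$ has a real root at least the largest real root of $p^{(k+2)} = (p')^{(k+1)}$ for $k \in \{0, \ldots, n-2\}$, form a sub-family of the conditions asserting that $p$ is right-Noetherian. Hence Theorem~\hyperlink{T:2.1}{2.1} applies to both $p$ and $p'$. In view of Proposition~\hyperlink{P:2.1}{2.1}, which gives $x_1 \geq x_2 \geq x_3$, both log-concavity ratios are strictly bounded above on all of $(x_1, \infty)$: we have $\alpha_p(x) < 1 - 1/n$ and $\alpha_{p'}(x) < 1 - 1/(n-1)$, with strictness because the respective limit values are only attained as $x \to \infty$.

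Next I would analyze signs. Assume without loss of generality that $p$ has positive leading coefficient. Since $x > x_1 \geq x_2 \geq x_3$, the values $p'(x), p''(x), p'''(x)$ are all strictly positive, so $\alpha_{p'}(x) = p'(x) p'''(x)/p''(x)^2 > 0$ and the sign of $\alpha_p(x) = p(x) p''(x)/p'(x)^2$ agrees with the sign of $p(x)$. If $p(x) \geq 0$, then multiplying the two strict bounds yields $\alpha_p(x)\alpha_{p'}(x) < (1 - 1/n)(1 - 1/(n-1)) = 1 - 2/n$. If $p(x) < 0$, then $\alpha_p(x)\alpha_{p'}(x) \leq 0 < 1 - 2/n$ (the interesting range being $n \geq 3$, since for $n \leq 2$ the factor $\alpha_{p'}$ degenerates and only the polynomial form of the inequality carries content). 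Either way, the first claim holds on $(x_1, \infty)$.

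For the polynomial inequality, I would rewrite the product as $\alpha_p(x)\alpha_{p'}(x) = p(x) p'''(x)/\bigl(p'(x) p''(x)\bigr)$, valid wherever $p'(x) p''(x) \neq 0$, and clear the positive denominator to obtain $(n-2) p'(x) p''(x) > n p(x) p'''(x)$ on $(x_1, \infty)$. The boundary point $x = x_1$ is handled directly: $p'(x_1) = 0$ makes the left side vanish, while $p(x_1) \leq 0$ (because $x_0 \geq x_1$ and $p$ is positive only beyond $x_0$) together with $p'''(x_1) \geq 0$ (because $x_1 \geq x_3$) make the right side non-positive. Continuity then upgrades the strict inequality on $(x_1, \infty)$ to the weak inequality $(n-2) p'(x) p''(x) \geq n p(x) p'''(x)$ for all $x \geq x_1$.

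The only subtle point is verifying that the bounds supplied by Theorem~\hyperlink{T:2.1}{2.1} apply uniformly on the full interval $(x_1, \infty)$, rather than only on the possibly smaller intervals named in that theorem for $p$ and $p'$ separately; this is immediate from Proposition~\hyperlink{P:2.1}{2.1}. Everything else is a multiplication of inequalities together with a continuity argument at the boundary, so I do not anticipate any significant obstacle.
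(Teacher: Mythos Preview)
Your proposal is correct and follows essentially the same route as the paper: apply Theorem~2.1 to both $p$ and $p'$ to obtain $\alpha_p(x) < 1 - 1/n$ and $0 \leq \alpha_{p'}(x) < 1 - 1/(n-1)$ on $(x_1,\infty)$, multiply, and then treat the polynomial inequality separately at $x = x_1$ and for $x > x_1$. Your version is slightly more explicit (you spell out why $p'$ is right-Noetherian and do a sign-based case split on $p(x)$, whereas the paper just says ``multiplying them together''), but the substance is identical.
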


\begin{proof}
By Theorem~\hyperlink{T:2.1}{2.1}, for $x >x_1$, we have
\begin{align*}
1- \frac{1}{n} > \alpha_p(x) > - \infty \quad \text{ and } \quad 1- \frac{1}{n-1} > \alpha_{p'}(x) \geq 0.
\end{align*}By multiplying them together, we always get the following upper bound:
\begin{align*}
1- \frac{2}{n} >  \alpha_{p}(x) \alpha_{p'}(x).
\end{align*}Moreover, for $x = x_1$, since $p$ is right-Noetherian, we have
\begin{align*}
\frac{p'(x_1) p''(x_1)}{n} - \frac{p(x_1) p'''(x_1)}{n-2} = - \frac{ p(x_1) p'''(x_1)}{n-2} \geq 0.
\end{align*}Also, for $x > x_1$, we have
\begin{align*}
\frac{p'(x) p''(x)}{n} - \frac{p(x) p'''(x)}{n-2} &= \frac{p'(x) p''(x)}{n-2} \Bigl(  \frac{n-2}{n} - \frac{p(x) p''(x) \cdot  p'(x) p'''(x)}{p'(x) ^2 \cdot p''(x)^2}  \Bigr) \\
&= \frac{p'(x) p''(x)}{n-2} \Bigl(  \frac{n-2}{n} - \alpha_{p}(x) \alpha_{p'}(x)  \Bigr) > 0.
\end{align*}This finishes the proof.
\end{proof}

For the class of right-Noetherian polynomials, we show that this class will be strongly log-concave after translation. We state the definitions here.

\hypertarget{D:2.3}{\begin{fdefi}[Strongly log-concave]}
Let $p(x_1, \cdots, x_n)$ be a multivariate polynomial, we say $p$ is strongly log-concave if any order partial derivative is either identically zero or log-concave on $\mathbb{R}^n_{>0}$.
\end{fdefi}

\hypertarget{L:2.2}{\begin{flemma}}
Let $p(x)$ be a real univariate polynomial of degree $n$ which is right-Noetherian, then $p$ is strongly log-concave after the translation $x \mapsto x-x_0$.
\end{flemma}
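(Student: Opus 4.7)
The plan is to reduce the strong log-concavity of the translated polynomial $q(y) \coloneqq p(y + x_0)$ on $\mathbb{R}_{>0}$ to a derivative-by-derivative application of Theorem~\hyperlink{T:2.1}{2.1}. Without loss of generality I normalize so that $p$ has positive leading coefficient. The first observation I would record is that right-Noetherianness is hereditary under differentiation: if $p$ is right-Noetherian of degree $n$, then for every $k \in \{0, \ldots, n-1\}$ the derivative $p^{(k)}$ is itself a right-Noetherian polynomial of degree $n-k$, because the defining root inequalities for $p^{(k)}$ form a sub-family of those for $p$. Proposition~\hyperlink{P:2.1}{2.1} then yields the descending chain $x_0 \geq x_1 \geq \cdots \geq x_{n-1}$, where $x_j$ denotes the largest real root of $p^{(j)}$.

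Next, I would apply Theorem~\hyperlink{T:2.1}{2.1} separately to each $p^{(k)}$, which gives that the log-concavity ratio $\alpha_{p^{(k)}}(x)$ is monotonically increasing with supremum $1 - 1/(n-k) < 1$. In particular, for every $x > x_0 \geq x_k$ we have $\alpha_{p^{(k)}}(x) < 1$. Since the leading coefficient of $p$ is positive, $p^{(k)}(x) > 0$ on $(x_k, \infty)$, and on this domain the inequality $\alpha_{p^{(k)}}(x) < 1$ is precisely $(\log p^{(k)})''(x) < 0$; hence $p^{(k)}$ is strictly log-concave on $(x_0, \infty)$.

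Finally, the translation $y = x - x_0$ carries $(x_0, \infty)$ bijectively onto $\mathbb{R}_{>0}$, and by the chain rule $q^{(k)}(y) = p^{(k)}(y + x_0)$. Hence each $q^{(k)}$ with $k \in \{0, \ldots, n-1\}$ is log-concave on $\mathbb{R}_{>0}$; the $n$-th derivative $q^{(n)}$ is a positive constant, which is trivially log-concave; and all higher derivatives vanish identically. This verifies the criterion of Definition~\hyperlink{D:2.3}{2.3} and proves that $q$ is strongly log-concave. The only slightly delicate bookkeeping is the heredity of the right-Noetherian condition under differentiation and the sign normalization of the leading coefficient; the substantive work of converting the root-combinatorial hypothesis into an analytic log-concavity bound is already packaged into Theorem~\hyperlink{T:2.1}{2.1}, which is the true engine of the argument.
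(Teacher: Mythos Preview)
Your proof is correct and follows the same approach as the paper: the paper's own proof is a one-liner, ``This follows directly by Theorem~\hyperlink{T:2.1}{2.1},'' and you have simply unpacked that line into its constituent steps---the heredity of right-Noetherianness under differentiation, the application of Theorem~\hyperlink{T:2.1}{2.1} to each $p^{(k)}$, and the translation. There is no divergence in strategy.
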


\begin{proof}
This follows directly by Theorem~\hyperlink{T:2.1}{2.1}.
\end{proof}

We prove the following deformation result for the class of right-Noetherian polynomials. The idea here is to deform the largest real root $x_0$ of $p$ to the next largest real root $x_m$ of $p^{(m)}$ for some $m \in \{1, \cdots, n-1\}$ and keep the $m$-th partial derivative same. Along this deformation, the right-Noetherianness is remained and this deformation will be monotone. In Figure~\ref{fig:2}, we plot the value of the log-concavity ratio along the deformation of $(x-5)^2(x^2 + 10x +51) = x^4 - 24 x^2 - 260 x + 1275$.

\begin{figure}
\centering
\begin{tikzpicture}[yscale=4]
	\def\theta{2*pi/3}
	\xdef\acc{0}
  	\draw[->] (-0.2,0) -- (8.5,0) node[right] {$x$};
  	\draw[->] (0,-0.04) -- (0,1) ;
	\foreach \t in {2.25, 2.5, 2.75, 3, 3.25, 3.5, 3.75, 4, 4.25, 4.5, 4.75, 5}
	{
		\pgfmathsetmacro\r{\t/10 }
		\pgfmathsetmacro\g{\t/10 }	
		\pgfmathsetmacro\b{0.6+\t/15}
  		\draw[color={rgb, 1:red, \r; green, \g; blue, \b},domain={\t }:8.4,samples=200]    	plot (\x, { 0.75 + 0.75*( -2*\x*(\t^3) - \t^4 -4*(\x^2)+16*\t*\x + 12*(\t^2) -48  )/((\x^2 +\t*\x + (\t^2 - 12))^(2) )    } )   ;
	}
	  \draw[color={rgb, 1:red, 0.2; green, 0.2; blue, 0.733},domain={2 }:8.4,samples=200]    	plot (\x,  {0.75 - 3/((\x+4)^2)} ) ;
   	\draw[red,dashed] (0,0.666) -- (2,0.666) node at (-0.4,0.65) {$2/3$};
  	\draw[red,dashed] (0,0.5) -- (5,0.5) node at (-0.4,0.5) {$1/2$};
  	\draw[red,dashed] (0,0.75) -- (8.5,0.75) node at (-0.4,0.77) {$3/4$};
\end{tikzpicture}
\caption{Value of $\alpha_{P}$ along the deformation of $(x-5)^2(x^2 + 10x +51)$}
\label{fig:2}
\end{figure}

\hypertarget{T:2.2}{\begin{fthm}}
Let $p(x)$ be a real univariate polynomial which is right-Noetherian with $x_0 \geq x_1 \geq \cdots \geq x_{n-1}$, where $x_i$ is the largest real root of $p^{(i)}(x)$. Consider the following deformation
\begin{align*}
P(x, y) \coloneqq p(x) - \sum_{k = 0}^{m-1} \frac{(x-y)^k}{k!} p^{(k)}(y) = \sum_{k = m}^n \frac{(x-y)^k}{k!} p^{(k)}(y),
\end{align*}where $y \in [x_m, x_0]$ and $m$ is the multiplicity of $x_0$. Then for $n-1 \geq m \geq 1$ and $x > y > x_m$, 
\begin{align*}
\frac{\partial}{\partial y} \alpha_{P}(x, y)  < 0.
\end{align*}Here, $\alpha_{P}(x, y)$ is defined by
\begin{align*}
\alpha_{P}(x, y) \coloneqq \frac{P(x, y) \frac{\partial^2}{\partial x^2} P(x, y)}{ \bigl(  \frac{\partial}{\partial x} P(x, y) \bigr)^2}.
\end{align*}
Notice that because $p(x)$ is right-Noetherian, $p^{(k)}(y) > 0$ for $y \in (x_m, x_0]$, $k \in \{m, \cdots, n\}$, and $p^{(m)}(x_m) = 0$. In particular, this deformation is a foliation that foliates the following set
\begin{align*}
\Bigl \{  (x, y) \in [x_0, \infty) \times \mathbb{R} \colon  \alpha_p(x)  \leq y \leq \alpha_{P(x, x_m)}(x)  \Bigr \}.
\end{align*}
\end{fthm}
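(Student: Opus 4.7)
The plan is to differentiate $\alpha_P$ in $y$ and reduce the sign question to an integral inequality governed by the positivity and monotonicity of $p^{(m)}$ on $(x_m,\infty)$. First, iterating Taylor's theorem with integral remainder at $y$ gives, for $0 \leq j \leq m-1$, the representation
\[
P_{x^j}(x,y) \;=\; \int_y^x \frac{(x-s)^{m-j-1}}{(m-j-1)!}\, p^{(m)}(s)\, ds, \qquad \partial_y P_{x^j}(x,y) \;=\; -\,\frac{(x-y)^{m-j-1}}{(m-j-1)!}\, p^{(m)}(y),
\]
the $\partial_y$-formula arising just from differentiating the lower limit. Because $p$ is right-Noetherian and $y > x_m$, we have $p^{(k)}(s) > 0$ for all $k \geq m$ and $s > x_m$, so every $P_{x^j}$ is strictly positive on the region $x > y > x_m$.

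Applying the quotient rule to $\alpha_P = P\,P_{xx}/P_x^{\,2}$ and substituting the formulas for $P_y,\,P_{xy},\,P_{xxy}$, one can factor out the strictly positive scalar $p^{(m)}(y)\,(x-y)^{m-3}/(m-3)!$; the condition $\partial_y\alpha_P(x,y) < 0$ then reduces, after elementary algebra, to
\[
\beta^{2}\, P_x P_{xx} \;+\; (m-1)(m-2)\,P\,P_x \;>\; 2\beta\,(m-1)\, P\,P_{xx}, \qquad \beta \coloneqq x-y.
\]
The small cases are immediate: $m=1$ gives $p'(y)\,p''(x) > 0$ directly, and $m=2$ collapses to the trapezoidal over-estimate $\tfrac{\beta}{2}[p'(x)+p'(y)] > p(x) - p(y)$, which follows from the convexity of $p'$ on $(x_2,\infty) \supset [y,x]$.

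For the main case $m \geq 3$, I would rewrite the target in terms of the moments $M_j \coloneqq \int_y^x (x-s)^{m-j-1}\,p^{(m)}(s)\,ds$; after clearing denominators it becomes $M_1(\beta^{2} M_2 + M_0) > 2\beta\,M_0 M_2$. Set $\Delta \coloneqq \int_y^x (x-s)^{m-3}(s-y)^{2}\, p^{(m)}(s)\, ds > 0$. Two observations simplify the problem: the algebraic identity $\beta^{2} M_2 + M_0 - 2\beta M_1 = \Delta$ (a consequence of $((x-s)-\beta)^{2} = (s-y)^{2}$), and the Cauchy--Schwarz bound $M_1^{\,2} \leq M_0 M_2$ obtained by splitting $(x-s)^{m-2} = (x-s)^{(m-1)/2}(x-s)^{(m-3)/2}$. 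Writing $\beta^{2}M_2 + M_0 = 2\beta M_1 + \Delta$, the target becomes $M_1\,\Delta > 2\beta\,(M_0 M_2 - M_1^{\,2})$, and after a routine symmetrization the difference takes the form
\[
\tfrac{1}{2}\iint_{[y,x]^{2}} (x-s_1)^{m-3}(x-s_2)^{m-3}\,p^{(m)}(s_1)\,p^{(m)}(s_2)\,\mathcal{K}(s_1,s_2)\, ds_1\, ds_2,
\]
with the symmetric kernel $\mathcal{K}(s_1,s_2) \coloneqq (x-s_1)(s_2-y)^{2} + (x-s_2)(s_1-y)^{2} - 2\beta\,(s_1-s_2)^{2}$.

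The main obstacle is that $\mathcal{K}$ is \emph{not} pointwise nonnegative (for instance $\mathcal{K}(y,x) = -\beta^{3}$), and in the degenerate case $p^{(m)} \equiv $ const on $[y,x]$ the whole integral vanishes identically; so strict positivity must exploit the further right-Noetherian information that $p^{(m+1)}(s) > 0$ on $(x_{m+1},\infty) \supset (y,x)$, i.e.\ that $p^{(m)}$ is strictly increasing on $[y,x]$. I would extract this by representing $p^{(m)}(s) = p^{(m)}(y) + \int_y^s p^{(m+1)}(t)\,dt$ and opening up the double integral into an iterated integral in $(t_1,t_2)$; since direct integration by parts alone does not produce a pointwise nonnegative residual kernel, the technical heart of the argument will be to show, via a refined double rearrangement exploiting the monotonicity of $p^{(m)}$ and the explicit polynomial structure of $\mathcal{K}$, that the combined expression admits a manifestly nonnegative quadratic decomposition in $p^{(m+1)}$ (together with a linear term controlled by $p^{(m)}(y) \geq 0$). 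Once such a decomposition is in hand, strict positivity follows because $p^{(m+1)}$ is a polynomial strictly positive on $(y,x)$, yielding $\partial_y\alpha_P(x,y) < 0$ as claimed.
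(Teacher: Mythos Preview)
Your integral-remainder reduction is correct and attractive: you cleanly recover the paper's key expression for $\partial_y\alpha_P$ and your moment reformulation $M_1(\beta^2 M_2 + M_0) > 2\beta M_0 M_2$, together with the identity $\beta^2 M_2 + M_0 - 2\beta M_1 = \Delta$, is valid. The small cases $m=1,2$ are fine (for $m=1$ the surviving factor is $p'(x)\,p''(x)$, not $p'(y)\,p''(x)$, but this is harmless). Your route diverges from the paper's: the paper factors $P(x,y) = (x-y)^m\tilde P(x,y)$ with $\tilde P(x,y)=\sum_{k\ge m}(x-y)^{k-m}p^{(k)}(y)/k!$ and reduces everything to showing $(m-2)\alpha_{\tilde P}(x,y) - 2m < 0$, which it obtains by a cascade of upper bounds on $\alpha_{\tilde P^{(k)}}(x,y)$ starting from the initial values $\alpha_{\tilde P^{(k)}}(y,y)=\tfrac{(k+2)(m+k+1)}{(k+1)(m+k+2)}\alpha_{p^{(m+k)}}(y)$ and iterating Lemma~2.1; in particular it uses Theorem~2.1 on every derivative $p^{(m+k)}$, not just $p^{(m+1)}$.

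There is, however, a genuine gap in your $m\ge 3$ argument. After arriving at the symmetrized double integral with kernel $\mathcal K$, you correctly observe that $\mathcal K$ changes sign and that the integral \emph{vanishes identically} when $p^{(m)}$ is constant on $[y,x]$ (one checks $M_1\Delta = 2\beta(M_0M_2-M_1^2)$ exactly in that case). But from that point on the proof is only a program: ``a refined double rearrangement exploiting the monotonicity of $p^{(m)}$'' is asserted to exist, not exhibited. This is precisely the heart of the theorem, and it is not clear that monotonicity of $p^{(m)}$ alone --- i.e.\ just $p^{(m+1)}>0$ on $[y,x]$ --- suffices; the paper's proof leans on the full right-Noetherian hierarchy (positivity of \emph{all} $p^{(k)}$ for $k\ge m$ on $(x_m,\infty)$ together with the quantitative bounds on $\alpha_{p^{(m+k)}}$ supplied by Theorem~2.1). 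If you want to push your integral approach through, you will need either to produce an explicit nonnegative decomposition of the double integral that uses only $p^{(m+1)}>0$ (and prove it is not identically zero), or to feed in the higher-order positivity $p^{(k)}(y)>0$ for all $k\ge m$ --- which is exactly the information encoded in the coefficients of $\tilde P$. Either way, the missing step is substantial and cannot be waved through.
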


\begin{proof}
The idea of $P(x, y)$ is to deform the largest real root $x_0$ of $p$ with multiplicity to the largest real root $x_m$ of $p^{(m)}$ and keep the $m$-th partial derivative concerning $x$ the same. Along this deformation, the right-Noetherianness is remained and the multiplicity of the largest real root will be remained $m$ but jump to at least $m+1$ when $y = x_m$. First, we have the following properties
\begin{align*}
\frac{\partial^l}{\partial x^l}P(x,y) = p^{(l)}(x) - \sum_{k=l}^{m-1} \frac{(x-y)^{k-l}}{(k-l)!} p^{(k)}(y);\quad  \frac{\partial^l}{\partial x^m} P(x,y) = p^{(m)}(x);  \quad \frac{\partial^l}{\partial x^l} P(y,y) \equiv 0
\end{align*}for $l \in \{0, \cdots, m-1\}$. With these, for $y \in \bigl[x_m, \  x_0 \bigr]$, if we treat $y$ as a fixed value, then
\begin{align*}
P(x,y) = (x-y)^m \cdot \Bigl(  \sum_{k = m}^{n} \frac{(x-y)^{k-m}}{k!}p^{(k)}(y)     \Bigr),
\end{align*}which implies that $y$ is the largest real root of $P(x, y)$ with multiplicity $m$ for $y \in \bigl[x_m, \  x_0 \bigr)$. If $y = x_m$, then since $p^{(m)}(x_m) = 0$, we get
\begin{align*}
P(x, x_m) &= (x - x_m)^{m+1} \cdot  \Bigl(  \sum_{k = m+1}^{n} \frac{(x-x_m)^{k-m}}{k!}p^{(k)}(x_m)     \Bigr).
\end{align*}The largest real root $x_m$ of $P(x, x_m)$ has multiplicity greater than or equal to $m+1$. For $l \in \{0, \cdots, m-1\}$, we have
\begin{align*}
\label{eq:2.12}
&\kern-1em \frac{\partial^{l+1} }{\partial y \partial x^l} P(x, y)  \tag{2.12}  \\
&= \frac{\partial}{\partial y} \Bigl (  p^{(l)}(x) - \sum_{k=l}^{m-1} \frac{(x-y)^{k-l}}{(k-l)!} p^{(k)}(y)    \Bigr) \\
&= \sum_{k=l+1}^{m-1} \frac{(x-y)^{k-l-1}}{(k-l-1)!} p^{(k)}(y)  - \sum_{k=l}^{m-1} \frac{(x-y)^{k-l}}{(k-l)!} p^{(k+1)}(y) = - \frac{(x-y)^{m-l-1}}{(m-l-1)!} p^{(m)}(y).
\end{align*}

Now, for convenience, we denote $\partial^l P /\partial x^l$ by $P^{(l)}(x, y)$. For $x > y$, by equation (\ref{eq:2.12}), we may compute 
\begingroup
\allowdisplaybreaks
\begin{align*}
\label{eq:2.13}
&\kern-1em\frac{\partial }{\partial y} \alpha_P (x, y) \tag{2.13} \\
&= \frac{\partial}{\partial y} \frac{P(x, y) P''(x, y)}{P'(x,y)^2} 
=   \frac{P' P'' \frac{\partial}{\partial y} P + P P' \frac{\partial}{\partial y} P'' - 2P P'' \frac{\partial}{\partial y}P' }{P'(x, y)^3} \\
&= \frac{ p^{(m)} (y) }{ P'(x, y)^3}  \Bigl( P' P'' \frac{(x-y)^{m-1}}{(m-1)!} + P   P' \frac{(x-y)^{m-3}}{(m-3)!}  - 2P P'' \frac{(x-y)^{m-2}}{(m-2)!} \Bigr).
\end{align*}
\endgroup
If we write $P(x, y) = (x-y)^m \cdot \tilde{P}(x, y)$ with $\tilde{P}(x, y) \coloneqq  \sum_{k = m}^{n}  {(x-y)^{k-m}}p^{(k)}(y)/{k!}$, then
\begin{align*}
\label{eq:2.14}
\frac{\partial}{\partial y} P(x, y) &= (x-y)^{m-1} \Bigl( m\tilde{P} + (x-y)\tilde{P}' \Bigr); \tag{2.14} \\
\label{eq:2.15}
\frac{\partial^2}{\partial y^2} P(x, y) &= (x-y)^{m-2} \Bigl( m(m-1)\tilde{P} + 2m(x-y)\tilde{P}' + (x-y)^2\tilde{P}'' \Bigr). \tag{2.15}
\end{align*}Hence, by combining equations (\ref{eq:2.13}), (\ref{eq:2.14}), and (\ref{eq:2.15}), we obtain
\begin{align*}
\label{eq:2.16}
&\kern-1em \frac{\partial}{\partial y} \alpha_P(x, y) \tag{2.16} \\
&= \frac{p^{(m)}(y)}{(m-1)!P'(x, y)^3} \cdot \Bigl(  -  {2(x-y)^{3m-3}}  \tilde{P}\tilde{P}'  +  {(x-y)^{3m-2}}  \bigl( (m-2)\tilde{P}\tilde{P}'' -2m\tilde{P}'^2 \bigr) \\
&\kern27.5em   -  {(x-y)^{3m-1}} \tilde{P}'\tilde{P}''  \Bigr).
\end{align*}When $m =1$ or $2$, every terms of (\ref{eq:2.16}) are negative, then we are done. When $m = n -1$, we have $\tilde{P}'' =  0$, the remaining terms of (\ref{eq:2.16}) are all negative. When $n-2 \geq m \geq 3$, we consider the following term:
\begin{align*}
\label{eq:2.17}
(m-2)\tilde{P}\tilde{P}'' - 2m \tilde{P}'^2 = \tilde{P}'^2 \bigl(   (m-2) \alpha_{\tilde{P}} - 2m \bigr). \tag{2.17}
\end{align*}When $x = y$, we get
\begin{align*}
\alpha_{\tilde{P}}(y, y) = \lim_{x \rightarrow y^+} \frac{\tilde{P}(x, y) \tilde{P}''(x, y)}{\tilde{P}'(x, y)^2} =  \frac{ \bigl( p^{(m)}(y)/m! \bigr) \cdot \bigl( 2 p^{(m+2)}(y)/(m+2)! \bigr)  }{\bigl( p^{(m+1)}(y)  /(m+1)!  \bigr)^2} = \frac{2(m+1)}{m+2} \alpha_{p^{(m)}}(y).
\end{align*}Similarly, we obtain
\begin{align*}
\label{eq:2.18}
\alpha_{\tilde{P}^{(k)}}(y, y) &=   \frac{ \bigl( k! \cdot p^{(m+k)}(y)/(m+k)! \bigr) \cdot \bigl( (k+2)! \cdot p^{(m+k+2)}(y)/(m+k+2)! \bigr)  }{\bigl( (k+1)! \cdot p^{(m+k+1)}(y)  /(m+k+1)!  \bigr)^2} \tag{2.18} \\
&=  \frac{(k+2)(m+k+1)}{(k+1)(m+k+2)} \alpha_{p^{(m+k)}}(y)
\end{align*}for $k \in \{0, \cdots, n-m-1\}$. By Theorem~\hyperlink{T:2.1}{2.1} and equation (\ref{eq:2.18}), we get
\begin{align*}
\label{eq:2.19}
 \frac{(k+2)(m+k+1)}{(k+1)(m+k+2)} \Bigl (1- \frac{1}{n-m-k} \Bigr) > \alpha_{\tilde{P}^{(k)}}(y, y) =  \frac{(k+2)(m+k+1)}{(k+1)(m+k+2)} \alpha_{p^{(m+k)}}(y) \geq 0 \tag{2.19}
\end{align*}for $k \in \{0, \cdots, n-m-1\}$. When $k = n-m-1$, $\tilde{P}^{(n-m-1)}$ is a polynomial of degree $1$, so we automatically have $\alpha_{\tilde{P}^{(n-m-1)}} \equiv 0$. We have $2 > \alpha_{\tilde{P}^{(n-m-1)}}$ and $1/(2- \alpha_{\tilde{P}^{(n-m-1)}}) = 1/2$. When $k = n-m-2$, by inequality (\ref{eq:2.19}), when $x = y$ we have
\begin{align*}
 \frac{(n-m)(n-1)}{n(n-m-1)} \Bigl (1- \frac{1}{2} \Bigr) = \frac{(n-1)(n-m)}{2n(n-m-1)}  > \alpha_{\tilde{P}^{(n-m-2)}}(y, y)   \geq 0.
\end{align*}On the other hand, we have  
\begin{align*}
 \frac{(n-1)(n-m)}{2n(n-m-1)} > \frac{1}{2-\alpha_{\tilde{P}^{(n-m-1)}}} = \frac{1}{2}.
\end{align*}By Lemma~\hyperlink{L:2.1}{2.1}, $\alpha_{\tilde{P}^{(n-m-2)}}(x ,y)$ will be decreasing if the value $\alpha_{\tilde{P}^{(n-m-2)}}(x ,y)$ exceeds $1/(2- \alpha_{\tilde{P}^{(n-m-1)}}(x, y)) = 1/2$. Combine these, we have an upper bound 
\begin{align*}
2 > \frac{(n-1)(n-m)}{2n(n-m-1)} = \max \Bigl\{   \frac{(n-1)(n-m)}{2n(n-m-1)}, \frac{1}{2}  \Bigr\}  > \alpha_{\tilde{P}^{(n-m-2)}}(x, y)
\end{align*}for $x \geq y$. In addition, for $x \geq y$, we get 
\begin{align*}
\label{eq:2.20}
\frac{1}{2 -  \frac{(n-1)(n-m)}{2n(n-m-1)}} \geq \frac{1}{2 - \alpha_{\tilde{P}^{(n-m-2)}}(x, y)}. \tag{2.20}
\end{align*}When $k = n-m-3$ and $x=y$, by inequality (\ref{eq:2.19}), we have 
\begin{align*}
 \frac{(k+2)(m+k+1)}{(k+1)(m+k+2)} \Bigl (1- \frac{1}{n-m-k} \Bigr) =  \frac{2(n-2)(n-m-1)}{3(n-1)(n-m-2)}   > \alpha_{\tilde{P}^{(k)}}(y, y)  \geq 0.
\end{align*}One can check that
\begin{align*}
 \frac{2(n-2)(n-m-1)}{3(n-1)(n-m-2)} > \frac{1}{2 -  \frac{(n-1)(n-m)}{2n(n-m-1)}}.
\end{align*}Similar to the previous argument, by Lemma~\hyperlink{L:2.1}{2.1}, we have the following upper bound
\begin{align*}
2 >  \frac{2(n-2)(n-m-1)}{3(n-1)(n-m-2)}  > \alpha_{\tilde{P}^{(n-m-3)}}(x, y)
\end{align*}for $x \geq y$. We claim that for any $k \in \{0, \cdots, n-m-2\}$, we always have
\begin{align*}
2 >  \frac{(k+2)(m+k+1)}{(k+1)(m+k+2)} \Bigl (1- \frac{1}{n-m-k} \Bigr) 
\end{align*}and
\begin{align*} 
\frac{(k+2)(k+m+1)}{(k+1)(k+m+2)} \Bigl (1 - \frac{1}{n - m - k} \Bigr ) \Bigl( 2-  \frac{(k+3)(k+m+2)}{(k+2)(k+m+3)} \Bigl (1 - \frac{1}{n-1-m-k} \Bigr )    \Bigr) > 1.
\end{align*}The proof of these claims should be straightforward. Same as the previous argument, we have the following upper bound for $x \geq y$,
\begin{align*}
\frac{(k+2)(k+m+1)}{(k+1)(k+m+2)} \Bigl (1 - \frac{1}{n-m-k} \Bigr ) &\geq \alpha_{\tilde{P}^{(k)}} (x, y)   \text{ for } k \in \{0, \cdots, n-m-2\}.
\end{align*}Thus, for $n-2 \geq m \geq 3$, by the above upper bound, the quantity (\ref{eq:2.17}) will satisfy
\begin{align*}
 (m-2) \alpha_{\tilde{P}} - 2m \leq \frac{2(m+1)(n-m-1)}{(m+2)(n-m)} (m-2) - 2m < 2(m-2) -2m < 0.
\end{align*}In conclusion, we have $\frac{\partial}{\partial y} \alpha_P (x, y) < 0$. This finishes the proof.
\end{proof}

\subsection{General Inverse \texorpdfstring{$\sigma_k$}{} Equations and \texorpdfstring{$\Upsilon$}{}-Cones}
\label{sec:2.2}
In this subsection, we introduce the notion of $\Upsilon$-cones, which is an extension of the $C$-subsolution cone introduced by Székelyhidi \cite{szekelyhidi2018fully} and Guan \cite{guan2014second}. The arguments in this subsection might be tedious because any set in this subsection might have more than one connected components, we need to specify which connected component we are considering. After all the arguments in this subsection, there will be no ambiguity, so we may assume the set is the connected component we are interested in. \smallskip

First, let us state some widely used notations, see Spruck \cite{spruck2005geometric} for more details. For an $n$-tuple numbers $\lambda = \{ \lambda_1, \cdots, \lambda_n\}$, for $k \in \{1, \cdots, n \}$, the $k$-th elementary symmetric polynomial $\sigma_k(\lambda)$ of $\lambda$ will be
\begin{align*}
\sigma_k(\lambda) \coloneqq \sum_{1 \leq i_1 < \cdots < i_k \leq n} \lambda_{i_1} \cdots \lambda_{i_k}.
\end{align*}We also define $\sigma_0(\lambda) \coloneqq 1$ for convenience. For $l \in \{1, \cdots, n \}$ and pairwise distinct indices $i_1, \cdots, i_l$, where $i_j \in \{1, \cdots, n\}$ for all $j \in \{1, \cdots, l\}$, we denote the set $\lambda - \{ \lambda_{i_1}, \cdots, \lambda_{i_l} \}$ by $\lambda_{; i_1, \cdots, i_l}$. Consider the following general inverse $\sigma_k$ type multilinear polynomial
\begin{align*}
\lambda_1 \cdots \lambda_n -   \sum_{k = 0}^{n-1} c_k \sigma_k(\lambda).
\end{align*}By doing the substitution $\mu =  \lambda - c_{n-1}$, we get a new general inverse $\sigma_k$ type multilinear polynomial:
\begin{align*}
\label{eq:2.21}
\lambda_1 \cdots \lambda_n - \sum_{k=0}^{n-1} c_k \sigma_k(\lambda) = 
\mu_1 \cdots \mu_n - \sum_{j=0}^{n-2} d_j \sigma_j(\mu),  \tag{2.21}
\end{align*}where $\mu_i = \lambda_i - c_{n-1}$. We have the following change of variables formula.

\hypertarget{L:2.3}{\begin{flemma}}
By doing the substitution $\mu_i = \lambda_i - c_{n-1}$ for all $i \in \{1, \cdots, n\}$, we have
\begin{align*}
\sigma_k(\lambda) = \sum_{j = 0}^{k}   c_{n-1}^{k-j} \binom{n-j}{k-j}  \sigma_j(\mu)
\end{align*}and the coefficients $d_j$ for $j \in \{1, \cdots, n-2\}$ will be
\begin{align*}
d_j =   \sum_{k = j}^{n-1} c_k  c_{n-1}^{k-j} \binom{n-j}{k-j} - c_{n-1}^{n-j}.
\end{align*}In addition, after substitution, the original general inverse $\sigma_k$ type multilinear polynomial becomes
\begin{align*}
\lambda_1 \cdots \lambda_n  - \sum_{k = 0}^{n-1} c_k \sigma_k(\lambda)  = \mu_1 \cdots \mu_n  - \sum_{j = 0}^{n-2} d_j \sigma_j(\mu)
\end{align*}and for all positive integer $l$ and $i_a \in \{ 1, \cdots, n   \}$ for all $a \in \{ 1, \cdots, l\}$, we have
\begin{align*}
\frac{\partial^l}{\partial \lambda_{i_1} \cdots \partial \lambda_{i_l}}  \Bigl ( \lambda_1 \cdots \lambda_n  - \sum_{k = 0}^{n-1} c_k \sigma_k(\lambda) \Bigr) = \frac{\partial^l}{\partial \mu_{i_1} \cdots \partial \mu_{i_l}}  \Bigl ( \mu_1 \cdots \mu_n  - \sum_{j = 0}^{n-2} d_j \sigma_j(\mu)  \Bigr).
\end{align*}
\end{flemma}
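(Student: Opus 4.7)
The plan is to prove all three parts of Lemma~2.3 by a single generating-function computation, followed by a bookkeeping step. Write $c \coloneqq c_{n-1}$ throughout, so the substitution reads $\lambda_i = \mu_i + c$.

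First I would establish the elementary-symmetric-polynomial identity by comparing two expansions of the same product of linear factors. Using the classical generating function
\begin{align*}
\prod_{i=1}^{n}(z+\lambda_i) = \sum_{k=0}^{n}\sigma_k(\lambda)\,z^{n-k},
\end{align*}
I set $\lambda_i = \mu_i + c$ and rewrite
\begin{align*}
\prod_{i=1}^{n}\bigl((z+c)+\mu_i\bigr) = \sum_{j=0}^{n}\sigma_j(\mu)\,(z+c)^{n-j}.
\end{align*}
Expanding $(z+c)^{n-j}$ with the binomial theorem and matching the coefficient of $z^{n-k}$ on both sides gives exactly
\begin{align*}
\sigma_k(\lambda) = \sum_{j=0}^{k} c^{k-j}\binom{n-j}{k-j}\,\sigma_j(\mu),
\end{align*}
which is the first claim. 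I would apply this identity at $k=n$ to obtain $\sigma_n(\lambda) = \sum_{j=0}^{n} c^{n-j}\sigma_j(\mu)$ (the $\binom{n-j}{n-j}=1$ case).

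Next I would plug these expressions into $\sigma_n(\lambda) - \sum_{k=0}^{n-1} c_k\sigma_k(\lambda)$ and interchange the order of summation to collect coefficients of $\sigma_j(\mu)$. The coefficient of $\sigma_n(\mu)$ is $1$ (from the $j=n$ term in the expansion of $\sigma_n(\lambda)$), matching the $\mu_1\cdots\mu_n$ on the right-hand side. For $j \in \{0,\ldots,n-1\}$ the coefficient is
\begin{align*}
c^{n-j} - \sum_{k=j}^{n-1} c_k\, c^{k-j}\binom{n-j}{k-j}.
\end{align*}
The one subtlety to check is that the $j=n-1$ coefficient vanishes: indeed it equals $c - c_{n-1}\binom{1}{0} = c - c_{n-1} = 0$, which is precisely what makes the substitution eliminate the top-order $\sigma_{n-1}$ term and allows the right-hand sum to stop at $j=n-2$. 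The remaining coefficients for $j \in \{0,\ldots,n-2\}$ yield
\begin{align*}
d_j = \sum_{k=j}^{n-1} c_k\, c^{k-j}\binom{n-j}{k-j} - c^{n-j},
\end{align*}
which is exactly the claimed formula.

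The equality of the original polynomial with its $\mu$-expression is then an identity of polynomials, so the third assertion about partial derivatives is automatic: since $\mu_i = \lambda_i - c_{n-1}$ is a pure translation, $\partial/\partial \lambda_{i} = \partial/\partial \mu_{i}$ by the chain rule, and applying any string of such partials to both sides of the already-established polynomial identity preserves the equality. There is no genuine obstacle here; the lemma is a computational identity, and the only place where one must be careful is verifying the cancellation at $j=n-1$, which is precisely what motivates the substitution $\mu_i = \lambda_i - c_{n-1}$ in the first place.
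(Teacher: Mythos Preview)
Your proposal is correct and follows essentially the same approach as the paper: establish the identity $\sigma_k(\lambda)=\sum_{j=0}^{k}c_{n-1}^{k-j}\binom{n-j}{k-j}\sigma_j(\mu)$, substitute into $\sigma_n(\lambda)-\sum_{k=0}^{n-1}c_k\sigma_k(\lambda)$, interchange the order of summation, and read off the $d_j$. Your generating-function derivation of the $\sigma_k$ identity and your explicit check that the $j=n-1$ coefficient vanishes are slight elaborations of steps the paper merely asserts, and your chain-rule justification for the partial-derivative claim is exactly what the paper means by ``the change of variables formula.''
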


\begin{proof}First, by doing the substitution $\mu = \lambda - c_{n-1}$, we have $\lambda =  \mu + c_{n-1}$. Hence,
\begin{align*}
\sigma_k(\lambda) &= \sigma_k(\mu + c_{n-1}) = \sum_{j = 0}^{k}   c_{n-1}^{k-j} \binom{n-j}{k-j}  \sigma_j(\mu).
\end{align*}Thus, we get
\begingroup
\allowdisplaybreaks
\begin{align*}
\lambda_1 \cdots \lambda_n - \sum_{k=0}^{n-1} c_k \sigma_k(\lambda) &= 
\sum_{j = 0}^{n}   c_{n-1}^{n-j}    \sigma_j(\mu) - \sum_{k=0}^{n-1} c_k \sum_{j = 0}^{k}   c_{n-1}^{k-j} \binom{n-j}{k-j}  \sigma_j(\mu) \\
&= \mu_1 \cdots \mu_n + \sum_{j = 0}^{n-1}   c_{n-1}^{n-j}   \sigma_j(\mu) - \sum_{j=0}^{n-1}  \sum_{k = j}^{n-1} c_k  c_{n-1}^{k-j} \binom{n-j}{k-j}  \sigma_j(\mu) \\
&= \mu_1 \cdots \mu_n - \sum_{j = 0}^{n-2}  \Bigl(      \sum_{k = j}^{n-1} c_k  c_{n-1}^{k-j} \binom{n-j}{k-j}  - c_{n-1}^{n-j}    \Bigr) \sigma_j(\mu).
\end{align*}
\endgroup
So, for any $j \in \{1, \cdots, n-2\}$, we have
\begin{align*}
d_j = \sum_{k = j}^{n-1} c_k  c_{n-1}^{k-j} \binom{n-j}{k-j} - c_{n-1}^{n-j}.
\end{align*}
The rest follows by the change of variables formula. This finishes the proof.
\end{proof}

For convenience, by above Lemma~\hyperlink{L:2.3}{2.3}, we may assume that $c_{n-1} = 0$ by doing a translation. In most of the proofs in this subsection, we will do this substitution to simplify the proofs. We consider the following general inverse $\sigma_k$ type multilinear polynomial instead.
\begin{align*}
\label{eq:2.22}
f(\lambda) = f(\lambda_1, \cdots, \lambda_n) \coloneqq \lambda_1 \cdots \lambda_n    - \sum_{k = 0}^{n-2} c_k \sigma_k(\lambda). \tag{2.22}
\end{align*}

Now, we state the definition of $C$-subsolution here which was introduced by Székelyhidi \cite{szekelyhidi2018fully} and Guan \cite{guan2014second}. We will slightly adjust the settings in \cite{szekelyhidi2018fully} because in this work we mainly focus on the level set not a global section on the manifold $M$.

\hypertarget{D:2.4}{\begin{fdefi}[\texorpdfstring{$C$}{}-Subsolution. Székelyhidi \cite{szekelyhidi2018fully}, Guan \cite{guan2014second} and Trudinger \cite{trudinger1995dirichlet}]}Consider an equation $f(\lambda_1, \cdots, \lambda_n) = h$, where $f(\lambda_1, \cdots, \lambda_n)$ is a smooth symmetric function of variables $\{\lambda_1, \cdots, \lambda_n\}$. We assume that $f$ is defined in an open symmetric cone $\Gamma_f \subset \mathbb{R}^n$ satisfying $f > 0$, $\partial f/\partial \lambda_i > 0$ for all $i \in \{1, \cdots, n\}$ on $\Gamma_f$, and $\sup_{\partial \Gamma_f}  f <  h$. We say that $\mu = (\mu_1, \cdots, \mu_n) \in \mathbb{R}^n$ is a $C$-suboslution to the equation $f = h$ if the following set
\begin{align*}
\label{eq:2.23}
F^{h}(\mu)  \coloneqq \bigl \{  \lambda  \colon  {f}(\lambda) = h \  \text{ and } \  \lambda - \mu = (\lambda_1 - \mu_1, \cdots, \lambda_n - \mu_n) \in \Gamma_n  \bigr \} \tag{2.23}
\end{align*}is bounded. By collecting all the $C$-subsolutions, we call this collection the $C$-subsolution cone.
\end{fdefi}

\hypertarget{D:2.5}{\begin{fdefi}[Alternative definition of Definition~\hyperlink{D:2.4}{2.4}. Székelyhidi \cite{szekelyhidi2018fully} and Trudinger \cite{trudinger1995dirichlet}]} 
Suppose that $f$ is defined in an open symmetric cone $\Gamma_f \subset \mathbb{R}^n$ satisfying $f > 0$, $\partial f/\partial \lambda_i > 0$ for all $i \in \{1, \cdots, n\}$ on $\Gamma_f$, and $\sup_{\partial \Gamma_f}  f <  h$. Define 
\begin{align*}
\label{eq:2.24}
\Gamma_f^h \coloneqq \bigl \{ \lambda \in \Gamma_f \colon f(\lambda) > h  \bigr \}. \tag{2.24}
\end{align*}For $\mu \in \mathbb{R}^n$, set (\ref{eq:2.23}) $F^{h}(\mu)$ is bounded if and only if $\lim_{t \rightarrow \infty} f(\mu + t e_i) > h$ for all $i \in \{1, \cdots, n\}$, where $e_i$ is the $i$-th standard vector. We denote by ${\Gamma}_f^{n-1, h}$ the projection of $\Gamma_f^h$ onto $\mathbb{R}^{n-1}$ by dropping the last entry. Then for any $\mu' = (\mu_1, \cdots, \mu_{n-1} ) \in {\Gamma}_f^{n-1, h}$, define the function $f^{(n-1)}$ on ${\Gamma}_f^{n-1, h}$ by the following limit 
\begin{align*}
f^{(n-1)} (\mu_1, \cdots, \mu_{n-1}) &\coloneqq \lim_{\lambda_n \rightarrow \infty} f(\mu_1, \cdots, \mu_{n-1}, \lambda_n) \geq 0.
\end{align*}
First, the set $F^{h}(\mu)$ is bounded if and only if $f^{(n-1)} \bigl(\mu_{s(1)}, \cdots, \mu_{s(n-1)} \bigr) > h$ for every $s \in S_n$, where $S_n$ is the symmetric group. This is well-defined since $f$ is a symmetric function. We can show that for any $\mu \in \mathbb{R}^n$, $F^h(\mu)$ is bounded if and only if $\bigl(\mu_{s(1)}, \cdots, \mu_{s(n-1)}\bigr) \in \Gamma^{n-1, h}_f$ for every $s \in S_n$.
\end{fdefi}

Let $\Gamma_f^{n}$ be a connected component of $\{f(\lambda) > 0\}$, we are interested in whether there exists a connected component of $\{f(\lambda) > 0\}$ contained in the positive orthant $\Gamma_n$ after translation. Inspired by the work of Trudinger \cite{trudinger1995dirichlet} on the Dirichlet problem (over the reals) for equations of the eigenvalues of the Hessian, the results of Caffarelli--Nirenberg--Spruck \cite{caffarelli1985dirichlet}, and the results of Collins--Székelyhidi \cite{collins2017convergence}. In \cite{lin2022}, the author introduced the $\Upsilon$-cones to keep track of the information of the original equation as much as possible. We abstractly define the following sets.
\hypertarget{D:2.6}{\begin{fdefi}[\texorpdfstring{$\Upsilon$}{}-cones. Lin \cite{lin2022}]}
Let $f(\lambda) \coloneqq  \lambda_1 \cdots \lambda_n - \sum_{k = 0}^{n-1} c_k \sigma_k(\lambda)$ be a general inverse $\sigma_k$ type multilinear polynomial and $\Gamma_f^{n}$ be a connected component of $\{f(\lambda) > 0\}$, we denote by $\Gamma^{n-1}_f$ the projection of $\Gamma^{n}_f$ onto $\mathbb{R}^{n-1}$ by dropping the last entry. We define  
\begin{align*}
\Upsilon_1 \coloneqq \bigl \{  \mu \in \mathbb{R}^n  \colon    \bigl(\mu_{s(1)}, \cdots, \mu_{s(n-1)}\bigr) \in \Gamma^{n-1}_f,\quad \forall s \in S_n \bigr \},  
\end{align*}where $S_n$ is the symmetric group. For $n-1 \geq k \geq 2$, we define the following $\Upsilon$-cones
\begin{align*}
\Upsilon_k  \coloneqq \bigl \{  \mu \in \mathbb{R}^n  \colon    \bigl(\mu_{s(1)}, \cdots, \mu_{s(n-k)}\bigr) \in \Gamma^{n-k}_f,\quad \forall s \in S_n  \bigr \},  
\end{align*}where we define $ \Gamma^{n-k}_f$ inductively by the projection of $ \Gamma^{n+1-k}_f$ onto $\mathbb{R}^{n-k}$ by dropping the last entry.
\end{fdefi}


\hypertarget{D:2.7}{\begin{fdefi}[\texorpdfstring{$\Upsilon$}{}-stableness]}
Let $f(\lambda) \coloneqq   \lambda_1 \cdots \lambda_n - \sum_{k = 0}^{n-1} c_k \sigma_k(\lambda)$ be a general inverse $\sigma_k$ type multilinear polynomial and $\Gamma_f^{n}$ be a connected component of $\{f(\lambda) > 0\}$. We say that this connected component $\Gamma^n_f$ of $f(\lambda)$ is $\Upsilon$-stable if
\begin{align*}
\Gamma^n_f \subseteq  q +  \Gamma_n \text{ for some } q \in \mathbb{R}^n, 
\end{align*}where $\Gamma_n$ is the positive orthant of $\mathbb{R}^n$. We say that this connected component $\Gamma^n_f$ is strictly $\Upsilon$-stable if it is $\Upsilon$-stable and the boundary $\partial \Gamma^n_f$ is contained in the $\Upsilon_1$-cone. 
\end{fdefi}

\hypertarget{R:2.3}{\begin{frmk}}
Let $f(\lambda) \coloneqq   \lambda_1 \cdots \lambda_n - \sum_{k = 0}^{n-1} c_k \sigma_k(\lambda)$ be a general inverse $\sigma_k$ type multilinear polynomial and $\Gamma_f^{n}$ be a connected component of $\{f(\lambda) > 0\}$. We will show that if $\Gamma_f^{n}$ is strictly $\Upsilon$-stable, then the symmetric cone $\Gamma_f$ in Definition~\hyperlink{D:2.4}{2.4} will always be contained in the $\Upsilon_1$-cone. Normally, we consider the largest possible $\Gamma_f$, which is in fact the $\Upsilon_1$-cone. So the $\Upsilon_1$-cone is the same as the $C$-subsolution cone introduced by Székelyhidi \cite{szekelyhidi2018fully}.
\end{frmk}

\hypertarget{L:2.4}{\begin{flemma}}
Let $f(\lambda) \coloneqq  \lambda_1 \cdots \lambda_n - \sum_{k = 0}^{n-1} c_k \sigma_k(\lambda)$ be a general inverse $\sigma_k$ type multilinear polynomial and $\Gamma_f^{n}$ be a connected component of $\{f(\lambda) > 0\}$. If $\Gamma^n_f$ is $\Upsilon$-stable, say $\Gamma^n_f \subseteq  q +  \Gamma_n$ with $q = (q_1, \cdots, q_n)$, then 
\begin{align*}
\label{eq:2.25}
\Gamma^n_f \subseteq  \Upsilon_1 \subseteq \Upsilon_2  \subseteq \cdots \subseteq \Upsilon_{n-1} = (c_{n-1}, \cdots, c_{n-1}) + \Gamma_n, \tag{2.25}
\end{align*}$c_{n-1} \geq q_i$ for all $i \in \{1, \cdots, n\}$. For any $l \in \{1, \cdots, n-1\}$, we have $\Upsilon_l$ is open, connected, and
\begin{align*}
\label{eq:2.26}
\Upsilon_l  =      \bigcap_{1 \leq i_1 < \cdots < i_l \leq n} \Gamma^n_{ f_{i_1 \cdots i_l} } =  \bigcap_{1 \leq i_1 < \cdots < i_l \leq n} \Bigl \{   \sigma_{n-l} (\lambda_{; i_1, \cdots, i_l})   - \sum_{k=l}^{n-1} c_k \sigma_{k - l}(\lambda_{;i_1, \cdots, i_l})  > 0  \Bigr \}. \tag{2.26}
\end{align*}Here, we write $f_{i_1 \cdots i_l}$ as the $l$-th partial derivative $\frac{\partial^l f}{\partial \lambda_{i_1} \cdots \partial \lambda_{i_l}}$.
\end{flemma}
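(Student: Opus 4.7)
The plan is to reduce to the case $c_{n-1} = 0$ via the translation in Lemma~\hyperlink{L:2.3}{2.3}, so that the bottom of the chain becomes $\Upsilon_{n-1} = \Gamma_n$. The heart of the proof is to identify each iterated projection $\Gamma^{n-l}_f$ with a connected component of $\{f_{n,n-1,\dots,n-l+1} > 0\}$; once this is established, the $S_n$-symmetry of $f$ converts Definition~\hyperlink{D:2.6}{2.6} into the intersection formula (\ref{eq:2.26}), and from there the chain of inclusions, openness, and the identification of $\Upsilon_{n-1}$ all follow. The inclusion $\Gamma^n_f \subseteq \Upsilon_1$ is then immediate from symmetry and the definition of $\Gamma^{n-1}_f$ as a projection.

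The central step I would isolate first is an auxiliary projection identity. Let $F$ be a general inverse $\sigma_k$ type multilinear polynomial in $m$ variables and let $\Gamma^m_F$ be an $\Upsilon$-stable connected component of $\{F > 0\}$. Then the projection $\Gamma^{m-1}_F$ obtained by dropping the last coordinate equals a connected component of $\{F_m > 0\}$ in $\mathbb{R}^{m-1}$, where $F_m = \partial F/\partial \lambda_m$. Since $F$ is linear in $\lambda_m$, for fixed $\lambda' = (\lambda_1,\dots,\lambda_{m-1})$ the slice $\{\lambda_m : F(\lambda',\lambda_m) > 0\}$ is $(t^*,\infty)$ when $F_m(\lambda') > 0$, all of $\mathbb{R}$ when $F_m(\lambda') = 0$ and $F(\lambda', 0) > 0$, and $(-\infty, t^*)$ when $F_m(\lambda') < 0$. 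In the last two cases the connected lift of the entire slice lies in $\{F > 0\}$ and meets $\Gamma^m_F$, so is contained in $\Gamma^m_F$ but reaches $\lambda_m = -\infty$, contradicting $\Gamma^m_F \subseteq q + \Gamma_m$. Hence $F_m > 0$ on the projection, and the reverse inclusion follows by a standard open-plus-relatively-closed argument based on lifting $\lambda'$ by arbitrarily large $\lambda_m$ (using continuity of $F_m$ and the fact that such a ray eventually lands in $\{F > 0\}$, hence in $\Gamma^m_F$).

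Applying this identity iteratively, using that each $f_{n,n-1,\dots,n-l+1}$ is again a general inverse $\sigma_k$ type multilinear polynomial and that its associated component inherits $\Upsilon$-stability from $\Gamma^n_f \subseteq q + \Gamma_n$, gives $\Gamma^{n-l}_f$ as a connected component of $\{f_{n,\dots,n-l+1} > 0\}$. Permutation symmetry of $f$ then turns the symmetrized Definition~\hyperlink{D:2.6}{2.6} into $\Upsilon_l = \bigcap_{i_1 < \cdots < i_l} \Gamma^n_{f_{i_1\cdots i_l}}$, which is open as an intersection of open sets. The inclusion $\Upsilon_l \subseteq \Upsilon_{l+1}$ follows because one more differentiation corresponds to one more projection. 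For $l = n-1$, direct computation gives $f_{i_1\cdots i_{n-1}} = \lambda_j - c_{n-1}$ for the remaining index $j$, so $\Upsilon_{n-1} = (c_{n-1},\dots,c_{n-1}) + \Gamma_n$.

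The inequality $c_{n-1} \geq q_i$ I would verify by a direct test: expanding gives $f(x, N, \dots, N) = (x - c_{n-1}) N^{n-1} + O(N^{n-2})$, so for any $x > c_{n-1}$ and $N$ sufficiently large this point lies in $\{f > 0\}$ and is path-connected (through $(t,t,\dots,t)$ for $t \geq N$) to the diagonal at infinity inside $\{f > 0\}$, hence in $\Gamma^n_f$; symmetry then forces every $q_i \leq c_{n-1}$. The main obstacle I expect is the connectedness of $\Upsilon_l$, since intersections of connected open sets need not be connected. I would handle this by first showing $\Upsilon_l$ is invariant under the translation $\mu \mapsto \mu + t\mathbf{1}$ for $t \geq 0$: along such a ray, each $f_{i_1\cdots i_l}$ has strictly positive directional derivative, because its own first partials are positive on $\Gamma^n_{f_{i_1\cdots i_l}}$ (the same projection argument applied one level deeper), so its value stays positive and one cannot exit the component. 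Given this invariance, any two points $\mu, \mu' \in \Upsilon_l$ can be joined by first flowing both up to $\mu + T\mathbf{1}$ and $\mu' + T\mathbf{1}$, then connecting via the straight segment between them for $T$ so large that the entire segment sits in every $\Gamma^n_{f_{i_1\cdots i_l}}$ (by the leading-term asymptotics of each $f_{i_1\cdots i_l}$), producing a path in $\Upsilon_l$.
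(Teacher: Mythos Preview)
Your proposal is correct and follows essentially the same strategy as the paper: both arguments hinge on the observation that, by linearity of $f$ in each variable together with the lower bound from $\Upsilon$-stability, every first partial $f_i$ must be strictly positive on $\Gamma^n_f$, and then iterate this to identify the projections $\Gamma^{n-l}_f$ with components of the higher partials. The organization differs slightly---the paper runs a single induction on the degree and uses an explicit section $\lambda_1(\lambda_2,\dots,\lambda_m)$ to propagate $\Upsilon$-stability to $\{f_i>0\}$, whereas you package this as a reusable projection lemma; and for connectedness the paper builds piecewise-linear paths to the diagonal point $(\lambda_{\max},\dots,\lambda_{\max})$ while you translate by $t\mathbf{1}$ and join by a segment at large $T$---but these are minor variants of the same monotonicity-plus-asymptotics idea.
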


\hypertarget{R:2.4}{\begin{frmk}}
Notice that for the above Lemma~\hyperlink{L:2.4}{2.4}, we need to specify each connected component inductively on the subindices to avoid ambiguity. For example, when $l = n-2$, the set  
\begin{align*}
&\kern-1em \bigcap_{1 \leq i_1 < \cdots < i_{n-2} \leq n} \Bigl \{   \sigma_{2} (\lambda_{; i_1, \cdots, i_{n-2}})   - \sum_{k=n-2}^{n-1} c_k \sigma_{k - (n-2)}(\lambda_{;i_1, \cdots, i_{n-2}})  > 0  \Bigr \} \\
 &= \bigcap_{1 \leq i_1  < i_{2} \leq n} \bigl \{  \lambda_{i_1} \lambda_{i_2}  - c_{n-1} (\lambda_{i_1} + \lambda_{i_2}) -   c_{n-2}   > 0  \bigr \}
\end{align*}will have two connected components. We specify the one which is contained in the next $\Upsilon$-cone $\Upsilon_{n-1} = (c_{n-1}, \cdots, c_{n-1}) + \Gamma_n$. Similarly, we specify the connected component inductively til $\Upsilon_1$ by decreasing the subindices. But for notational convention, we abbreviate these expressions. 
\end{frmk}

The $\Upsilon$-stableness in fact gives us some constraints on the coefficients $\{c_k\}_{k = 0, \cdots, n-1}$. For example, if $\Gamma^n_f$ is $\Upsilon$-stable, then $c_{n-1}^2 + c_{n-2} \geq 0$. Otherwise, if $c_{n-1}^2 + c_{n-2} < 0$, then  
\begin{align*}
\bigcap_{1 \leq i_1  < i_{2} \leq n} \bigl \{  \lambda_{i_1} \lambda_{i_2}  - c_{n-1} (\lambda_{i_1} + \lambda_{i_2})   -   c_{n-2}   > 0  \bigr \}
\end{align*}is not contained in $\Upsilon_{n-1} = (c_{n-1}, \cdots, c_{n-1}) + \Gamma_n$, which violates the above Lemma~\hyperlink{L:2.4}{2.4}. Later on, we will prove that the $\Upsilon$-stableness condition is equivalent to the right-Noetherianness condition for the class of general inverse $\sigma_k$ type multilinear polynomials. So the constraints can be derived using the resultants and can be written explicitly when the degree is low, see \cite{lin2022}. We will show some examples in Section~\ref{sec:4} when the degree is less than or equal to four.

\begin{proof}[Proof of Lemma~\hyperlink{L:2.4}{2.4}]
We prove this by mathematical induction on the degree $n$. We also show that there exists a unique connected component that intersects and is entirely contained in the next $\Upsilon$-cone if $\Gamma^n_f$ is $\Upsilon$-stable. After translation, we may assume $c_{n-1} = 0$ for convenience and say $\Gamma^n_f$ is contained in $q + \Gamma_n$ with $q = (q_1, \cdots, q_n) \in \mathbb{R}^n$. When $n = 1$, $f = \lambda_1$, we immediately get
\begin{align*}
\Gamma^1_f = \{\lambda_1 > 0\} = \Gamma_1.
\end{align*}So $c_{1-1} = c_0 = 0 \geq q_1$, otherwise $\Gamma^1_f$ will not be contained in $q + \Gamma_1$. When $n = 2$, $f = \lambda_1 \lambda_2 - c_0$, several cases must be considered. If $c_0 < 0$, then $\{\lambda_1 \lambda_2 - c_0 > 0\}$ will not be contained in $q + \Gamma_2$ for any $q \in \mathbb{R}^2$, which contradicts the hypothesis that $\Gamma^2_f$ is $\Upsilon$-stable. If $c_0 = 0$, then $\{  \lambda_1 \lambda_2 > 0\}$ has two connected components. Since $\Gamma^2_f$ is $\Upsilon$-stable, $\Gamma^2_f$ is actually the positive orthant, which is one of the connected components of $\{  \lambda_1 \lambda_2 > 0\}$. So $c_{2-1} = c_1 = 0 \geq q_i$ for any $i \in \{1, 2\}$. By the definition of $\Upsilon$-cones, we get 
\begin{align*}
\Upsilon_1 \coloneq \bigl \{  \mu \in \mathbb{R}^2  \colon    \bigl(\mu_{s(1)} \bigr) \in \Gamma^{1}_f,\  \forall s \in S_2 \bigr \} = \bigl \{  \mu \in \mathbb{R}^2  \colon    \bigl(\mu_{s(1)} \bigr) \in \Gamma_{1},\  \forall s \in S_2 \bigr \}  = \Gamma_2.
\end{align*}Similarly, if $c_0 > 0$, then $\{  \lambda_1 \lambda_2  -c_0 > 0\}$ has two connected components. Since $\Gamma^2_f$ is $\Upsilon$-stable, $\Gamma^2_f$ is the one contained in the positive orthant $\Gamma_2$ and $c_{2-1} = c_1 = 0 \geq q_i$ for any $i \in \{1, 2\}$. By the definition of $\Upsilon$-cones, we get 
\begin{align*}
\Upsilon_1 \coloneq \bigl \{  \mu \in \mathbb{R}^2  \colon    \bigl(\mu_{s(1)} \bigr) \in \Gamma^{1}_f,\  \forall s \in S_2 \bigr \} = \bigl \{  \mu \in \mathbb{R}^2  \colon    \bigl(\mu_{s(1)} \bigr) \in \Gamma_{1},\  \forall s \in S_2 \bigr \}  = \Gamma_2.
\end{align*}In conclusion, when $n= 2$, $\Gamma^2_f$ is the unique connected component of $\{f(\lambda) > 0\}$ contained in
\begin{align*}
\Upsilon_1=  \Gamma_2 = \bigcap_{1 \leq i_1   \leq 2} \bigl \{   \sigma_{1} (\lambda_{; i_1})    > 0  \bigr \} = \{ \lambda_1 > 0 \} \cap  \{ \lambda_2 > 0 \}. 
\end{align*}Suppose that the statement is true when $n = m-1$. When $n = m$, we have
\begin{align*}
f(\lambda) = \lambda_1 \cdots \lambda_m - \sum_{k=0}^{m-2} c_k \sigma_k(\lambda).
\end{align*}Suppose that there exists $(\lambda_1, \cdots, \lambda_m) \in \Gamma_f^m$, such that
\begin{align*}
0 \geq \frac{\partial f}{\partial \lambda_i } (\lambda_1, \cdots, \lambda_m) = f_i(\lambda_1, \cdots, \lambda_m) = \sigma_{m-1}(\lambda_{;i}) - \sum_{k=1}^{m-2} c_k \sigma_{k-1} (\lambda_{; i})
\end{align*}for some $i \in \{1, \cdots, m\}$ and we denote $\partial f/\partial \lambda_i$ by $f_i$. By fixing other entries, for $\tilde{\lambda}_i \leq \lambda_i$, we get
\begin{align*}
&\kern-1em \tilde{\lambda}_i \bigl( \sigma_{m-1}(\lambda_{;i}) - \sum_{k=1}^{m-2} c_k \sigma_{k-1} (\lambda_{; i}) \bigr) - \sum_{k=0}^{m-2} c_k \sigma_k(\lambda_{;i})  \\
&\geq {\lambda}_i \bigl( \sigma_{m-1}(\lambda_{;i}) - \sum_{k=1}^{m-2} c_k \sigma_{k-1} (\lambda_{; i}) \bigr) - \sum_{k=0}^{m-2} c_k \sigma_k(\lambda_{;i}) = f(\lambda_1, \cdots, \lambda_m) > 0.
\end{align*}This implies that $(\lambda_1, \cdots, \lambda_{i-1}, \tilde{\lambda}_i, \lambda_{i+1}, \cdots, \lambda_m) \in \Gamma^m_f$ for all $\tilde{\lambda}_i \leq \lambda_i$ due to the assumption that $\Gamma^m_f$ is connected. By letting $\tilde{\lambda}_i < q_i$, we see that $\Gamma^m_f$ will not be contained in $q + \Gamma_m$, which leads to a contradiction. So for any $i \in \{1, \cdots,m\}$, $\Gamma^m_f$ will be contained in one of the connected component of  
\begin{align*}
\Bigl \{ f_i =  \sigma_{m-1}(\lambda_{;i}) - \sum_{k=1}^{m-2} c_k \sigma_{k-1} (\lambda_{; i}) > 0    \Bigr\}. 
\end{align*}For any $(\lambda_1, \cdots, \lambda_m) \in \Gamma^m_f$, since $\Gamma^m_f$ is contained in $\bigl \{  \sigma_{m-1}(\lambda_{;i}) - \sum_{k=1}^{m-2} c_k \sigma_{k-1} (\lambda_{; i}) > 0    \bigr\}$ for all $i \in \{1, \cdots, m\}$, we may construct a piecewise linear path connecting $(\lambda_1, \cdots, \lambda_m)$ to $(\lambda_{\text{max}}, \cdots, \lambda_{\text{max}})$, where $\lambda_{\text{max}} \coloneqq \max \{ \lambda_1, \cdots, \lambda_m\}$. Similarly, we can construct a piecewise linear path connecting $(\lambda_{s(1)}, \cdots, \lambda_{s(m)})$ to $(\lambda_{\text{max}}, \cdots, \lambda_{\text{max}})$ for all $s \in S_m$. For any $(\tilde{\lambda}_1, \cdots, \tilde{\lambda}_m)$ on these continuity paths, we always have
\begin{align*}
\tilde{\lambda}_1 \cdots \tilde{\lambda}_m - \sum_{k=0}^{m-2} c_k \sigma_k(\tilde{\lambda}) > 0.
\end{align*}This implies that $(\lambda_{s(1)}, \cdots, \lambda_{s(m)})$ are in the same connected component as $(\lambda_1, \cdots, \lambda_m)$ for all $s \in S_m$. Hence, $(\lambda_{s(1)}, \cdots, \lambda_{s(m)}) \in \Gamma^m_f$ for all $s \in S_m$. As a consequence, we similarly obtain that each connected component of $\bigl \{  \sigma_{m-1}(\lambda_{; i}) - \sum_{k=1}^{m-2} c_k \sigma_{k-1} (\lambda_{; i}) > 0    \bigr\}$ will be the same for all $i \in \{1, \cdots, m\}$ after change of variables. Next, we prove that this connected component of $\bigl \{  \sigma_{m-1}(\lambda_{;i}) - \sum_{k=1}^{m-2} c_k \sigma_{k-1} (\lambda_{; i}) > 0    \bigr\}$ is contained in $(q_1, \cdots, q_{i-1}, q_{i+1}, \cdots, q_m) + \Gamma_{m-1}$ by ignoring the Cartesian product $\mathbb{R}$ term. Without loss of generality, we only consider the case $i=1$. Let $(\tilde{\lambda}_1, \cdots, \tilde{\lambda}_m) \in \Gamma^m_f$ and consider the following section 
\begin{align*}
\lambda_1(\lambda_2, \cdots, \lambda_m) \coloneqq \frac{\sum_{k=0}^{m-2} c_k \sigma_k(\lambda_{; 1}) + f(\tilde{\lambda})}{\sigma_{m-1}(\lambda_{; 1}) - \sum_{k=1}^{m-2} c_k \sigma_{k-1} (\lambda_{; 1})},
\end{align*}where $f(\tilde{\lambda}) = \sigma_m(\tilde{\lambda}) - \sum_{k=0}^{m-2} c_k \sigma_k(\tilde{\lambda}) > 0$. Notice that this section $\lambda_1$ in $\Gamma^m_f$ is defined on this connected component of $\bigl\{\sigma_{m-1}(\lambda_{; 1}) - \sum_{k=1}^{m-2} c_k \sigma_{k-1} (\lambda_{; 1}) > 0 \bigr\}$, continuous, and 
\begin{align*}
\lambda_1(\tilde{\lambda}_2, \cdots, \tilde{\lambda}_m) = \frac{\sum_{k=0}^{m-2} c_k \sigma_k(\tilde{\lambda}_{; 1}) + f(\tilde{\lambda})}{\sigma_{m-1}(\tilde{\lambda}_{; 1}) - \sum_{k=1}^{m-2} c_k \sigma_{k-1} (\tilde{\lambda}_{; 1})} = \tilde{\lambda}_1.
\end{align*}Moreover, for any $(\lambda_2, \cdots, \lambda_m) \in \bigl \{  \sigma_{m-1}(\lambda_{;1}) - \sum_{k=1}^{m-2} c_k \sigma_{k-1} (\lambda_{; 1}) > 0    \bigr\}$ we get
\begin{align*}
\lambda_1(\lambda_2, \cdots, \lambda_m) \lambda_2 \cdots \lambda_m - \lambda_1(\lambda_2, \cdots, \lambda_m) \sum_{k=1}^{m-2} c_k \sigma_{k-1} c_k \sigma_{k-1}(\lambda_{;1})   - \sum_{k = 0}^{m-2} c_k \sigma_k(\lambda_{; 1}) = f(\tilde{\lambda}) > 0.
\end{align*}Thus, if this connected component of $\bigl \{  \sigma_{m-1}(\lambda_{;1}) - \sum_{k=1}^{m-2} c_k \sigma_{k-1} (\lambda_{; 1}) > 0    \bigr\}$ is not contained in $(q_2, \cdots,   q_m) + \Gamma_{m-1}$, then $\Gamma^m_f$ is not contained in $q + \Gamma_m$. This contradicts the hypothesis that $\Gamma^m_f$ is $\Upsilon$-stable. Similarly, for all $i \in \{1, \cdots, m\}$, this connected component of $\bigl \{  \sigma_{m-1}(\lambda_{;i}) - \sum_{k=1}^{m-2} c_k \sigma_{k-1} (\lambda_{; i}) > 0    \bigr\}$ is contained in $(q_1, \cdots, q_{i-1}, q_{i+1}, \cdots, q_m) + \Gamma_{m-1}$ by ignoring the Cartesian product $\mathbb{R}$ term. Hence, this connected component of $\bigl \{  \sigma_{m-1}(\lambda_{;i}) - \sum_{k=1}^{m-2} c_k \sigma_{k-1} (\lambda_{; i}) > 0    \bigr\}$ is $\Upsilon$-stable and will be the unique connected component contained in the $\Upsilon_1$-cone of $\Gamma^{m-1}_{f_i}$ by mathematical induction.\smallskip

 We now show that the $\Upsilon_1$-cone of $\Gamma^m_f$ is exactly
\begin{align*}
 \bigcap_{1 \leq i \leq m} \Bigl \{   \sigma_{m-1} (\lambda_{; i })   - \sum_{k=1}^{m-2} c_k \sigma_{k - 1}(\lambda_{;i})  > 0  \Bigr \}.
\end{align*}Then, the rest follows from mathematical induction. By the previous arguments, we know 
\begin{align*}
\Gamma^m_f \subseteq \bigcap_{1 \leq i \leq m} \Bigl \{   \sigma_{m-1} (\lambda_{; i })   - \sum_{k=1}^{m-2} c_k \sigma_{k - 1}(\lambda_{;i})  > 0  \Bigr \}.
\end{align*}For any $(\lambda_1, \cdots, \lambda_m) \in \Gamma^m_f$ and $i \in \{1, \cdots, m\}$, we have $\sigma_{m-1} (\lambda_{; i })   - \sum_{k=1}^{m-2} c_k \sigma_{k - 1}(\lambda_{;i})  > 0$. By the definition of $\Upsilon_1$, we obtain that $\Upsilon_1 \subseteq \bigcap_{1 \leq i \leq m} \bigl \{   \sigma_{m-1} (\lambda_{; i })   - \sum_{k=1}^{m-2} c_k \sigma_{k - 1}(\lambda_{;i})  > 0  \bigr \}$. On the other hand, for any $(\lambda_1, \cdots, \lambda_m) \in \bigcap_{1 \leq i \leq m} \bigl \{   \sigma_{m-1} (\lambda_{; i })   - \sum_{k=1}^{m-2} c_k \sigma_{k - 1}(\lambda_{;i})  > 0  \bigr \}$, we define the following continuous section
\begin{align*}
\lambda_i(\lambda_1, \cdots, \lambda_{i-1}, \lambda_{i+1}, \cdots, \lambda_m) \coloneqq \frac{\sum_{k=0}^{m-2} c_k \sigma_k(\lambda_{; i}) + f(\tilde{\lambda})}{\sigma_{m-1}(\lambda_{; i}) - \sum_{k=1}^{m-2} c_k \sigma_{k-1} (\lambda_{; i})},
\end{align*}where $\tilde{\lambda} \in \Gamma^m_f$. Similar to before, we can show that
\begin{align*}
\bigl(\lambda_1, \cdots, \lambda_{i-1}, \lambda_i(\lambda_1, \cdots, \lambda_{i-1}, \lambda_{i+1}, \cdots, \lambda_m), \lambda_{i+1}, \cdots, \lambda_m \bigr) \in \Gamma^m_f. 
\end{align*}This implies that $(\lambda_1, \cdots, \lambda_{i-1}, \lambda_{i+1}, \cdots, \lambda_m ) \in \Gamma^{m-1}_f$ for any $i \in \{1, \cdots, m\}$. Hence, by the definition of $\Upsilon_1$-cone, we get $(\lambda_1, \cdots, \lambda_m) \in \Upsilon_1$. In conclusion, we obtain
\begin{align*}
\Upsilon_1 = \bigcap_{1 \leq i \leq m} \Bigl \{   \sigma_{m-1} (\lambda_{; i })   - \sum_{k=1}^{m-2} c_k \sigma_{k - 1}(\lambda_{;i})  > 0  \Bigr \}.
\end{align*}By mathematical induction and (\ref{eq:2.26}), since $ \bigl \{   \sigma_{m-1} (\lambda_{; i })   - \sum_{k=1}^{m-2} c_k \sigma_{k - 1}(\lambda_{;i})  > 0  \bigr \}$ is $\Upsilon$-stable, 
\begin{align*}
\bigcap_{1 \leq i \leq m} \Bigl \{   \sigma_{m-1} (\lambda_{; i })   - \sum_{k=1}^{m-2} c_k \sigma_{k - 1}(\lambda_{;i})  > 0  \Bigr \} &\subseteq \bigcap_{1 \leq i < j \leq m} \Bigl \{   \sigma_{m-2} (\lambda_{; i, j})   - \sum_{k=2}^{m-2} c_k \sigma_{k - 2}(\lambda_{;i, j})  > 0  \Bigr \}  \\
&\subseteq  \cdots \subseteq \bigcap_{1 \leq i   \leq m} \Bigl \{   \lambda_i   > 0  \Bigr \} = \Gamma_m.
\end{align*}Last, we show that there exists a unique connected component of $\{ f(\lambda) > 0\}$ so that the intersection with $\Upsilon_1$ is not empty. Let $(\tilde{\lambda}_1, \cdots, \tilde{\lambda}_m) \in  \Upsilon_1 \backslash  {\Gamma}^m_f$ and 
\begin{align*}
\tilde{\lambda}_1 \cdots \tilde{\lambda}_m - \sum_{k=0}^{m-2} c_k \sigma_k(\tilde{\lambda}) > 0.
\end{align*}Similar to before, because $(\tilde{\lambda}_1, \cdots, \tilde{\lambda}_m) \in \Upsilon_1$, we can find a piecewise linear path connecting $(\tilde{\lambda}_1, \cdots, \tilde{\lambda}_m)$ to $(\lambda, \cdots, \lambda)$ for some $\lambda > 0$ large. In addition, along this path, any point will satisfy $f > 0$. We can connect any point in ${\Gamma}^m_f$ to $(\tilde{\lambda}_1, \cdots, \tilde{\lambda}_m)$, thus $(\tilde{\lambda}_1, \cdots, \tilde{\lambda}_m)$ will be in the same connected component, which is a contradiction. This finishes the proof.
\end{proof}

In the proof of Lemma~\hyperlink{L:2.4}{2.4}, we also obtain the following result, let us list this result here.

\hypertarget{L:2.5}{\begin{flemma}}
Let $f(\lambda) \coloneqq  \lambda_1 \cdots \lambda_n - \sum_{k = 0}^{n-1} c_k \sigma_k(\lambda)$ be a general inverse $\sigma_k$ type multilinear polynomial and $\Gamma_f^{n}$ be a connected component of $\{f(\lambda) > 0\}$. If $\Gamma^n_f$ is $\Upsilon$-stable, then for any $\lambda \in \Upsilon_l$,
\begin{align*}
\lambda + \overline{\Gamma}_n \subset \Upsilon_l
\end{align*}for $l \in \{0, 1, \cdots, n-1\}$. Here, we write $\Gamma^n_f = \Upsilon_0$ and $\overline{\Gamma}_n$ is the closure of $\Gamma_n$. As a consequence, $\Upsilon_l$ will be a connected open set for any $l \in \{0, 1, \cdots, n-1\}$ and as a set, 
\begin{align*}
\Upsilon_{l} = \bigcap_{1 \leq i_1 < \cdots < i_l \leq n} \Bigl \{   \sigma_{n-l} (\lambda_{; i_1, \cdots, i_l})   - \sum_{k=l}^{n-1} c_k \sigma_{k - l}(\lambda_{;i_1, \cdots, i_l})  > 0  \Bigr \} \cap \Upsilon_{l+1}.
\end{align*}In particular, for any $\lambda \in \bigl\{\lambda_1 \cdots \lambda_n - \sum_{k=0}^{n-2} c_k \sigma_k(\lambda) > 0 \bigr\} \cap \Upsilon_1$, we have $\lambda \in \Gamma^n_f$.
\end{flemma}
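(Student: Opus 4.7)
The plan is to prove all three conclusions --- closure under $\overline{\Gamma}_n$, the connectedness-and-intersection identity, and the ``in particular'' claim --- together by downward induction on $l$ from $n-1$ to $0$. The engine is the identity
\[
\frac{\partial}{\partial \lambda_j}\Bigl(\sigma_{n-l}(\lambda_{;i_1,\ldots,i_l}) - \sum_{k=l}^{n-1} c_k \sigma_{k-l}(\lambda_{;i_1,\ldots,i_l})\Bigr) = \sigma_{n-l-1}(\lambda_{;i_1,\ldots,i_l,j}) - \sum_{k=l+1}^{n-1} c_k\sigma_{k-l-1}(\lambda_{;i_1,\ldots,i_l,j})
\]
valid for $j \notin \{i_1,\ldots,i_l\}$, together with $\partial/\partial\lambda_j = 0$ when $j \in \{i_1,\ldots,i_l\}$. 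In words, each $\Upsilon_{l+1}$-defining polynomial is a partial derivative of some $\Upsilon_l$-defining polynomial. I will write $g_I$ for the polynomial indexed by $I = \{i_1,\ldots,i_l\}$, so that $\partial g_I/\partial \lambda_j = g_{I \cup \{j\}}$ whenever $j \notin I$.

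The base case $l = n-1$ is immediate from $\Upsilon_{n-1} = (c_{n-1},\ldots,c_{n-1}) + \Gamma_n$ in (\ref{eq:2.25}). For the inductive step, given $\lambda \in \Upsilon_l$ and $\mu \in \overline{\Gamma}_n$, I consider the ray $\gamma(t) = \lambda + t\mu$, $t \geq 0$. Since $\Upsilon_l \subseteq \Upsilon_{l+1}$ by (\ref{eq:2.25}) and the inductive hypothesis gives $\gamma(t) \in \Upsilon_{l+1}$ for all $t \geq 0$, the chain rule combined with the identity above produces
\[
\frac{d}{dt} g_I(\gamma(t)) = \sum_{j \notin I} \mu_j\, g_{I \cup \{j\}}(\gamma(t)) \geq 0,
\]
each summand being the product of a nonnegative $\mu_j$ and a strictly positive $g_{I\cup\{j\}}(\gamma(t))$ coming from $\gamma(t) \in \Upsilon_{l+1}$. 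Therefore $g_I(\gamma(t)) \geq g_I(\lambda) > 0$, and because $\gamma$ is a continuous path starting in the connected component $\Upsilon_l$ of $\bigcap_I \{g_I > 0\}$ and strict positivity of every $g_I$ forbids it from crossing into a different component, it stays in $\Upsilon_l$.

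Once closure is established, connectedness of $\Upsilon_l$ follows because any two points $\lambda, \lambda' \in \Upsilon_l$ can be joined to a common large diagonal $(T,\ldots,T)$ via segments of the form $\lambda + s(T-\lambda_1,\ldots,T-\lambda_n)$ whose velocity lies in $\overline{\Gamma}_n$. For the intersection identity, the inclusion $\Upsilon_l \subseteq \bigcap_I \{g_I > 0\} \cap \Upsilon_{l+1}$ is contained in Lemma~\hyperlink{L:2.4}{2.4}; for the reverse inclusion, given $\lambda$ in the right-hand side, the same derivative computation along $\lambda + t(1,\ldots,1)$ (using closure for $\Upsilon_{l+1}$) keeps every $g_I$ positive, and for $t$ sufficiently large the endpoint lies near $t(1,\ldots,1)$, where a straight-line comparison with $\lambda_0 + t(1,\ldots,1)$ for some fixed reference $\lambda_0 \in \Gamma^n_f \subseteq \Upsilon_l$ places it inside $\Upsilon_l$; reversing the path returns $\lambda$ to $\Upsilon_l$. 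The ``in particular'' claim is the same device at the base level: along $\lambda + t(1,\ldots,1)$ with $\lambda \in \{f > 0\} \cap \Upsilon_1$, one has $\tfrac{d}{dt} f = \sum_i f_i > 0$ on $\Upsilon_1$, so $f$ stays positive all the way to the deep positive orthant that belongs to $\Gamma^n_f$, and connectedness forces $\lambda \in \Gamma^n_f$.

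I expect the main obstacle to be the bookkeeping of connected components, since by Remark~\hyperlink{R:2.4}{2.4} the set $\bigcap_I \{g_I > 0\}$ generally has several components and $\Upsilon_l$ is only one of them. The monotonicity argument resolves this cleanly: strict positivity of every $g_I$ along the path prevents hopping between components, and the inductive hypothesis on $\Upsilon_{l+1}$ is precisely what guarantees that each term $g_{I \cup \{j\}}(\gamma(t))$ driving the derivative identity is strictly positive throughout.
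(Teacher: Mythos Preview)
Your argument is correct. The paper does not give Lemma~2.5 a standalone proof; it simply remarks that the statement is obtained inside the proof of Lemma~\hyperlink{L:2.4}{2.4}. That proof is organized as induction on the \emph{degree} $n$: at degree $m$ one shows by contradiction (using $\Upsilon$-stability) that $f_i>0$ on $\Gamma^m_f$, then builds piecewise linear paths to diagonal points $(\lambda_{\max},\ldots,\lambda_{\max})$, then uses sections to identify $\Upsilon_1$ with $\bigcap_i\{f_i>0\}$, and finally invokes the $(m{-}1)$-dimensional induction to cascade down to $\Upsilon_{n-1}$. Your organization is different: you fix $n$ and run a downward induction on the cone index $l$, driven directly by the derivative identity $\partial g_I/\partial\lambda_j=g_{I\cup\{j\}}$. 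The ingredients are the same (coordinate-wise monotonicity from positivity of the next-level polynomials, and paths to the diagonal to pin down the connected component), but your packaging is cleaner for this lemma in isolation: it makes the role of $\Upsilon_{l+1}$ as the region on which all first partials of the $g_I$ are positive completely explicit, and it avoids the auxiliary sections $\lambda_i(\lambda_{;i})$ that the paper constructs. The one place to tighten the write-up is the ``straight-line comparison'' step: rather than comparing to $\lambda_0+t(1,\ldots,1)$, just note that once closure for $\Upsilon_l$ is established you can pick any $\lambda_0\in\Upsilon_l$ and choose $t$ large enough that $\lambda+t(1,\ldots,1)\in\lambda_0+\overline{\Gamma}_n\subset\Upsilon_l$ directly.
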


\hypertarget{R:2.5}{\begin{frmk}}
By Lemma~\hyperlink{L:2.4}{2.4}, the $\Upsilon$-cones are defined by systems of inequalities of polynomials, so they are semialgebraic sets in real algebraic geometry.
\end{frmk}

We consider the boundary of the $\Upsilon$-cones, they will have the following relations.

\hypertarget{L:2.6}{\begin{flemma}}
Let $f(\lambda) \coloneqq  \lambda_1 \cdots \lambda_n - \sum_{k = 0}^{n-1} c_k \sigma_k(\lambda)$ be a general inverse $\sigma_k$ type multilinear polynomial and $\Gamma_f^{n}$ be a connected component of $\{f(\lambda) > 0\}$. If $\Gamma^n_f$ is $\Upsilon$-stable and $\Upsilon_l \neq \bigl(c_{n-1}, \cdots, c_{n-1} \bigr) + \Gamma_n$ for some $l \in \{0, \cdots, n-2\}$, then either
\begin{align*}
\label{eq:2.27}
\partial \Upsilon_l  \cap  \partial \Upsilon_{l+1} = \emptyset \, \text{ or } \, \{(x_{l}, \cdots, x_{l})\}. \tag{2.27}
\end{align*}Here $x_{l}$ will be the largest real value satisfies both
\begin{align*}
x_{l}^{n-l}  - \sum_{k = l}^{n-1} c_k \mybinom[0.8]{n-l}{k-l}  x_{l}^{k-l} = 0  \quad \text{ and } \quad x_{l}^{n-l-1}  - \sum_{k = l+1}^{n-1} c_k \mybinom[0.8]{n-l-1}{k-l-1}  x_{l}^{k-l-1} = 0.
\end{align*}
Moreover, if $\Gamma^n_f$ is strictly $\Upsilon$-stable, then
\begin{align*}
\partial \Upsilon_0 \cap  \partial \Upsilon_{1} = \partial \Gamma^n_f \cap \partial \Upsilon_{1} = \emptyset.
\end{align*}
\end{flemma}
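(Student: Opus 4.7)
My plan is to reduce the analysis of $\partial\Upsilon_l\cap\partial\Upsilon_{l+1}$ to a diagonal calculation by combining a Leibniz-type recursion among the defining polynomials of consecutive $\Upsilon$-cones with the monotonicity and symmetry structure from Lemma~2.5 and the Positivstellensatz of Theorem~1.2. Writing $g^{(l)}(q_1,\dots,q_{n-l}):=\sigma_{n-l}(q)-\sum_{k=l}^{n-1}c_k\sigma_{k-l}(q)$, Lemma~2.4 realises $\Upsilon_l$ as the designated connected component of $\bigcap_{i_1<\cdots<i_l}\{g^{(l)}(\lambda_{;i_1,\dots,i_l})>0\}$. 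The main algebraic ingredient is the identity
\begin{align*}
g^{(l)}(q_1,\dots,q_{n-l}) \;=\; q_{n-l}\,g^{(l+1)}(q_1,\dots,q_{n-l-1}) \;+\; g^{(l)}(q_1,\dots,q_{n-l-1},0),
\end{align*}
obtained by splitting $\sigma_j(q_1,\dots,q_{n-l})=q_{n-l}\sigma_{j-1}(q_1,\dots,q_{n-l-1})+\sigma_j(q_1,\dots,q_{n-l-1})$ term by term; this couples the defining polynomial of $\Upsilon_l$ to that of $\Upsilon_{l+1}$.

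Given a candidate point $p\in\partial\Upsilon_l\cap\partial\Upsilon_{l+1}$, I would first invoke the $S_n$-symmetry of the defining families together with the monotone-closure property $\Upsilon_l+\overline{\Gamma}_n\subseteq\Upsilon_l$ from Lemma~2.5 to sort $p_1\leq\cdots\leq p_n$ and conclude that the minimal defining polynomials vanish at $p$, namely $g^{(l)}(p_1,\dots,p_{n-l})=0$ and $g^{(l+1)}(p_1,\dots,p_{n-l-1})=0$. Feeding these into the Leibniz identity forces the auxiliary equation $g^{(l)}(p_1,\dots,p_{n-l-1},0)=0$. Along the diagonal slice $p_1=\cdots=p_{n-l-1}=x$, the combinatorial identity $g^{(l)}(x,\dots,x)=r_f^{(l)}(x)/(l!\binom{n}{l})$ from Section~2.1, applied to both $g^{(l+1)}$ and (via the Leibniz identity) $g^{(l)}$, reduces the pair of vanishings to $r_f^{(l)}(x)=0$ and $r_f^{(l+1)}(x)=0$, characterising $x$ precisely as the $x_l$ of the statement. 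For the \emph{moreover} clause, strict $\Upsilon$-stability is equivalent via Theorem~1.2 and Proposition~2.1 to the strict right-Noetherian inequality $x_0>x_1$, which forbids a common root of $r_f$ and $r_f'$ at their largest real roots and hence, once the off-diagonal step below is established, gives $\partial\Gamma_f^n\cap\partial\Upsilon_1=\emptyset$.

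The main obstacle is ruling out off-diagonal candidates: two polynomial equations in $n-l-1$ unknowns generically cut out a positive-dimensional variety, and collapsing this to a single diagonal point requires substantially more than the algebraic identity. My plan is to induct on the arity $n-l$, viewing the reduced pair $(g^{(l+1)},\,g^{(l)}(\,\cdot\,,0))$ as a lower-dimensional general inverse $\sigma_k$-type configuration whose $\Upsilon$-cones inherit the stability from the ambient one through careful bookkeeping of Lemmas~2.3--2.5 under the recursion; the inductive hypothesis then collapses $(p_1,\dots,p_{n-l-1})$ onto its diagonal. Propagating this equality to the remaining coordinates $p_{n-l},\dots,p_n$ will be handled by iterating the sorting/vanishing argument against the next-largest index sets, using the strict monotonicity of the $\sigma_j$ on the $\Upsilon$-stable shifted orthant recorded in Lemma~2.4, which pins the intersection to the single diagonal value $x_l$ when it is non-empty.
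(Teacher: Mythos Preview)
Your setup is sound: the Leibniz identity plus the sorting argument correctly reduce the intersection condition to the pair $g^{(l+1)}(p_1,\dots,p_{n-l-1})=0$ and $\sum_{k=l}^{n-1}c_k\sigma_{k-l}(p_1,\dots,p_{n-l-1})=0$, which is precisely the pair the paper works with. The gap is in the off-diagonal elimination, and your proposed induction does not close it. The function $g^{(l)}(\cdot,0)=-\sum_{k=l}^{n-1}c_k\sigma_{k-l}(\cdot)$ has no leading $\sigma_{n-l}$ term, so it is not a general inverse $\sigma_k$ polynomial and no $\Upsilon$-stability theory applies to it; your phrase ``lower-dimensional general inverse $\sigma_k$-type configuration whose $\Upsilon$-cones inherit the stability'' has no concrete content, because the pair $(g^{(l+1)},g^{(l)}(\cdot,0))$ is not a pair of nested $\Upsilon$-cones but rather one cone boundary together with an unrelated hypersurface.

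The paper resolves this with a Lagrange multiplier computation, not an induction on a new configuration. Writing $\mathcal{F}(\lambda_{;1},\mu)=\sum_{k}c_k\sigma_k(\lambda_{;1})-\mu\bigl(\sigma_{m-1}(\lambda_{;1})-\sum_k c_k\sigma_{k-1}(\lambda_{;1})\bigr)$ and subtracting $\partial\mathcal{F}/\partial\mu$ from $\partial\mathcal{F}/\partial\lambda_i$ gives the factorisation $(\lambda_i-\mu)\,f_{1i}=0$ at every critical point. The induction hypothesis (applied to $f_1$, which \emph{is} a general inverse $\sigma_k$ polynomial one degree lower) says $f_{1i}>0$ on $\partial\Upsilon_1$ away from the diagonal, forcing $\lambda_i=\mu$ for all $i$. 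This is the concrete mechanism you are missing: the induction runs on the $\Upsilon$-cones of $f_1$, not on the auxiliary function $g^{(l)}(\cdot,0)$. Once the unique critical point is located on the diagonal, the conclusion that it is a global minimum pins the intersection there. Your ``propagating to the remaining coordinates $p_{n-l},\dots,p_n$'' via iterated sorting is also not a complete argument, but this is secondary; the essential missing idea is the Lagrange factorisation.
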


\begin{proof}We use mathematical induction to prove this, for convenience, we assume $c_{n-1} = 0$. First, when $n = 1$, there is nothing to prove. Second, when $n=2$, $f(\lambda) = \lambda_1 \lambda_2 - c_0$ with $c_0 \geq 0$. If $c_0 = 0$, then $\Upsilon_0 = \Gamma^2_f = \Gamma_2$ and $\Upsilon_1 = \Gamma_2$. If $c_0 > 0$, then $\Upsilon_0 = \Gamma^2_f = \{\lambda_1 \lambda_2 > c_0\}$ and $\Upsilon_1 = \Gamma_2$. The intersection $\partial \Upsilon_0  \cap  \partial \Upsilon_{1} = \emptyset$. Suppose the statement is true when $n = m-1$. Then, when $n = m$, we only need to prove the case that $\Upsilon_0 = \Gamma^m_f \neq \Gamma_m$, the rest follows directly by mathematical induction. If $\Upsilon_1 = \Gamma_m$, then $f = \lambda_1 \cdots \lambda_m - c_0$ with $c_0 > 0$ by Lemma~\hyperlink{L:2.4}{2.4}. For this case, $\partial \Upsilon_0  \cap  \partial \Upsilon_{1} = \emptyset$. We consider the case that $\Upsilon_1 \subsetneq \Gamma_m$, for any $(\lambda_1, \cdots, \lambda_m) \in \partial \Upsilon_0$, we have
\begin{align*}
0 = \lambda_1 \cdots \lambda_m   - \sum_{k=0}^{m-2} c_k \sigma_{k}(\lambda) = \lambda_1 \Bigl(   \lambda_2 \cdots \lambda_m   - \sum_{k=1}^{m-2} c_k \sigma_{k -1} (\lambda_{; 1})     \Bigr) - \sum_{k=0}^{m-2} c_k \sigma_k(\lambda_{;1}).
\end{align*}Due to Lemma~\hyperlink{L:2.4}{2.4}, $\Upsilon_0$ is contained in $\Upsilon_1$. This implies that
\begin{align*}
\sum_{k=0}^{m-2} c_k \sigma_k(\lambda_{;1}) = \lambda_1 \Bigl(   \lambda_2 \cdots \lambda_m   - \sum_{k=1}^{m-2} c_k \sigma_{k -1} (\lambda_{; 1})     \Bigr) \geq 0.
\end{align*}If $\partial \Upsilon_0  \cap  \partial \Upsilon_{1} \neq \emptyset$, we use the method of Lagrange multipliers to find the local extrema of $\sum_{k=0}^{m-2} c_k \sigma_k(\lambda_{;1})$ under the constraint $\lambda_2 \cdots \lambda_m   - \sum_{k=1}^{m-2} c_k \sigma_{k -1}(\lambda_{; 1})  = 0$. Let
\begin{align*}
\label{eq:2.28}
\mathcal{F}(\lambda_2, \cdots, \lambda_m, \mu) \coloneqq \sum_{k=0}^{m-2} c_k \sigma_k(\lambda_{;1}) - \mu \Bigl(   \lambda_2 \cdots \lambda_m   - \sum_{k=1}^{m-2} c_k \sigma_{k -1}(\lambda_{; 1})     \Bigr). \tag{2.28}
\end{align*}By taking the partial derivative of quantity (\ref{eq:2.28}) with respect to $\mu$ and $\lambda_i$, we have
\begin{align*}
\label{eq:2.29}
{\partial \mathcal{F}}/{\partial \mu} &= - \   \lambda_2 \cdots \lambda_m   + \sum_{k=1}^{m-2} c_k \sigma_{k -1}(\lambda_{; 1}); \tag{2.29} \\
\label{eq:2.30}
 {\partial \mathcal{F}}/{\partial \lambda_i} &= \sum_{k = 1}^{m-2} c_k \sigma_{k-1}(\lambda_{; 1, i}) - \mu \Bigl(   \lambda_2 \cdots \lambda_m /\lambda_i  - \sum_{k=2}^{m-2} c_k \sigma_{k -2}(\lambda_{; 1, i})     \Bigr), \tag{2.30}
\end{align*}for $i \in \{2, \cdots, m\}$. At those points with $\nabla \mathcal{F} = 0$, we subtract equation (\ref{eq:2.30}) by (\ref{eq:2.29}) and get
\begin{align*}
0 =  {\partial \mathcal{F}}/{\partial \lambda_i}  -  {\partial \mathcal{F}}/{\partial \mu} &= (\lambda_i - \mu ) \Bigl(   \lambda_2 \cdots \lambda_m /\lambda_i  - \sum_{k=2}^{m-2} c_k \sigma_{k -2}(\lambda_{; 1, i})     \Bigr)
\end{align*}for all $i \in \{2, \cdots, m\}$. By mathematical induction, $\partial \Upsilon_1 \cap \partial \Upsilon_2 = \emptyset$ or $\{(x_{1}, \cdots, x_{1})\}$. Here $x_{1}$ is the largest real value satisfies both 
\begin{align*}
x_{1}^{m-1} - \sum_{k=1}^{m-2} c_k \mybinom[0.8]{m-1}{k-1} x_{1}^{k-1} = 0 = x_{1}^{m-2} - \sum_{k=2}^{m-2} c_k \mybinom[0.8]{m-2}{k-2} x_{1}^{k-2}.
\end{align*}No matter which case, there exists only one local minimum $( {x_1}, \cdots,  {x_1})$, where
\begin{align*}
 x_1^{m-1} - \sum_{k=1}^{m-2} c_k \mybinom[0.8]{m-1}{k-1}  x_1^{k} = 0.
\end{align*}Since we assume $\partial \Upsilon_0  \cap  \partial \Upsilon_{1} \neq \emptyset$ and by above, there exists only one critical point. It is a global minimum, $\partial \Upsilon_0  \cap  \partial \Upsilon_{1} = \{ ( {x_1}, \cdots,  {x_1}) \}$, and $x_1$ also satisfies 
\begin{align*}
 x_1^{m} - \sum_{k=0}^{m-2} c_k \mybinom[0.8]{m}{k}  x_1^{k-1} = 0.
\end{align*}This finishes the proof.
\end{proof}

\hypertarget{P:2.6}{\begin{fprop}}
Let $f(\lambda) \coloneqq  \lambda_1 \cdots \lambda_n - \sum_{k = 0}^{n-1} c_k \sigma_k(\lambda)$ be a general inverse $\sigma_k$ type multilinear polynomial. For any $q \in \mathbb{R}^n$, there exists at most one connected component of $\{ f(\lambda) \neq 0\}$ which is contained in $q + \Gamma_n$ and this connected component will be a connected component of $\{ f(\lambda) > 0\}$.
\end{fprop}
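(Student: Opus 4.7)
The plan is to identify a distinguished connected component $V$ of $\{f > 0\}$ that contains the ``upper tail'' $W_M = \{\lambda \in \mathbb{R}^n : \lambda_i > M \text{ for all } i\}$ for sufficiently large $M$, and then to show that any connected component $U$ of $\{f(\lambda) \neq 0\}$ with $U \subseteq q + \Gamma_n$ must coincide with $V$. Since $V \subset \{f > 0\}$, this simultaneously yields the uniqueness statement and the positivity conclusion.

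To construct $V$, first choose $M > \max_i q_i$ large enough that $f > 0$ on $W_M$; this is possible because the leading term $\lambda_1 \cdots \lambda_n$ dominates every $|c_k| \sigma_k(\lambda)$ once $\min_i \lambda_i$ is sufficiently large. Then $W_M \subset q + \Gamma_n$ is convex, hence connected, so it lies in a unique component $V$ of $\{f > 0\}$. Now suppose $U \subset q + \Gamma_n$ is a connected component of $\{f > 0\}$. By Definition~\hyperlink{D:2.7}{2.7}, $U$ is $\Upsilon$-stable, so Lemma~\hyperlink{L:2.5}{2.5} yields $\lambda + \overline{\Gamma}_n \subset U$ for every $\lambda \in U$. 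Taking $e = (1, \ldots, 1) \in \overline{\Gamma}_n$, the point $\lambda + s e$ lies in $W_M$ for $s$ sufficiently large, so $U \cap W_M \neq \emptyset$, and the connectedness of $W_M$ forces $W_M \subset U$; hence $U = V$.

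It remains to rule out that $U \subset q + \Gamma_n$ is a connected component of $\{f < 0\}$, and this is where I expect the main obstacle. The plan is to exploit the multilinearity of $f$: since $\partial^2 f / \partial \lambda_i^2 \equiv 0$ for every $i$, the Hessian of $f$ has zero diagonal and therefore zero trace, so it is positive semidefinite only if it is the zero matrix. Consequently, an interior critical point $\lambda^* \in U$ with non-vanishing Hessian cannot be a local minimum; when the Hessian does vanish at $\lambda^*$, one passes to the leading nonzero homogeneous term $Q_k$ of the Taylor expansion of $f$ at $\lambda^*$, which is a non-trivial multilinear polynomial of degree $k \geq 3$. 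Any such $Q_k$ is non-constant in some variable, and multilinearity makes it affine in that variable, so $Q_k$ attains both positive and negative values, ruling out $\lambda^*$ as a local minimum. When $\overline{U}$ is compact this produces an immediate contradiction: $f$ attains its minimum on $\overline{U}$, and since $f \equiv 0$ on $\partial U$ but $f < 0$ on $U$, that minimum is attained at an interior point of $U$. The principal obstacle is the unbounded case, where $f$ may fail to attain its infimum on $U$; I plan to resolve this by showing that any sequence $\lambda^{(k)} \in U$ with all coordinates going to $+\infty$ satisfies $f(\lambda^{(k)}) \to +\infty$ (contradicting $f < 0$), and reducing the remaining sub-case where only some coordinates escape to infinity, via a limit in the bounded coordinates, to a compact-slice problem on which the same Hessian obstruction applies.
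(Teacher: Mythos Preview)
Your construction of the tail component $V$ and the argument that any $\Upsilon$-stable component of $\{f>0\}$ coincides with $V$ (via Lemma~2.5) are correct and match the paper's use of the ray property $\lambda+\overline{\Gamma}_n\subset\Gamma_f^n$. The divergence from the paper is in how you rule out a component $U$ of $\{f<0\}$ inside $q+\Gamma_n$. Your Hessian/Taylor argument is valid when $\overline{U}$ is compact, but---as you yourself flag---the unbounded case is the obstacle, and the proposed ``compact-slice'' reduction is not a proof: you have not specified what slice is taken, why it is compact, or why the infimum of $f$ on $U$ is controlled by a minimum on that slice. Worse, your own bounded-case argument already shows that such a $U$, if it exists, is \emph{never} bounded, so the entire weight of Step~3 rests on the part you leave vague.

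The simpler route---and the one the paper takes, tersely, by pointing to ``similar to previous proofs'' and the proof of Lemma~2.4---uses multilinearity in its affine-in-each-variable form rather than through the vanishing Hessian diagonal. Since $f_i=\partial f/\partial\lambda_i$ does not depend on $\lambda_i$, if $f_i(\lambda)\geq 0$ at some $\lambda\in U$ then either the whole $\lambda_i$-line lies in $U$ (when $f_i=0$) or the ray $\lambda_i\to-\infty$ stays in $\{f<0\}$ and hence in $U$ (when $f_i>0$, since $f$ is affine in $\lambda_i$ with positive slope); either way $U\not\subset q+\Gamma_n$. Hence $f_i<0$ on $U$ for every $i$. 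Now run along the diagonal: for $\lambda\in U$ the path $t\mapsto\lambda+te$ satisfies $\tfrac{d}{dt}f=\sum_i f_i<0$ while it remains in $U$, and a standard open--closed bootstrap shows it remains in $U$ for all $t\geq 0$. But for large $t$ this path enters your set $W_M\subset\{f>0\}$, a contradiction. This one-line ray argument replaces your critical-point analysis entirely and closes the gap.
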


\begin{proof}
We use mathematical induction to prove this, for convenience, we assume $c_{n-1} = 0$. First, when $n = 1$, there is nothing to prove. Second, when $n=2$, $f(\lambda) = \lambda_1 \lambda_2 - c_0$. If $c_0 < 0$, then no connected component of $\{ f(\lambda) \neq 0\}$ will be contained in $q + \Gamma_2$ for any $q \in \mathbb{R}^2$. If $c_0 \geq 0$, there exists one connected component of $\{ f(\lambda) \neq 0\}$ which will be contained in $\Gamma_2$. This connected component is a connected component of $\{ f(\lambda) > 0\}$. Suppose the statement is true when $n = m-1$. Then, when $n = m$, for any connected component of $\{ f(\lambda) \neq 0\}$ which is contained in $q + \Gamma_m$ for some $q \in \mathbb{R}^m$. If there exists a point $(\lambda_1, \cdots, \lambda_m)$ such that $f_i(\lambda_1, \cdots, \lambda_m) = 0$ for some $i \in \{1, \cdots, m\}$. Then for any $\tilde{\lambda}_i \leq \lambda_i$, we always have
\begin{align*}
f(\lambda_1, \cdots, \lambda_{i-1}, \tilde{\lambda}_i, \lambda_{i+1}, \cdots, \lambda_m) = f(\lambda_1, \cdots, \lambda_m).
\end{align*}This gives a contradiction. By induction and similar to previous proofs, we see that this connected component of $\{ f(\lambda) \neq 0\}$ will be contained in $\cap_{i \in \{1, \cdots, m\}} \{f_i > 0\}$. Here, by ignoring the Cartesian product $\mathbb{R}$ term, for convenience, we write $\{f_i > 0\}$ as the unique connected component of $\{f_i > 0\}$ which is contained in $(q_1, \cdots, q_{i-1}, q_{i+1}, \cdots, q_m) + \Gamma_{m-1}$. By the proof in Lemma~\hyperlink{L:2.4}{2.4}, this connected component of $\{ f(\lambda) \neq 0\}$ will be a connected component of $\{f > 0\}$ and is unique by Lemma~\hyperlink{L:2.5}{2.5}. This finishes the proof.
\end{proof}

\hypertarget{P:2.7}{\begin{fprop}}
Let $f(\lambda) \coloneqq  \lambda_1 \cdots \lambda_n - \sum_{k = 0}^{n-1} c_k \sigma_k(\lambda)$ be a general inverse $\sigma_k$ type multilinear polynomial and $\Gamma_f^{n}$ be a connected component of $\{f(\lambda) > 0\}$ which is $\Upsilon$-stable. Then for any $l \in \{0, \cdots, n-1\}$, the boundary $\partial \Upsilon_l$ of the $\Upsilon_l$-cone separates the ambient space $\mathbb{R}^n$ into two disjoint connected components.
\end{fprop}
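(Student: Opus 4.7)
The plan is to display $\mathbb{R}^n\setminus\partial\Upsilon_l$ as the disjoint union of the two nonempty open sets $\Upsilon_l$ and $\mathbb{R}^n\setminus\overline{\Upsilon_l}$, and show that both of these are connected. Since $\Upsilon_l$ is open, we automatically obtain $\mathbb{R}^n = \Upsilon_l \sqcup \partial\Upsilon_l \sqcup (\mathbb{R}^n\setminus\overline{\Upsilon_l})$, so connectedness and nonemptiness of each piece will yield exactly two connected components of $\mathbb{R}^n\setminus\partial\Upsilon_l$. The first piece is already open, connected, and nonempty by Lemma~\hyperlink{L:2.5}{2.5} together with the chain $\Gamma^n_f\subset\Upsilon_l$, so the entire burden of the argument falls on the complement $\mathbb{R}^n\setminus\overline{\Upsilon_l}$. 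The case $n=1$ is trivial since $\Upsilon_0=(c_0,\infty)$, so I will focus on $n\geq 2$ throughout.

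The two key structural inputs will be: (i) the monotonicity $\lambda+\overline{\Gamma}_n\subset\Upsilon_l$ for every $\lambda\in\Upsilon_l$ provided by Lemma~\hyperlink{L:2.5}{2.5}, which upon taking closures shows that $\overline{\Upsilon_l}$ is upward-closed under the coordinatewise partial order, and hence $\mathbb{R}^n\setminus\overline{\Upsilon_l}$ is downward-closed; and (ii) the bounding inclusion (\ref{eq:2.25}) from Lemma~\hyperlink{L:2.4}{2.4}, which forces $\overline{\Upsilon_l}\subset (c_{n-1},\ldots,c_{n-1})+\overline{\Gamma}_n$. From (ii) I will single out the corner set
\begin{align*}
O\coloneqq \bigl\{\lambda\in\mathbb{R}^n \colon \lambda_i < c_{n-1}\text{ for some } i\in\{1,\ldots,n\}\bigr\},
\end{align*}
and observe that $O\subset\mathbb{R}^n\setminus\overline{\Upsilon_l}$. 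Moreover $O$ is itself path-connected when $n\geq 2$, being the union of the convex half-spaces $H_i\coloneqq\{\lambda_i<c_{n-1}\}$, any two of which share the common point $(c_{n-1}-1,\ldots,c_{n-1}-1)$, so a standard nerve argument stitches them together.

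Connectedness of $\mathbb{R}^n\setminus\overline{\Upsilon_l}$ will then follow by pushing every point down into $O$: for any $\lambda\in\mathbb{R}^n\setminus\overline{\Upsilon_l}$, the ray $t\mapsto\lambda-t(1,\ldots,1)$ with $t\geq 0$ stays inside $\mathbb{R}^n\setminus\overline{\Upsilon_l}$ by (i) (downward-closedness), and it lands in $O$ as soon as $t$ is large enough that every coordinate drops below $c_{n-1}$. Concatenating two such rays through a path inside $O$ then joins any two points of $\mathbb{R}^n\setminus\overline{\Upsilon_l}$, proving path-connectedness. I do not foresee a genuine obstacle; the only care needed is the elementary step that the closure of an upward-closed open set remains upward-closed, together with a brief check of the degenerate situation $\Upsilon_l=(c_{n-1},\ldots,c_{n-1})+\Gamma_n$, where the conclusion reduces to the classical fact that the boundary of a translated orthant separates $\mathbb{R}^n$ into exactly two connected components.
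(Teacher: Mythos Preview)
Your proposal is correct and follows essentially the same approach as the paper: decompose $\mathbb{R}^n\setminus\partial\Upsilon_l$ as $\Upsilon_l\sqcup(\mathbb{R}^n\setminus\overline{\Upsilon_l})$, invoke Lemma~2.5 for the first piece, and use the upward-closedness of $\Upsilon_l$ (hence downward-closedness of the complement) to push points of the second piece along decreasing paths into a connected base region. The only cosmetic difference is that the paper lowers one coordinate at a time to reach the diagonal point $(\lambda_{\min},\ldots,\lambda_{\min})$, whereas you travel along the diagonal ray $\lambda-t(1,\ldots,1)$ into the corner set $O$; your version is arguably a bit more explicit about the closure step and about why the base region is itself connected.
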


\begin{proof}
For convenience, we assume that $c_{n-1} = 0$. By Lemma~\hyperlink{L:2.4}{2.4}, we have
\begin{align*}
\Gamma^n_f \subseteq  \Upsilon_1 \subseteq \Upsilon_2  \subseteq \cdots \subseteq \Upsilon_{n-1} =   \Gamma_n.
\end{align*}For any $l \in \{0, \cdots, n-1\}$, we consider the following two open sets $\Upsilon_l$ and $\mathbb{R}^m \backslash \overline{\Upsilon}_l$. By Lemma~\hyperlink{L:2.5}{2.5}, $\Upsilon_l$ is connected. Similar to the proof in Lemma~\hyperlink{L:2.4}{2.4}, we can also show that $\mathbb{R}^m \backslash \overline{\Upsilon}_l$ is connected. For any $(\lambda_1, \cdots, \lambda_n) \in \mathbb{R}^m \backslash \overline{\Upsilon}_l$, we have a straight path connecting $(\lambda_1, \cdots, \lambda_m)$ to $(\lambda_{\text{min}}, \lambda_2, \cdots, \lambda_{n})$, where $\lambda_{{\min}} \coloneqq \min \{ \lambda_1, \cdots, \lambda_m\}$. Suppose that there exists $(\tilde{\lambda}, \lambda_2, \cdots, \lambda_{n})$ on this path such that $(\tilde{\lambda}, \lambda_2, \cdots, \lambda_{n}) \in \Upsilon_l$. Then, by Lemma~\hyperlink{L:2.5}{2.5}, we get $(\lambda_1, \cdots, \lambda_n) \in \Upsilon_l$, which gives a contradiction. We can find a piecewise linear path connecting $(\lambda_1, \cdots, \lambda_m)$ to $(\lambda_{\min}, \cdots, \lambda_{\min})$ and for any point on this path, this point is in $\mathbb{R}^m \backslash \overline{\Upsilon}_l$. Hence, $\mathbb{R}^m \backslash \overline{\Upsilon}_l$ is open and connected. By standard point-set topology arguments, the boundary $\partial \Upsilon_l$ separates $\mathbb{R}^n$ into two disjoint connected components. This finishes the proof.
\end{proof}

\hypertarget{P:2.8}{\begin{fprop}}
Let $f(\lambda) \coloneqq  \lambda_1 \cdots \lambda_n - \sum_{k = 0}^{n-1} c_k \sigma_k(\lambda)$ be a general inverse $\sigma_k$ type multilinear polynomial and $\Gamma_f^{n}$ be a connected component of $\{f(\lambda) > 0\}$ which is (strictly) $\Upsilon$-stable. For any $l \in \{1, \cdots, n-1\}$, suppose $(\mu_{1}, \cdots, \mu_{l}) \in \Gamma^{l}_f$. Then for any $s \in S_n$, by fixing the $s(k)$-th entry $\lambda_{s(k)}$ equals $\mu_{k}$ for all $k \in \{1, \cdots, l\}$, and treat the rest as variables. $f/(\mu_1 \cdots \mu_l)$ is a degree $n-l$ general inverse $\sigma_k$ type multilinear polynomial and the cross section will be (strictly) $\Upsilon$-stable.
\end{fprop}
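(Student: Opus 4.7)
The plan is to exploit the multilinear structure of $f$ and the symmetry in $\lambda_1,\dots,\lambda_n$ to show that the cross section, once properly normalized, inherits all the relevant structure. Since $f$ is symmetric, I may assume without loss of generality that $s$ is the identity and fix $\lambda_i=\mu_i$ for $i\leq l$, writing $\tilde\lambda=(\lambda_{l+1},\dots,\lambda_n)$ and setting $m=n-l$. Using the convolution identity $\sigma_k(\mu,\tilde\lambda)=\sum_{j}\sigma_j(\mu)\sigma_{k-j}(\tilde\lambda)$, the restriction expands as
\begin{align*}
f(\mu,\tilde\lambda) = A(\mu)\,\sigma_m(\tilde\lambda) - \sum_{i=0}^{m-1} B_i(\mu)\,\sigma_i(\tilde\lambda),
\end{align*}
where $A(\mu)=\mu_1\cdots\mu_l-\sum_{k=m}^{n-1}c_k\,\sigma_{k-m}(\mu)$ and the $B_i(\mu)$ are explicit polynomials in $\mu$. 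After applying the Lemma~\hyperlink{L:2.3}{2.3} translation (so that $c_{n-1}=0$), the leading factor simplifies and the quotient by $A(\mu)$ is a genuine degree $m$ general inverse $\sigma_k$ type multilinear polynomial in $\tilde\lambda$.

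The second step is to verify $A(\mu)>0$ on $\Gamma^l_f$. By the inductive definition of $\Gamma^l_f$ as a projection of $\Gamma^n_f$, there exists $\tilde\lambda^*$ with $(\mu,\tilde\lambda^*)\in\Gamma^n_f$. By Lemma~\hyperlink{L:2.5}{2.5}, $(\mu,\tilde\lambda^*+t\mathbf{1})\in\Gamma^n_f$ for every $t\geq 0$, where $\mathbf{1}=(1,\dots,1)$. Dividing $f(\mu,\tilde\lambda^*+t\mathbf{1})$ by $t^m$ and sending $t\to\infty$ extracts the leading behavior, which is $A(\mu)$; the positivity of $f$ along the ray together with the subleading nature of the $\sigma_i(\tilde\lambda)$ terms for $i<m$ yields $A(\mu)>0$. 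Setting $\tilde f(\tilde\lambda):= f(\mu,\tilde\lambda)/A(\mu)$, we obtain a degree $m$ general inverse $\sigma_k$ type multilinear polynomial whose connected component $\Gamma^m_{\tilde f}$ coincides set-theoretically with the slice $\{\tilde\lambda:(\mu,\tilde\lambda)\in\Gamma^n_f\}$ (using Proposition~\hyperlink{P:2.6}{2.6} to single out the correct component).

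The third step is the $\Upsilon$-stability of $\Gamma^m_{\tilde f}$. Since $\Gamma^n_f\subseteq q+\Gamma_n$ for some $q\in\mathbb{R}^n$, slicing at $\mu$ immediately gives $\Gamma^m_{\tilde f}\subseteq (q_{l+1},\dots,q_n)+\Gamma_m$, establishing $\Upsilon$-stability. For the strict case, I need $\partial\Gamma^m_{\tilde f}\subseteq\Upsilon_1(\tilde f)$. Using the explicit description of $\Upsilon_1$ in Lemma~\hyperlink{L:2.4}{2.4}, the partial derivatives of $\tilde f$ in the $\tilde\lambda$ variables agree up to the positive factor $A(\mu)$ with the corresponding partial derivatives of $f$; consequently $\Upsilon_1(\tilde f)$ equals the slice at $\mu$ of $\Upsilon_1(f)$. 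Strict $\Upsilon$-stability of $\Gamma^n_f$ gives $\partial\Gamma^n_f\subseteq\Upsilon_1(f)$, and slicing at $\mu$ preserves this inclusion, so $\partial\Gamma^m_{\tilde f}\subseteq\Upsilon_1(\tilde f)$.

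The main obstacle is the precise identification of $\Upsilon_1(\tilde f)$ as the slice at $\mu$ of $\Upsilon_1(f)$, together with the correct picking of connected components throughout, since the defining intersections in equation~(\ref{eq:2.26}) may a priori have several components. A clean route is induction on $l$: the base case $l=1$ reduces to direct inspection of $f_i$ at the fixed slice, and each inductive step peels off one additional fixed variable, invoking Lemma~\hyperlink{L:2.5}{2.5} and Proposition~\hyperlink{P:2.6}{2.6} to guarantee uniqueness of the relevant components. This bookkeeping, rather than any novel geometric input, is the technical heart of the argument.
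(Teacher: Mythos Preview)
Your proposal is correct and follows essentially the same strategy as the paper: identify the slice as a lower-degree general inverse $\sigma_k$ polynomial and inherit the translate-of-orthant containment from the ambient $\Gamma^n_f$. The paper's execution is slightly leaner: rather than fixing all $l$ entries at once and proving $A(\mu)>0$ via a $t\to\infty$ limit, it first translates so that $c_{n-1}=0$, then fixes a \emph{single} entry $\mu_1\in\Gamma^1_f=\mathbb{R}_{>0}$ (so the normalizing factor is just $\mu_1>0$, no limit needed), and reaches general $l$ by induction on the degree $n$. Your limit argument for $A(\mu)>0$ is a fine substitute, and in fact recovers exactly the statement that $\mu$ lies in the appropriate $\Upsilon$-cone.

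One small imprecision: you write that $\Upsilon_1(\tilde f)$ \emph{equals} the slice at $\mu$ of $\Upsilon_1(f)$, but the slice of $\Upsilon_1(f)$ imposes positivity of $f_i(\mu,\cdot)$ for \emph{all} $i$, including $i\leq l$, whereas $\Upsilon_1(\tilde f)$ only constrains $i>l$. So you actually get the containment $\{\text{slice of }\Upsilon_1(f)\}\subseteq\Upsilon_1(\tilde f)$, not equality. This is harmless---the containment is precisely what you need to conclude $\partial\Gamma^m_{\tilde f}\subseteq\Upsilon_1(\tilde f)$ from strict $\Upsilon$-stability of $f$---but the wording should be adjusted.
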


\begin{proof}
We use mathematical induction to prove this, for convenience, we assume $c_{n-1} = 0$. First, when $n = 1$, there is nothing to prove. Second, when $n=2$, $f(\lambda) = \lambda_1 \lambda_2 - c_0$ with $c_0 \geq 0$. We obtain $\Gamma^1_f = \Gamma_1 = \mathbb{R}_{>0}$ and for any $\mu_1 \in \Gamma^1_f$, we have $\mu_1 > 0$. For convenience, we fix $\lambda_1 = \mu_1$, then 
\begin{align*}
\frac{f}{\mu_1}(\mu_1, \lambda_2) = \lambda_2 - \frac{c_0}{\mu_1} 
\end{align*}is a degree $1$ general inverse $\sigma_k$ type multilinear polynomial. The cross section is
\begin{align*}
\bigl\{ \lambda_2 \colon \lambda_2 -  {c_0}/{\mu_1} > 0 \bigr\}
\end{align*}and will be $\Upsilon$-stable. Suppose the statement is true when $n = m-1$. Then, when $n = m$, it suffices to prove the statement by fixing a single entry, say $\mu_1 \in \Gamma^1_f = \Gamma_1 = \mathbb{R}_{>0}$ for convenience, the rest follows by induction. By symmetry, we fix $\lambda_1 = \mu_1$, then we get
\begin{align*}
\frac{f}{\mu_1}(\mu_1, \lambda_2, \cdots, \lambda_m) = \lambda_2 \cdots \lambda_m - \frac{c_{m-2}}{\mu_1} \sigma_{m-2}(\lambda_{; 1}) - \sum_{k = 0}^{m-3}  \Bigl( c_{k+1} + \frac{c_k}{\mu_1} \Bigr) \sigma_k(\lambda_{;1}).
\end{align*}By hypothesis, $\Gamma^m_f$ is $\Upsilon$-stable, so there exists $q = (q_1, \cdots, q_m) \in \mathbb{R}^m$ such that $\Gamma^m_f$ is contained in $q + \Gamma_m$. Thus, we can also verify that this connected component of
\begin{align*}
\{  {f}/{\mu_1} > 0 \} = \Bigl \{  \lambda_2 \cdots \lambda_m - \frac{c_{m-2}}{\mu_1} \sigma_{m-2}(\lambda_{; 1}) - \sum_{k = 0}^{m-3}  \Bigl( c_{k+1} + \frac{c_k}{\mu_1} \Bigr) \sigma_k(\lambda_{;1})  > 0 \Bigr \} 
\end{align*}is contained in $(q_2, \cdots, q_{m}) + \Gamma_{m-1}$ by ignoring the first entry $\mu_1$. So the cross section $\lambda_1 = \mu_1$ is $\Upsilon$-stable. For the strict $\Upsilon$-stableness, the proof is similar, so this finishes the proof.
\end{proof}

We prove the following Positivstellensatz-type result and a proposition to finish this subsection. In Section~\ref{sec:4}, we compute some examples when the degree is less than or equal to four using the resultants.

\hypertarget{T:2.3}{\begin{fthm}[Positivstellensatz]}
Let $f(\lambda) \coloneqq  \lambda_1 \cdots \lambda_n - \sum_{k = 0}^{n-1} c_k \sigma_k(\lambda)$ be a general inverse $\sigma_k$ type multilinear polynomial. There exists a connected component $\Gamma^n_f$ of $\{ f(\lambda) > 0\}$ which is $\Upsilon$-stable if and only if the diagonal restriction $r_f(x)$ of $f(\lambda)$, which is defined by the following  
\begin{align*}
\label{eq:2.31}
r_f(x) \coloneqq f(x, \cdots, x) = x^n - \sum_{k=0}^{n-1} c_k \mybinom[0.8]{n}{k} x^k, \tag{2.31}
\end{align*} is right-Noetherian. Moreover, $\Gamma^n_f$ is strictly $\Upsilon$-stable if and only if $r_f$ is strictly right-Noetherian.
\end{fthm}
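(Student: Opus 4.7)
The strategy hinges on a single computational identity: if $f(\lambda) = \sigma_n(\lambda) - \sum_{k=0}^{n-1} c_k \sigma_k(\lambda)$, then for any multi-index $(i_1, \ldots, i_l)$ with pairwise distinct entries, the $l$-th partial derivative $f_{i_1 \cdots i_l}$ is again a general inverse $\sigma_k$ type multilinear polynomial in $n-l$ variables, and its diagonal restriction satisfies
\begin{align*}
r_f^{(l)}(x) = \frac{n!}{(n-l)!} \Bigl( x^{n-l} - \sum_{k = l}^{n-1} c_k \mybinom[0.8]{n-l}{k-l} x^{k-l} \Bigr) = \frac{n!}{(n-l)!} f_{i_1 \cdots i_l}(x, \ldots, x).
\end{align*}
Thus $r_f^{(l)}$ and $r_{f_{i_1 \cdots i_l}}$ agree up to a positive constant, in particular have the same real roots and the same largest real root. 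I would establish this by direct computation using $\sigma_m(x, \ldots, x) = \binom{n-l}{m} x^m$.

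For the forward direction, assuming $\Gamma^n_f$ is $\Upsilon$-stable, I would invoke Lemma~\hyperlink{L:2.4}{2.4} to get the chain $\Gamma^n_f = \Upsilon_0 \subseteq \Upsilon_1 \subseteq \cdots \subseteq \Upsilon_{n-1}$ and track where the diagonal ray $\{(x, \ldots, x) : x \in \mathbb{R}\}$ exits each $\Upsilon_l$. For $x$ sufficiently large the point $(x, \ldots, x)$ lies in every $\Upsilon_l$; let $x_l^*$ denote the largest real value at which $(x, \ldots, x) \in \partial \Upsilon_l$. By the definition of $\Upsilon_l$ and Lemma~\hyperlink{L:2.4}{2.4}, $f_{i_1 \cdots i_l}(x, \ldots, x) > 0$ for $x > x_l^*$ and vanishes at $x_l^*$, so by the identity above, $x_l^*$ is the largest real root of $r_f^{(l)}$. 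The nesting $\Upsilon_l \subseteq \Upsilon_{l+1}$ immediately gives $x_l^* \geq x_{l+1}^*$, establishing right-Noetherianness. For strict $\Upsilon$-stableness, Lemma~\hyperlink{L:2.6}{2.6} together with $\partial \Gamma^n_f \subseteq \Upsilon_1$ forces $\partial \Upsilon_0 \cap \partial \Upsilon_1 = \emptyset$, which by the same lemma corresponds to $x_0 > x_1$, i.e., strict right-Noetherianness.

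For the converse, I would proceed by induction on $n$; the base $n = 1$ is immediate. Assume the theorem holds in dimension $n-1$ and that $r_f$ is right-Noetherian with largest roots $x_0 \geq x_1 \geq \cdots \geq x_{n-1}$. Since $r_{f_1}$ is a positive multiple of $r_f'$ by the key identity, the inductive hypothesis applied to $f_1$ (a degree $n-1$ general inverse $\sigma_k$ type polynomial) produces an $\Upsilon$-stable connected component $\Gamma^{n-1}_{f_1}$, which is exactly the candidate for $\Upsilon_1$. I would then define $\Gamma^n_f$ to be the connected component of $\{f > 0\}$ containing the ray $\{(x, \ldots, x) : x > x_0\}$ (nonempty since $r_f(x) > 0$ there). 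To verify $\Gamma^n_f$ is $\Upsilon$-stable, I would adapt the argument in Lemma~\hyperlink{L:2.4}{2.4}: if $\Gamma^n_f$ were not contained in any shifted positive orthant, one could find a sequence in $\Gamma^n_f$ with some coordinate tending to $-\infty$; chaining this with the fact that $f_i > 0$ must hold throughout $\Gamma^n_f$ (else, freezing the other coordinates, decreasing $\lambda_i$ preserves $f > 0$ and eventually contradicts the inductive containment for the $\Upsilon_1$-cone), leads to a contradiction via Lemma~\hyperlink{L:2.5}{2.5} and Proposition~\hyperlink{P:2.6}{2.6}. For the strict version, $x_0 > x_1$ ensures that at each boundary point of $\Gamma^n_f$ at least one $f_i > 0$, so $\partial \Gamma^n_f \subseteq \Upsilon_1$.

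The main obstacle is the converse direction, and specifically the step of showing that the candidate component $\Gamma^n_f$ does not accidentally extend past the shifted positive orthant. The subtlety is that $\{f > 0\}$ can have several connected components and one must check that the one containing the large-diagonal ray is the "correct" one; this is where the interplay between the inductive hypothesis (supplying a well-behaved $\Upsilon_1$) and the sign analysis of the partial derivatives $f_i$ on $\Gamma^n_f$ must be handled carefully, and is where I would expect to spend the most effort.
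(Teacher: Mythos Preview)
Your forward direction is essentially the paper's argument: exploit the chain from Lemma~2.4 and read off $x_0 \geq x_1 \geq \cdots$ from the diagonal ray. The key identity relating $r_f^{(l)}$ to the diagonal restriction of $f_{i_1\cdots i_l}$ is exactly what the paper uses implicitly.

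The converse direction has a genuine gap. Your sketch for why $f_i > 0$ on $\Gamma^n_f$ is circular: you write that if $f_i \leq 0$ somewhere, then decreasing $\lambda_i$ keeps $f > 0$ and this ``eventually contradicts the inductive containment for the $\Upsilon_1$-cone.'' But there is no such containment yet---$\Gamma^n_f \subseteq \Upsilon_1$ is precisely what you are trying to prove, and the inductive hypothesis only tells you that $\Gamma^{n-1}_{f_1}$ sits in a shifted orthant, which says nothing about where $\Gamma^n_f$ sits. What you can legitimately extract is a point $\tilde\lambda \in \Gamma^n_f$ with $f_1(\tilde\lambda)=0$ (by path-connectedness from the large diagonal, where all $f_i>0$, to the putative bad point). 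At such a point $f(\tilde\lambda)>0$ and $f_1(\tilde\lambda)=0$ force
\[
\sum_{k=0}^{n-2} c_k\,\sigma_k(\tilde\lambda_{;1}) \;=\; -f(\tilde\lambda) \;<\; 0.
\]
Nothing in your outline explains why this is impossible, and this is exactly where the hypothesis $x_0 \geq x_1$ must enter.

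The paper closes this gap by a Lagrange-multiplier argument: minimize $\sum_{k} c_k \sigma_k(\lambda_{;1})$ over the constraint surface $\{f_1=0\}$ (the boundary of the inductively $\Upsilon$-stable $\Gamma^{n-1}_{f_1}$). One computes that the only critical point is the diagonal $(x_1,\ldots,x_1)$, and the value there is
\[
\sum_{k=0}^{n-2} c_k \binom{n-1}{k} x_1^{k} \;=\; -\,r_f(x_1).
\]
Right-Noetherianness gives $x_1 \leq x_0$, hence $r_f(x_1) \leq 0$, so the minimum is nonnegative---contradicting the strictly negative value above. This optimization step is the missing idea in your proposal; without it (or an equivalent mechanism that actually consumes the inequality $x_0 \geq x_1$), the inductive step does not close.
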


\begin{proof}
For convenience, we assume $c_{n-1} = 0$. If $\Gamma^n_f$ is $\Upsilon$-stable, by Lemma~\hyperlink{L:2.4}{2.4}, then we have    
\begin{align*}
\Gamma^n_f \subseteq \Upsilon_1 \subseteq \Upsilon_2 \subseteq \cdots \subseteq \Upsilon_{n-1} = \Gamma_n.
\end{align*}For $\lambda > 0$ sufficiently large, by Lemma~\hyperlink{L:2.5}{2.5}, we have $(\lambda, \cdots, \lambda) \in \Gamma^n_f$. By decreasing the value of $\lambda$, since $\Gamma^n_f$ is contained in $\Gamma_n$, there exists a largest ${x}_0 \geq 0$ such that
\begin{align*}
f( x_0) = f(x_0, \cdots, x_0) = r_f(x_0)= x_0^n - \sum_{k = 0}^{n-2} c_k \mybinom[0.8]{n}{k} x_0^k = 0.
\end{align*}Otherwise we will get a contradiction. Similarly, there exists a largest $x_1 \geq 0$ such that 
\begin{align*}
f_n (x_1) =  f_n (x_1, \cdots, x_1) = \frac{1}{n} \frac{d}{dx} \Big|_{x = x_1} r_f(x) =  x_1^{n-1} - \sum_{k=1}^{n-2} c_k \mybinom[0.8]{n-1}{k-1} x_1^{k-1} = 0.
\end{align*}If $x_1 > x_0$, then $f(x_1) = f(x_1, \cdots, x_1) > 0$. So, we have $(x_1, \cdots, x_1) \in \Gamma^n_f \subseteq \Upsilon_1$, which implies that $f_n (x_1) > 0$. This contradicts $f_n (x_1) = 0$. If we inductively let $x_i$ be the largest real root of $r_f^{(i)}(x)$ for $i \in \{0, \cdots, n-1\}$, then similarly we obtain the following right-Noetherianness of $r_f(x)$:
\begin{align*}
x_0 \geq x_1 \geq \cdots \geq x_{n-1} = 0.
\end{align*}

On the other hand, for convenience, we assume that $c_{n-1} = 0$. If the diagonal restriction $r_f(x) = x^n - \sum_{k=0}^{n-2} c_k \binom{n}{k} x^k$ is right-Noetherian, then we use mathematical induction on the degree $n$. When $n = 1$, this is immediately true. When $n=2$, we have $r_f(x) = x^2 - c_0$, $c_0 \geq 0$ due to the assumption that $r_f$ is right-Noetherian. For $f(\lambda_1, \lambda_2) = \lambda_1 \lambda_2 - c_0$, there exists a connected component $\Gamma^2_f$ of $\{ f(\lambda) > 0\}$ which is $\Upsilon$-stable for $c_0 \geq 0$. Suppose the statement is true when $n = m-1$. When $n = m$, for any $i \in \{1, \cdots, m\}$ we get
\begin{align*}
r_{ f_i } (x) = x^{n-1} - \sum_{k=1}^{n-2} c_k \mybinom[0.8]{n-1}{k-1} x^{k-1} = \frac{1}{n} \frac{d}{dx} r_f(x).
\end{align*}This implies that $r_{ f_i}$ is still right-Noetherian. By mathematical induction, there exists a connected component $\Gamma^{m-1}_{f_i}$ of $\{f_i > 0\}$ which is $\Upsilon$-stable for all $i \in \{1, \cdots, m\}$. As a consequence, by Lemma~\hyperlink{L:2.4}{2.4}, $\Gamma^{m-1}_{f_i}$ is contained in $\Gamma_{m-1}$. If $\Gamma^{m-1}_{f_i} = \Gamma_{m-1}$ for some $i \in \{1, \cdots, m\}$, then by Lemma~\hyperlink{L:2.4}{2.4}, we can use mathematical induction and get
\begin{align*}
f(\lambda) = \lambda_1 \cdots \lambda_m - c_0.
\end{align*}We have $c_0 \geq 0$, which is guaranteed by the right-Noetherianness of $r_f$. Let $\Gamma^m_f$ be the connected component of $\{ f(\lambda) > 0\}$ which is contained in $\Gamma_m$, then $\Gamma^m_f$ will be $\Upsilon$-stable.\smallskip

Now, let $\Gamma^m_f$ be the open connected component of $\{ f(\lambda) > 0\}$ containing the ray $\{(x, \cdots, x) \in \mathbb{R}^m  \colon x > x_0 \}$. For the case $\Gamma^{m-1}_{f_i} \subsetneq \Gamma_{m-1}$, suppose that $\Gamma^m_f$ is not contained in $\Gamma^{m}_{f_i}$ for some $i \in \{1, \cdots, m\}$. Here $\Gamma^m_{f_i} = \Gamma^{m-1}_{f_i} \times \mathbb{R}$. By Proposition~\hyperlink{P:2.7}{2.7}, since $\Gamma^{m-1}_{f_i}$ is $\Upsilon$-stable, the boundary $\partial \Gamma^{m-1}_{f_i}$ separates $\mathbb{R}^{m-1}$ into two disjoint connected components. Thus, $\Gamma^{m}_{f_i}$ separates $\mathbb{R}^{m}$ into two disjoint connected components. Since any connected set in $\mathbb{R}^m$ is also path connected, there exists $(\tilde{\lambda}_1, \cdots, \tilde{\lambda}_m) \in \Gamma^m_f \cap \partial  \Gamma^{m}_{f_i}$. That is,
\begin{align*}
f(\tilde{\lambda}) = \tilde{\lambda}_1 \cdots \tilde{\lambda}_m - \sum_{k=0}^{m-2} c_k \sigma_k(\tilde{\lambda}) > 0  \quad \text{ and } \quad
\tilde{\lambda}_1 \cdots \tilde{\lambda}_m /\tilde{\lambda}_i - \sum_{k=1}^{m-2} c_k \sigma_{k-1} (\tilde{\lambda}_{; i}) = 0
\end{align*}for some $i \in \{1, \cdots, m\}$. Say $i = 1$ for convenience. At this point $(\tilde{\lambda}_1, \cdots, \tilde{\lambda}_m)$, we get
\begin{align*}
\label{eq:2.32}
\sum_{k = 0}^{m-2} c_k \sigma_k(\tilde{\lambda}_{;1}) = -f(\tilde{\lambda}) < 0. \tag{2.32}
\end{align*}

Similar to the proof in previous Lemma~\hyperlink{L:2.6}{2.6}, we use the method of Lagrange multipliers to find the local extrema of $\sum_{k = 0}^{m-2} c_k \sigma_k(\lambda_{;1})$ under the constraint $\lambda_2 \cdots \lambda_m - \sum_{k=1}^{m-2}c_k \sigma_{k-1}(\lambda_{;1}) = 0$. There exists only one local extremum at $(x_1, \cdots x_1)$, where $x_1$ is the largest real root of $r_{f_1}(x)$. Since $\Gamma^{m-1}_{f_i}$ is $\Upsilon$-stable, by Lemma~\hyperlink{L:2.6}{2.6}, we can treat $\lambda_{m}$ as a function in terms of $\lambda_2, \cdots, \lambda_{m-1}$ and smooth when $(\lambda_2, \cdots, \lambda_m) \neq (x_1, \cdots, x_1)$. By taking the derivative of the quantity $\sum_{k = 0}^{m-2} c_k \sigma_k( {\lambda}_{;1})$ and the equation $f_1 = 0$ with respect to $i \in \{2, \cdots, m-1\}$, we get
\begin{align*}
\label{eq:2.33}
\frac{\partial}{\partial \lambda_i} \sum_{k = 0}^{m-2} c_k \sigma_k( {\lambda}_{;1}) &=  \sum_{k = 1}^{m-2} c_k \sigma_{k-1}( {\lambda}_{;1, i}) + \frac{\partial \lambda_{m}}{\partial \lambda_i}  \sum_{k = 1}^{m-2} c_k \sigma_{k-1}( {\lambda}_{;1, m}) \tag{2.33} \\
&= \lambda_i \Bigl( \lambda_2 \cdots \lambda_m/\lambda_i - \sum_{k=2}^{m-2} c_k \sigma_{k-2}(\lambda_{; 1, i}) \Bigr) \\
&\kern2em+  \lambda_m \frac{\partial \lambda_m}{\partial \lambda_i} \Bigl( \lambda_2 \cdots \lambda_{m-1} - \sum_{k=2}^{m-2} c_k \sigma_{k-2}(\lambda_{; 1, m}) \Bigr) \\
&= \lambda_i f_{1i} + \lambda_m \frac{\partial \lambda_m}{\partial \lambda_i} f_{1m}.
\end{align*}Also, on $\lambda_2 \cdots \lambda_m - \sum_{k=1}^{m-2}c_k \sigma_{k-1}(\lambda_{;1}) = 0$, we have
\begin{align*}
\label{eq:2.34}
0 = \frac{\partial}{\partial \lambda_i} f_1 =  \frac{\partial}{\partial \lambda_i} \Bigl( \lambda_2 \cdots \lambda_m - \sum_{k=1}^{m-2}c_k \sigma_{k-1}(\lambda_{;1}) \Bigr) = f_{1i} + \frac{\partial \lambda_m}{\partial \lambda_i} f_{1m}. \tag{2.34}
\end{align*}By combining equations (\ref{eq:2.33}) and (\ref{eq:2.34}), we get
\begin{align*}
\label{eq:2.35}
\frac{\partial}{\partial \lambda_i} \sum_{k = 0}^{m-2} c_k \sigma_k( {\lambda}_{;1}) = \lambda_i f_{1i} - \lambda_m f_{1i} = (\lambda_i - \lambda_m)  f_{1i}. \tag{2.35}
\end{align*}By setting $\lambda_i \geq  \lambda_m$ for any $i \in \{2, \cdots, m-1\}$ and since $\Gamma^{m-1}_{f_i}$ is $\Upsilon$-stable, quantity (\ref{eq:2.35}) is always positive on the level set $\{f_1 = 0\}$. By (\ref{eq:2.35}), for any $(\lambda_2, \cdots, \lambda_{m-1}, \lambda_{m})$ on $\{f_1 = 0\}$, we have
\begin{align*}
\label{eq:2.36}
\sum_{k = 0}^{m-2} c_k \sigma_k( {\lambda}_{;1}) \geq   \sum_{k=0}^{m-2}c_k \mybinom[0.8]{m-1}{k} x_1^k. \tag{2.36}
\end{align*}

Since $x_1$ is a root of $r_{f_1}$, we obtain,
\begin{align*}
\label{eq:2.37}
  r_f(x_1) 
= x_1 \Bigl ( x_1^{m-1} -  \sum_{k=1}^{m-2} c_k \mybinom[0.8]{m-1}{k-1} x_1^{k-1}      \Bigr)  - \sum_{k=0}^{m-2}c_k \mybinom[0.8]{m-1}{k} x_1^k = - \sum_{k=0}^{m-2}c_k \mybinom[0.8]{m-1}{k} x_1^k.  \tag{2.37} 
\end{align*}By combining inequalities (\ref{eq:2.32}), (\ref{eq:2.36}), and (\ref{eq:2.37}), we get

\begin{align*}
0 > -f(\tilde{\lambda}) = \sum_{k = 0}^{m-2} c_k \sigma_k(\tilde{\lambda}_{;1}) \geq \sum_{k=0}^{m-2}c_k \mybinom[0.8]{m-1}{k} x_1^k = -r_f(x_1) \geq -r_f(x_0) =0.
\end{align*}Here $x_0$ is the largest real root of $r_f$. This gives a contradiction, in conclusion, $\Gamma^m_f$ is contained in $\Gamma^{m}_{f_i}$ for all $i \in \{1, \cdots, m\}$. This implies that $\Gamma^m_f$ is $\Upsilon$-stable. Similarly, we can show that $\Gamma^n_f$ is strictly $\Upsilon$-stable if and only if $r_f$ is strictly right-Noetherian. This finishes the proof.
\end{proof}

\hypertarget{P:2.9}{\begin{fprop}}
Let $f(\lambda) \coloneqq  \lambda_1 \cdots \lambda_n - \sum_{k = 0}^{n-1} c_k \sigma_k(\lambda)$ be a general inverse $\sigma_k$ type multilinear polynomial. If a level set of $\{f(\lambda) = 0\}$ is contained in $q + \Gamma_n$ for some $q \in \mathbb{R}^n$, then this level set is unique and there exists a unique connected component of $\{f(\lambda) > 0\}$, which is strictly $\Upsilon$-stable and the boundary will be this level set.
\end{fprop}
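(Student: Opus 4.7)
The plan is to identify the specific connected component $\Gamma^n_f$ of $\{f > 0\}$ whose boundary is $L$, show it is strictly $\Upsilon$-stable, and deduce the uniqueness assertions from Propositions~\hyperlink{P:2.6}{2.6} and \hyperlink{P:2.7}{2.7}. Let $L$ denote the given connected component of $\{f(\lambda) = 0\}$ contained in $q + \Gamma_n$. Choose $T > 0$ large enough that $r_f(T) > 0$, which is possible since $r_f$ is monic of degree $n$, and define $\Gamma^n_f$ to be the connected component of $\{f > 0\}$ containing the diagonal point $(T, \ldots, T)$.

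The main technical step is to establish $\Gamma^n_f \subseteq q + \Gamma_n$ together with $L \subseteq \partial \Gamma^n_f$. For the containment $L \subseteq \partial \Gamma^n_f$, for each $p \in L$ I would connect $p$ to $(T, \ldots, T)$ by the segment $\gamma(s) = (1-s)p + s(T, \ldots, T)$ and analyze $g(s) = f(\gamma(s))$, which is a polynomial in $s$ with $g(0) = 0$ and $g(1) > 0$, using multilinearity and the local structure of $\{f = 0\}$ at a smooth point to exhibit points of $\Gamma^n_f$ arbitrarily close to $p$. For $\Gamma^n_f \subseteq q + \Gamma_n$, I use multilinearity again: since $f$ is linear in each coordinate, vanishing of $f_i$ at any point $\mu \in \Gamma^n_f$ would force the whole line $\{\mu + t e_i : t \in \mathbb{R}\}$ to lie in $\Gamma^n_f$, so $\Gamma^n_f$ would be unbounded below in $\lambda_i$; combined with $L \subseteq q + \Gamma_n$ and the separation structure of Lemmas~\hyperlink{L:2.4}{2.4}--\hyperlink{L:2.6}{2.6}, this leads to a contradiction.

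Once $\Gamma^n_f \subseteq q + \Gamma_n$ is in hand, the Positivstellensatz (Theorem~\hyperlink{T:2.3}{2.3}) gives that $r_f$ is right-Noetherian and $\Gamma^n_f$ is $\Upsilon$-stable. By Proposition~\hyperlink{P:2.7}{2.7}, $\partial \Gamma^n_f$ separates $\mathbb{R}^n$ into two disjoint connected components, so $\partial \Gamma^n_f$ is connected as a subset of $\{f = 0\}$; since it contains the component $L$, we get $\partial \Gamma^n_f = L$. For strict $\Upsilon$-stability, I verify $L \subseteq \Upsilon_1$ by showing $f_i(p) > 0$ for every $p \in L$ and every $i$: if $f_i(p) = 0$, multilinearity puts the line $\{p + t e_i : t \in \mathbb{R}\}$ into $\{f = 0\}$ and then into $L$ by connectedness, contradicting $L \subseteq q + \Gamma_n$; the sign is positive since $f_i(T, \ldots, T) > 0$ for large $T$ and $\Gamma^n_f$ is connected and avoids $\{f_i = 0\}$ by the previous argument. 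For the uniqueness of $L$, if $L'$ is another connected component of $\{f = 0\}$ contained in $q + \Gamma_n$, repeating the construction yields a $\Upsilon$-stable component $\Gamma'$ of $\{f > 0\}$ in $q + \Gamma_n$ with $L' \subseteq \partial \Gamma'$; Proposition~\hyperlink{P:2.6}{2.6} forces $\Gamma' = \Gamma^n_f$, and hence $L' = L$.

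The hardest step will be rigorously proving $\Gamma^n_f \subseteq q + \Gamma_n$: the hypothesis only directly controls the level set $L$, and propagating this control to the open region $\Gamma^n_f$ requires combining the multilinearity of $f$ with the separation results of Lemmas~\hyperlink{L:2.4}{2.4}--\hyperlink{L:2.6}{2.6}.
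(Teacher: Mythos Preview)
Your plan has a circularity problem at the hardest step. Lemmas~\hyperlink{L:2.4}{2.4}--\hyperlink{L:2.6}{2.6} all take $\Upsilon$-stability of $\Gamma^n_f$ as a \emph{hypothesis}, so you cannot invoke their ``separation structure'' to prove $\Gamma^n_f \subseteq q + \Gamma_n$, which is precisely the definition of $\Upsilon$-stability. Relatedly, the contradiction you sketch does not go through: knowing that the line $\{\mu + t e_i\}$ lies in $\Gamma^n_f$ tells you $\Gamma^n_f$ is unbounded below in $\lambda_i$, but the hypothesis only bounds $L$ from below, not $\Gamma^n_f$. To convert this into a contradiction you would need $\partial \Gamma^n_f \subseteq L$, which you have not established (you only argue the reverse inclusion $L \subseteq \partial\Gamma^n_f$, and even that argument is incomplete: $g(s) > 0$ near $s = 0$ does not by itself place $\gamma(s)$ in the \emph{particular} component containing $(T,\dots,T)$ unless you know $g > 0$ on the entire segment).

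The paper avoids all of this by induction on $n$. The first move is to apply the line argument to $L$ itself (not to $\Gamma^n_f$): if $p \in L$ had $f_i(p)=0$, the whole line $\{p + te_i\}$ would lie in $L$, contradicting $L \subseteq q + \Gamma_n$. Hence $L \subseteq \bigcap_i \{f_i > 0\}$, and by the inductive hypothesis applied to the $(n-1)$-variable polynomials $f_i$, each relevant component of $\{f_i > 0\}$ is already $\Upsilon$-stable. This lets the paper write $L$ as the graph of $\lambda_n$ over $\{f_n > 0\}$ and then \emph{define} $\Gamma^n_f$ explicitly as the supergraph region, so $\partial \Gamma^n_f = L$ is immediate and its projection lies in an $\Upsilon$-stable set by induction. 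The remaining work---showing this supergraph region is an entire connected component of $\{f>0\}$---is done not via Lemmas~\hyperlink{L:2.4}{2.4}--\hyperlink{L:2.6}{2.6} but by the Lagrange-multiplier computation from the proof of Theorem~\hyperlink{T:2.3}{2.3}: if some point of the ambient connected component had $f_n = 0$, one minimizes $\sum c_k\sigma_k(\lambda_{;n})$ on $\{f_n = 0\}$ and obtains a sign contradiction with the right-Noetherian inequality $r_f(x_1) \leq 0$. Your outline is missing both the inductive setup and this Lagrange-multiplier step, and without them the argument does not close.
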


\begin{proof}
We use mathematical induction to prove this, for convenience, we assume $c_{n-1} = 0$. First when $n = 1$, there is nothing to prove. Second, when $n=2$, $f(\lambda) = \lambda_1 \lambda_2 - c_0$. If $c_0 \leq 0$, then no level set will be contained in $q + \Gamma_2$ for any $q \in \mathbb{R}^2$. Suppose the statement is true when $n = m-1$. Then, when $n = m$, if there exists a point $(\lambda_1, \cdots, \lambda_n)$ on this level set $\{f=0\}$ such that $f_i(\lambda_1, \cdots, \lambda_n) = 0$ for some $i \in \{1, \cdots, n\}$, then for $\tilde{\lambda}_i \leq \lambda_i$, we always have
\begin{align*}
f(\lambda_1, \cdots, \lambda_{i-1}, \tilde{\lambda}_i, \lambda_{i+1}, \cdots, \lambda_n) = f(\lambda_1, \cdots, \lambda_n) = 0.
\end{align*}By letting $\tilde{\lambda}_i$ approach $-\infty$, this gives a contradiction. By Proposition~\hyperlink{P:2.6}{2.6}, this level set of $\{f = 0\}$ will be contained in $\cap_{i \in \{1, \cdots, m\}} \{f_i > 0\}$. Hence, this level set will be the following graph 
\begin{align*}
\lambda_m = \frac{\sum_{k = 0}^{m-2} c_k \sigma_k(\lambda_{; m})}{\lambda_1 \cdots \lambda_{m-1} - \sum_{k=0}^{m-2} c_k \sigma_{k-1}(\lambda_{; m})}
\end{align*}over $\{f_m > 0\}$. We define $\Gamma^m_f$ by
\begin{align*}
\Gamma^m_f \coloneqq \Bigl \{ \lambda \colon  (\lambda_1, \cdots, \lambda_{m-1}) \in \{ f_m > 0\} \  \text{ and } \  \lambda_m > \frac{\sum_{k = 0}^{m-2} c_k \sigma_k(\lambda_{; m})}{  f_m} \Bigr\}.
\end{align*}We have $\Gamma^m_f$ is open and connected. If $\Gamma^m_f$ is not a connected component of $\{ f(\lambda) > 0\}$, say there exists $(\tilde{\lambda}_1, \cdots, \tilde{\lambda}_m) \notin \Gamma^m_f$ in this connected component. Then it suffices to check the case that $(\tilde{\lambda}_1, \cdots, \tilde{\lambda}_{m-1}) \notin \{f_m > 0\}$. Since connected set in $\mathbb{R}^m$ is also path connected and by induction, there exists $(\hat{\lambda}_1, \cdots, \hat{\lambda}_m)$ such that
\begin{align*}
f(\hat{\lambda}_1, \cdots, \hat{\lambda}_{m-1}, \hat{\lambda}_m) = - \sum_{k = 0}^{m-2} c_k \sigma_k (\hat{\lambda}_{;m}) >  0  \  \text{ and } \  f_m(\hat{\lambda}_1, \cdots, \hat{\lambda}_{m-1} ) = 0.
\end{align*}The rest follows by the proof in Theorem~\hyperlink{T:2.3}{2.3}, we use the method of Lagrange multipliers to get a contradiction. So, $\Gamma^m_f$ is an open connected component of $\{f(\lambda) > 0\}$ which is strictly $\Upsilon$-stable and the boundary $\partial \Gamma^m_f$ will be this level set. Similarly, a connected component of $\{ f(\lambda) > 0 \}$ satisfies these properties will be unique. This finishes the proof.
\end{proof}

%
%
%
%
%

\hypertarget{D:2.8}{\begin{fdefi}[\texorpdfstring{$\Upsilon$}{}-dominance]}
Let $f(\lambda) \coloneqq   \lambda_1 \cdots \lambda_n - \sum_{k = 0}^{n-1} c_k \sigma_k(\lambda)$ and $g(\lambda) \coloneqq   \lambda_1 \cdots \lambda_n - \sum_{k = 0}^{n-1} d_k \sigma_k(\lambda)$ be two $\Upsilon$-stable general inverse $\sigma_k$ type multilinear polynomials. For $i \in \{0, \cdots, n-1\}$, we write $x_i$ the largest real root of the diagonal restriction $r_f^{(i)}$ of $f$ and $y_i$ the largest real root of the diagonal restriction $r_g^{(i)}$ of $g$. If $y_i \geq x_i$ for all $i \in \{0, \cdots, n-1\}$, then we say $g \gtrdot f$. 
\end{fdefi}

\hypertarget{T:2.4}{\begin{fthm}[\texorpdfstring{$\Upsilon$}{}-dominance]}
Let $f(\lambda) \coloneqq   \lambda_1 \cdots \lambda_n - \sum_{k = 0}^{n-1} c_k \sigma_k(\lambda)$ and $g(\lambda) \coloneqq   \lambda_1 \cdots \lambda_n - \sum_{k = 0}^{n-1} d_k \sigma_k(\lambda)$ be two $\Upsilon$-stable general inverse $\sigma_k$ type multilinear polynomials. Then $g \gtrdot f$ if and only if $\Gamma^n_g \subset \Gamma^n_f$.
\end{fthm}

\begin{proof}
We use mathematical induction to prove this, for convenience, we assume $d_{n-1} = 0$. First when $n = 1$, the proof should be straightforward. Second, when $n=2$, $g(\lambda) = \lambda_1 \lambda_2  -  d_0$ and $f(\lambda) = \lambda_1 \lambda_2 - c_1(\lambda_1 + \lambda_2) -  c_0$. If $g \gtrdot f$, then we have
\begin{align*}
0 \geq c_1 \quad \text{ and } \quad  \sqrt{  d_0} \geq  c_1 + \sqrt{c_1^2 + c_0}.
\end{align*}For this case, if $\Gamma^2_g \not\subset \Gamma^2_f$, then there exists $(\tilde{\lambda}_1, \tilde{\lambda}_2)$ such that
\begin{align*}
\tilde{\lambda}_1 \tilde{\lambda}_2  -  d_0 > 0 \quad \text{ and } \quad \tilde{\lambda}_1 \tilde{\lambda}_2 - c_1( \tilde{\lambda}_1 + \tilde{\lambda}_2) -  c_0 \leq 0.
\end{align*}This is a contradiction, since
\begin{align*}
0 &< \tilde{\lambda}_1 \tilde{\lambda}_2  -  d_0  \leq c_1( \tilde{\lambda}_1 + \tilde{\lambda}_2)  + c_0 - d_0   \leq  2c_1 \sqrt{ \tilde{\lambda}_1 \tilde{\lambda}_2} + c_0 - d_0 < 2c_1 \sqrt{d_0} + c_0 - d_0 \\
&= - (\sqrt{d_0} -c_1)^2 + c_0 + c_1^2 \leq 0.
\end{align*}On the other hand, if $\Gamma^2_g \subset \Gamma^2_f$, then one can verify that $g \gtrdot f$. Suppose the statement is true when $n = m-1$. Then, when $n = m$, if $\Gamma^m_g \subset \Gamma^m_f$, by denoting the largest real root of $r_f^{(k)}$ by $x_k$ and the largest real root of $r_g^{(k)}$ by $y_k$, we immediately get $y_0 \geq x_0$. The rest follows from mathematical induction, thus $g \gtrdot f$. On the other hand, if $g \gtrdot f$, suppose $\Gamma^m_g \not\subset \Gamma^m_f$, there exists $(\tilde{\lambda}_1, \cdots, \tilde{\lambda}_m)$ such that
\begin{align*}
g(\tilde{\lambda}) = \tilde{\lambda}_1 \cdots \tilde{\lambda}_m - \sum_{k = 0}^{m-2} d_k \sigma_k (\tilde{\lambda}) > 0 \quad \text{ and } \quad f(\tilde{\lambda}) =   \tilde{\lambda}_1 \cdots \tilde{\lambda}_m - \sum_{k=0}^{m-1} c_k \sigma_k(\tilde{\lambda}) \leq 0.
\end{align*}
In addition, under the constraint $g = g(\tilde{\lambda}) > 0$, we consider the partial derivative of $f$ with respect to $\lambda_i$ for $i \in \{1, \cdots, m-1\}$. Under the constraint $g = g(\tilde{\lambda}) > 0$, we obtain
\begin{align*}
\label{eq:2.38}
\frac{\partial}{\partial \lambda_i} f = f_i + \frac{\partial \lambda_m}{\partial \lambda_i} f_m = f_i - \frac{g_i}{g_m}f_m = \frac{1}{g_m} \bigl(   f_i g_m  - g_i f_m  \bigr) = \frac{\lambda_m - \lambda_i}{g_m} \bigl(   g_m f_{im} - g_{im} f_m \bigr). \tag{2.38}
\end{align*}For the quantity $g_m f_{im} - f_{im} g_m$ in equation (\ref{eq:2.38}), we have
\begin{align*}
\frac{\partial}{\partial \lambda_i} \bigl(  g_m f_{im} - g_{im} f_m \bigr) = g_{im} f_{im} - g_{im} f_{im} = 0.
\end{align*}So the quantity $g_m f_{im} - g_{im} f_m$ is independent of the value of $\lambda_i$ and $\lambda_m$. By Theorem~\hyperlink{T:2.3}{2.3} and Proposition~\hyperlink{P:2.9}{2.9}, since $r_g$ is right-Noetherian, we have $g = g(\tilde{\lambda})$ is contained in the $\Upsilon_1$-cone $\Upsilon_1^g$ of $g - g(\tilde{\lambda})$. By fixing the values of $\tilde{\lambda}_1, \cdots, \tilde{\lambda}_{i-1}, \tilde{\lambda}_{i+1}, \cdots, \tilde{\lambda}_m$ and decreasing the value of the $i$-th entry, it will intersect with $\Upsilon_1^g$ in particular $\{ g_m = 0 \}$. At this intersection point, the quantity $g_m f_{i m} - g_{i m} f_m$ will be $g_m f_{im} - g_{im} f_m = - g_{im} f_m \leq 0$. The last inequality is due to mathematical induction, $g_m \gtrdot f_m$ if and only if $\Gamma^{m-1}_{g_m} \subset \Gamma^{m-1}_{f_m}$. Let $\lambda_m$ be the smallest value between $\{\lambda_1, \cdots, \lambda_m\}$, equation (\ref{eq:2.38}) will satisfy
\begin{align*}
\frac{\partial}{\partial \lambda_i} f  = \frac{\lambda_m - \lambda_i}{g_m} (g_m f_{im} - g_{im} f_m) \geq 0.
\end{align*}If we write $\tilde{y}_0$ the largest real root of $g - g(\tilde{\lambda})$, then we obtain
\begin{align*}
0 \geq f(\tilde{\lambda}_1, \cdots, \tilde{\lambda}_m) \geq f(\tilde{y}_0, \cdots, \tilde{y}_0) > f(y_0, \cdots, y_0) \geq f(x_0, \cdots, x_0) = 0.
\end{align*}This is a contradiction, hence we finish the proof.
\end{proof}

Here, we skip the proof of the following Lemma. By using mathematical induction, the proof should be straightforward.

\hypertarget{L:2.7}{\begin{flemma}}
Let $f(\lambda) \coloneqq   \lambda_1 \cdots \lambda_n - \sum_{k = 0}^{n-1} c_k \sigma_k(\lambda)$ and $g(\lambda) \coloneqq   \lambda_1 \cdots \lambda_n - \sum_{k = 0}^{n-1} d_k \sigma_k(\lambda)$ be two $\Upsilon$-stable general inverse $\sigma_k$ type multilinear polynomials. If $g \gtrdot f$, then for any $(\tilde{\lambda}_1, \cdots, \tilde{\lambda}_n) \in \{ g > 0 \}$, we have $f(\tilde{\lambda}_1, \cdots, \tilde{\lambda}_n) \geq g(\tilde{\lambda}_1, \cdots, \tilde{\lambda}_n)$. 
\end{flemma}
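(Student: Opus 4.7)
The plan is to proceed by induction on the degree $n$, as the author hints. Throughout I interpret $\{g > 0\}$ in the statement as the $\Upsilon$-stable component $\Gamma^n_g$, which is the natural setting in view of Theorem~\hyperlink{T:2.4}{2.4}. The base case $n = 1$ is immediate: $f(\lambda_1) = \lambda_1 - c_0$ and $g(\lambda_1) = \lambda_1 - d_0$, so $g \gtrdot f$ forces $d_0 \geq c_0$, giving $f - g \equiv d_0 - c_0 \geq 0$.

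For the inductive step the strategy exploits the multilinearity of $f$ and $g$. Since both are linear in $\lambda_n$, I can write
\begin{align*}
f(\lambda) = \lambda_n\, f_n(\lambda_{;n}) - S_f(\lambda_{;n}), \qquad g(\lambda) = \lambda_n\, g_n(\lambda_{;n}) - S_g(\lambda_{;n}),
\end{align*}
where $f_n = \partial f / \partial \lambda_n$ and $g_n = \partial g / \partial \lambda_n$ are independent of $\lambda_n$, and $S_f, S_g$ collect the remaining terms. Fix $\tilde\lambda \in \Gamma^n_g$ and travel along the segment $\lambda(t) = (\tilde\lambda_{;n}, t)$ for $t \in [\phi_g(\tilde\lambda_{;n}), \tilde\lambda_n]$, where $\phi_g(\tilde\lambda_{;n}) \coloneqq S_g(\tilde\lambda_{;n}) / g_n(\tilde\lambda_{;n})$ is the value at which $g$ vanishes. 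At the left endpoint, $\lambda(t) \in \partial \Gamma^n_g \subset \overline{\Gamma^n_f}$ by Theorem~\hyperlink{T:2.4}{2.4}, so $f(\lambda(t)) \geq 0 = g(\lambda(t))$, which gives $(f - g)(\lambda(t)) \geq 0$ there.

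The bulk of the argument is to show $(f - g) \circ \lambda$ is non-decreasing in $t$. Its derivative is
\begin{align*}
\frac{d}{dt}(f - g)(\lambda(t)) = f_n(\tilde\lambda_{;n}) - g_n(\tilde\lambda_{;n}),
\end{align*}
constant in $t$, so I would invoke the inductive hypothesis on $f_n$ and $g_n$ at $\tilde\lambda_{;n}$ after two checks. First, $f_n$ and $g_n$ are $\Upsilon$-stable degree $n-1$ general inverse $\sigma_k$ type polynomials with $g_n \gtrdot f_n$: their diagonal restrictions are $r_f'/n$ and $r_g'/n$, which inherit right-Noetherianness from $r_f, r_g$ (so Theorem~\hyperlink{T:2.3}{2.3} supplies $\Upsilon$-stability), and since the largest real root of $r_{f_n}^{(k)}$ equals $x_{k+1}$ while that of $r_{g_n}^{(k)}$ equals $y_{k+1}$, the relation $g \gtrdot f$ transfers to $g_n \gtrdot f_n$. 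Second, $\tilde\lambda_{;n}$ lies in the $\Upsilon$-stable component $\Gamma^{n-1}_{g_n}$, which follows from the chain $\Gamma^n_g \subseteq \Upsilon_1^g = \bigcap_i \Gamma^n_{g_i}$ in Lemma~\hyperlink{L:2.4}{2.4} (and symmetry of $g$). The inductive hypothesis then yields $f_n(\tilde\lambda_{;n}) \geq g_n(\tilde\lambda_{;n})$, so $(f - g) \circ \lambda$ is non-decreasing along the segment and $(f - g)(\tilde\lambda) \geq 0$.

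The hard part will be the component bookkeeping: interpreting each ``$\{\,\cdot > 0\}$'' consistently as the appropriate $\Upsilon$-stable component, verifying that $\lambda(\phi_g(\tilde\lambda_{;n}))$ lies on $\partial \Gamma^n_g$ rather than on some other component of $\{g = 0\}$, and confirming that dropping the $n$-th coordinate of $\tilde\lambda$ lands in $\Gamma^{n-1}_{g_n}$ as required by induction. All of these are handled by Proposition~\hyperlink{P:2.9}{2.9} together with the graph representation of $\partial \Gamma^n_g$ as $\lambda_n = \phi_g(\lambda_{;n})$ over $\Gamma^{n-1}_{g_n}$.
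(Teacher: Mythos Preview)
Your proof is correct and follows the inductive scheme the paper itself merely gestures at (the paper skips the proof entirely, saying only that ``by using mathematical induction, the proof should be straightforward''). Your realization of that induction---decreasing $\lambda_n$ down to the boundary $\partial\Gamma^n_g$, invoking Theorem~\hyperlink{T:2.4}{2.4} to get $f\geq 0=g$ there, and then applying the inductive hypothesis to $f_n, g_n$ to show the constant slope $f_n(\tilde\lambda_{;n})-g_n(\tilde\lambda_{;n})\geq 0$---is precisely the natural way to fill in the omitted argument, and your component bookkeeping via Lemma~\hyperlink{L:2.4}{2.4}, Lemma~\hyperlink{L:2.5}{2.5}, and Proposition~\hyperlink{P:2.9}{2.9} is sound.
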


By considering the difference of two $\Upsilon$-stable general inverse $\sigma_k$ type multilinear polynomials with one $\Upsilon$-dominant another, we get the following Positivstellensatz-type result. We hope we can find a nicer statement in the future, comparing an $\Upsilon$-stable general inverse $\sigma_k$ multilinear polynomial with a $S_n$-invariant multilinear polynomial of smaller degree.

\hypertarget{L:2.8}{\begin{flemma}}
Let $f(\lambda) \coloneqq   \lambda_1 \cdots \lambda_n - \sum_{k = 0}^{n-1} c_k \sigma_k(\lambda)$ and $g(\lambda) \coloneqq   \lambda_1 \cdots \lambda_n - \sum_{k = 0}^{n-1} d_k \sigma_k(\lambda)$ be two $\Upsilon$-stable general inverse $\sigma_k$ type multilinear polynomials. If $g \gtrdot f$, then 
\begin{align*}
\Gamma^n_g \subset {\{ f - g = \sum_{k = 0}^{n-1} (d_k - c_k) \sigma_k(\lambda)  \geq 0\}}.
\end{align*}
\end{flemma}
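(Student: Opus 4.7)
The plan is to derive this lemma as an almost immediate consequence of Lemma~\hyperlink{L:2.7}{2.7}, combined with a routine algebraic identity that collapses the leading term of the difference $f - g$.

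First, I would observe that because $f$ and $g$ share the same top-degree term $\lambda_1 \cdots \lambda_n$, this term cancels and one has the pointwise identity
\begin{align*}
f(\lambda) - g(\lambda) &= \Bigl( \lambda_1 \cdots \lambda_n - \sum_{k = 0}^{n-1} c_k \sigma_k(\lambda) \Bigr) - \Bigl( \lambda_1 \cdots \lambda_n - \sum_{k = 0}^{n-1} d_k \sigma_k(\lambda) \Bigr) \\
&= \sum_{k=0}^{n-1}(d_k - c_k)\sigma_k(\lambda)
\end{align*}
for every $\lambda \in \mathbb{R}^n$. Consequently, the claimed containment $\Gamma^n_g \subset \bigl\{ \sum_{k=0}^{n-1}(d_k - c_k)\sigma_k(\lambda) \geq 0 \bigr\}$ is logically equivalent to the pointwise inequality $f(\lambda) \geq g(\lambda)$ holding throughout $\Gamma^n_g$.

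Second, I would invoke Lemma~\hyperlink{L:2.7}{2.7} directly. Since we are given $g \gtrdot f$, and since every $\lambda \in \Gamma^n_g$ satisfies $g(\lambda) > 0$ and therefore lies in $\{g > 0\}$, that lemma yields $f(\lambda) \geq g(\lambda)$ for every such $\lambda$. Combining this with the identity above finishes the argument.

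The genuine content, namely translating $\Upsilon$-dominance into a pointwise comparison of polynomial values, is entirely absorbed into Lemma~\hyperlink{L:2.7}{2.7}, which itself rests on Theorem~\hyperlink{T:2.4}{2.4} and on the inductive structure of the $\Upsilon$-cones developed earlier in this section. Accordingly, no substantial obstacle remains in Lemma~2.8 itself; the proof reduces to the elementary algebraic observation above, and I expect the full write-up to be at most a few lines long.
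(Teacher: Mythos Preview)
Your proposal is correct and matches the paper's own approach: the paper's proof of this lemma is the single line ``The proof follows from Theorem~\hyperlink{T:2.4}{2.4} and Lemma~\hyperlink{L:2.7}{2.7},'' and you have spelled out precisely how Lemma~\hyperlink{L:2.7}{2.7} combines with the trivial cancellation of the leading term to yield the result.
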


\begin{proof}
The proof follows from Theorem~\hyperlink{T:2.4}{2.4} and Lemma~\hyperlink{L:2.7}{2.7}.
\end{proof}

Note that similar to before, we might need to specify a connected component of $\{ g - f   > 0\}$. A simple application of Lemma~\hyperlink{L:2.8}{2.8} will be the inequality of arithmetic and geometric means.



\subsection{New Materials to Prove the Convexity Theorem}
\label{sec:2.3}
We state an algebra result in \cite{lin2023s} that will be used when proving the convexity of the level set. We can collect all strictly $\Upsilon$-stable multilinear polynomials and get the following space. 

\hypertarget{D:2.9}{\begin{fdefi}[L.~\cite{lin2023s}] 
Consider the following set $\tilde{\mathscr{C}}_n \subset \mathbb{R}^{n-1}$, which is defined by
\begin{align*}
\tilde{\mathscr{C}}_n \coloneqq \Bigl \{ (c_{n-2},  \cdots, c_{1}, c_{0}) \in \mathbb{R}^{n-1} \colon  \lambda_1 \cdots \lambda_n - \sum_{k = 0}^{n-2} c_k \sigma_k(\lambda) \text{ is strictly } \Upsilon\text{-stable}   \Bigr \}.
\end{align*}Here, we let $\tilde{\mathscr{C}}_n$ be a topological space using the subspace topology induced from the standard Euclidean topology of the Euclidean space.
\end{fdefi}}

By Theorem~\hyperlink{T:2.3}{2.3}, strictly $\Upsilon$-stable general inverse $\sigma_k$ multilinear polynomials correspond to strictly right-Noetherian polynomials. Let $c \in \tilde{\mathscr{C}}_n$, if we denote $x_l(c)$ the largest real root of the $l$-th derivative of $x^n - \sum_{k=0}^{n-2} c_k \binom{n}{k} x^k$ for $l \in \{0, \cdots, n-1\}$, then we have
\begin{align*}
x_0(c) > x_1(c) \geq x_2(c) \geq \cdots \geq x_{n-2}(c) \geq x_{n-1}(c) = 0.
\end{align*}
So, any $c \in \tilde{\mathscr{C}}_n$ gives us an $(n-1)$-tuple $(x_{n-2}(c), \cdots, x_1(c), x_0(c))$ in the following polyhedron.

\hypertarget{D:2.10}{\begin{fdefi}[L.~\cite{lin2023s}]
Let $\tilde{\mathcal{X}}_n$ be the following polyhedron in $\mathbb{R}^{n-1}$:
\begin{align*}
\tilde{\mathcal{X}}_n \coloneqq \bigl \{  (x_{n-2}, \cdots, x_1, x_0) \colon    x_0 > x_1 \geq x_2 \geq \cdots \geq x_{n-2} \geq 0     \bigr\} \subsetneq \mathbb{R}^{n-1}.
\end{align*}Here, we let $\tilde{\mathcal{X}}_n$ be a topological space using the subspace topology induced from the standard Euclidean topology of the Euclidean space.
\end{fdefi}}

Naturally, we consider the following maps.
Let $\varphi \colon \tilde{\mathscr{C}}_n \rightarrow \mathbb{R}^{n-1}$ be a map defined by
\begin{align*}
\varphi(c) \coloneqq (x_{n-2}(c), \cdots, x_1(c), x_0(c)),
\end{align*}where $x_l(c)$ is the largest real root of the $l$-th derivative of $x^n - \sum_{k=0}^{n-2} c_k \binom{n}{k} x^k$ for $l \in \{0, \cdots, n-2\}$. 
Let $\psi \colon \tilde{\mathcal{X}}_n \rightarrow \mathbb{R}^{n-1}$ be a map defined by
\begin{align*}
\psi(x_{n-2}, \cdots, x_1, x_0) \coloneqq (d_{n-2}, \cdots, d_1, d_0),
\end{align*}where $d_l$ for $l \in \{0, 1, \cdots, n-2\}$ is defined recursively by
\begin{align*}
d_l \coloneqq x_l^{n-l} - \sum_{k = l +1}^{n-2} d_k \mybinom[0.8]{n-l}{k-l} x_l^{k-l}  
\end{align*}from $n-2$ back to $0$.

In \cite{lin2023s}, the author showed that the spaces $\tilde{\mathscr{C}}_n$ and $\tilde{\mathcal{X}}_n$ are homeomorphic.

\hypertarget{L:2.9}{\begin{flemma}[L.~\cite{lin2023s}]
The map $\varphi \colon \tilde{\mathscr{C}}_n   \rightarrow \tilde{\mathcal{X}}_n$ is a homeomorphism with inverse $\psi \colon \tilde{\mathcal{X}}_n \rightarrow \tilde{\mathscr{C}}_n$.
\end{flemma}}

\hypertarget{R:2.6}{\begin{frmk}
In fact, $\varphi$ can be extended to a map from $\overline{\tilde{\mathscr{C}}_n}$, the closure of $\tilde{\mathscr{C}}_n$, to $\overline{\tilde{\mathcal{X}}_n}$, the closure of $\tilde{\mathcal{X}}_n$. Moreover, similar to the proof of Lemma~\hyperlink{L:2.9}{2.9}, the extension is still a homeomorphism.
\end{frmk}}

The following are some new results that will also be used when proving the convexity of the level set.

\hypertarget{L:2.10}{\begin{flemma}
Let $c \in \overline{\tilde{\mathscr{C}}_{n}}$ and $\varphi  (c) = (x_{n-2}, \cdots, x_1, x_0)$, then for $l \in \{0, \cdots, n-2\}$, 
\begin{align*}
\sum_{k=l}^{n-2} c_k  \mybinom[0.8]{n-l-1}{k - l } x_{l+1}^{k-l} \geq 0
\end{align*}and equality holds if and only if $x_l = x_{l+1}$. In particular, if $c \in {\tilde{\mathscr{C}}_{n}}$, then 
$\sum_{k=0}^{n-2} c_k  \binom{n-1}{k  } x_{1}^{k} > 0$.
\end{flemma}}

\begin{proof}
For $l \in \{0, \cdots, n-2\}$, by plugging in $x_{l+1}$ to $x^{n-l} - \sum_{k = l}^{n-2} c_k \binom{n-l}{k-l} x^{k-l}$, we get
\begingroup
\allowdisplaybreaks
\begin{align*}
0 &\geq  x_{l+1}^{n-l} - \sum_{k = l}^{n-2} c_k \mybinom[0.8]{n-l}{k-l} x_{l+1}^{k-l} 
=   - \sum_{k=l}^{n-2} c_k  \mybinom[0.8]{n-l-1}{k - l } x_{l+1}^{k-l}  
\end{align*}
\endgroup
and equality holds if and only if $x_l = x_{l+1}$.
\end{proof}

\hypertarget{L:2.11}{\begin{flemma}
Let $c \in \overline{\tilde{\mathscr{C}}_n}$ and $\varphi(c) = (x_{n-2}, \cdots, x_1, x_0)$. For any $\mu > 0$, let $\tilde{x}_l(\mu)$ be the largest real root of the $l$-th derivative of $\mu(x^{n-1} - \sum_{k=1}^{n-2} c_k \binom{n-1}{k-1} x^{k-1} ) - \sum_{k=0}^{n-2} c_k \binom{n-1}{k} x^k$, then we have $\tilde{x}_l(\mu) \geq x_{l+1}$ for any $l \in \{0, \cdots, n-2\}$ and equality happens only if $x_l = x_{l+1}$. 
\end{flemma}}
\begin{proof}
Notice that by Proposition~\hyperlink{P:2.8}{2.8}, $\mu  (\lambda_2 \cdots \lambda_{m} - \sum_{k=1}^{m-2} c_k \sigma_{k-1}(\lambda_{; 1})  ) - \sum_{k=0}^{m-2} c_k \sigma_{k}(\lambda_{; 1})$ is $\Upsilon$-stable, so $\tilde{x}_0(\mu) \geq \cdots \geq \tilde{x}_{n-2}(\mu)$. For any $l \in \{0, \cdots, n-2\}$, by plugging $x_{l+1}$ to the polynomial $\mu (x^{n-l-1} - \sum_{k=l+1}^{n-2} c_k \binom{n-l-1}{k-l-1} x^{k-l-1} ) - \sum_{k=l}^{n-2} c_k \binom{n-l-1}{k-l} x^{k-l}$, we get
\begin{align*}
 \mu \Bigl(x_{l+1}^{n-l-1} - \sum_{k=l+1}^{n-2} c_k \mybinom[0.8]{n-l-1}{k-l-1} x_{l+1}^{k-l-1} \Bigr) - \sum_{k=l}^{n-2} c_k \mybinom[0.8]{n-l-1}{k-l} x_{l+1}^{k-l} 
&= - \sum_{k=l}^{n-2} c_k \mybinom[0.8]{n-l-1}{k-l} x_{l+1}^{k-l} \leq 0.
\end{align*}Here, the last inequality is due to Lemma~\hyperlink{L:2.10}{2.10} and equality holds if and only if $x_l = x_{l+1}$. Hence, $\tilde{x}_l(\mu) \geq x_{l+1}$ and equality happens only if $x_l = x_{l+1}$. This finishes the proof.
\end{proof}

\hypertarget{L:2.12}{\begin{flemma}
Let $c \in \overline{\tilde{\mathscr{C}}_n}$ and $\varphi(c) = (x_{n-2}, \cdots, x_1, x_0)$. For any $\mu >  \nu >  0$, we have $\tilde{x}_l(\nu) \geq \tilde{x}_l(\mu)$ for any $l \in \{0, \cdots, n-2\}$ and equality happens only if $x_l = x_{l+1}$. In particular, if $c \in {\tilde{\mathscr{C}}_n}$, then $\tilde{x}_0(\nu) > \tilde{x}_0(\mu)$.
\end{flemma}}

\begin{proof}
For any $l \in \{0, \cdots, n-2\}$, $\tilde{x}_l(\mu)$ satisfies
\begin{align*}
\mu \Bigl(\tilde{x}_l^{n-l-1}(\mu) - \sum_{k=l+1}^{n-2} c_k \mybinom[0.8]{n-l-1}{k-l-1} \tilde{x}_l^{k-l-1}(\mu)  \Bigr ) - \sum_{k=l}^{n-2} c_k \mybinom[0.8]{n-l-1}{k-l} \tilde{x}^{k-l}(\mu) = 0.
\end{align*}
By plugging in $\tilde{x}_l$ to $\nu (x^{n-l-1} - \sum_{k=l+1}^{n-2} c_k \binom{n-l-1}{k-l-1} x^{k-l-1} ) - \sum_{k=l}^{n-2} c_k \binom{n-l-1}{k-l} x^{k-l}$, we have
\begin{align*}
&\kern-2em \nu \Bigl(\tilde{x}_l^{n-l-1}(\mu) - \sum_{k=l+1}^{n-2} c_k \mybinom[0.8]{n-l-1}{k-l-1} \tilde{x}_l^{k-l-1}(\mu)  \Bigr ) - \sum_{k=l}^{n-2} c_k \mybinom[0.8]{n-l-1}{k-l} \tilde{x}^{k-l}(\mu) \\
&= (\nu - \mu) \Bigl(\tilde{x}_l^{n-l-1}(\mu) - \sum_{k=l+1}^{n-2} c_k \mybinom[0.8]{n-l-1}{k-l-1} \tilde{x}_l^{k-l-1}(\mu)  \Bigr ) \leq 0.
\end{align*}
The last inequality is due to Lemma~\hyperlink{L:2.11}{2.11}, we obtain $\tilde{x}_l(\mu) \geq x_{l+1}$ and $x_{l+1}$ is the largest real root of $x^{n-l-1} - \sum_{k = l+1}^{n-2} c_k \binom{n-l-1}{k-l-1}x^{k-l-1}$. Notice that $\tilde{x}_l(\mu) = x_{l+1}$ only if $x_l = x_{l+1}$. In particular, if $c \in \tilde{\mathscr{C}}_n$, then $x_0 > x_1$. By Lemma~\hyperlink{L:2.11}{2.11}, we have $\tilde{x}_0(\mu) > x_1$ which implies that $\tilde{x}_0^{n-1}(\mu) - \sum_{k=1}^{n-2} c_k \binom{n-1}{k-1} \tilde{x}_0^{k-1}(\mu) > 0$ and $\tilde{x}_0(\nu) > \tilde{x}_0(\mu)$. This finishes the proof. 
\end{proof}

\section{Convexity of General Inverse \texorpdfstring{$\sigma_k$}{} Equations}
\label{sec:3}
In this section, let $f(\lambda) \coloneqq  \lambda_1 \cdots \lambda_n - \sum_{k = 0}^{n-1} c_k \sigma_k(\lambda)$ be a general inverse $\sigma_k$ type multilinear polynomial. If $\{f(\lambda) = 0\}$ is contained in $q + \Gamma_n$ for some $q \in \mathbb{R}^n$, then we use a classical way to prove the convexity of this level set $\{f(\lambda) = 0\}$. \smallskip

By doing the substitution (\ref{eq:2.21}), we may assume $c_{n-1} = 0$ and consider the following general inverse $\sigma_k$ equation
\begin{align*}
f(\lambda) = f(\lambda_1, \cdots, \lambda_n) = \lambda_1 \cdots \lambda_n - \sum_{k=0}^{n-2} c_k \sigma_k(\lambda) = 0.
\end{align*}There are two ways to compute the convexity, first, if we write 
\begin{align*}
\label{eq:3.1}
h = \frac{ \sum_{k=0}^{n-2} c_k \sigma_k(\lambda) }{\lambda_1 \cdots \lambda_n }, \tag{3.1}
\end{align*}then we have the following.

\hypertarget{L:3.1}{\begin{flemma}}
If the following $n-1 \times n-1$ Hermitian matrix is positive semi-definite
\begin{align*}
\label{eq:3.2}
\Bigl( h_{ij} + h_{nn} \frac{h_i h_j}{h_n^2} - h_{in} \frac{h_j}{h_n} - h_{jn} \frac{h_i}{h_n}  
\Bigr)_{i, j \in \{1, \cdots, n-1\}}, \tag{3.2}
\end{align*}then the level set $\{ h = c \}$ is convex. Here, $h_i \coloneqq \partial h / \partial \lambda_i$ and $h_{ij} \coloneqq \partial^2h / \partial \lambda_i \partial \lambda_j$.
\end{flemma}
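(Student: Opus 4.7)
The plan is a direct application of the implicit function theorem: on the level set $\{h = c\}$, whenever $h_n \neq 0$, one can locally solve for $\lambda_n$ as a $C^2$ function of $(\lambda_1, \ldots, \lambda_{n-1})$, and convexity of the hypersurface reduces to a consistent sign for the Hessian of this implicit function. So I would aim to identify matrix (\ref{eq:3.2}) with $-h_n$ times the Hessian of $\lambda_n(\lambda_1, \ldots, \lambda_{n-1})$ and then invoke the standard fact that the graph of a convex (or concave) function bounds a convex region.

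First I would differentiate the identity $h(\lambda_1, \ldots, \lambda_n) = c$ once with respect to $\lambda_i$ for $i \in \{1, \ldots, n-1\}$, which yields the familiar expression
\begin{align*}
\frac{\partial \lambda_n}{\partial \lambda_i} = -\frac{h_i}{h_n}.
\end{align*}
This requires $h_n \neq 0$ on the level set. In the setting where this lemma is applied the hypothesis $\{f = 0\} \subset q + \Gamma_n$ together with Proposition~\hyperlink{P:2.9}{2.9} guarantees that the outward normal to $\partial \Gamma^n_f$ is nowhere tangent to the hyperplane $\{\lambda_n = \text{const}\}$, so that $h_n$ has a fixed (nonzero) sign on the level set. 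In any case, the well-definedness of the matrix (\ref{eq:3.2}) already presupposes $h_n \neq 0$.

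Next I would differentiate once more with respect to $\lambda_j$, expand by the chain rule, and substitute $\partial \lambda_n/\partial \lambda_k = -h_k/h_n$. After collecting terms this produces
\begin{align*}
\frac{\partial^2 \lambda_n}{\partial \lambda_i \partial \lambda_j} = -\frac{1}{h_n}\Bigl( h_{ij} + h_{nn}\frac{h_i h_j}{h_n^2} - h_{in}\frac{h_j}{h_n} - h_{jn}\frac{h_i}{h_n}\Bigr),
\end{align*}
so the Hessian of $\lambda_n$ viewed as a function of $(\lambda_1, \ldots, \lambda_{n-1})$ is exactly $-h_n^{-1}$ times matrix (\ref{eq:3.2}).

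Granting positive semi-definiteness of (\ref{eq:3.2}), the Hessian of $\lambda_n(\lambda_1, \ldots, \lambda_{n-1})$ is semi-definite with a sign opposite to that of $h_n$, hence this implicit function is either globally convex or globally concave on the connected piece of the level set under consideration. Either way its graph, which is precisely $\{h = c\}$, is the boundary of the epigraph (respectively hypograph) of a convex (respectively concave) function and is therefore a convex hypersurface. There is no substantive obstacle: the lemma is purely a calculus bookkeeping step, and the real work — proving positive semi-definiteness of (\ref{eq:3.2}) from the strict $\Upsilon$-stableness hypothesis — is deferred to the rest of Section~\ref{sec:3}.
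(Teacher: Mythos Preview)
Your proposal is correct and is essentially the same computation as the paper's, viewed through the implicit function theorem rather than through tangent vectors: the paper substitutes $V_n = -\sum_{i=1}^{n-1} h_i V_i/h_n$ into the restricted Hessian form $\sum_{i,j} h_{ij} V_i \bar V_j$ to obtain the quadratic form associated to matrix (\ref{eq:3.2}), which is exactly your identity $\partial^2\lambda_n/\partial\lambda_i\partial\lambda_j = -h_n^{-1}\,(\text{matrix \ref{eq:3.2}})_{ij}$ applied to $(V_1,\dots,V_{n-1})$. Indeed, Lemma~\hyperlink{L:3.2}{3.2} of the paper records this same relation (with the constant $-h_n$ written out as $C_{0;n}/(\lambda_n\sigma_n(\lambda))$), so your route and the paper's are two phrasings of one calculation.
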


\begin{proof}
Let $V = (V_1, \cdots, V_n) \in T_{ {\lambda}} \bigl\{ h = c   \bigr\}$ be a tangent vector, which gives, $\sum_i h_i V_i =0$. Then, to get convexity, which is equivalent to the following quantity  
\begin{align*}
\label{eq:3.3}
\sum_{i, j} h_{ij} V_i  {V}_{\bar{j}}  \tag{3.3}
\end{align*}is non-negative. Since $V$ is a tangent vector, we can write $V_n = -  \sum_{i=1}^{n-1} h_i V_i/{h_n}$. By plugging in quantity (\ref{eq:3.3}), we obtain
\begin{align*}
\label{eq:3.4}
\sum_{i, j} h_{ij} V_i V_{\bar{j}} &= \sum_{i = 1}^{n-1} \Bigl ( h_{ii} + h_{nn} \frac{h_i^2}{h_n^2} - 2h_{in} \frac{h_i}{h_n}      \Bigr) |V_i|^2 \tag{3.4} \\
&\kern2em   + \sum_{1 \leq i < j \leq n-1}  \Bigl (h_{ij} + h_{nn}  \frac{h_i h_j}{h_n^2} - h_{in} \frac{h_j}{h_n} - h_{jn} \frac{h_i}{h_n}   \Bigr  ) (V_i V_{\bar{j}} + V_{\bar{i}} V_j ).
\end{align*}So, if the following $n-1 \times n-1$ Hermitian matrix is positive semi-definite
\begin{align*}
\Bigl( h_{ij} + h_{nn} \frac{h_i h_j}{h_n^2} - h_{in} \frac{h_j}{h_n} - h_{jn} \frac{h_i}{h_n}  
\Bigr)_{i, j \in \{1, \cdots, n-1\}},
\end{align*}then the quantity (\ref{eq:3.4}) is non-negative. This implies that the level set $\{ h = c \}$ is convex.
\end{proof}

If we write 
\begin{align*}
\label{eq:3.5}
\lambda_n = \frac{\sum_{k = 0}^{n-2} c_k \sigma_k(\lambda_{; n})}{\lambda_1 \cdots \lambda_{n-1} - \sum_{k=1}^{n-2} c_k \sigma_{k-1}(\lambda_{; n})}, \tag{3.5}
\end{align*}then the Hessian of $\lambda_n$ is related to $n-1 \times n-1$ Hermitian matrix (\ref{eq:3.3}) as follows.

\hypertarget{L:3.2}{\begin{flemma}}
Let $h =  {\sum_{k = 0}^{n-2} c_k \sigma_k(\lambda)}/{\sigma_n(\lambda)}$, then we have
\begin{align*}
h_i = -\frac{\sum_{k= 0}^{n-2} c_k \sigma_k(\lambda_{;i})}{\lambda_i \sigma_n(\lambda) };\quad   h_{ij} = \frac{\sum_{k =0}^{n-2} c_k \sigma_k(\lambda_{; i, j})}{ \lambda_i \lambda_j \sigma_n(\lambda) }(1 + \delta_{ij}),
\end{align*}where we denote by $h_i \coloneqq \partial h/ \partial \lambda_i$ and $h_{ij} \coloneqq \partial^2 h/ \partial \lambda_i \partial \lambda_j$. Moreover, we have
\begin{align*}
\label{eq:3.6}
&\kern-1em h_{ij} + h_{nn} \frac{h_i h_j}{h_n^2} - h_{in} \frac{h_j}{h_n} - h_{jn} \frac{h_i}{h_n}   \tag{3.6} \\
&= \frac{ ( C - \lambda_i C_{1; i}      )    \bigl(   C  - \lambda_j \lambda_n  C_{2; j, n}  \bigr) + ( C - \lambda_j C_{1; j}      )   \bigl(   C - \lambda_i \lambda_n   C_{2; i, n}  \bigr)   }{  \lambda_i \lambda_j \sigma_n(\lambda)   (  C -   \lambda_n C_{1;  n}      )  } \\
&\kern2em - \frac{     ( C - \lambda_n C_{1; n}      )   \bigl(  C - \lambda_i \lambda_j  C_{2; i, j}  \bigr)  }{  \lambda_i \lambda_j  \sigma_n(\lambda)  (  C -   \lambda_n C_{1;  n}      )  }(1- \delta_{ij}) \\
&= \frac{C_{0; n}}{\lambda_n \sigma_n(\lambda) } \frac{\partial^2}{\partial \lambda_i \partial \lambda_j} \lambda_n.
\end{align*}Here, we denote 
\begin{align*}
C  &\coloneqq \sum_{k = 0}^{n-2} c_k \sigma_k(\lambda) = h \sigma_n(\lambda) = h \lambda_1 \cdots \lambda_n; \,
&C_{1; i}& \coloneqq \sum_{k = 1}^{n-2} c_k \sigma_{k-1}( \lambda_{; i}); \\ 
C_{0; n} &\coloneqq   \sum_{k = 0}^{n-2} c_k \sigma_k(\lambda_{;n}) = C - \lambda_n C_{1; n}; \,
\text{ and    } \, 
&C_{2; i, j}&  \coloneqq \sum_{k = 2}^{n-2} c_k \sigma_{k-2}(\lambda_{; i, j}).
\end{align*}
\end{flemma}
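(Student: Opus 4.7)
The two formulas for $h_i$ and $h_{ij}$ follow from the quotient rule applied to $h = C/\sigma_n(\lambda)$ combined with $\partial \sigma_k/\partial \lambda_l = \sigma_{k-1}(\lambda_{;l})$ and the decomposition $\sigma_k(\lambda) = \sigma_k(\lambda_{;l}) + \lambda_l\,\sigma_{k-1}(\lambda_{;l})$. A single application yields $h_i = (C_{1;i} - C/\lambda_i)/\sigma_n$, which simplifies to the stated form via $C - \lambda_i C_{1;i} = \sum_{k=0}^{n-2} c_k \sigma_k(\lambda_{;i})$. Differentiating once more, using that $\sum_{k=0}^{n-2} c_k \sigma_k(\lambda_{;i})$ is independent of $\lambda_i$, gives the formula for $h_{ii}$; the case $i\neq j$ follows from the same decomposition applied with $\lambda_j$, together with $C_{1;i} = \sum_{k=1}^{n-2} c_k(\sigma_{k-1}(\lambda_{;i,j}) + \lambda_j \sigma_{k-2}(\lambda_{;i,j}))$.

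The second equality in (3.6), which identifies the matrix with a positive multiple of the Hessian of $\lambda_n$, is automatic from implicit differentiation. On $\{h = c\}$, which coincides with $\{f = 0\}$, regard $\lambda_n$ as a function of $\lambda_1,\dots,\lambda_{n-1}$ and differentiate $h = c$ twice. The first derivative gives $\partial \lambda_n/\partial \lambda_j = -h_j/h_n$, and substituting this into the second derivative of $h = c$ produces
\begin{align*}
h_{ij} + h_{nn}\frac{h_i h_j}{h_n^2} - h_{in}\frac{h_j}{h_n} - h_{jn}\frac{h_i}{h_n} = -h_n\,\frac{\partial^2 \lambda_n}{\partial \lambda_i \partial \lambda_j}.
\end{align*}
The explicit value $-h_n = C_{0;n}/(\lambda_n \sigma_n(\lambda))$ obtained in the first step then yields the factor $C_{0;n}/(\lambda_n \sigma_n(\lambda))$ claimed in (3.6).

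For the first equality in (3.6), I substitute the explicit forms of $h_i$ and $h_{ij}$ into the left-hand side. The ratios simplify as $h_i/h_n = \lambda_n(C - \lambda_i C_{1;i})/(\lambda_i C_{0;n})$ and $h_i h_j/h_n^2 = \lambda_n^2 (C-\lambda_i C_{1;i})(C-\lambda_j C_{1;j})/(\lambda_i \lambda_j C_{0;n}^2)$, and after clearing the common denominator $\lambda_i \lambda_j \sigma_n(\lambda) C_{0;n}$ the identity reduces to a polynomial relation among $C$, $C_{1;a}$, $C_{2;a,b}$, and $\sum_{k=0}^{n-2}c_k \sigma_k(\lambda_{;a,b})$. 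Iterating the basic decomposition twice gives
\begin{align*}
\sum_{k=0}^{n-2} c_k \sigma_k(\lambda_{;a,b}) = C - \lambda_a C_{1;a} - \lambda_b C_{1;b} + \lambda_a \lambda_b C_{2;a,b},
\end{align*}
after which the required identity is a direct verification treated separately for $i \neq j$ and $i = j$; in the latter case the factor $(1-\delta_{ij})$ suppresses the third term while the extra multiplier $1+\delta_{ij}=2$ in $h_{ii}$ absorbs its contribution, and the combined numerator collapses to $2(C - \lambda_i C_{1;i})(C - \lambda_i \lambda_n C_{2;i,n})$.

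The main obstacle is purely the bookkeeping in the final substitution: many symmetric-function decompositions must be tracked simultaneously and the cross terms must cancel cleanly. Conceptually, however, the identity is forced once $h_i$ and $h_{ij}$ are known, since the second equality in (3.6) follows from implicit differentiation, and the first equality is merely its explicit form.
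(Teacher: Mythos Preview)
Your proposal is correct, and for the second equality in (3.6) it is genuinely cleaner than the paper's argument. The paper establishes the relation to $\partial^2\lambda_n/\partial\lambda_i\partial\lambda_j$ by computing both sides separately: it differentiates the rational expression for $\lambda_n$ directly (the long displays (3.10) and (3.11)), then computes the left-hand side of (3.6) from the explicit $h_i,h_{ij}$ (displays (3.12)--(3.14)), and finally checks that the two match. You instead observe that implicit differentiation of $h=c$ gives the identity $h_{ij}+h_{nn}h_ih_j/h_n^2-h_{in}h_j/h_n-h_{jn}h_i/h_n=-h_n\,\partial^2\lambda_n/\partial\lambda_i\partial\lambda_j$ in one line, and then read off $-h_n=C_{0;n}/(\lambda_n\sigma_n(\lambda))$. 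This bypasses all of (3.10)--(3.11) and makes the second equality essentially automatic. For the first equality (the explicit form in terms of $C,C_{1;i},C_{2;i,j}$) and for the formulas for $h_i,h_{ij}$, your route and the paper's coincide: both use the decomposition $\sigma_k(\lambda)=\sigma_k(\lambda_{;l})+\lambda_l\sigma_{k-1}(\lambda_{;l})$ iterated once or twice, and then collect terms.
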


\begin{proof}First, we have the following
\begin{align*}
0 = h \lambda_1 \cdots \lambda_n - \sum_{k=0}^{n-2} c_k \sigma_{k} (\lambda) = \lambda_i \Bigl(  h \lambda_1 \cdots \lambda_n /\lambda_i - \sum_{k=1}^{n-2} c_k \sigma_{k-1}(\lambda_{; i})    \Bigr) - \sum_{k = 0}^{n-2} c_k \sigma_k (\lambda_{; i}).
\end{align*}This implies that
\begin{align*}
\label{eq:3.7}
 C_{0; i} \coloneqq \sum_{k = 0}^{n-2} c_k \sigma_k (\lambda_{; i}) = h \lambda_1 \cdots \lambda_n  - \lambda_i \sum_{k=1}^{n-2} c_k \sigma_{k-1}(\lambda_{; i})  = C - \lambda_i C_{1; i}. \tag{3.7}
\end{align*}In addition, for $i \neq j$, we get
\begin{align*}
\label{eq:3.8}
C_{0; i} + C_{0; j} - C_{0; i, j} =  \sum_{k = 0}^{n-2} c_k \sigma_k (\lambda_{; i}) + \sum_{k = 0}^{n-2} c_k \sigma_k (\lambda_{; j}) - \sum_{k = 0}^{n-2} c_k \sigma_k (\lambda_{; i, j}) &= C - \lambda_i \lambda_j C_{2; i, j},    \tag{3.8}
\end{align*}where we denote $C_{0; i, j}  = \sum_{k = 0}^{n-2} c_k \sigma_k (\lambda_{; i, j})$. By (\ref{eq:3.8}), we obtain 
\begin{align*}
\label{eq:3.9}
C_{0; i, j} &=   C_{0; i}  +  C_{0; j} -  C  + \lambda_i \lambda_j C_{2; i, j}.    \tag{3.9}
\end{align*}For $i = j$, on $h = \sum_{k=0}^{n-2} c_k \sigma_k(\lambda) / \sigma_n(\lambda)$, we have 
\begin{align*}
\lambda_n = \frac{  \sum_{k = 0}^{n-2} c_k \sigma_k(\lambda_{; n})  }{h \lambda_1 \cdots \lambda_{n-1}  - \sum_{k=1}^{n-2}  c_k \sigma_{k-1}(\lambda_{; n})  }  =  \frac{C_{0; n}}{ h \lambda_1 \cdots \lambda_{n-1}  - C_{1; n}  }
\end{align*}this implies that
\begingroup
\allowdisplaybreaks
\begin{align*}
\label{eq:3.10}
\frac{\partial^2}{\partial \lambda_i^2} \lambda_n &= \frac{\partial}{\partial \lambda_i} \Bigl(     \frac{\sum_{k = 1}^{n-2} c_k \sigma_{k-1}(\lambda_{; i, n})      }{   h \lambda_1 \cdots \lambda_{n-1}  - \sum_{k=1}^{n-2}  c_k \sigma_{k-1}(\lambda_{; n})    }    \tag{3.10}  \\
&\kern6em-   \frac{  \sum_{k = 0}^{n-2} c_k \sigma_k(\lambda_{; n}) \bigl( h \lambda_1 \cdots \lambda_{n-1}/\lambda_i  - \sum_{k=2}^{n-2}  c_k \sigma_{k-2}(\lambda_{; i, n})  \bigr)  }{ \bigl( h \lambda_1 \cdots \lambda_{n-1}  - \sum_{k=1}^{n-2}  c_k \sigma_{k-1}(\lambda_{; n})  \bigr)^2 }    \Bigr) \\
&= 2   \frac{\sum_{k = 0}^{n-2} c_k \sigma_{k}(\lambda_{;  n})    \bigl( h \lambda_1 \cdots \lambda_{n-1}/\lambda_i  - \sum_{k=2}^{n-2}  c_k \sigma_{k-2}(\lambda_{; i, n})  \bigr)^2   }{ \bigl( h \lambda_1 \cdots \lambda_{n-1}  - \sum_{k=1}^{n-2}  c_k \sigma_{k-1}(\lambda_{; n})  \bigr)^3 }    \\
&\kern2em- 2   \frac{  \sum_{k = 1}^{n-2} c_k \sigma_{k-1}(\lambda_{; i, n}) \bigl( h \lambda_1 \cdots \lambda_{n-1}/\lambda_i  - \sum_{k=2}^{n-2}  c_k \sigma_{k-2}(\lambda_{; i, n})  \bigr)  }{ \bigl( h \lambda_1 \cdots \lambda_{n-1}  - \sum_{k=1}^{n-2}  c_k \sigma_{k-1}(\lambda_{; n})  \bigr)^2 }     \\
&= 2   \frac{ \lambda_n     \bigl( C  -  \lambda_i \lambda_n C_{2; i, n}  \bigr)^2   }{ \lambda_i^2  C_{0; n}^2 }   - 2   \frac{  \lambda_n C_{1; i, n}  \bigl( C  - \lambda_i \lambda_n C_{2; i, n}  \bigr)  }{ \lambda_i C_{0; n}^2 }      \\
&= 2   \frac{ \lambda_n     \bigl( C  -  \lambda_i \lambda_n C_{2; i, n}  \bigr)   }{ \lambda_i^2  C_{0; n}^2 } \Bigl(    C -  \lambda_i \lambda_n C_{2; i, n} - \lambda_i  C_{1; i, n} \Bigr) = 2   \frac{ \lambda_n C_{0; i}     \bigl( C  -  \lambda_i \lambda_n C_{2; i, n}  \bigr)   }{ \lambda_i^2  C_{0; n}^2 }. 
\end{align*}
\endgroup
For $i \neq j$, similarly we get
\begingroup
\allowdisplaybreaks
\begin{align*}
\label{eq:3.11}
\frac{\partial^2}{\partial \lambda_j  \partial \lambda_i } \lambda_n &=  \frac{\partial}{\partial \lambda_j} \Bigl(     \frac{\sum_{k = 1}^{n-2} c_k \sigma_{k-1}(\lambda_{; i, n})      }{   h \lambda_1 \cdots \lambda_{n-1}  - \sum_{k=1}^{n-2}  c_k \sigma_{k-1}(\lambda_{; n})    }  \tag{3.11}   \\
&\kern6em -   \frac{  \sum_{k = 0}^{n-2} c_k \sigma_k(\lambda_{; n}) \bigl( h \lambda_1 \cdots \lambda_{n-1}/\lambda_i  - \sum_{k=2}^{n-2}  c_k \sigma_{k-2}(\lambda_{; i, n})  \bigr)  }{ \bigl( h \lambda_1 \cdots \lambda_{n-1}  - \sum_{k=1}^{n-2}  c_k \sigma_{k-1}(\lambda_{; n})  \bigr)^2 }    \Bigr) \\
&=   \frac{\sum_{k = 2}^{n-2} c_k \sigma_{k-2}(\lambda_{; i, j, n})      }{   h \lambda_1 \cdots \lambda_{n-1}  - \sum_{k=1}^{n-2}  c_k \sigma_{k-1}(\lambda_{; n})    }  \\
&\kern2em -   \frac{  \sum_{k = 1}^{n-2} c_k \sigma_{k-1}(\lambda_{; i, n}) \bigl( h \lambda_1 \cdots \lambda_{n-1}/\lambda_j  - \sum_{k=2}^{n-2}  c_k \sigma_{k-2}(\lambda_{; j, n})  \bigr)  }{ \bigl( h \lambda_1 \cdots \lambda_{n-1}  - \sum_{k=1}^{n-2}  c_k \sigma_{k-1}(\lambda_{; n})  \bigr)^2 }     \\
&\kern2em -   \frac{  \sum_{k = 1}^{n-2} c_k \sigma_{k-1}(\lambda_{; j, n}) \bigl( h \lambda_1 \cdots \lambda_{n-1}/\lambda_i  - \sum_{k=2}^{n-2}  c_k \sigma_{k-2}(\lambda_{; i, n})  \bigr)  }{ \bigl( h \lambda_1 \cdots \lambda_{n-1}  - \sum_{k=1}^{n-2}  c_k \sigma_{k-1}(\lambda_{; n})  \bigr)^2 } \\
&\kern2em -   \frac{  \sum_{k = 0}^{n-2} c_k \sigma_k(\lambda_{; n}) \bigl( h \lambda_1 \cdots \lambda_{n-1}/\lambda_i \lambda_j - \sum_{k=3}^{n-2}  c_k \sigma_{k-2}(\lambda_{; i,  j, n})  \bigr)  }{ \bigl( h \lambda_1 \cdots \lambda_{n-1}  - \sum_{k=1}^{n-2}  c_k \sigma_{k-1}(\lambda_{; n})  \bigr)^2 }   \\
&\kern2em + 2  \frac{  \sum_{k = 0}^{n-2} c_k \sigma_k(\lambda_{; n}) \bigl( h \lambda_1 \cdots \lambda_{n-1}/\lambda_i  - \sum_{k=2}^{n-2}  c_k \sigma_{k-2}(\lambda_{; i, n})  \bigr)  }{ \bigl( h \lambda_1 \cdots \lambda_{n-1}  - \sum_{k=1}^{n-2}  c_k \sigma_{k-1}(\lambda_{; n})  \bigr)^3 }  \\
&\kern2em \times  \bigl( h \lambda_1 \cdots \lambda_{n-1}/\lambda_j  - \sum_{k=2}^{n-2}  c_k \sigma_{k-2}(\lambda_{; j, n})  \bigr)  \\
&= \frac{\lambda_n}{ \lambda_i \lambda_j C_{0; n}^2}  \Bigl(  \lambda_i \lambda_j C_{2; i, j, n} C_{0; n}   - C_{0; n}  (C- \lambda_i \lambda_j \lambda_n C_{3; i, j, n})  \\
&\kern6em - \lambda_i   C_{1; i,  n} (C - \lambda_j \lambda_n C_{2; j, n} )    - \lambda_j   C_{1; j,  n} (C - \lambda_i \lambda_n C_{2; i, n} )   \\
&\kern15em + 2  (C- \lambda_i \lambda_n C_{2; i, n}) (C- \lambda_j \lambda_n C_{2; j, n})   \Bigr )  \\
&= \frac{\lambda_n}{ \lambda_i \lambda_j C_{0; n}^2}  \Bigl( C_{0; j} (C - \lambda_i \lambda_n C_{2; i, n}) + C_{0; i} (C - \lambda_j \lambda_n C_{2; j, n})  \\
&\kern17em - C_{0; n} (C - \lambda_i \lambda_j C_{2; i, j}) \Bigr). 
\end{align*}
\endgroup

Here, for convenience, we denote 
\begin{align*}
C_{1; i, j} \coloneqq \sum_{k = 1}^{n-2} c_k \sigma_{k-1}(\lambda_{; i, j}); \, 
C_{2; i, j, n} \coloneqq  \sum_{k = 2}^{n-2} c_k \sigma_{k-2}(\lambda_{; i, j, n}); \,  \text{ and } C_{3; i, j, n} \coloneqq  \sum_{k = 3}^{n-2} c_k \sigma_{k-3}(\lambda_{; i, j, n}).
\end{align*} On the other hand, for $i = j$, we have

\begingroup
\allowdisplaybreaks
\begin{align*}
\label{eq:3.12}
&\kern-1em h_{ii} + h_{nn} \frac{h_i^2}{h_n^2} - 2h_{in} \frac{h_i}{h_n}    \tag{3.12}  \\
&= 2 \frac{\sum_{k =0}^{n-2} c_k \sigma_k(\lambda_{; i})}{\lambda_i^2 \sigma_n(\lambda)  }  + 2 \frac{\sum_{k= 0}^{n-2} c_k \sigma_k(\lambda_{;n})}{\lambda_n^2  \sigma_n(\lambda) }   \Bigl(   \frac{\lambda_n   \sum_{k=0}^{n-2} c_k \sigma_k(\lambda_{; i})}{\lambda_i   \sum_{k=0}^{n-2} c_k \sigma_k(\lambda_{; n})}  \Bigr)^2 \\
&\kern2em - 2 \frac{\sum_{k =0}^{n-2} c_k \sigma_k(\lambda_{; i, n})}{\lambda_i \lambda_n \sigma_n(\lambda) }   \frac{\lambda_n   \sum_{k=0}^{n-2} c_k \sigma_k(\lambda_{; i})}{\lambda_i    \sum_{k=0}^{n-2} c_k \sigma_k(\lambda_{; n})}   \\
&= \frac{2 \sum_{k = 0}^{n-2} c_k \sigma_k(\lambda_{; i}) }{ \lambda_i^2 \sigma_n(\lambda)  \sum_{k=0}^{n-2} c_k \sigma_k(\lambda_{; n}) } \Bigl (  \sum_{k = 0}^{n-2} c_k \sigma_k(\lambda_{; i})  + \sum_{k = 0}^{n-2} c_k \sigma_k(\lambda_{; n}) - \sum_{k = 0}^{n-2} c_k \sigma_k(\lambda_{; i, n})        \Bigr) \\
&= \frac{2 C_{0; i} }{  \lambda_i^2 \sigma_n(\lambda)  C_{0; n} } \Bigl ( C   - \lambda_i \lambda_n C_{2; i, n}       \Bigr) = \frac{C_{0; n} }{\lambda_n \sigma_n(\lambda)    }  \frac{\partial^2 }{\partial \lambda_i^2} \lambda_n.
\end{align*}
\endgroup

For $i \neq j$, we obtain
\begingroup
\allowdisplaybreaks
\begin{align*}
\label{eq:3.13}
&\kern-1em h_{ij} + h_{nn} \frac{h_i h_j}{h_n^2} - h_{in} \frac{h_j}{h_n} - h_{jn} \frac{h_i}{h_n}  \tag{3.13} \\
&= \frac{\sum_{k =0}^{n-2} c_k \sigma_k(\lambda_{; i, j})}{\lambda_i \lambda_j  \sigma_n(\lambda) }  + 2 \frac{\sum_{k= 0}^{n-2} c_k \sigma_k(\lambda_{;n})}{ \lambda_n^2 \sigma_n(\lambda) } \frac{\lambda_n   \sum_{k=0}^{n-2} c_k \sigma_k(\lambda_{; i})}{\lambda_i   \sum_{k=0}^{n-2} c_k \sigma_k(\lambda_{; n})}    \frac{\lambda_n   \sum_{k=0}^{n-2} c_k \sigma_k(\lambda_{; j})}{\lambda_j    \sum_{k=0}^{n-2} c_k \sigma_k(\lambda_{; n})} \\
&\kern2em - \frac{\sum_{k =0}^{n-2} c_k \sigma_k(\lambda_{; i, n})}{ \lambda_i \lambda_n \sigma_n(\lambda)}   \frac{\lambda_n   \sum_{k=0}^{n-2} c_k \sigma_k(\lambda_{; j})}{\lambda_j    \sum_{k=0}^{n-2} c_k \sigma_k(\lambda_{; n})}  - \frac{\sum_{k =0}^{n-2} c_k \sigma_k(\lambda_{; j, n})}{ \lambda_j \lambda_n  \sigma_n(\lambda) }   \frac{\lambda_n   \sum_{k=0}^{n-2} c_k \sigma_k(\lambda_{; i})}{\lambda_i    \sum_{k=0}^{n-2} c_k \sigma_k(\lambda_{; n})}  \\
&= \frac{1}{ \lambda_i \lambda_j \sigma_n(\lambda)   C_{0; n} } \Bigl(   C_{0; i, j} C_{0; n}  + 2  C_{0; i} C_{0; j}     - C_{0; i, n}  C_{0; j}  - C_{0; j, n}  C_{0; i}  \Bigr). 
\end{align*}
\endgroup

We can simplify the terms in the parentheses of quantity (\ref{eq:3.13}):
\begingroup
\allowdisplaybreaks
\begin{align*}
\label{eq:3.14}
&\kern-1em C_{0; i, j} C_{0; n}  + 2  C_{0; i} C_{0; j}     - C_{0; i, n}  C_{0; j}  - C_{0; j, n}  C_{0; i}  \tag{3.14}  \\
&= C_{0; n}  \Bigl( C_{0; i} + C_{0; j} -  C + \lambda_i \lambda_j C_{2; i, j} \Bigr) + 2 C_{0; i}  C_{0; j} \\
&\kern2em -  C_{0; j}  \Bigl(  C_{0; i} + C_{0; n} -  C  + \lambda_i \lambda_n C_{2; i, n} \Bigr)  \\
&\kern2em -  C_{0; i}  \Bigl(  C_{0; j} + C_{0; n} -  C  + \lambda_j \lambda_n C_{2; j, n} \Bigr)  \\
&=  C_{0; j}  \Bigl(  C  - \lambda_i \lambda_n C_{2; i, n} \Bigr) +   C_{0; i}  \Bigl(   C  - \lambda_j \lambda_n  C_{2; j, n} \Bigr)  -C_{0; n}   \Bigl(  C - \lambda_i \lambda_j  C_{2; i, j} \Bigr). 
\end{align*}
\endgroup

Hence, by (\ref{eq:3.11}), (\ref{eq:3.13}), and (\ref{eq:3.14}), we get
\begin{align*}
h_{ij} + h_{nn} \frac{h_i h_j}{h_n^2} - h_{in} \frac{h_j}{h_n} - h_{jn} \frac{h_i}{h_n}   = \frac{ C_{0; n}}{ \lambda_n \sigma_n(\lambda) }  \frac{\partial^2}{\partial \lambda_i \partial \lambda_j} \lambda_n. 
\end{align*} This finishes the proof. 
\end{proof}

\hypertarget{T:3.1}{\begin{fthm}[Convexity of the general inverse $\sigma_k$ equation]}
Let $f(\lambda) \coloneqq  \lambda_1 \cdots \lambda_n - \sum_{k = 0}^{n-1} c_k \sigma_k(\lambda)$ be a general inverse $\sigma_k$ type multilinear polynomial. If the diagonal restriction $r_f(x) = x^n - \sum_{k = 0}^{n-1} c_k \binom{n}{k} x^k$ is strictly right-Noetherian, then $\{f=0\}$ is strictly convex. 
\end{fthm}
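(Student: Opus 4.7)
The plan is to reduce Theorem 3.1 to the positive semi-definiteness of a single Hessian matrix and then exploit the strict $\Upsilon$-stability hypothesis. Combining Lemma 3.1 and Lemma 3.2, convexity of $\{f = 0\}$ follows once the matrix $\bigl(\partial^2 \lambda_n / \partial \lambda_i \partial \lambda_j\bigr)_{i,j \in \{1, \ldots, n-1\}}$ is positive semi-definite, provided the scalar prefactor $C_{0;n}/(\lambda_n \sigma_n(\lambda))$ is positive. By Theorem 2.3 strict right-Noetherianness of $r_f$ is equivalent to strict $\Upsilon$-stability of $\Gamma^n_f$, and Lemma 2.4 then places $\partial \Gamma^n_f$ inside $\Upsilon_1 \subseteq \Gamma_n$, where $\sigma_n(\lambda) > 0$ and $f_n > 0$. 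On the level set $\{f = 0\}$ the identity $C_{0;n} = \lambda_n f_n$ derived in Lemma 3.2 yields the required positivity of the prefactor. So only the PSD property of the Hessian of $\lambda_n$ remains.

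I would establish this PSD property by induction on $n$. For $n = 2$ the function $\lambda_2 = c_0/\lambda_1$ is trivially convex on $\lambda_1 > 0$. For the inductive step I would use Proposition 2.8: fixing any single variable $\lambda_i = \mu_i$ with $\mu_i \in \Gamma^1_f$ yields a cross-section $f/\mu_i$ that is again a strictly $\Upsilon$-stable general inverse $\sigma_k$ type polynomial in $n-1$ variables, to which the inductive hypothesis applies. This controls all the two-dimensional principal submatrices of the Hessian obtained after fixing $n-3$ variables, and in particular gives non-negativity of all $2 \times 2$ minors in the transverse directions.

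For the remaining (fully $n-1$-dimensional) direction, the diagonal locus $\lambda_1 = \cdots = \lambda_{n-1}$ is handled by Lemma 1.2: PSD-ness along this curve is equivalent to the monotonicity of the log-concavity ratio $\alpha_{r_f}$, which is exactly the content of the monotonicity Theorem 2.1 for right-Noetherian polynomials. To propagate PSD-ness from the symmetric curve to the entire $\Upsilon_1$-locus, I would use the deformation $P(x,y)$ of Theorem 2.2 as a homotopy: the foliation it provides is monotone in $y$ for $\alpha_P$, and under this deformation right-Noetherianness is preserved while the multiplicity of the largest root only increases. The endpoint multiplicity $m = n$ corresponds to the Monge--Ampère case $\lambda_1 \cdots \lambda_n = c$, whose level set is well-known to be convex. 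By the sign analysis of the entries in formula (3.6) combined with the $S_n$-symmetry of $f$ and the inductive control on cross-sections, the PSD property is preserved throughout this deformation.

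The hard part will be the propagation step from the symmetric curve to arbitrary points on the level set. The obstruction is that formula (3.6) for the Hessian entries couples all of the quantities $C$, $C_{0;i}$, $C_{1;i}$, $C_{2;i,j}$, $C_{2;i,n}$, etc., and positive semi-definiteness is not entrywise obvious. My plan is to control this by first proving the weaker inequality $\sum_{i,j} (\partial^2 \lambda_n / \partial \lambda_i \partial \lambda_j) V_i \overline{V}_j \geq 0$ for $V$ tangent to $\{f = 0\}$ via a Lagrange-multiplier argument analogous to the ones used in Lemma 2.6 and Theorem 2.3, reducing to a univariate inequality on $r_f$; this inequality is precisely where monotonicity of $\alpha_{r_f}$ (Theorem 2.1) is invoked, and where Proposition 2.5 provides the key pointwise bound $(n-2) r'_f r''_f \geq n\, r_f r'''_f$ needed to close the estimate.
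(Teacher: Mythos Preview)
Your proposal has a genuine gap at precisely the point you flag as ``the hard part.'' The route via Lemmas 3.1--3.2 is sound: convexity of $\{f=0\}$ is equivalent to positive semi-definiteness of the Hessian of $\lambda_n$ on the $\Upsilon_1$-locus. The problem is your plan to \emph{establish} that PSD property. The inductive hypothesis (via Proposition 2.8) only gives PSD of every $(n-2)\times(n-2)$ principal submatrix of the $(n-1)\times(n-1)$ Hessian, and that does not imply PSD of the full matrix (e.g.\ $2I_{n-1}-\mathds{1}_{n-1\times n-1}$ has all proper principal minors non-negative but a negative eigenvalue). Lemma 1.2 handles only the single symmetric curve $\lambda_1=\cdots=\lambda_{n-1}$. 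Your proposed bridge---using the univariate deformation $P(x,y)$ of Theorem 2.2 as a homotopy---does not connect: that theorem moves the largest root of a \emph{univariate} polynomial and controls $\alpha_P$, but there is no mechanism offered by which monotonicity of $\alpha_P$ translates into PSD of the multivariate Hessian at a point $(\lambda_1,\ldots,\lambda_{n-1})$ with distinct coordinates. Proving PSD directly off the diagonal is essentially the content of Conjecture 1.1 (modulo strictness), which the paper explicitly leaves open.

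The paper circumvents this obstacle entirely. Instead of attacking the Hessian, it proves by induction the following geometric claim: every line through a point of $\Gamma^n_f$ meets $\partial\Gamma^n_f$ in at least one and at most two points. The restriction $\tilde f(t)=f(x+tv)$ is a polynomial in $t$ of degree $\#\{i:v_i\neq 0\}$; if some $v_i=0$ one reduces to the $(n-1)$-dimensional cross-section via Proposition 2.8, and if all $v_i\neq 0$ one argues by contradiction using perturbation and the fact that $\{f=0\}\subset\Upsilon_1$ to manufacture a third intersection of a line with a lower-dimensional level set, violating the inductive claim. Once the two-point intersection property holds, convexity of $\overline{\Gamma^n_f}$ (hence of its boundary) is immediate. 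This line-counting argument is what you are missing; it is the actual engine of the proof, and nothing in your outline substitutes for it.
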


\begin{proof}
We assume $c_{n-1} = 0$ for convenience. We prove the statement by mathematical induction on the degree $n$. When $n = 2$, $f(\lambda) =  {\lambda}_1  {\lambda}_2    - {c}_0$ with $c_0 > 0$. Let $(\mu_1, \mu_2), (\nu_1, \nu_2) \in   \{ f \geq 0 \}$. That is, $f(\mu_1, \mu_2) = \mu_1 \mu_2 - c_0 \geq 0$ and $f(\nu_1, \nu_2) = \nu_1 \nu_2 - c_0 \geq 0$. For any ${t} \in (0, 1)$, we have 
\begin{align*}
&\kern-2em f( (1- {t}) \mu_1+   {t}\nu_1,  (1- {t}) \mu_2 +  {t}\nu_2) \\
&= (1-t)^2 \mu_1 \mu_2 +  {t}(1- {t}) (\mu_1 \nu_2 + \mu_2 \nu_1) + t^2 \nu_1 \nu_2 - c_0 \\
&\geq  (1-t)^2 \mu_1 \mu_2 + 2  {t}(1- {t}) \sqrt{ \mu_1 \mu_2 \nu_1\nu_2 } + t^2 \nu_1 \nu_2 - c_0 \geq 0
\end{align*}
and equality holds when $(\mu_1, \mu_2) = (\nu_1, \nu_2)$ and $\mu_1 \mu_2 = c_0 = \nu_1 \nu_2$. Hence, the set $\{ f(\lambda) = \lambda_1 \lambda_2 - c_0 \geq 0 \}$ is strictly convex. \smallskip

Suppose the statement is true when $n = m-1$. When $n = m \geq 3$, let $\mu = (\mu_1, \cdots, \mu_m), \nu = (\nu_1, \cdots, \nu_m) \in  \{ f \geq 0 \}$. If $\mu_i = \nu_i$ for some $i \in \{1, \cdots, m\}$, then by fixing the $i$-th position and Proposition~\hyperlink{P:2.8}{2.8}, we may view $f(\bullet, \mu_i, \bullet)$ as a degree $m-1$ strictly $\Upsilon$-stable general inverse $\sigma_k$ equation. By mathematical induction, for any $t \in (0, 1)$, we have $f( (1-t)\mu + t \nu ) > 0$. So, we only need to consider the case that $\mu_i \neq \nu_i$ for all $i \in \{1, \cdots, m\}$. 
In addition, it suffices to show that $\{ f \geq 0 \}$ is convex. We show that if $\{ f \geq 0 \}$ is convex, then $\{ f \geq 0 \}$ is indeed strictly convex. If $\{ f \geq 0 \}$ is convex, let $\mu, \nu \in   \{ f \geq 0 \}$ with $\mu_i \neq \nu_i$ for all $i \in \{1, \cdots, m\}$. We can view $f( (1-t) \mu + t \nu)$ as a degree $m$ polynomial in terms of $t$, so there are finitely many roots by the fundamental theorem of algebra. Suppose that there exists $\tilde{t} \in (0, 1)$ such that $f( (1-\tilde{t}) \mu + \tilde{t} \nu ) = 0$. 
Let $\tilde{\mu}$ and $\tilde{\nu}$ denote the $(m-1)$-tuples $(\mu_2, \cdots, \mu_m)$ and $(\nu_2, \cdots, \nu_m)$, respectively. Since $f( (1-t) \mu  + t \nu )$ only has finitely many roots, there exists $0 < t_0 < \tilde{t}$ and $1 > t_1 > \tilde{t}$ such that $f( (1-t_0) \mu +  t_0 \nu) > 0$ and $f( (1-t_1) \mu + t_1 \nu) > 0$. For $\epsilon > 0$ sufficiently small, we get $f( (1-t_0) \mu_1 + t_0 \nu_1 - \epsilon, (1-t_0) \tilde{\mu} + t_0 \tilde{\nu} ) > 0$ and $f( (1-\tilde{t}) \mu_1 +  \tilde{t} \nu_1 - \frac{t_1 - \tilde{t}}{{t_1} - t_0}\epsilon, (1-\tilde{t}) \tilde{\mu} +   \tilde{t} \tilde{\nu} ) < 0$, which is a contradiction if $\{ f \geq 0\}$ is convex. \smallskip

Moreover, it suffices to show that $\{ f(\lambda) \geq 0 \}$ is convex if $c \in \tilde{\mathscr{C}}_m$ is in the generic strata. Let $c \in \tilde{\mathscr{C}}_m$ not in the generic strata and suppose $\{ f(\lambda) = \lambda_1 \cdots \lambda_m - \sum_{k=0}^{m-2} c_k \sigma_k(\lambda) \geq 0\}$ is not convex. If there exists 
$\mu, \nu \in \{ f(\lambda) = \lambda_1 \cdots \lambda_m - \sum_{k=0}^{m-2} c_k \sigma_k(\lambda) \geq 0\}$ and $\tilde{t} \in (0, 1)$ such that $f((1-\tilde{t})\mu + \tilde{t} \nu) < 0$. 
By choosing $\epsilon > 0$ sufficiently small, we get $f(\mu_1 + \epsilon, \tilde{\mu}) > 0$, $f(\nu_1 + \epsilon, \tilde{\nu}) > 0$, and $f((1-\tilde{t})\mu_1 + \tilde{t} \nu_1 + \epsilon, (1-\tilde{t})\tilde{\mu} + \tilde{t} \tilde{\nu}  ) < 0$. We \hypertarget{claim1}{\textbf{claim}} that there exists $\tilde{c} \in \tilde{\mathscr{C}}_m$ in the generic strata with $\{  \lambda_1 \cdots \lambda_m - \sum_{k=0}^{m-2} \tilde{c}_k \sigma_k(\lambda) \geq 0\} \subset \{   \lambda_1 \cdots \lambda_m - \sum_{k=0}^{m-2} c_k \sigma_k(\lambda) \geq 0\}$ such that $(\mu_1 + \epsilon, \tilde{\mu}), (\nu_1 + \epsilon, \tilde{\nu}) \in \{ \lambda_1 \cdots \lambda_m - \sum_{k=0}^{m-2} \tilde{c}_k \sigma_k(\lambda) \geq 0\}$. If \bhyperlink{claim1}{\textbf{claim 1}} holds and $\{ \lambda_1 \cdots \lambda_m - \sum_{k=0}^{m-2} \tilde{c}_k \sigma_k(\lambda) \geq 0\}$ is convex when $\tilde{c} \in \tilde{\mathscr{C}}_m$ is in the generic strata, then we get $((1-\tilde{t})\mu_1 + \tilde{t} \nu_1 + \epsilon, (1-\tilde{t})\tilde{\mu} + \tilde{t} \tilde{\nu}  ) \in \{  \lambda_1 \cdots \lambda_m - \sum_{k=0}^{m-2} \tilde{c}_k \sigma_k(\lambda) \geq 0\} \subset \{   \lambda_1 \cdots \lambda_m - \sum_{k=0}^{m-2} c_k \sigma_k(\lambda) \geq 0\}$, which leads to a contradiction.   

To justify \bhyperlink{claim1}{\textbf{claim 1}}, for any $\mu \in \{ f(\lambda) = \lambda_1 \cdots \lambda_m - \sum_{k=0}^{m-2} c_k \sigma_k(\lambda) > 0\}$, we consider the following continuous function, which is the composition of two continuous functions:
\begin{center} 
\begin{tikzcd}[row sep=-0.2em]
\tilde{\mathcal{X}}_m \ar[r,"\psi"] & \tilde{\mathscr{C}}_m \ar[r,"\mu"] & \mathbb{R} \\
(x_{m-2}, \cdots, x_1, x_0) \arrow[r,mapsto]  & (d_{m-2}, \cdots, d_1, d_0) \arrow[r,mapsto] & \mu_1 \cdots \mu_m - \sum_{k=0}^{m-2} d_k \sigma_k(\mu)
\end{tikzcd}
\end{center}
Here, $\psi \colon \tilde{\mathcal{X}}_m \rightarrow \tilde{\mathscr{C}}_m $ is the map in Lemma~\hyperlink{L:2.9}{2.9}. Let $x_l(c)$ be the largest real root of the $l$-th derivative of $x^m - \sum_{k=0}^{m-2} c_k \binom{m}{k} x^k$ for $l \in \{0, \cdots, m-2\}$.
Since $\mu \circ \psi (x_{m-2}(c), \cdots, x_{1}(c), x_{0}(c)) = \mu_1 \cdots \mu_m - \sum_{k=0}^{m-2} c_k \sigma_k(\mu) > 0$, by the continuity of the function $\mu \circ \psi$, there exists $N_{\mu} > 0$ sufficiently large such that for any $N \geq N_{\mu}$, we have
\begin{align*}
\mu \circ \psi \Bigl(x_{m-2}(c) + \frac{1}{(m-1)N}, \cdots, x_1(c) + \frac{1}{2N}, x_0(c) + \frac{1}{N}   \Bigr) > 0.
\end{align*}Here, $x_0(c) + \frac{1}{N} > x_1(c) + \frac{1}{2N} > \cdots > x_{m-2}(c) + \frac{1}{(m-1)N}$, so it is the generic strata. 
By picking $N \geq \max \{ N_{(\mu_1 + \epsilon, \tilde{\mu})}, N_{(\nu_1 + \epsilon, \tilde{\nu})} \}$ and by the \bhyperlink{T:2.4}{$\Upsilon$-dominance Theorem}, we get 
\begin{align*}
(\mu_1 + \epsilon, \tilde{\mu}), (\nu_1 + \epsilon, \tilde{\nu}) \in \Bigl\{ \lambda_1 \cdots \lambda_m - \sum_{k=0}^{m-2} \tilde{c}_k \sigma_k(\lambda) \geq 0 \Bigr\} \subset  \Bigl\{   \lambda_1 \cdots \lambda_m - \sum_{k=0}^{m-2} c_k \sigma_k(\lambda) \geq 0 \Bigr\},
\end{align*}where $(\tilde{c}_{m-2}, \cdots, \tilde{c}_1, \tilde{c}_0 ) = \psi (x_{m-2}(c) + \frac{1}{(m-1)N}, \cdots, x_1(c) + \frac{1}{2N}, x_0(c) + \frac{1}{N}  )$. Thus, we confirm \bhyperlink{claim1}{\textbf{claim 1}} and only need to show that $\{ f(\lambda) \geq 0 \}$ is convex if $c \in \tilde{\mathscr{C}}_m$ is in the generic strata.
 \smallskip

Now, to prove $\{ f \geq 0 \}$ is convex when $c \in \tilde{\mathscr{C}}_m$ is in the generic strata, by the intermediate value theorem, it suffices to consider $\mu, \nu   \in  \partial \{ f \geq 0 \}$ and show that $(1-t) \mu+ t \nu \in \{ f \geq 0\}$ for any $t \in (0, 1)$. 
First, for convenience, we assume $\mu_1 > \nu_1$. By fixing $\mu_1$, we may view $\tilde{\mu} = (\mu_2, \cdots, \mu_{m})$ is on the level set of the following degree $m-1$ strictly $\Upsilon$-stable general inverse $\sigma_k$ equation:
\begin{align*}
\label{eq:3.15}
\mu_1 \Bigl(\lambda_2 \cdots \lambda_{m} - \sum_{k=1}^{m-2} c_k \sigma_{k-1}(\lambda_{; 1}) \Bigr) - \sum_{k=0}^{m-2} c_k \sigma_{k}(\lambda_{; 1}) = 0. \tag{3.15}
\end{align*}Similarly, we view $\tilde{\nu} = (\nu_2, \cdots, \nu_{m})$ is on the level set of the following degree $m-1$ strictly $\Upsilon$-stable general inverse $\sigma_k$ equation:
\begin{align*}
\label{eq:3.16}
\nu_1 \Bigl(\lambda_2 \cdots \lambda_{m} - \sum_{k=1}^{m-2} c_k \sigma_{k-1}(\lambda_{; 1}) \Bigr) - \sum_{k=0}^{m-2} c_k \sigma_{k}(\lambda_{; 1}) = 0. \tag{3.16}
\end{align*}

Let $\tilde{x}_l(\mu_1)$ be the largest real root of the $l$-th derivative of the diagonal restriction $\mu_1 (x^{m-1} - \sum_{k=1}^{m-2} c_k \binom{m-1}{k-1} x^{k-1} ) - \sum_{k=0}^{m-2} c_k \binom{m-1}{k} x^k$ of quantity (\ref{eq:3.15}) for $l \in \{0, \cdots, m-2\}$. Similarly, let $\tilde{x}_l(\nu_1)$ be the largest real root of the $l$-th derivative of the diagonal restriction $\nu_1 (x^{m-1} - \sum_{k=1}^{m-2} c_k \binom{m-1}{k-1} x^{k-1} ) - \sum_{k=0}^{m-2} c_k \binom{m-1}{k} x^k$ of quantity (\ref{eq:3.16}) for $l \in \{0, \cdots, m-2\}$. Since $\mu_1 > \nu_1$, by Lemma~\hyperlink{L:2.12}{2.12}, we have $\tilde{x}_l(\mu_1) < \tilde{x}_l(\nu_1)$ for all $l \in \{0, 1, \cdots, m-2\}$. By the \bhyperlink{T:2.4}{$\Upsilon$-dominance Theorem}, we have the following set inclusion relation:  
\begin{align*}
&\kern-2em   \Bigl\{ \nu_1 \Bigl(\lambda_2 \cdots \lambda_{m} - \sum_{k=1}^{m-2} c_k \sigma_{k-1}(\lambda_{; 1}) \Bigr) - \sum_{k=0}^{m-2} c_k \sigma_{k}(\lambda_{; 1}) \geq 0 \Bigr \}   \\
& \bigsubset  \Bigl\{ \mu_1 \Bigl(\lambda_2 \cdots \lambda_{m} - \sum_{k=1}^{m-2} c_k \sigma_{k-1}(\lambda_{; 1}) \Bigr) - \sum_{k=0}^{m-2} c_k \sigma_{k}(\lambda_{; 1}) \geq 0 \Bigr \}.
\end{align*}
We show that for any $\tilde{\mu} \in \{f(\mu_1, \bullet) = 0\}$, $\tilde{\nu} \in \{f(\nu_1, \bullet) = 0\}$, $s \in (0, 1)$, and let $\tau_1 \coloneqq (1-s) \mu_1 + s \nu_1$, there exists a unique $\tilde{s} = \tilde{s}(s, \tilde{\mu}, \tilde{\nu}) \in (0, 1)$ such that $f(\tau_1, (1-\tilde{s})  \tilde{\mu} + \tilde{s} \tilde{\nu}) = 0$. We have
\begin{align*}
f(\tau_1, \tilde{\mu}) &= \tau_1 \Bigl(\mu_2 \cdots \mu_{m} - \sum_{k=1}^{m-2} c_k \sigma_{k-1}(\mu_{; 1}) \Bigr) - \sum_{k=0}^{m-2} c_k \sigma_{k}(\mu_{; 1}) \\
&=(\tau_1 - \mu_1) \Bigl(\mu_2 \cdots \mu_{m} - \sum_{k=1}^{m-2} c_k \sigma_{k-1}(\mu_{; 1}) \Bigr) < 0
\end{align*}and
\begingroup
\allowdisplaybreaks
\begin{align*}
f(\tau_1, \tilde{\nu}) &= \tau_1 \Bigl(\nu_2 \cdots \nu_{m} - \sum_{k=1}^{m-2} c_k \sigma_{k-1}(\nu_{; 1}) \Bigr) - \sum_{k=0}^{m-2} c_k \sigma_{k}(\nu_{; 1}) \\
&=(\tau_1 - \nu_1) \Bigl(\nu_2 \cdots \nu_{m} - \sum_{k=1}^{m-2} c_k \sigma_{k-1}(\nu_{; 1}) \Bigr) > 0.
\end{align*}
\endgroup
These imply that $\tilde{\nu} \in \{ f(\tau_1, \bullet) > 0\}$ and $\tilde{\mu} \notin \{ f(\tau_1, \bullet) \geq 0\}$. By the intermediate value theorem, there exists $\tilde{s} \in (0, 1)$ such that $f(\tau_1, (1-\tilde{s}) \tilde{\mu} + \tilde{s} \tilde{\nu}) = 0$. The uniqueness of $\tilde{s}$ is ensured because the set $\{f(\tau_1, \bullet) \geq 0\}$ is strictly convex, which is obtained by mathematical induction.  


We \hypertarget{claim2}{\textbf{claim}} that $\tilde{s} = \tilde{s}(s, \tilde{\mu}, \tilde{\nu}) \leq s$ for any $\tilde{\mu} \in \{f(\mu_1, \bullet) = 0\}$, $\tilde{\nu} \in \{f(\nu_1, \bullet) = 0\}$, and $s \in (0, 1)$. By the strict convexity of the set $\{f(\tau_1, \bullet) \geq 0\}$, for any $t \in [\tilde{s}, 1]$, we have $f(\tau_1, (1-t)\tilde{\mu} + t \tilde{\nu} ) \geq 0$.
So, if \bhyperlink{claim2}{\textbf{claim 2}} holds, then $f(\tau_1, (1-s)\tilde{\mu} + s \tilde{\nu}) = f((1-s)\mu + s \nu) \geq 0$. This implies that $\{f \geq 0\}$ is convex. Hence, we finish the proof if \bhyperlink{claim2}{\textbf{claim 2}} holds.\smallskip

First, to justify \bhyperlink{claim2}{\textbf{claim 2}}, we fix $\mu_1$ and treat $\mu_{m}$ as a function with variables $\{ \mu_2, \cdots, \mu_{m-1}\}$. Similarly, we fix $\nu_1$ and treat $\nu_{m}$ as a function with variables $\{ \nu_2, \cdots, \nu_{m-1}\}$. For $j \in \{2, \cdots, m-1\}$, we have
\begin{align*}
\label{eq:3.17}
\frac{\partial \mu_{m}}{\partial \mu_j} = -\frac{f_j(\mu_1, \cdots, \mu_m)}{f_{m}(\mu_1, \cdots, \mu_m)} = - \frac{f_j(\mu)}{f_m(\mu)} \quad \text{ and } \quad
\frac{\partial \nu_{m}}{\partial \nu_j} = -\frac{f_j(\nu_1, \cdots, \nu_m)}{f_{m}(\nu_1, \cdots, \nu_m)} = - \frac{f_j(\nu)}{f_m(\nu)}. \tag{3.17}
\end{align*}
Moreover, if we let $\tau_i = (1- \tilde{s}) \mu_i  + \tilde{s} \nu_i$ for any $i \in \{2, \cdots, m\}$ and denote the $(m-1)$-tuple $(\tau_2, \cdots, \tau_m)$ by $\tilde{\tau}$. For any $i, j \in \{2, \cdots, m-1\}$, we have
\begin{align*}
\frac{\partial \tau_i}{\partial \mu_j} &= \frac{\partial \tilde{s}}{\partial \mu_j} (\nu_i - \mu_i) + (1- \tilde{s}) \delta_{ij}; \quad &\frac{\partial \tau_{m}}{\partial \mu_j}& = \frac{\partial \tilde{s}}{\partial \mu_j} (\nu_{m} - \mu_{m}) + (1- \tilde{s}) \frac{\partial \mu_{m}}{\partial \mu_j}; \\
\frac{\partial \tau_i}{\partial \nu_j} &= \frac{\partial \tilde{s}}{\partial \nu_j} (\nu_i - \mu_i) +  \tilde{s} \delta_{ij}; \quad &\frac{\partial \tau_{m}}{\partial \nu_j}& = \frac{\partial \tilde{s} }{\partial \nu_j} (\nu_{m} - \mu_{m}) + \tilde{s} \frac{\partial \nu_{m}}{\partial \nu_j}.
\end{align*}
Second, on $\{f(\tau_1, \bullet) = 0\}$, by taking the partial derivative of $f(\tau_1, \bullet) = 0$ with respect to $\mu_j$ and $\nu_j$ for $j \in \{2, \cdots, m-1\}$, by above, we obtain
\begin{align*}
&\frac{\partial \tilde{s} }{\partial \mu_j} \sum_{k=2}^{m}f_k(\tau)(\nu_k - \mu_k) + (1- \tilde{s}) \Bigl(f_j(\tau)   + f_{m}(\tau) \frac{\partial \mu_{m}}{\partial \mu_j} \Bigr) = 0; \\
&\frac{\partial \tilde{s} }{\partial \nu_j} \sum_{k=2}^{m}f_k(\tau)(\nu_k - \mu_k)  + \tilde{s} \Bigl(f_j(\tau)   + f_{m}(\tau) \frac{\partial \nu_{m}}{\partial \nu_j} \Bigr) = 0.
\end{align*}
Since $\{f(\tau_1, \bullet) \geq 0\}$ is convex, by the supporting hyperplane theorem, we have $\sum_{k=2}^{m}f_k(\tau)(\nu_k - \mu_k) \geq 0$. Moreover, $\tilde{\nu} \in \{f(\tau_1, \bullet) > 0\}$, so $\tilde{\nu}$ cannot be on the tangent plane to $\{f(\tau_1, \bullet) = 0\}$ at the point $\tilde{\tau} \coloneqq (\tau_2, \cdots, \tau_m)$. Thus, we have $\sum_{k=2}^{m}f_k(\tau)(\nu_k - \mu_k) > 0$. By the implicit function theorem, $\frac{\partial \tilde{s}}{\partial \mu_j}$ and $\frac{\partial \tilde{s}}{\partial \nu_j}$ exist for all $j \in \{2, \cdots, m-1\}$.  
At the local extrema of $\tilde{s}$, we have $\frac{\partial \tilde{s}}{\partial \mu_j} = 0$ and $\frac{\partial \tilde{s}}{\partial \nu_j} = 0$ for all $j \in \{2, \cdots, m-1\}$. These imply that at the local extrema of $\tilde{s}$, we have
\begin{align*}
\frac{\partial \mu_{m}}{\partial \mu_j} = - \frac{f_j (\tau)}{f_{m}(\tau)} \quad  \text{ and } \quad \frac{\partial \nu_{m}}{\partial \nu_j} = - \frac{f_j (\tau)}{f_{m}(\tau)}.
\end{align*}So, at any local extremum of $\tilde{s}$, for any $j \in \{2, \cdots, m-1\}$, by (\ref{eq:3.17}), we have
\begin{align*}
\frac{f_j (\mu)}{f_{m}(\mu)} = \frac{f_j (\tau)}{f_{m}(\tau)} = \frac{f_j (\nu)}{f_{m}(\nu)} = \xi_j \in (0, \infty).
\end{align*}

If $\xi_j = 1$ for all $j \in \{2, \cdots, m-1\}$, then $\mu_2= \cdots = \mu_{m} = \tilde{x}_0(\mu_1)$, $\tau_2 = \cdots = \tau_{m} = \tilde{x}_0(\tau_1)$, and $\nu_2 = \cdots = \nu_{m} = \tilde{x}_0(\nu_1)$. Here, we denote $\tilde{x}_0(\tau_1)$ the largest real root of $\tau_1 (x^{m-1} - \sum_{k=1}^{m-2} c_k \binom{m-1}{k-1} x^{k-1} ) - \sum_{k=0}^{m-2} c_k \binom{m-1}{k} x^k$. We consider the diagonal restriction $r_f(x) \coloneqq x^m - \sum_{k=0}^{m-2} c_k \binom{m}{k} x^k$ of $f(\lambda)$, then $r_f(x)$ is strictly right-Noetherian because $c \in \tilde{\mathscr{C}}_m$. We show that the following rational function is convex on $\{ x > x_1\}$:
\begin{align*}
\label{eq:3.18}
\frac{\sum_{k=0}^{m-2} c_k \binom{m-1}{k} x^k }{x^{m-1} - \sum_{k=1}^{m-2} c_k \binom{m-1}{k-1}x^{k-1} }. \tag{3.18}
\end{align*}
Consider the second derivative of rational function (\ref{eq:3.18}) with respect to $x$, we get
\begin{align*}
&\kern-2em \frac{\partial^2}{\partial x^2} \Bigl(  \frac{\sum_{k=0}^{m-2} c_k \binom{m-1}{k} x^k }{x^{m-1} - \sum_{k=1}^{m-2} c_k \binom{m-1}{k-1}x^{k-1} } \Bigr) \\
&= \frac{\partial^2}{\partial x^2} \Bigl(  \frac{\sum_{k=0}^{m-2} c_k \binom{m-1}{k} x^k }{x^{m-1} - \sum_{k=1}^{m-2} c_k \binom{m-1}{k-1}x^{k-1} }  - x \Bigr) = -m \frac{\partial^2}{\partial x^2} \Bigl(  \frac{r_f(x)}{r_f'(x)} \Bigr) \\
& = -m \frac{\partial}{\partial x} \Bigl(  \frac{(r_f'(x))^2 - r_f(x) r_f''(x)}{(r_f'(x))^2} \Bigr) = m \frac{\partial}{\partial x} \Bigl(  \frac{  r_f(x) r_f''(x)}{(r_f'(x))^2} \Bigr) 
= m \frac{\partial}{\partial x} \alpha_f(x) >0.
\end{align*}Here, $\alpha_f(x) \coloneqq \frac{r_f(x) r_f''(x)}{(r_f'(x))^2}$ is the log-concavity ratio and the last inequality is due to Theorem~\hyperlink{T:2.1}{2.1}. So, rational function (\ref{eq:3.18}) is convex on $\{ x > x_1\}$. In addition, by Lemma~\hyperlink{L:2.11}{2.11} and Lemma~\hyperlink{L:2.12}{2.12}, we have $\tilde{x}_0(\nu_1) > \tilde{x}_0(\tau_1) > \tilde{x}_0(\mu_1) > x_1$. Thus, by the convexity of rational function (\ref{eq:3.18}), we obtain
\begin{align*}
\label{eq:3.19}
\frac{\mu_1(\tilde{x}_0(\nu_1) - \tilde{x}_0(\tau_1)) + \nu_1 (\tilde{x}_0(\tau_1) - \tilde{x}_0(\mu_1))      }{\tilde{x}_0(\nu_1) - \tilde{x}_0(\mu_1)} > \tau_1 = (1-s) \mu_1 +  s \nu_1. \tag{3.19}
\end{align*}By simplifying inequality (\ref{eq:3.19}), we get
\begin{align*}
(\mu_1 - \nu_1) \bigl(  (1-s) \tilde{x}_0(\mu_1) + s \tilde{x}_0 (\nu_1) - \tilde{x}_0(\tau_1) \bigr) > 0.
\end{align*}This implies that $(1-s) \tilde{x}_0(\mu_1) + s \tilde{x}_0 (\nu_1) > \tilde{x}_0(\tau_1)$ and $( (1-s) \tilde{x}_0(\mu) + s \tilde{x}_0(\nu), \cdots, (1-s) \tilde{x}_0(\mu) + s \tilde{x}_0(\nu)) \in \{ f(\tau_1, \bullet) \geq 0\} = \{ f( (1-s) \mu_1 + s \nu_1, \bullet) \geq 0\}$. 


If $\xi_j \neq 1$ for some $j \in \{2, \cdots, m-1\}$, for convenience, we say $\xi_{m-1} \in (0, 1)$. We consider the following set $\{f_{m-1} - \xi_{m-1} f_{m} \geq 0\}$ and we show that $\{f_{m-1} - \xi_{m-1} f_{m} \geq 0\}$ is convex. We have
\begin{align*}
\label{eq:3.20}
f_{m-1} &= \lambda_1   \cdots \lambda_{m-2} \lambda_m - \sum_{k=1}^{m-2} c_k \sigma_{k-1}(\lambda_{; m-1}) \tag{3.20} \\
&=    \lambda_{m} \Bigl(\lambda_1 \cdots \lambda_{m-2} - \sum_{k=2}^{m-2} c_k \sigma_{k-2}(\lambda_{; m-1, m}) \Bigr)   - \sum_{k=1}^{m-2} c_k \sigma_{k-1} (\lambda_{; m-1, m});           \\
\label{eq:3.21}
f_{m} &= \lambda_1 \cdots  \lambda_{m-1} - \sum_{k=1}^{m-2} c_k \sigma_{k-1}(\lambda_{; m}) \tag{3.21} \\
&=   \lambda_{m-1} \Bigl(\lambda_1 \cdots \lambda_{m-2}  - \sum_{k=2}^{m-2} c_k \sigma_{k-2}(\lambda_{; m-1, m}) \Bigr)   - \sum_{k=1}^{m-2} c_k \sigma_{k-1} (\lambda_{; m-1, m}).
\end{align*}
On the level set $\{ f_{m-1} - \xi_{m-1} f_{m} = 0\}$, by combining equations (\ref{eq:3.20}) and (\ref{eq:3.21}), we get
\begin{align*}
\lambda_{m} = \xi_{m-1} \lambda_{m-1} + (1- \xi_{m-1}) \frac{\sum_{k=1}^{m-2} c_k \sigma_{k-1} (\lambda_{; m-1, m})}{\lambda_1 \cdots  \lambda_{m-2} - \sum_{k=2}^{m-2} c_k \sigma_{k-2}(\lambda_{; m-1, m})}.
\end{align*}So, the level set $\{ f_{m-1} - \xi_{m-1} f_{m} = 0\}$ is actually a graph with domain $\mathbb{R}_{\lambda_{m-1}} \times \{ f_{m-1 m} > 0\}$.
Since $c  \in \tilde{\mathscr{C}}_m$ is in the generic strata and by mathematical induction, we have
\begin{align*}
\begin{pmatrix} 
\frac{\partial^2}{\partial \lambda_i \partial \lambda_j}  \Bigl (  \frac{\sum_{k=1}^{m-2} c_k \sigma_{k-1} (\lambda_{; m-1, m})}{\lambda_1 \cdots  \lambda_{m-2} - \sum_{k=2}^{m-2} c_k \sigma_{k-2}(\lambda_{; m-1, m})}   \Bigr)
\end{pmatrix}_{i, j \in \{1, \cdots, m-2\}} \geq 0.
\end{align*}Thus, the following $m-1 \times m-1$ matrix is also positive semi-definite: 
\begin{align*}
\begin{pmatrix}
\frac{\partial^2 \lambda_m}{\partial \lambda_i \partial \lambda_j} 
\end{pmatrix}_{i, j \in \{1, \cdots, m-1\}} &=
\begin{pmatrix}
\Bigl( \frac{\partial^2}{\partial \lambda_i \partial \lambda_j}  \Bigl (  \frac{\sum_{k=1}^{m-2} c_k \sigma_{k-1} (\lambda_{; m-1, m})}{\lambda_1 \cdots  \lambda_{m-2} - \sum_{k=2}^{m-2} c_k \sigma_{k-2}(\lambda_{; m-1, m})}   \Bigr)  \Bigr)_{i, j \in \{1, \cdots, m-2\}} & 0 \\
0 & 0
\end{pmatrix} \geq 0.
\end{align*}Hence, the graph $\{ f_{m-1} - \xi_{m-1} f_m = 0\}$ is convex.

Let $\mu, \nu \in   \{   f_{m-1} - \xi_{m-1} f_{m} \geq 0 \}$. We can view $(f_{m-1} - \xi_{m-1} f_{m})( (1-t) \mu + t \nu )$ as a polynomial in terms of $t$. If it is not a zero polynomial, then there are finitely many roots by the fundamental theorem of algebra. Suppose that there exists $\tilde{t} \in (0, 1)$ such that $(f_{m-1} - \xi_{m-1} f_{m})( (1-\tilde{t}) \mu + \tilde{t}\nu ) = 0$. Since $(f_{m-1} - \xi_{m-1} f_{m})( (1-t) \mu + t \nu )$ only has finitely many roots, there exists $t_0 < \tilde{t}$ and $t_1 > \tilde{t}$ such that $(f_{m-1} - \xi_{m-1} f_{m})( (1-t_0) \mu + t_0 \nu) > 0$ and $(f_{m-1} - \xi_{m-1} f_{m})(  (1-t_1) \mu + t_1 \nu) > 0$. For $\epsilon > 0$ sufficiently small, we get $(f_{m-1} - \xi_{m-1} f_{m})( (1-t_0)\mu_1 + t_0 \nu_1,   \cdots, (1-t_0) \mu_{m-1} + t_0 \nu_{m-1}, (1-t_0)\mu_m + t_0 \nu_m - \epsilon) > 0$ and $(f_{m-1} - \xi_{m-1} f_{m})( (1-\tilde{t})\mu_1 + \tilde{t} \nu_1,   \cdots, (1-\tilde{t}) \mu_{m-1} + \tilde{t} \nu_{m-1}, (1-\tilde{t}) \mu_m + \tilde{t} \nu_m - \frac{t_1 - \tilde{t}}{{t}_1 - t_0} \epsilon) < 0$. This leads to a contradiction because $\{ f_{m-1} - \xi_{m-1} f_m \geq 0\}$ is convex.

If $(f_{m-1} - \xi_{m-1} f_{m})( (1-t) \mu + t \nu)$ is a zero polynomial, then the line $\{ (1-t) \mu +  t \nu)   \colon t \in \mathbb{R}   \}$ lies in $\{ f_{m-1} - \xi_{m-1} f_{m} = 0 \}$. In particular, the line $\{ (1-t) \mu_1 +  t \nu_1, \cdots, (1-t) \mu_{m-2} +  t \nu_{m-2})   \colon t \in \mathbb{R}   \}$ lies in $\{ f_{m-1 \, m } > 0\}$, which implies that $\mu_i = \nu_i$ for all $i \in \{1, \cdots, m-2\}$. Hence, the line $\{ ((1-t) \mu_{m-1} +  t \nu_{m-1}, (1-t) \mu_{m} +  t \nu_{m})   \colon t \in \mathbb{R}   \}$ is actually the line
\begin{align*}
\lambda_m - \xi_{m-1} \lambda_{m-1} = (1- \xi_{m-1}) \frac{  \sum_{k=1}^{m-2} c_k \sigma_{k-1} (\mu_{; m-1, m})}{  \mu_1 \cdots \mu_{m-2}  - \sum_{k=2}^{m-2} c_k \sigma_{k-2}(\mu_{; m-1, m})}.
\end{align*}
In conclusion, if $(f_{m-1} - \xi_{m-1} f_{m})( (1-t) \mu + t \nu)$ is a zero polynomial, then $\mu_i = \nu_i$ for all $i \in \{1, \cdots, m-2\}$ and $(\nu_m - \mu_m) - \xi_{m-1} (\nu_{m-1} - \mu_{m-1}) = 0$.

At this local extremum, we have $\tilde{\tau} = (1-\tilde{s}) \tilde{\mu} + \tilde{s} \tilde{\nu}$. Since $\{ f_{m-1} - \xi_{m-1} f_m \geq 0\}$ is convex, $\mu, \nu \in \partial \{ f_{m-1} - \xi_{m-1} f_m \geq 0\}$ with $\mu_i \neq \nu_i$ for all $i \in \{1, \cdots, m\}$, and by above argument, we get $ (1- \tilde{s})  {\mu} + \tilde{s}  {\nu}  \in  \{ f_{m-1} - \xi_{m-1} f_m > 0\}$. Similar to before, we have $\{ f_{m-1} - \xi_{m-1} f_m \geq 0\} \subset   \{ f_{1 m-1} - \xi_{m-1} f_{1m} > 0\}$ and by the fact that $(  \tau_1, (1-\tilde{s}) \tilde{\mu} + \tilde{s} \tilde{\nu}  ) = ( (1-s)\mu_1 + s \nu_1, (1-\tilde{s}) \tilde{\mu} + \tilde{s} \tilde{\nu}  )  \in  \partial \{ f_{m-1} - \xi_{m-1} f_m \geq 0\}$.
Hence, $(1- \tilde{s})  {\mu}_1 + \tilde{s}  {\nu}_1 > (1-  {s})  {\mu}_1 +  {s}  {\nu}_1$, which implies that $s > \tilde{s}$. Thus, we confirm \bhyperlink{claim2}{\textbf{claim 2}} for all local extrema.\smallskip

Last, we study the asymptotic behavior, we show that if $(1-s)\mu_j + s \nu_j$ is sufficiently large for some $j \in \{2, \cdots, n\}$, then $ (1-s) \tilde{\mu} + s \tilde{\nu}  \in \{f(\tau_1, \bullet) \geq 0\} =  \{f( (1-s) \mu_1 + s \nu_1, \bullet) \geq 0\}$. We use mathematical induction on the degree $n$ to prove this. When $n = 2$, we consider $f(\lambda)= \lambda_1 \lambda_2 - c_0$ with $c_0 > 0$. By fixing $\mu_1 > \nu_1$ and let $\tau_1 = (1-s)\mu_1+ s \nu_1$, we have 
\begin{align*}
\tilde{s} = \frac{\tau_2 -  \mu_2}{\nu_2 - \mu_2} = \frac{\frac{c_0}{\tau_1} - \frac{c_0}{\mu_1}}{\frac{c_0}{\nu_1} - \frac{c_0}{\mu_1}} = \frac{ \mu_1 \nu_1 - \tau_1 \nu_1}{\mu_1 \tau_1 - \tau_1 \nu_1} = s \frac{\nu_1}{\tau_1} < s.
\end{align*}When $n = 3$, we consider $f(\lambda)= \lambda_1 \lambda_2 \lambda_3 - c_1(\lambda_1 + \lambda_2 + \lambda_3) - c_0$ with $(c_1, c_0)$ in the generic strata of $\tilde{\mathscr{C}}_3$. By fixing $\mu_1 > \nu_1$ and let $\tau_1 = (1-s)\mu_1+ s \nu_1$, we consider $(\mu_2, \mu_3) \in \{f(\mu_1, \bullet) = 0 \}$, $(\nu_2, \nu_3) \in \{f(\nu_1, \bullet) = 0 \}$, and $((1-s)\mu_2+s\nu_2, (1-s)\mu_3 + s\nu_3)$. By picking $N_1 > 0$ sufficiently large such that if $(1-s)\mu_2+ s \nu_2 \geq N_1$ and $(1-s) \mu_3 + s \nu_3 \geq N_1$, then
\begin{align*}
((1-s)\mu_2+ s \nu_2)( (1-s)\mu_3 + s \nu_3)- \frac{c_1}{\tau_1}  ( (1-s)(\mu_2+\mu_3)+ s(\nu_2+\nu_3)   ) - c_1 - \frac{c_0}{\tau_1}
&>   \frac{N_1^2}{2}   > 0.
\end{align*}Hence, $( (1-s)\mu_2+ s \nu_2, (1-s)\mu_3 + s \nu_3) \in \{ f( \tau_1, \bullet) = f( (1-s)\mu_1+ s \nu_1, \bullet) \geq 0\}$ in this case. Without loss of generality, we consider the case that $(1-s)\mu_2 + s \nu_2 \geq N_2 > N_1$ and $(1-s) \mu_3 + s \nu_3$ is bounded above by $N_1$, where $N_2$ will be determined later. In this case, by the fact that $\nu_3 > c_1/\nu_1$, $\mu_3 >  c_1 / \mu_1$, $ {(1-s)}/{\mu_1} +  {s}/{\nu_1}    -   {1}/{\tau_1}  > \epsilon_{s, \mu_1, \nu_1} > 0$, and $c_1 > 0$, we get
\begin{align*}
&\kern-2em  ((1-s)\mu_2+ s \nu_2)  ((1-s)\mu_3+s\nu_3) - \frac{c_1}{\tau_1}  ( (1-s)(\mu_2+\mu_3)+s(\nu_2+\nu_3)  ) - c_1 - \frac{c_0}{\tau_1}\\
&=   ((1-s)\mu_2 + s\nu_2) \Bigl(   ((1-s)\mu_3+s\nu_3) - \frac{c_1}{\tau_1} \Bigr) - \frac{c_1}{\tau_1}  ((1-s)\mu_3 + s\nu_3)    - c_1 - \frac{c_0}{\tau_1} \\
&>  c_1  ((1-s)\mu_2 + s \nu_2) \Bigl(    \frac{1-s}{\mu_1} + \frac{s}{\nu_1}    -  \frac{1}{\tau_1}   \Bigr)   - \frac{c_1}{\tau_1}   N_1   - c_1 - \frac{c_0}{\tau_1}  \geq    \frac{c_1 N_2 \epsilon_{s, \mu_1, \nu_1} }{2}       > 0,
\end{align*}provided that $N_2$ is sufficiently large. 
Hence, if $(1-s)\mu_j + s \nu_j \geq N_2$ for some $j \in \{2, 3\}$, then $( (1-s)\mu_2 + s\nu_2,  (1-s)\mu_3 + s \nu_3 ) \in \{ f( \tau_1, \bullet) = f((1-s)\mu_1 + s\nu_1, \bullet) \geq 0\}$ in this case. \smallskip

Suppose the statement is true when $n = m-1 \geq 3$. When $n = m \geq 4$, we consider the point $ (1-s) \tilde{\mu} + s \tilde{\nu}$. Similar to the above, we can pick $N_1 > 0$ sufficiently large such that if $(1-s)\mu_j + s \nu_j \geq N_1$ for all $j \in \{2, \cdots, m\}$, then $(1-s) \tilde{\mu} + s \tilde{\nu} \in \{ f( \tau_1, \bullet)  \geq 0\}$. Here, we iteratively show that for $k$ from $2$ to $m-1$, there exists $N_k > N_{k-1}$ sufficiently large such that if $(1-s)\mu_j+s\nu_j \geq N_k$ for all $j \in \{2, \cdots, m -k + 1 \}$ and $N_{k-1} \geq (1-s)\mu_j + s \nu_j$ for all $j \in \{m-k+2, \cdots, m\}$, then $(1-s) \tilde{\mu} + s \tilde{\nu}  \in \{ f( \tau_1, \bullet)   \geq 0\}$. When $k = 2$, by the previous argument, we consider the leading term of $f(\tau_1, (1-s)\tilde{\mu} + s \tilde{\nu} )$, that is, $f_{2 \cdots m-1}( \tau_1, (1-s)\tilde{\mu} + s \tilde{\nu} ) = \tau_1 ( (1-s) \mu_m+ s \nu_m) - c_{m-2}$. Since we assume that $c = (c_{m-2}, \cdots, c_1, c_0) \in \tilde{\mathscr{C}}_m$ is in the generic strata, so $c_{m-2} > 0$ and $f_{2 \cdots m-1}(\lambda) = \lambda_1 \lambda_m - c_{m-1}$ is strictly $\Upsilon$-stable. Similar to above, there exists $\epsilon_{s, \mu_1, \nu_1} > 0$ such that
\begin{align*}
&\kern-2em f_{2 \cdots m-1}( \tau_1, (1-s)\tilde{\mu} + s \tilde{\nu} ) \\
&= \tau_1 \Bigl(  (1-s) \mu_m + s \nu_m - \frac{c_{m-2}}{\tau_1} \Bigr) > c_{m-2} \tau_1 \Bigl(  \frac{1-s}{\mu_1} + \frac{s}{\nu_1}  - \frac{1}{\tau_1} \Bigr) > c_{m-2} \tau_1 \epsilon_{s, \mu_1, \nu_1} > 0.
\end{align*}
Hence, $f_{2 \cdots m-1}( \tau_1, (1-s)\tilde{\mu} + s \tilde{\nu} ) = \tau_1 ( (1-s)\mu_m + s \nu_m) - c_{m-2}$ has a uniform positive lower bound. This implies that there exists $N_2 > N_{1}$ sufficiently large such that if $(1-s)\mu_j+s\nu_j \geq N_2$ for all $j \in \{2, \cdots, m - 1 \}$ and $N_{1} \geq (1-s)\mu_m + s \nu_m$, then $(1-s) \tilde{\mu} + s \tilde{\nu}  \in \{ f( \tau_1, \bullet)   \geq 0\}$.

Suppose that when $k = l - 1 \geq 2$, the leading term $f_{2 \cdots m-l+2}( \tau_1, (1-s) \tilde{\mu} + s \tilde{\nu} )$ has a uniform positive lower bound. When $k = l$, we consider the leading term $f_{2 \cdots m-l+1}( \tau_1, (1-s) \tilde{\mu} + s \tilde{\nu} )$. Since $c  \in \tilde{\mathscr{C}}_m$ is in the generic strata, by mathematical induction, the set $\{f_{2 \cdots m-l+1}(\lambda) \geq 0\}$ is strictly convex. We consider the cross sections when $\lambda_1 = \mu_1$ and $\lambda_1 = \nu_1$, that is, $\{f_{2 \cdots m-l+1}(\mu_1, \bullet) \geq 0\}$ and $\{f_{2 \cdots m-l+1}(\nu_1, \bullet) \geq 0\}$. For any 
\begin{align*}
(\mu_{m-l+2}, \cdots, \mu_m ) \in \{f_{2 \cdots m-l+1}(\mu_1, \bullet) \geq 0\} \cap \Bigl[\frac{c_{m-2}}{\mu_1}, \frac{N_{l-1}}{1-s} \Bigr]^{l-1}
\end{align*}
and
\begin{align*}
(\nu_{m-l+2}, \cdots, \nu_m ) \in \{f_{2 \cdots m-l+1}(\nu_1, \bullet) \geq 0\} \cap \Bigl[\frac{c_{m-2}}{\nu_1}, \frac{N_{l-1}}{s} \Bigr]^{l-1}, 
\end{align*}
because the set $\{f_{2 \cdots m-l+1}(\lambda) \geq 0\}$ is strictly convex, we get 
\begin{align*}
f_{2 \cdots m-l+1}( (1-s) \mu_1+ s \nu_1, (1-s) \mu_{m-l+2}+ s \nu_{m-l+2}, \cdots, (1-s) \mu_m + s\nu_m) > 0. 
\end{align*}
In addition, since $\{f_{2 \cdots m-l+1}(\mu_1, \bullet) \geq 0\} \cap [c_{m-2}/\mu_1, N_{l-1}/(1-s)]^{l-1}$ and $\{f_{2 \cdots m-l+1}(\nu_1, \bullet) \geq 0\} \cap [c_{m-2}/\nu_1, N_{l-1}/s]^{l-1}$ are both compact subsets, we obtain
\begin{align*}
\inf \Set{ f_{2 \cdots m-l+1} ((1-s)\mu + s \nu)  | \scalebox{0.7}{ $  \begin{array}{l}
    (\mu_{m-l+2}, \cdots, \mu_m ) \in \{f_{2 \cdots m-l+1}(\mu_1, \bullet) \geq 0\} \cap \bigl[ \frac{c_{m-2}}{\mu_1}, \frac{N_{l-1}}{1-s} \bigr]^{l-1}  \\
 \text{ and }   (\nu_{m-l+2}, \cdots, \nu_m )  \in \{f_{2 \cdots m-l+1}(\nu_1, \bullet) \geq 0\} \cap \bigl [ \frac{c_{m-2}}{\nu_1}, \frac{N_{l-1}}{s} \bigr]^{l-1}
  \end{array} $} } > 0.
\end{align*}

This implies that the leading term $f_{2 \cdots m-l+1}( \tau_1, (1-s)\mu_{m-l+2}+s\nu_{m-l+2}, \cdots, (1-s)\mu_m+s\nu_m)$ has a uniform positive lower bound. By letting $N_l > N_{l-1}$ sufficiently large, for $(1-s)\mu_j+s\nu_j \geq N_l$ for all $j \in \{2, \cdots, m -l + 1 \}$ and $N_{l-1} \geq (1-s)\mu_j+s\nu_j$ for all $j \in \{m-l+2, \cdots, m\}$, we have $f((1-s)\mu +s\nu ) > 0$. Hence, $(1-s) \tilde{\mu} + s \tilde{\nu}  \in  \{f((1-s)\mu_1+s\nu_1, \bullet) \geq 0\}$ provided that $N_l$ is sufficiently large. By doing this process iteratively, if $(1-s)\mu_j+s\nu_j \geq N_{m-1}$ for some $j \in \{2, \cdots, m\}$, then $(1- s)\tilde{\mu} + s \tilde{\nu}  \in  \{f((1-s)\mu_1  + s  \nu_1, \bullet) \geq 0\}$, which finishes the proof.
\end{proof}

\begin{figure}
\begin{tikzpicture}[scale = 0.2]
\draw[->] (-0.5,0) -- (20,0) node[right] {$\lambda_2$};
\draw[->] (0,-0.5) -- (0,20) node[above] {$\lambda_3$};
\draw[color=red,domain=1.5:18,samples=100]    plot ( \x , {  (4*\x + 3)/(3*\x - 4)   })  node[right] { \tiny $\mu_1 =3$ }       ; 
\draw[color=red,domain=2.34:18,samples=100]    plot ( \x , {  (4*\x-1.4)/(1.9*\x - 4)   })  node[right] { \tiny $s = 1/2$ }       ; 
\draw[color=red,domain=6.4:18,samples=100]    plot ( \x , {  (5*\x-7.25)/(\x - 5)   })  node[right] { \tiny $\nu_1 = 0.8$ }       ; 
\draw[color=red,fill=red,thick] (9.21307, 9.21307) circle[radius= 0.5em]; 
\draw[color=red,fill=red,thick] ( {4.02758}, { 4.02758}) circle[radius= 0.5em];  
\draw[color=red,fill=red,thick] ( {3}, {3)}) circle[radius= 0.5em];  
\draw[<->,color=blue] (3,3) -- (4.02758,4.02758);
\draw[<->,color=blue] (9.21307,9.21307) -- (4.02758,4.02758);
\node at (3.2,4.1) {$\tilde{s}$};
\node at (5.8,7.8) {$1-\tilde{s}$};
\end{tikzpicture}
\caption{$\lambda_1 \lambda_2 \lambda_3 - 4(\lambda_1 + \lambda_2 + \lambda_3) + 9 = 0$ with $\nu_1 =0.8$, $\tau_1 = 1.9$, and $\mu_1 =3$.}
\label{fig:3.1}
\end{figure}

\hypertarget{R:3.1}{\begin{frmk}}
In Figure~\ref{fig:3.1}, we plot the level set $\lambda_1 \lambda_2 \lambda_3 - 4(\lambda_1 + \lambda_2 + \lambda_3) + 9 = 0$ with $\nu_1 =0.8$, $\tau_1 = 1.9$, and $\mu_1 =3$. $\tau_1 = 1.9$ when $s = 1/2$. The ratio $\tilde{s}:1-\tilde{s}$ is the ratio of the distance $\dist( (\mu_2, \mu_3), (\tau_2, \tau_3))$ to the distance $\dist( (\tau_2, \tau_3), (\nu_2, \nu_3))$.
\end{frmk}

We prove the following Lemma to end this section, which shows that Theorem~\hyperlink{T:2.1}{2.1} is equivalent to the positive definiteness of the Hessian matrix of $\lambda_n$ on the curve $\{ \lambda_1 = \cdots = \lambda_{n-1}\}$ on $\{f = 0\}$. The author believes that the following result gives a promising hope that Conjecture~\hyperlink{C:1.1}{1.1} is true.

\hypertarget{L:3.3}{\begin{flemma}}
Let $f(\lambda) \coloneqq  \lambda_1 \cdots \lambda_n - \sum_{k = 0}^{n-1} c_k \sigma_k(\lambda)$ be a general inverse $\sigma_k$ type multilinear polynomial. If the diagonal restriction $r_f(x)$ of $f$ is strictly right-Noetherian, then on the curve $\{\lambda_1 = \cdots = \lambda_{n-1}\}$ of the level set $\{ f  =  0 \}$ with $\lambda_1  > x_1$, the positive definiteness of the following $n-1 \times n-1$ Hessian matrix 
\begin{align*}
\Bigl(  \frac{\partial^2 \lambda_n}{\partial \lambda_i \partial \lambda_j}  \Bigr)_{i, j \in \{1, \cdots, n-1\}}
\end{align*}is equivalent to the monotonicity of log-concavity ratio of $r_f(x) = x^n - \sum_{k=0}^{n-1} c_k  \binom{n}{k} x^k$. Here, $x_1$ is the largest real root of $r'_f$.
\end{flemma}

\begin{proof}
For convenience, we assume that $c_{n-1} = 0$. By Lemma~\hyperlink{L:3.2}{3.2}, we show that the following matrix is positive-definite at every point on the curve $\{\lambda_1 = \cdots = \lambda_{n-1} = x \}$ with $x > x_1$:
\begingroup
\allowdisplaybreaks
\begin{align*}
\begin{pmatrix}
h_{ij} + h_{nn} \frac{h_i h_j}{h_n^2} - h_{in} \frac{h_j}{h_n} - h_{jn} \frac{h_i}{h_n}  
\end{pmatrix}_{i, j \in \{1, \cdots, n-1\}}.
\end{align*}
\endgroup
By Lemma~\hyperlink{L:3.2}{3.2}, we have
\begin{align*}
&\kern-1em h_{ij} + h_{nn} \frac{h_i h_j}{h_n^2} - h_{in} \frac{h_j}{h_n} - h_{jn} \frac{h_i}{h_n}  \\   
&= \frac{ ( C - \lambda_i C_{1; i}      )    \bigl(   C  - \lambda_j \lambda_n  C_{2; j, n}  \bigr) + ( C - \lambda_j C_{1; j}      )   \bigl(   C - \lambda_i \lambda_n   C_{2; i, n}  \bigr)   }{  \lambda_i \lambda_j \sigma_n(\lambda)   (  C -   \lambda_n C_{1;  n}      )  } \\
&\kern2em - \frac{     ( C - \lambda_n C_{1; n}      )   \bigl(  C - \lambda_i \lambda_j  C_{2; i, j}  \bigr)  }{  \lambda_i \lambda_j  \sigma_n(\lambda)  (  C -   \lambda_n C_{1;  n}      )  }(1- \delta_{ij}). \end{align*}

Now, it suffices to show that the following $n-1 \times n-1$ matrix is positive semi-definite
\begin{align*}
\label{eq:3.22}
\tilde{C} &\coloneqq
2 (x^{n-2}\lambda_n -  C_{1; 1})(x^{n-2} -   C_{2; 1,n}) \mathds{1}_{n-1 \times n-1} \tag{3.22} \\ 
&\kern2em-  (x^{n-1} -    C_{1; n})(x^{n-3} \lambda_n - C_{2; 1, 2}) 
\begin{pNiceMatrix}[nullify-dots,xdots/line-style = loosely dotted]
0 & 1 &    \Cdots & 1  \\
1 & 0 &   \Ddots &  \Vdots \\
\Vdots  & \Ddots &  \Ddots &1  \\
1 &    \Cdots  & 1& 0
\end{pNiceMatrix},
\end{align*}where $\mathds{1}_{n-1 \times n-1} $ is the $n-1 \times n-1$ all-ones matrix and we have
\begin{align*}
C_{1; 1} &= \sum_{k = 1}^{n-2} c_k \sigma_{k-1}(\lambda_{; 1}) = \sum_{k = 1}^{n-2} c_k \biggl( \mybinom[0.8]{n-2}{k-1}  x^{k-1}  + \mybinom[0.8]{n-2}{k-2}  x^{k-2} \lambda_n   \biggr); \\
C_{1; n} &= \sum_{k = 1}^{n-2} c_k \sigma_{k-1}(\lambda_{; n}) = \sum_{k = 1}^{n-2} c_k   \mybinom[0.8]{n-1}{k-1}  x^{k-1} ; \\
C_{2; 1, 2} &=  \sum_{k = 2}^{n-2} c_k \sigma_{k-2}(\lambda_{; 1, 2}) =  \sum_{k = 2}^{n-2} c_k  \biggl( \mybinom[0.8]{n-3}{k-2}  x^{k-2}  + \mybinom[0.8]{n-3}{k-3}  x^{k-3} \lambda_n   \biggr); \\
C_{2; 1, n} &=  \sum_{k = 2}^{n-2} c_k \sigma_{k-2}(\lambda_{; 1, n}) =  \sum_{k = 2}^{n-2} c_k    \mybinom[0.8]{n-2}{k-2}  x^{k-2}.
\end{align*}

By change of basis, to show $\tilde{C}$ is positive-definite, it is equivalent to showing that the following matrix is positive-definite
\begin{align*}
O^* \tilde{C} O,
\end{align*}where 
\begin{align*}
O \coloneqq  \begin{pNiceMatrix}[nullify-dots,xdots/line-style = loosely dotted]
1 & 1 & 1 & \Cdots  & 1 & 1 \\
 -1 & 0 & 0  & \Cdots & 0 & 1 \\
 0 & -1 & 0 & \Cdots & 0 &1 \\
  0 & 0& -1 & \Ddots & \Vdots & \\
\Vdots & \Vdots & \Ddots & \Ddots & 0 &  \Vdots \\
0 & 0 & \Cdots & 0& -1& 1
\end{pNiceMatrix}.
\end{align*}The column vectors of matrix $O$ are in fact the eigenvectors of $\mathds{1}_{n-1 \times n-1} $, which makes the new matrix $O^* \tilde{C} O$ and the computations simpler. We have
\begingroup
\allowdisplaybreaks
\begin{align*}
\label{eq:3.23}
 O^* \tilde{C} O  \tag{3.23}
&=  2 (x^{n-2}\lambda_n -  C_{1; 1})(x^{n-2} -   C_{2; 1,n}) \begin{pNiceMatrix}[nullify-dots,xdots/line-style = loosely dotted]
0_{n-2 \times n-2} &\vec{0} \\
\vec{0}^* & (n-1)^2
\end{pNiceMatrix} \\ 
&\kern2em - (x^{n-1} -    C_{1; n})(x^{n-3} \lambda_n - C_{2; 1, 2}) 
\begin{pNiceMatrix}[nullify-dots,xdots/line-style = loosely dotted]
-2 & -1 &  \Cdots & -1 & 0  \\
-1 & -2 &  \Ddots & \Vdots & 0 \\
\Vdots & \Ddots &   \Ddots & -1 & \Vdots  \\
-1 & \Cdots & -1 &   -2 &0 \\
0 & 0 &  \Cdots & 0 & (n-1)(n-2) 
\end{pNiceMatrix} \\
&= (x^{n-1} -    C_{1; n})(x^{n-3} \lambda_n - C_{2; 1, 2})    \begin{pNiceMatrix}[nullify-dots,xdots/line-style = loosely dotted]
2 & 1 &  \Cdots & 1 & 0  \\
1 & 2 &  \Ddots & \Vdots & 0 \\
\Vdots & \Ddots &   \Ddots & 1 & \Vdots  \\
1 & \Cdots & 1 &   2 &0 \\
0 & 0 &  \Cdots & 0 & 0
\end{pNiceMatrix} \\
&\kern2em + (n-1) \begin{pNiceMatrix}[nullify-dots,xdots/line-style = loosely dotted]
0 & 0 &  \Cdots & 0 & 0  \\
0 & 0 &  \Ddots & \Vdots & 0 \\
\Vdots & \Ddots &   \Ddots & 0 & \Vdots  \\
0 & \Cdots & 0 &   0 &0 \\
0 & 0 &  \Cdots & 0 & \begin{smallmatrix} 2(n-1) (x^{n-2}\lambda_n -  C_{1; 1})(x^{n-2} -   C_{2; 1,n}) \\ \kern2em- (n-2)  (x^{n-1} -    C_{1; n})(x^{n-3} \lambda_n - C_{2; 1, 2}) \end{smallmatrix}
\end{pNiceMatrix}.
\end{align*}
\endgroup
The $n-2 \times n-2$ matrix 
\begin{align*}
\begin{pNiceMatrix}[nullify-dots,xdots/line-style = loosely dotted]
2 & 1 &  \Cdots & 1  \\
1 & 2 &  \Cdots & 1  \\
\Vdots & \Vdots &   \Ddots &\Vdots  \\
1 & 1 &  \Cdots & 2
\end{pNiceMatrix}
\end{align*}is a positive-definite matrix with eigenvalues $\{   1, \cdots, 1, n-1  \}$. So to prove whether $O^* \tilde{C} O$ is positive-definite, it is equivalent to showing whether the following quantity is positive.
\begin{align*}
\label{eq:3.24}
 2(n-1) (x^{n-2}\lambda_n -  C_{1; 1})(x^{n-2} -   C_{2; 1,n})  - (n-2)  (x^{n-1} -    C_{1; n})(x^{n-3} \lambda_n - C_{2; 1, 2}). \tag{3.24}
\end{align*}Now, we use the equation itself, that is, 
\begin{align*}
\label{eq:3.25}
\lambda_n = \frac{\sum_{k = 0}^{n-2} c_k  \binom{n-1}{k}  x^{k}}{x^{n-1}  - \sum_{k = 1}^{n-2} c_k \binom{n-1}{k-1} x^{k-1}}. \tag{3.25}
\end{align*}Then by (\ref{eq:3.24}) and (\ref{eq:3.25}), we obtain the following 
\begingroup
\allowdisplaybreaks
\begin{align*}
\label{eq:3.26}
&\kern-1em  2(n-1) (x^{n-2}\lambda_n -  C_{1; 1})(x^{n-2} -   C_{2; 1,n})  - (n-2)  (x^{n-1} -    C_{1; n})(x^{n-3} \lambda_n - C_{2; 1, 2})   \tag{3.26} \\
&= 2(n-1) \biggl (  \lambda_n \biggl( x^{n-2} - \sum_{k = 2}^{n-2} c_k  \mybinom[0.8]{n-2}{k-2}  x^{k-2}  \biggr) -  \sum_{k = 1}^{n-2} c_k  \mybinom[0.8]{n-2}{k-1}  x^{k-1}      \biggr ) \\  
&\kern2em \times  \biggl(x^{n-2}   - \sum_{k = 2}^{n-2} c_k    \mybinom[0.8]{n-2}{k-2}  x^{k-2} \biggr  )  \\
&\kern2em - (n-2) \biggl ( \lambda_n \biggl(  x^{n-3} -   \sum_{k = 3}^{n-2} c_k \mybinom[0.8]{n-3}{k-3}  x^{k-3}     \biggr)   -   \sum_{k = 2}^{n-2} c_k   \mybinom[0.8]{n-3}{k-2}  x^{k-2}    \biggr ) \\
&\kern2em \times \biggl(x^{n-1} -  \sum_{k = 1}^{n-2} c_k   \mybinom[0.8]{n-1}{k-1}  x^{k-1} \biggr ) \\
&=  \Biggl(      2(n-1) \biggl( x^{n-2} - \sum_{k = 2}^{n-2} c_k  \mybinom[0.8]{n-2}{k-2}  x^{k-2}  \biggr)^2   \times \sum_{k = 0}^{n-2} c_k  \mybinom[0.8]{n-1}{k}  x^{k}   \\     
&\kern3em   -2(n-1)    \biggl( x^{n-1} - \sum_{k = 1}^{n-2} c_k  \mybinom[0.8]{n-1}{k-1}  x^{k-1}  \biggr) \biggl( x^{n-2} - \sum_{k = 2}^{n-2} c_k  \mybinom[0.8]{n-2}{k-2}  x^{k-2}  \biggr)   \\
 &\kern3em \times \sum_{k = 1}^{n-2} c_k  \mybinom[0.8]{n-2}{k-1}  x^{k-1}         \\      
 &\kern3em + (n-2)  \biggl( x^{n-1} - \sum_{k = 1}^{n-2} c_k  \mybinom[0.8]{n-1}{k-1}  x^{k-1}  \biggr)^2  \times \sum_{k = 2}^{n-2} c_k  \mybinom[0.8]{n-3}{k-2}  x^{k-2}   \\
 &\kern3em -(n-2)   \biggl( x^{n-1} - \sum_{k = 1}^{n-2} c_k  \mybinom[0.8]{n-1}{k-1}  x^{k-1}  \biggr)   \biggl( x^{n-3} - \sum_{k = 3}^{n-2} c_k  \mybinom[0.8]{n-3}{k-3}  x^{k-3}  \biggr) \\
  &\kern3em \times \sum_{k = 0}^{n-2} c_k  \mybinom[0.8]{n-1}{k}  x^{k}   \Biggr) \Bigg/ \biggl(    x^{n-1} - \sum_{k = 1}^{n-2} c_k \mybinom[0.8]{n-1}{k-1} x^{k-1}   \biggr).
\end{align*}
\endgroup

To simplify equation (\ref{eq:3.26}), if we consider the diagonal restriction $r_f(x) = x^n - \sum_{k = 0}^{n-2} c_k \binom{n}{k}x^k$, where $f = \lambda_1 \cdots \lambda_n - \sum_{k=0}^{n-2} c_k \sigma_k(\lambda)$, then we have
\begin{align*}
r_f^{(l)}(x) &= l!  \mybinom[0.8]{n}{l} \biggl (  x^{n-l} - \sum_{k = 0}^{n-2} c_k \mybinom[0.8]{n-l}{k-l}x^{k-l} \biggr)
\end{align*}and
\begin{align*}
\sum_{k = 0}^{n-2} c_k  \mybinom[0.8]{n- l -1}{k-l}  x^{k-l} &= \frac{x r_f^{(l+1)}(x)}{  (l+1)! \binom{n}{l+1} } - \frac{r_f^{(l)}(x)}{l!\binom{n}{l}}.
\end{align*}

So, we can rewrite equation (\ref{eq:3.26}) as
\begingroup
\allowdisplaybreaks
\begin{align*}
\label{eq:3.27}
&\kern-1em 2(n-1) (x^{n-2}\lambda_n -  C_{1; 1})(x^{n-2} -   C_{2; 1,n})  - (n-2)  (x^{n-1} -    C_{1; n})(x^{n-3} \lambda_n - C_{2; 1, 2}) \tag{3.27}  \\
&= \frac{n}{r_f'(x)} \Bigl(  2(n-1) \frac{r_f''(x)^2}{n^2(n-1)^2}   \Bigl(  \frac{x r_f'(x)}{n} - r_f(x)    \Bigr)  \\
&\kern5em - 2(n-1) \frac{r_f'(x)}{n}  \frac{r_f''(x)}{n(n-1)}  \Bigl(  \frac{x r_f''(x)}{n(n-1)} - \frac{r_f'(x)}{n}    \Bigr)  \\
&\kern5em - (n-2) \frac{r_f'(x)}{n} \frac{r_f'''(x)}{n(n-1)(n-2)}    \Bigl(  \frac{x r_f'(x)}{n} - r_f(x)    \Bigr)  \\ 
&\kern7em + (n-2) \frac{r_f'(x)^2}{n^2}    \Bigl(  \frac{x r_f'''(x)}{n(n-1)(n-2)} - \frac{r_f''(x)}{n(n-1)}    \Bigr) \Bigr) \\
&= \frac{1}{n(n-1) r_f'(x)} \bigl(  r_f(x) r_f'(x) r_f'''(x) +  r_f'(x)^2 r_f''(x)  -2 r_f(x) r_f''(x)^2      \bigr) \\
&= \frac{r_f'(x)^2}{n(n-1)  } \frac{\partial}{\partial x} \Bigl (   \frac{r_f(x) r_f''(x)}{r_f'(x)^2}      \Bigr ) = \frac{r_f'(x)^2}{n(n-1)  } \frac{\partial}{\partial x} \alpha_f(x).
\end{align*}
\endgroup 
Here, for notational convention, we denote $\alpha_{r_f}(x)$ as $\alpha_{f}(x)$. In conclusion, we have shown that on the curve $\{\lambda_1 = \cdots = \lambda_{n-1}\}$, the positive definiteness of the Hessian matrix of $\lambda_n$ is equivalent to the monotonicity of log-concavity ratio $\alpha_f$. 
\end{proof}

\section{Some Applications}
\label{sec:4}

In this section, we use our \bhyperlink{T:3.1}{convexity Theorem} to verify some examples. First, when the degree is low, the \bhyperlink{T:2.3}{Positivstellensatz Theorem} can be verified using the resultants and the discriminant. Here, we give a different proof of the Positivstellensatz results in \cite{lin2022}.

\hypertarget{D:4.1}{\begin{fdefi}[Resultant]}
The \textbf{resultant} of two univariate polynomials $p_1(x)$ and $p_2(x)$ is defined as the determinant of their Sylvester matrix. To be more precise, if we write
\begin{align*}
p_1(x) &= a_d x^d + a_{d-1} x^{d-1} + \cdots + a_0; \\
p_2(x) &= b_e x^e + b_{e-1} x^{e-1} + \cdots + b_0,
\end{align*}then the resultant of $p_1$ and $p_2$ is defined by the following.
\begin{align*}
\label{eq:4.1}
\res(p_1, p_2) \coloneqq \det  \begin{pNiceMatrix}[nullify-dots,xdots/line-style = loosely dotted]
a_d & 0 & \Cdots & 0  & b_e & 0 & \Cdots & 0 \\
a_{d-1} & a_d & \Cdots  & 0 & b_{e-1} & b_e & \Cdots & 0 \\
a_{d-2} & a_{d-1}  & \Ddots & 0 & b_{e-2} & b_{e-1} & \Ddots &  0 \\
\Vdots & \Vdots & \Ddots & a_d & \Vdots & \Vdots &  \Ddots & b_e \\
a_0 & a_{1}&  \Cdots & a_{d-1} &  b_0  & b_{1} & \Cdots & \Vdots \\
0 & a_0 & \Ddots & \Vdots  &  0  &  b_0  & \Ddots  & \Vdots \\
\Vdots & \Vdots & \Ddots & a_{1} & \Vdots & \Vdots &  \Ddots & b_{1} \\
0 & 0 & \Cdots & a_0 & 0 & 0 & \Cdots & b_0
\end{pNiceMatrix}. \tag{4.1}
\end{align*}
\end{fdefi}

\hypertarget{D:4.2}{\begin{fdefi}[Discriminant]}
Let $p(x) = a_n x^n + a_{n-1} x^{n-1} + \cdots + a_1 x + a_0$ be a polynomial of degree $n$ and the coefficient $a_0, \cdots, a_n$ are real numbers. The \textbf{discriminant} of $p$ is defined by
\begin{align*}
\label{eq:4.2}
\disc(p) \coloneqq \frac{(-1)^{n(n-1)/2}}{a_n} \res(p, p'). \tag{4.2}
\end{align*}
\end{fdefi}

\hypertarget{P:4.1}{\begin{fprop}}
The level set of the following general inverse $\sigma_k$ equations are all convex. 
\begin{align*}
\lambda_1 \lambda_2 - c_0 = 0, 
\end{align*} where $c_0 > 0$;
\begin{align*}
\lambda_1 \lambda_2 \lambda_3 - c_1(\lambda_1 + \lambda_2 + \lambda_3) - c_0 = 0, 
\end{align*} where $c_1 \geq 0$ and $c_0 > -2 c_1^{3/2}$;
\begin{align*} 
\lambda_1 \lambda_2 \lambda_3 \lambda_4 - c_2 \sigma_2(\lambda) - c_1 \sigma_1(\lambda) - c_0 = 0, 
\end{align*}where $c_2 \geq 0$, $c_1 \geq -2 c_2^{3/2}$, and $c_0 > -3c_2 x_1^2 -3c_1 x_1 $, where
\begin{align*}
x_1 &= 
\begin{dcases}
\sqrt[3]{c_1}&, \text{ when } c_2 = 0; \\
2 \sqrt{c_2} \cos \Bigl [ \frac{1}{3}   \arccos \Bigl(  \frac{c_1}{ 2c_2^{3/2}  }   \Bigr)         \Bigr ]&, \text{ when } c_2 > 0 \text{ and } 4c_2^3 - c_1^2 \geq 0; \\
2 \sqrt{c_2} \cosh \Bigl [ \frac{1}{3}   \arccosh \Bigl(  \frac{c_1}{ 2c_2^{3/2}  }   \Bigr)         \Bigr ]&, \text{ when } c_1, c_2 > 0 \text{ and } 4c_2^3 - c_1^2 \leq 0. \\
\end{dcases}
\end{align*}  
\end{fprop}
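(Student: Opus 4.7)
The plan is to reduce each assertion to a routine verification of the strict right-Noetherianness of the diagonal restriction $r_f$, and then to invoke Theorem~\hyperlink{T:2.3}{2.3} together with Theorem~\hyperlink{T:3.1}{3.1}. Indeed, if $r_f$ is strictly right-Noetherian, the \bhyperlink{T:2.3}{Positivstellensatz} gives a strictly $\Upsilon$-stable connected component $\Gamma^n_f$, and the \bhyperlink{T:3.1}{Convexity Theorem} then yields that the level set $\{f=0\}=\partial\Gamma^n_f$ is convex. Thus the entire task is to translate the listed coefficient inequalities into the relations $x_0>x_1\geq x_2\geq\cdots\geq x_{n-1}$ among the largest real roots of the successive derivatives of
\begin{align*}
r_f(x) = x^n - \sum_{k=0}^{n-1} c_k \binom{n}{k} x^k.
\end{align*}

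For $n=2$ we have $r_f(x)=x^2-c_0$ with $r_f'(x)=2x$, so $x_1=0$ and strict right-Noetherianness is simply $x_0=\sqrt{c_0}>0$, i.e.\ $c_0>0$. For $n=3$, $r_f(x)=x^3-3c_1x-c_0$ and $r_f'(x)=3(x^2-c_1)$, so $x_2=0$ and the existence of a real $x_1$ with $x_1\geq x_2$ forces $c_1\geq 0$ with $x_1=\sqrt{c_1}$. Then $r_f(x_1)=-2c_1^{3/2}-c_0$, so the strict inequality $x_0>x_1$ is equivalent to $r_f(x_1)<0$, that is, $c_0>-2c_1^{3/2}$. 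These are precisely the stated hypotheses.

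The case $n=4$ is the only one requiring any real work. Here $r_f(x)=x^4-6c_2x^2-4c_1x-c_0$, so $r_f'''(x)=24x$, $r_f''(x)=12(x^2-c_2)$, and $r_f'(x)=4(x^3-3c_2x-c_1)$. The conditions $x_3\leq x_2\leq x_1$ translate (after the $n=3$ analysis applied to $r_f'/4$) to $c_2\geq 0$ and $c_1\geq -2c_2^{3/2}$, and $x_1$ is the largest real root of the depressed cubic $x^3-3c_2x-c_1=0$. Applying Cardano's formula, with the usual split on the sign of the discriminant $4c_2^3-c_1^2$, gives precisely the three-case closed form for $x_1$ stated in the proposition. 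Finally, using $x_1^3=3c_2x_1+c_1$ to reduce $x_1^4=3c_2x_1^2+c_1x_1$, one computes
\begin{align*}
r_f(x_1) = -3c_2 x_1^2 - 3c_1 x_1 - c_0,
\end{align*}
so the strict inequality $x_0>x_1$ becomes $c_0>-3c_2x_1^2-3c_1x_1$, again matching the hypothesis.

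The only genuine obstacle is bookkeeping the three sub-cases of Cardano's formula for $x_1$ when $n=4$; everything else is mechanical substitution into the criterion supplied by the \bhyperlink{T:2.3}{Positivstellensatz Theorem}. Once the three cases are disposed of, Theorem~\hyperlink{T:3.1}{3.1} immediately delivers the convexity of all three level sets.
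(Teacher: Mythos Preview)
Your proposal is correct and follows essentially the same approach as the paper: verify that the stated coefficient inequalities encode strict right-Noetherianness of $r_f$, then invoke Theorem~\hyperlink{T:3.1}{3.1}. The paper actually only writes out the degree four case (explicitly deferring the lower degrees), and proceeds exactly as you do---computing $x_3=0$, $x_2=\sqrt{c_2}$, identifying $c_1\geq -2c_2^{3/2}$ as the condition $x_1\geq x_2$, splitting on the discriminant $27(4c_2^3-c_1^2)$ to obtain the three-case Cardano expression for $x_1$, and reducing $r_f(x_1)$ via $x_1^3=3c_2x_1+c_1$ to obtain the final condition on $c_0$.
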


\begin{proof}
Here, we only prove the degree four case:
\begin{align*}
\lambda_1 \lambda_2 \lambda_3 \lambda_4 - c_2 \sigma_2(\lambda) - c_1 \sigma_1(\lambda) - c_0 = 0. 
\end{align*}First, the diagonal restriction and its derivatives (after dividing by the leading coefficient) will be
\begin{align*}
\{ x^4 - 6c_2 x^2 - 4c_1 x - c_0, \ x^3 - 3c_2 x - c_1, \ x^2 - c_2, \ x\}.
\end{align*}Second, for the largest real roots, if we denote the largest real root of $k$-th derivative by $x_k$, then we have $x_2 = \sqrt{c_2}$, $x_3 = 0$. Then, for the depressed cubic polynomial $x^3 - 3c_2 x - c_1$, we want
\begin{align*} 
x_2^3 - 3c_2 x_2 - c_1 = -2c_2^{3/2} - c_1 \leq  0.
\end{align*}That is, $c_1 \geq -2c_2^{3/2}$. We compute the discriminant of the cubic polynomial $x^3 - 3c_2 x - c_1$, by (\ref{eq:4.1}) and (\ref{eq:4.2}), we have
\begingroup
\allowdisplaybreaks
\begin{align*}
\disc (x^3 - 3c_2 x - c_1)  \tag{4.3}  &= \res\bigl( x^3 - 3c_2 x - c_1, 3x^2 - 3c_2 \bigr) \\
 &=  \det \begin{pNiceMatrix}[nullify-dots,xdots/line-style = loosely dotted]
1 & 0    & 3 & 0 & 0  \\
0 & 1    & 0 & 3 & 0  \\
-3c_2 & 0    & -3c_2 & 0 & 3 \\
-c_1 & -3c_2   & 0 & -3c_2 &  0  \\
0 & -c_1   &  0  & 0 & -3c_2 \\
\end{pNiceMatrix} \\ 
&= 27(4c_2^3 - c_1^2).
\end{align*}
\endgroup
When $4c_2^3 - c_1^2 \geq 0$, then this is the case \textit{casus irreducibilis}. When $4c_2^3 - c_1^2 \leq 0$, then the root can be represented using hyperbolic functions. So the largest real root $x_1$ will be
\begingroup
\allowdisplaybreaks
\begin{align*}
x_1 &= \begin{dcases}
\sqrt[3]{c_1}&, \text{ when } c_2 = 0; \\
2 \sqrt{c_2} \cos \Bigl [ \frac{1}{3}   \arccos \Bigl(  \frac{c_1}{ 2c_2^{3/2}  }   \Bigr)         \Bigr ]&, \text{ when } c_2 > 0 \text{ and } 4c_2^3 - c_1^2 \geq 0; \\
2 \sqrt{c_2} \cosh \Bigl [ \frac{1}{3}   \arccosh \Bigl(  \frac{c_1}{ 2c_2^{3/2}  }   \Bigr)         \Bigr ]&, \text{ when } c_1, c_2 > 0 \text{ and } 4c_2^3 - c_1^2 \leq 0. \\
\end{dcases}
\end{align*}
\endgroup
Here, we take the branch $\arccos(\bullet) \in [0, \pi]$ and $\arccosh$ is the inverse hyperbolic cosine. Last, we plug $x_1$ in to the quartic polynomial $x^4 - 6c_2 x^2 - 4c_1 x - c_0$. Because we want $x_0 > x_1$, so we want the following to be true. 
\begin{align*}
x_1^4 - 6c_2 x_1^2 - 4c_1 x_1 - c_0 = -3c_2 x_1^2 -3c_1 x_1 - c_0 < 0.
\end{align*}That is, $c_0 > -3c_2 x_1^2 -3c_1 x_1$.
\end{proof}

In \cite{guan2019class}, Guan--Zhang studied the solvability of a general class of curvature equations. These curvature equations can be viewed as generalizations of the equations for Christoffel--Minkowski problem in convex geometry. Guan--Zhang considered the following class of equations
\begin{align*}
\sigma_m (\lambda) + c_{m-1} \sigma_{m-1}(\lambda) =  \sum_{k = 0}^{m-2} c_k \sigma_k(\lambda),
\end{align*}where $n$ is the dimension of the space, $n \geq m \geq 2$, $c_k \geq 0$ for $k \in \{0, \cdots, m-2\}$, and $c_{m-1} \in \mathbb{R}$. They obtained a priori estimates for the admissible solutions. Here, for the special case $m = n$, without assuming the admissible condition, we can show that the level set is convex. The following result can also be applied to the general inverse $\sigma_k$ equations with non-negative coefficients considered by Collins--Székelyhidi \cite{collins2017convergence} and Fang--Lai--Ma \cite{fang2011class}.

\begin{flemma}
The level set of the following general inverse $\sigma_k$ equation 
\begin{align*}
\label{eq:4.4}
f(\lambda) \coloneqq \lambda_1 \cdots \lambda_n + c_{n-1} \sigma_{n-1}(\lambda)  - \sum_{k=0}^{n-2} c_k \sigma_k(\lambda) = 0 \tag{4.4}
\end{align*}is convex if $c_k \geq 0$ for $k \in \{0, \cdots, n-2\}$ with $\sum_{k=0}^{n-2} c_k > 0$ and $c_{n-1} \in \mathbb{R}$.
\end{flemma}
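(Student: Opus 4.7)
The plan is to apply Theorem~\hyperlink{T:3.1}{3.1}: it is enough to show that the diagonal restriction
\begin{align*}
r_f(x) = x^n + n c_{n-1} x^{n-1} - \sum_{k=0}^{n-2} c_k \binom{n}{k} x^k
\end{align*}
is strictly right-Noetherian. Introduce the normalized derivatives $p_l(x) = \tfrac{(n-l)!}{n!} r_f^{(l)}(x) = x^{n-l} + (n-l) c_{n-1} x^{n-l-1} - \sum_{k=l}^{n-2} c_k \binom{n-l}{n-k} x^{k-l}$, which satisfy $p_l'(x) = (n-l)\,p_{l+1}(x)$, and write $\beta_l$ for the largest real root of $p_l$ (when it exists). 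I will prove by backward induction on $l \in \{0,\ldots,n-1\}$ that $\beta_l$ exists and satisfies $\beta_l \geq \beta_{l+1}$, with strict inequality at $l = 0$; by Proposition~\hyperlink{P:2.1}{2.1} and Theorem~\hyperlink{T:2.3}{2.3} this will conclude the argument.

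The base cases are direct. From $p_{n-1}(x) = x + c_{n-1}$ we get $\beta_{n-1} = -c_{n-1}$, and from $p_{n-2}(x) = x^2 + 2 c_{n-1} x - c_{n-2}$, the discriminant $4(c_{n-1}^2 + c_{n-2})$ is non-negative by $c_{n-2} \geq 0$, yielding $\beta_{n-2} = -c_{n-1} + \sqrt{c_{n-1}^2 + c_{n-2}} \geq \beta_{n-1}$. For the inductive step at $l \leq n-3$, note that $p_{l+1}(0) = -c_{l+1} \leq 0$ together with $p_{l+1}(x) \to +\infty$ as $x \to +\infty$ forces $\beta_{l+1} \geq 0$. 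Since $p_l$ is strictly increasing on $(\beta_{l+1}, +\infty)$ (because $p_l' = (n-l) p_{l+1} > 0$ there) and also tends to $+\infty$, it suffices to show that $p_l(\beta_{l+1}) \leq 0$. A short manipulation using $p_{l+1}(\beta_{l+1}) = 0$ to eliminate $\beta_{l+1}^{n-l}$, followed by Pascal's identity $\binom{n-l}{n-k} - \binom{n-l-1}{n-k} = \binom{n-l-1}{n-k-1}$, yields the compact expression
\begin{align*}
p_l(\beta_{l+1}) = c_{n-1} \beta_{l+1}^{n-l-1} - \sum_{k=l+1}^{n-2} c_k \binom{n-l-1}{n-k-1} \beta_{l+1}^{k-l} - c_l.
\end{align*}

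When $c_{n-1} \leq 0$, every term on the right-hand side is non-positive (using $\beta_{l+1} \geq 0$ and $c_k \geq 0$), so $p_l(\beta_{l+1}) \leq 0$ follows at once; equivalently, setting $\tilde c_{n-1} = -c_{n-1} \geq 0$ and $\tilde c_k = c_k$ puts $f$ in the non-negative coefficient framework of Collins--Székelyhidi~\cite{collins2017convergence} and Fang--Lai--Ma~\cite{fang2011class}, from which strict $\Upsilon$-stability, hence strict right-Noetherianness via Theorem~\hyperlink{T:2.3}{2.3}, is already known. The main obstacle is the case $c_{n-1} > 0$, where the leading term $c_{n-1}\beta_{l+1}^{n-l-1}$ is non-negative. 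To control it I substitute $\beta_{l+1}^{n-l-1} = -(n-l-1) c_{n-1} \beta_{l+1}^{n-l-2} + \sum_{k=l+1}^{n-2} c_k \binom{n-l-1}{n-k} \beta_{l+1}^{k-l-1}$ again from $p_{l+1}(\beta_{l+1}) = 0$, which produces
\begin{align*}
p_l(\beta_{l+1}) = -c_l - (n-l-1) c_{n-1}^2 \beta_{l+1}^{n-l-2} + \sum_{k=l+1}^{n-2} c_k \binom{n-l-1}{n-k-1} \beta_{l+1}^{k-l-1}\Bigl(c_{n-1}\tfrac{k-l}{n-k} - \beta_{l+1}\Bigr),
\end{align*}
where the first two terms are non-positive; the brackets $c_{n-1}(k-l)/(n-k) - \beta_{l+1}$ are non-positive precisely when $\beta_{l+1} \geq c_{n-1}(k-l)/(n-k)$. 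I plan to handle the residual positive contributions by iterating the same root relation (each iteration strips one power of $\beta_{l+1}$ and extracts an additional $-(n-l-1) c_{n-1}^2$-type term), and then combine the resulting cascade with the inductive hypothesis that $p_{l+1}$ is itself strictly right-Noetherian, so that the lower bound $\beta_{l+1} \geq \beta_{l+2} \geq \ldots \geq \beta_{n-1} = -c_{n-1}$ absorbs the remaining positivity. Strictness at $l = 0$ is then automatic: if $p_0(\beta_1) = 0$ in the case $c_{n-1} \leq 0$ then each non-positive term must vanish, which together with $\sum_{k=0}^{n-2} c_k > 0$ leads to a contradiction via sign analysis at $\beta_1$ (separating the sub-cases $\beta_1 > 0$ and $\beta_1 = 0$), and the $c_{n-1} > 0$ case is handled similarly once the cascade estimate is established. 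The iterative bookkeeping in the cascade is the technical heart of the argument and the main difficulty I anticipate.
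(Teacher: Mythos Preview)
Your overall strategy (reduce to strict right-Noetherianness via Theorem~\hyperlink{T:3.1}{3.1} and prove $p_l(\beta_{l+1})\le 0$ inductively) matches the paper's, and your argument for $c_{n-1}\le 0$ is complete. The gap is the case $c_{n-1}>0$: the ``cascade'' you outline is only a plan, not a proof, and the bookkeeping you anticipate is genuinely awkward because you have chosen the wrong substitution. You use $p_{l+1}(\beta_{l+1})=0$ to eliminate the top-degree term $\beta_{l+1}^{\,n-l}$, which leaves the stubborn $c_{n-1}\beta_{l+1}^{\,n-l-1}$ behind; iterating only trades one positive residue for another.

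The paper's one-line fix is to eliminate the $c_{n-1}$-term instead: multiply $p_{l+1}(\beta_{l+1})=0$ by $\tfrac{n-l}{\,n-l-1\,}\beta_{l+1}$ and subtract from $p_l(\beta_{l+1})$. This gives
\[
p_l(\beta_{l+1})=-\frac{1}{n-l-1}\,\beta_{l+1}^{\,n-l}-c_l-\sum_{k=l+1}^{n-2}c_k\,\binom{n-l}{n-k}\frac{n-k-1}{n-l-1}\,\beta_{l+1}^{\,k-l},
\]
which is manifestly $\le 0$ whenever $\beta_{l+1}\ge 0$ (your own observation that $p_{l+1}(0)=-c_{l+1}\le 0$ already guarantees this), and strictly negative once $\beta_{l+1}>0$. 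At the top level $l=0$ one then only needs $\beta_1>0$, which the paper obtains from an easy auxiliary claim ($x_0>0$ for every $r_f$ satisfying the hypothesis, carried along the induction); the residual degenerate case $c_1=\dots=c_{m-2}=0$ is handled directly. With this substitution your entire cascade disappears and strictness at $l=0$ is immediate.
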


\begin{proof}
Consider the following diagonal restriction $r_f(x)$ of equation (\ref{eq:4.4}), that is,
\begin{align*}
\label{eq:4.5}
r_f(x) &= x^n + n c_{n-1} x^{n-1} - \sum_{k=0}^{n-2} c_k \mybinom[0.8]{n}{k} x^k. \tag{4.5}
\end{align*}By Theorem~\hyperlink{T:3.1}{3.1}, if $r_f$ is strictly right-Noetherian,  then we are done. We prove this by mathematical induction on the degree $n$. We also \hypertarget{L:4.1 claim}{\textbf{claim}} that when $n \geq 2$, if the coefficients $c_k$ satisfy the hypothesis, then $x_0 > 0$. When $n = 1$, we have $r_f(x) = x + c_0$, which is strictly right-Noetherian. When $n = 2$, we have $r_f(x) = x^2 + 2c_1 x - c_0;\  r_f'(x) = 2x + 2c_1$. If we write $x_1 = -c_1$ the largest real root of $r_f'$, then by the hypothesis $\sum_{k=0}^{2-2}c_k = c_0 > 0$, we get
\begin{align*}
r_f(-c_1) &= -c_1^2 - c_0 \leq -c_0 < 0.
\end{align*}This implies that $r_f$ is strictly right-Noetherian. Moreover, we have $r_f(0) = -c_0 <0$, which implies that $x_0 > 0$. So the \bhyperlink{L:4.1 claim}{claim} is true when $n = 2$.
When $n = 3$, we obtain
\begin{align*}
r_f(x) = x^3 + 3c_2 x^2 - 3c_1 x - c_0;\quad r_f'(x) = 3x^2 + 6c_2 x - 3c_1;\quad r_f''(x) = 6x + 6c_2.
\end{align*}We have $x_2 = -c_2$. If $c_1 > 0$, then $x_1 > \max\{x_2, 0\}$. In addition, we obtain
\begin{align*}
 r_f(x_1) 
&= x_1^3 + 3c_{2}x_1^{2} - 3c_1 x_1 - c_0  = - 0.5 x_1^3  - 1.5c_1 x_1 - c_0 < 0.
\end{align*}Thus, the largest real root $x_0$ of $r_f$ is greater than $x_1$, $r_f$ is strictly right-Noetherian. If $c_1 = 0$, then $x_1 = \max\{0, -2c_2\}$. If $c_2  < 0$, then similar to above, we get $ r_f(x_1)  < 0$. This implies that $x_0 > x_1$,  $r_f$ is strictly right-Noetherian. Otherwise, if $c_2  \geq 0$, then $x_1 = 0$. For this case, by the hypothesis, we have $\sum_{k=0}^{3-2}c_k = c_0 + c_1 = c_0 > 0$. This implies that 
\begin{align*}
r_f(x_1) = r_f(0) = - c_0 < 0.
\end{align*}Thus, $x_0 > x_1 = 0$, $r_f$ is again strictly right-Noetherian. No matter which case, the \bhyperlink{L:4.1 claim}{claim} is true. Suppose the statement and the \bhyperlink{L:4.1 claim}{claim} is true when $n = m-1$. When $n = m$, by equation (\ref{eq:4.5}), we have
\begin{align*}
r_f(x) = x^m + m c_{m-1} x^{m-1} - \sum_{k = 0}^{m-2} c_k \mybinom[0.8]{m}{k} x^k.
\end{align*}If we consider the first derivative of $r_f(x)$ with respect to $x$, then we obtain
\begin{align*}
r_f'(x) = m \Bigl (    x^{m-1} + (m-1) c_{m-1} x^{m-2} - \sum_{k = 1}^{m-2} c_k \mybinom[0.8]{m-1}{k-1} x^{k-1}       \Bigr).
\end{align*}There are two cases to be considered. First, if $\sum_{k=1}^{m-2} c_k > 0$, then $r_f'$ satisfies the hypothesis, so  $r_f'$ is strictly right-Noetherian. Moreover, the largest real root $x_1$ of $r_f'$ will be positive. Also,
\begin{align*}
 r_f(x_1) 
&= x_1^m + mc_{m-1}x_1^{m-1} - \sum_{k=0}^{m-2} c_k \mybinom[0.8]{m}{k}x_1^k \\
&= x_1^m  - \sum_{k=0}^{m-2} c_k \mybinom[0.8]{m}{k}x_1^k - \frac{m}{m-1}x_1^m + \sum_{k=1}^{m-2}c_k \frac{m}{m-1} \mybinom[0.8]{m-1}{k-1} x_1^k \\
&= -\frac{1}{m-1} x_1^m  - \sum_{k=0}^{m-2}c_k \Bigl (1 - \frac{k}{m-1} \Bigr)  \mybinom[0.8]{m}{k} x_1^k < 0.
\end{align*}In this case, $x_0 > x_1 > 0$, $r_f$ is strictly right-Noetherian. Second, if $\sum_{k=1}^{m-2} c_k = 0$, then $c_k = 0$ for all $k \in \{1, \cdots, m-2\}$. By hypothesis, we have $c_0 > 0$, so $r_f(x) = x^m + mc_{m-1} x^{m-1} -c_0$. For $k \in \{1, \cdots, m-1\}$, we have $x_k = \max \{0, -(m-k)c_{m-1}\}$. We are done if $c_{m-1} < 0$. Otherwise, we have $x_1 = \cdots = x_{m-1} =  0$ and $x_0 > x_1 = 0$. Hence, no matter which case, $r_f$ is strictly right-Noetherian, and the \bhyperlink{L:4.1 claim}{claim} is true. This finishes the proof.
\end{proof}

\begin{flemma}
The level set of the deformed Hermitian--Yang--Mills equation
\begin{align*}
\label{eq:4.6}
\Im \bigl ( \omega + \sqrt{-1} \chi \bigr )^n &= \tan \bigl (   {\theta}   \bigr ) \cdot \Re \bigl ( \omega + \sqrt{-1} \chi \bigr )^n \tag{4.6}
\end{align*}is convex if $\theta$ is in the supercritical phase, that is, $\theta \in \bigl((n-2)\pi/2, n \pi/2 \bigr)$. In addition, the level set is also convex if $\theta  \in \bigl( -n \pi/2, -(n-2)\pi/2 \bigr)$.
\end{flemma}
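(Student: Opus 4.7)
The strategy is to apply the convexity theorem (Theorem~\hyperlink{T:3.1}{3.1}) by verifying that the diagonal restriction of the associated general inverse $\sigma_k$ type multilinear polynomial is strictly right-Noetherian. First, expanding the identity
\[
\Im\Bigl[e^{-\sqrt{-1}\hat{\theta}}\prod_{j=1}^{n}\bigl(1+\sqrt{-1}\lambda_j\bigr)\Bigr]=0
\]
rewrites the dHYM equation as $\sum_{k=0}^{n}\sin(k\pi/2-\hat{\theta})\,\sigma_k(\lambda)=0$. For $\hat{\theta}\in((n-2)\pi/2,n\pi/2)$, the coefficient of $\sigma_n$ is $\sin(n\pi/2-\hat{\theta})\in(0,1]$, so dividing through gives a general inverse $\sigma_k$ type multilinear polynomial $f(\lambda)=\lambda_1\cdots\lambda_n-\sum_{k=0}^{n-1}c_k\sigma_k(\lambda)$ with $c_k=-\sin(k\pi/2-\hat{\theta})/\sin(n\pi/2-\hat{\theta})$.

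Next, using $\prod_{j=1}^{n}(1+\sqrt{-1}x)=(1+x^2)^{n/2}e^{\sqrt{-1}\,n\arctan x}$, the diagonal restriction becomes
\[
r_f(x)\;=\;\frac{(1+x^2)^{n/2}\,\sin\bigl(n\arctan x-\hat{\theta}\bigr)}{\sin(n\pi/2-\hat{\theta})}.
\]
Its real roots are exactly the values $\tan\bigl((\hat{\theta}+m\pi)/n\bigr)$ for integers $m$ with $(\hat{\theta}+m\pi)/n\in(-\pi/2,\pi/2)$. A direct count shows that under the supercriticality hypothesis precisely the integers $m=0,-1,\ldots,-(n-1)$ qualify, producing $n$ pairwise distinct real roots. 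By Proposition~\hyperlink{P:2.2}{2.2}, such a real-rooted polynomial is automatically right-Noetherian, and since all roots are simple, Rolle's theorem forces the $n-1$ real roots of $r_f'$ to be strictly interlaced with those of $r_f$. In particular, the largest real root of $r_f'$ lies strictly below $x_0=\tan(\hat{\theta}/n)$, the largest real root of $r_f$. Hence $r_f$ is strictly right-Noetherian and Theorem~\hyperlink{T:3.1}{3.1} delivers the convexity of the level set.

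For the symmetric range $\hat{\theta}\in(-n\pi/2,-(n-2)\pi/2)$, the substitution $\lambda_i\mapsto -\lambda_i$ carries $\prod_j(1+\sqrt{-1}\lambda_j)$ to its complex conjugate and therefore sends the dHYM equation with phase $\hat{\theta}$ to the one with phase $-\hat{\theta}$, which is now supercritical and covered by the preceding paragraph. Since the reflection of a convex set through the origin is again convex, the level set is convex in this case as well.

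The only genuine subtlety I anticipate is bookkeeping: one must identify the connected component $\Gamma^n_f$ invoked by the convexity theorem with the component whose boundary is the actual dHYM level set. This is handled by Proposition~\hyperlink{P:2.9}{2.9}, which guarantees that once the diagonal restriction is strictly right-Noetherian, there is a \emph{unique} strictly $\Upsilon$-stable connected component of $\{f>0\}$, and supercriticality pins down its location via the simple root $x_0=\tan(\hat{\theta}/n)$, so no further disambiguation is required.
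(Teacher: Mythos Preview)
Your argument is correct and reaches the same conclusion as the paper via a slightly different route. The paper first rewrites the dHYM equation as $\sum_i \arctan(\lambda_i)=\theta$ and then computes, for each $k$, the largest real root $x_k$ of $r_f^{(k)}$ explicitly as $x_k=\tan\bigl((\theta-k\pi/2)/(n-k)\bigr)$, by observing that each partial derivative of the dHYM polynomial is again a dHYM polynomial in one fewer variable with phase shifted by $\pi/2$; the chain $x_0>x_1>\cdots>x_{n-1}$ is then checked by showing that $x\mapsto(\theta-x\pi/2)/(n-x)$ is decreasing. You instead compute all roots of $r_f$ at once from the closed form $r_f(x)=(1+x^2)^{n/2}\sin(n\arctan x-\hat\theta)/\sin(n\pi/2-\hat\theta)$, count that there are exactly $n$ simple real roots, and then invoke Proposition~\hyperlink{P:2.2}{2.2} together with Rolle's theorem to obtain strict right-Noetherianness without ever locating the $x_k$ for $k\geq 1$. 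This is a cleaner path to the hypothesis of Theorem~\hyperlink{T:3.1}{3.1}; the trade-off is that the paper's computation yields the explicit values of all the $x_k$, which is extra information you do not extract. For the negative-phase range your reflection $\lambda\mapsto-\lambda$ is exactly the content behind the paper's one-line remark that real-rootedness gives left-Noetherianness as well, so the two arguments coincide there.
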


\begin{proof}
First, it is well-known that the dHYM equation (\ref{eq:4.6}) can be rewritten as the following equation
\begin{align*}
\sum_{i = 1}^n \arctan(\lambda_i ) = \theta.
\end{align*}Since $\theta \in \bigl((n-2)\pi/2, n \pi/2 \bigr)$, we have 
\begin{align*}
\frac{\theta - k\pi/2}{n-k} \in  \Bigl( \frac{(n-2-k)\pi}{2(n-k)},  \frac{(n-k)\pi}{2(n-k)}  \Bigr) =  \Bigl(   \frac{\pi}{2} - \frac{\pi}{n-k},  \frac{\pi}{2}  \Bigr) \mbox{\LARGE$\subset$}  \Bigl(   - \frac{\pi}{2} ,  \frac{\pi}{2}  \Bigr)
\end{align*}for $k \in \{0, 1, \cdots, n-1\}$. Second, by Theorem~\hyperlink{T:3.1}{3.1}, we consider the diagonal restriction, we get
\begin{align*}
(n-k) \arctan(x_k) = \theta - \frac{k \pi}{2}
\end{align*}for $k \in \{0, 1, \cdots, n-1\}$, where $x_k$ is the largest real root of the $k$-th derivative of the diagonal restriction. We claim that:
\begin{align*}
\label{eq:4.7}
x_0 = \tan \Bigl ( \frac{\theta}{n}  \Bigr) >  \cdots > x_k =  \tan \Bigl ( \frac{\theta - k \pi/2}{n-k}  \Bigr) > \cdots > x_{n-1} =  \tan \Bigl( \theta - (n-1) \frac{\pi}{2} \Bigr).  \tag{4.7}
\end{align*}

Since $\tan(x)$ is increasing on $(-\pi/2, \pi/2)$, to check (\ref{eq:4.7}), it suffices to check whether 
\begin{align*}
  \frac{\theta}{n}    >  \frac{\theta - \pi/2}{n-1}   > \cdots >    \frac{\theta - k \pi/2}{n-k}    > \cdots >  \theta - (n-1) \frac{\pi}{2}. 
\end{align*}This is true because the function $f(x) = (\theta - x\pi/2)/(n-x)$ is decreasing on $(-\infty, n)$. By Theorem~\hyperlink{T:3.1}{3.1}, the level set is convex. \smallskip

In addition, we see that the dHYM equation is real-rooted, by Proposition~\hyperlink{P:2.2}{2.2}, the dHYM equation is both strictly right-Noetherian and strictly left-Noetherian. So if $\theta \in \bigl( -n \pi/2, -(n-2)\pi/2 \bigr)$, then the level set will also be convex.
\end{proof}

\vspace{2cm}
\Address

\end{document}